\apptocmd{\sloppy}{\hbadness 10000\relax}{}{}
\apptocmd{\sloppy}{\vbadness 10000\relax}{}{}
\newcommand{\N}{\mathbb{N}}
\newcommand{\R}{\mathbb{R}}
\newcommand{\E}{\mathbb{E}}
\newcommand{\ZZ}{\mathbb{Z}}
\newcommand{\RR}{\mathbb{R}}
\renewcommand{\P}{\mathbb{P}}
\newcommand{\var}{\mathop\mathrm{var}\nolimits}
\renewcommand{\phi}{\varphi}
\renewcommand{\hat}{\widehat}
\renewcommand{\tilde}{\widetilde}
\newcommand{\cK}{\mathcal{K}}
\newcommand{\cH}{\mathcal{H}}
\newcommand{\cE}{\mathcal{E}}
\DeclareMathOperator{\spt}{spt}
\newcommand{\tb}{\widetilde{\beta}}
\newcommand{\tJ}{\widetilde{J}}
\newcommand{\two}{{I\!I}}
\newcommand{\three}{{I\!I\!I}}
\newcommand{\bridges}{\mathsf{Bridges}}
\newcommand{\edges}{\mathsf{Edges}}
\newcommand{\phan}{\mathsf{Phantom}}
\newcommand{\leaves}{\mathsf{Leaves}}
\newcommand{\Child}{\Delta_1}
\newcommand{\Top}{\mathop\mathsf{Top}}
\newcommand{\near}{\mathsf{Near}}
\newcommand{\lD}[1]{\underline{D}^{#1}}
\newtheorem{theorem}{Theorem}[section]
\newtheorem{proposition}[theorem]{Proposition}
\newtheorem{lemma}[theorem]{Lemma}
\newtheorem{corollary}[theorem]{Corollary}
\theoremstyle{definition}
\newtheorem{definition}[theorem]{Definition}
\newtheorem{remark}[theorem]{Remark}
\numberwithin{equation}{section}
\numberwithin{figure}{section}
\newcommand{\rad}{\mathop\mathrm{rad}\nolimits}
\newcommand{\diam}{\mathop\mathrm{diam}\nolimits}
\newcommand{\dist}{\mathop\mathrm{dist}\nolimits}
\newcommand{\side}{\mathop\mathrm{side}\nolimits}
\newcommand{\res}{\hbox{ {\vrule height .22cm}{\leaders\hrule\hskip.2cm} }}
\newcommand{\Haus}{\mathcal{H}}
\newcommand{\Pack}{\mathcal{P}}
\newcommand{\rect}{\mathrm{rect}}
\newcommand{\pu}{\mathrm{pu}}
\begin{document}

\title{Identifying 1-rectifiable measures in Carnot groups}

\author{Matthew Badger \and Sean Li \and Scott Zimmerman}
\thanks{M.~Badger was partially supported by NSF DMS grants 1650546 and 2154047. S.~Li was partially supported by NSF DMS grant 1812879.}
\date{August 30, 2023}
\subjclass[2020]{Primary 28A75, Secondary 43A85, 53A04, 53C17}
\keywords{rectifiable curves, rectifiable measures, Jones' $\beta$ numbers, Carnot groups, doubling measures}

\address{Department of Mathematics\\ University of Connecticut\\ Storrs, CT 06269-1009}
\email{matthew.badger@uconn.edu}
\address{Department of Mathematics\\ University of Connecticut\\ Storrs, CT 06269-1009}
\email{sean.li@uconn.edu}
\address{Department of Mathematics\\ The Ohio State University at Marion\\ Marion, OH 43302-5695}
\email{zimmerman.416@osu.edu}

\begin{abstract}We continue to develop a program in geometric measure theory that seeks to identify how measures in a space interact with canonical families of sets in the space. In particular, extending a theorem of the first author and R.~Schul in Euclidean space, for an arbitrary \emph{locally finite Borel measure} in an arbitrary \emph{Carnot group}, we develop tests that identify the part of the measure that is carried by rectifiable curves and the part of the measure that is singular to rectifiable curves. Our main result is entwined with an extension of the \emph{Analyst's Traveling Salesman Theorem}, which characterizes subsets of rectifiable curves in $\RR^2$ (P.~Jones, 1990), in $\RR^n$ (K.~Okikolu, 1992), or in an arbitrary Carnot group (the second author) in terms of local geometric least squares data called \emph{Jones' $\beta$-numbers}. In a secondary result, we implement the Garnett-Killip-Schul construction of a doubling measure in $\RR^n$ that charges a rectifiable curve in an arbitrary \emph{complete, doubling, locally quasiconvex metric space}.\end{abstract}

\maketitle

\tableofcontents

\renewcommand{\thepart}{\Roman{part}}

\section{Introduction} Rectifiability is an important concept in geometric measure theory that supplies a finer notion of regularity of a set or measure than does dimension \cite{rectifiability-survey,measure-dimension}. There is not a single definition of rectifiability, but rather a number of variations that may be encoded using a common framework \cite{ident}. For recent work on rectifiable sets and absolutely continuous measures in Carnot groups and in general metric spaces, we refer the reader to the papers \cite{AM3,AM2,AM1,Bate-unrectifiable,Bate-tangents,Bate-Li-rectifiable,Chousionis-Li-Young} and the references within. A current challenge that we address in this paper is to find characterizations of rectifiability of locally finite measures without imposing the traditional background hypothesis of absolute continuity. In other words, we are interested in detecting how a measure interacts with a prescribed family of sets, but we do not want to make \emph{a priori} assumptions about the null sets of the measure. Building on recent progress on this problem in Euclidean space \cite{badger-naples,badger-schul}, we give the first characterization of a class of rectifiable measures inside the collection of locally finite Borel measures in a non-Euclidean setting.

Following the convention in Morse-Randolph \cite{MR44}, Federer \cite{Federer}, and Badger-Schul \cite{BS1}, we say that a Borel measure $\mu$ on a metric space $X$ is \emph{1-rectifiable} if there exists a sequence $\Gamma_1,\Gamma_2,\dots$ of rectifiable curves in $X$ such that $\mu(X\setminus \bigcup_1^\infty \Gamma_i)=0$; at the other extreme, we say that $\mu$ is \emph{purely 1-unrectifiable} if $\mu(\Gamma)=0$ for every rectifiable curve $\Gamma$ in $X$. We reemphasize that unlike some treatments \cite{DeLellis, Mattila}, we do not impose the simplifying assumption that a 1-rectifiable measure is absolutely continuous with respect to $1$-dimensional Hausdorff measure $\Haus^1$. See \S\ref{ss:rect-curves} for a primer on rectifiable curves and Figure \ref{fig:basic-examples} for some simple examples of rectifiable and purely unrectifiable measures in $\RR^2$. An arbitrary measure is not necessarily rectifiable or purely unrectifiable. Nevertheless, by a general form of the Lebesgue decomposition theorem (see Lemma \ref{l:decomp}), every $\sigma$-finite Borel measure $\mu$ on a metric space $X$ can be written uniquely as \begin{equation}\label{mu-decomp} \mu=\mu_\rect+\mu_\pu,\end{equation} where $\mu_\rect$ is 1-rectifiable and $\mu_\pu$ is purely 1-rectifiable. Unfortunately, the proof that the decomposition \eqref{mu-decomp} exists is abstract and does not indicate how to find the component measures. In our main result (see Theorem \ref{t-main}), we identify the 1-rectifiable and purely 1-unrectifiable parts of an arbitrary locally finite measure on an arbitrary Carnot group equipped with a Hebisch-Sikora norm (see \S\ref{ss:Carnot}).

\begin{figure}\begin{center}\includegraphics[width=.5\textwidth]{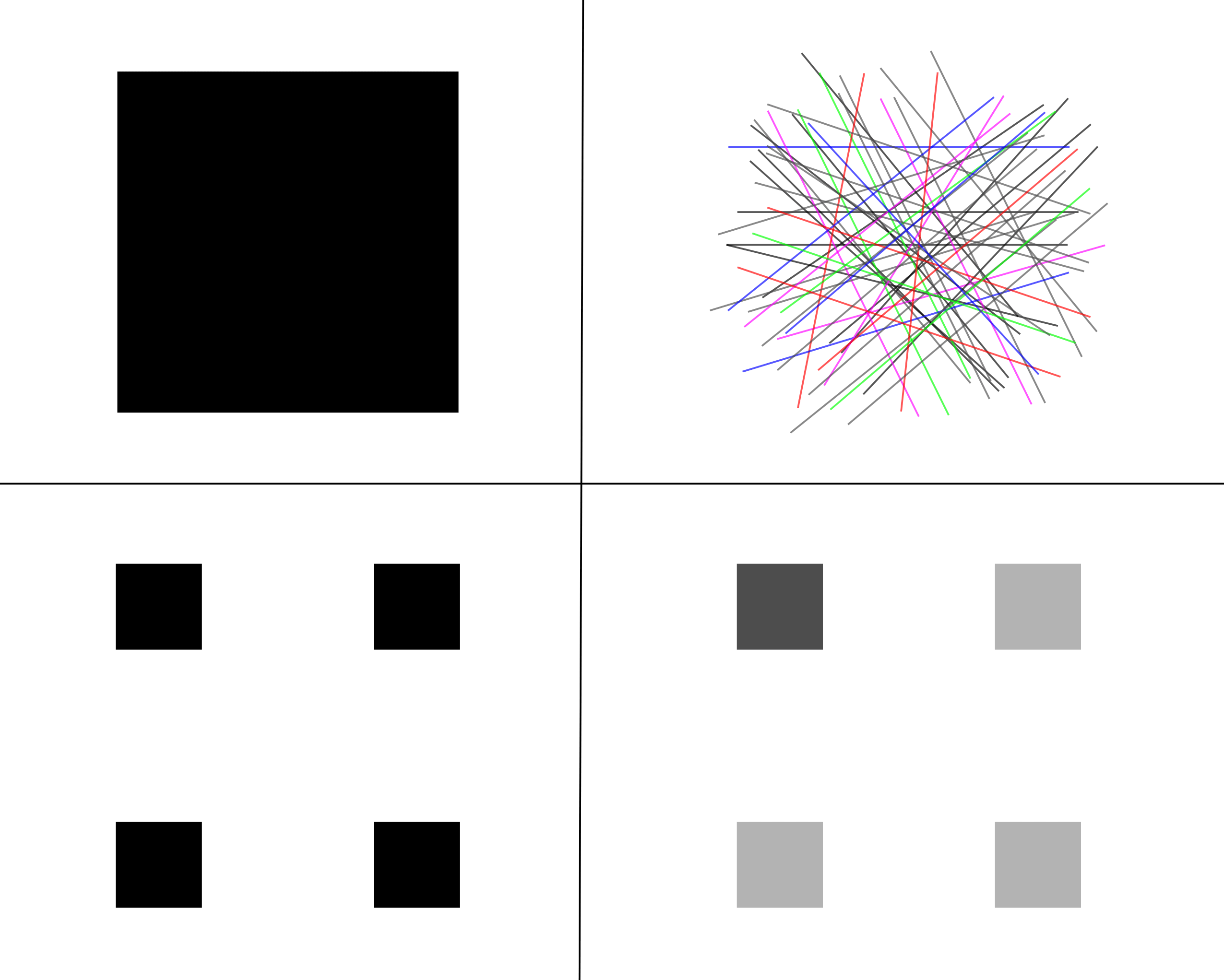}\end{center}\caption{The 2-dimensional Lebesgue measure $\mathcal{L}^2$ in the plane $\RR^2$ is purely 1-unrectifiable (top left). A countable sum $\mu=\sum_{i}^\infty \mu_i$ of measures $\mu_i$ supported on line segments $I_i$ is a 1-rectifiable measure with support $\overline{\bigcup_i I_i}$  (top right); in particular, there are examples of this type with $\spt\mu=\RR^2$.
The natural Hausdorff measure $\Haus^s\res E_s$ restricted to a Cantor set $E_s$ of Hausdorff dimension $s$ is 1-rectifiable when $s<1$ and purely 1-unrectifiable when $s\geq 1$ (bottom left). A self-similar measure $\mu$ supported on the set $E_1$ is 1-rectifiable when the generating sets for $E_1$ have unbalanced weights (bottom right). This illustrates that it is possible for a rectifiable measure to have purely unrectifiable support.}\label{fig:basic-examples}\end{figure}

\begin{theorem} \label{t-main} For every Carnot group $G$ and every locally finite Borel measure $\mu$ on $G$, there exist (explicitly defined) Borel functions $\underline{D}^1(\mu,\cdot)$ and $J^*(\mu,\cdot)$ from $G$ to $[0,\infty]$ such that the 1-rectifiable and purely 1-unrectifiable parts of a given locally finite measure $\mu$ are identified by the pointwise behavior of the functions:
\begin{align}
 \label{rect-part}   \mu_\rect &= \mu \res \left\{ x \in G \, : \, \underline{D}^1(\mu,x)>0 \text{ and } J^*(\mu,x) < \infty \right\},\\
\label{pu-part} \mu_\pu &= \mu \res \left\{ x \in G \, : \, \underline{D}^1(\mu,x) = 0 \text{ or } J^*(\mu,x) = \infty \right\}.
\end{align}
\end{theorem}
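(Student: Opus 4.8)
The plan is to decompose the proof into two halves that run in parallel to the Euclidean argument of Badger--Schul, replacing their use of Jones' Traveling Salesman Theorem in $\RR^n$ with the second author's Carnot-group version. First I would set up the relevant definitions: the lower $1$-density $\underline{D}^1(\mu,x)=\liminf_{r\to 0}\mu(B(x,r))/r$ and a summed $\beta$-number functional $J^*(\mu,x)$ built from the $L^2$-type Jones $\beta$-numbers $\beta_\mu(B)$ over a dyadic-type family of balls $B$ centered near $x$, weighted by $\mu(B)/\diam(B)$ in the style of the square function in \cite{badger-schul}. The two set inclusions to establish are: (i) $\mu\res\{\underline{D}^1>0,\ J^*<\infty\}$ is $1$-rectifiable, and (ii) $\mu\res\{\underline{D}^1=0\text{ or }J^*=\infty\}$ is purely $1$-unrectifiable. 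By the uniqueness of the Lebesgue-type decomposition \eqref{mu-decomp} (Lemma \ref{l:decomp}), once both pieces are shown to have the asserted rectifiability type, the identifications \eqref{rect-part} and \eqref{pu-part} follow automatically, since the two sets partition $G$.

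For the rectifiable direction (i), I would argue that $\mu$-a.e. point of the good set lies in (the closure of) a rectifiable curve. The key step is a Carnot-group analogue of the Badger--Schul ``drawing'' construction: on the set where $\underline{D}^1(\mu,x)>0$, positive lower density lets one compare $\mu$-measure of balls to their radii, so that finiteness of the $\beta$-number square function $J^*(\mu,x)<\infty$ on a subset of positive measure feeds into the sufficient half of the Analyst's Traveling Salesman Theorem in $G$: a set with controlled total $\beta^2$-sum is contained in a rectifiable curve. One exhausts the good set by countably many subsets on each of which the density is bounded below and the $\beta$-sum is uniformly controlled (after passing to subsets where $\mu$ is comparable to a multiple of $\Haus^1$ on the relevant scales), builds a rectifiable curve containing each such subset up to a null set, and concludes that $\mu$ restricted to the good set is carried by countably many rectifiable curves, hence $1$-rectifiable.

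For the purely unrectifiable direction (ii), I would show that for any single rectifiable curve $\Gamma$, the part of $\mu$ on the bad set assigns it zero mass. Split the bad set into $\{\underline{D}^1(\mu,x)=0\}$ and $\{J^*(\mu,x)=\infty\}$. On the first piece, a density/covering argument shows $\mu\res\{\underline{D}^1=0\}$ is singular to $\Haus^1$ and in particular gives no mass to any rectifiable curve, since $\Haus^1\res\Gamma$ is a finite measure with positive lower density $\Haus^1$-a.e. on $\Gamma$. On the second piece, one uses the necessity half of the Traveling Salesman Theorem in $G$: if $x\in\Gamma$ then the $\beta$-numbers of $\Gamma$ satisfy a Carleson-type summability bound, so $J^*(\Haus^1\res\Gamma,x)<\infty$ for $\Haus^1$-a.e.\ $x\in\Gamma$; a differentiation/comparison argument (comparing $\beta_\mu$ with $\beta_{\Haus^1\res\Gamma}$ at points where $\mu$ and $\Haus^1\res\Gamma$ are mutually comparable, which holds $\mu\res\Gamma$-a.e.\ if $\mu\res\Gamma\ne 0$) then forces $J^*(\mu,x)<\infty$ for $\mu\res\Gamma$-a.e.\ $x$, contradicting $x$ lying in the bad set unless $\mu(\Gamma\cap\{J^*=\infty\})=0$. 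Taking a countable union over a sequence of curves witnessing any competing $1$-rectifiable piece shows $\mu\res(\text{bad set})$ is purely $1$-unrectifiable.

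The main obstacle I expect is the interface between the measure $\mu$ and the Traveling Salesman Theorem, which as stated controls \emph{sets} (via $\Haus^1$), not arbitrary locally finite measures: one must transfer between $\beta_\mu$-type quantities and the set-theoretic $\beta$-numbers used in the Carnot TST, and this transfer requires the positive-lower-density hypothesis and a careful stopping-time/Frostman-type decomposition to ensure $\mu$ behaves like $\Haus^1$ on the scales that matter. A secondary technical point is that Carnot groups are not Euclidean: the $\beta$-numbers are measured against horizontal lines (left cosets of horizontal one-parameter subgroups), and the geometry of the Hebisch--Sikora norm must be used in place of Euclidean orthogonality throughout the density and covering estimates; I would lean on the structural lemmas of the second author's TST paper to handle these.
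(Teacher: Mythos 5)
Your high-level architecture — split into the ``good'' set $R=\{\underline{D}^1>0,\ J^*<\infty\}$ and ``bad'' set $P=R^c$, prove $\mu\res R$ is $1$-rectifiable and $\mu\res P$ is purely $1$-unrectifiable, then invoke uniqueness of the decomposition — is exactly what the paper does. However, two of your three component arguments have genuine gaps.

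First, on the piece $\{\underline{D}^1(\mu,\cdot)=0\}$: you claim this part of $\mu$ is ``singular to $\Haus^1$'' and conclude it charges no rectifiable curve. Neither half of this is sound. Lower density zero does not force singularity with respect to $\Haus^1$ (for instance, $\Haus^{3/2}$ restricted to a $3/2$-regular Cantor set in the plane has $\underline{D}^1\equiv 0$ but is not singular to $\Haus^1$, since any $\Haus^1$-null set is $\Haus^{3/2}$-null). More importantly, even if $\mu$ were singular to $\Haus^1$, that would not imply $\mu(\Gamma)=0$ for a rectifiable curve: the Garnett--Killip--Schul measures of Theorem~\ref{t-gks} are $1$-rectifiable \emph{and} singular to $\Haus^1$. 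The correct mechanism, used in Corollary~\ref{c-packing} and Remark~\ref{r:lower-density}, is the $1$-dimensional \emph{packing} measure: rectifiable curves have finite $\mathcal{P}^1$, and Cutler's theorem (Theorem~\ref{t:cutler}) shows that $\mu\res\{\underline{D}^1=0\}$ annihilates every set of finite $\mathcal{P}^1$ measure. Packing, not Hausdorff, is the right comparison.

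Second, on the piece $\{J^*(\mu,\cdot)=\infty\}$: your plan is to deduce $J^*(\mu,x)<\infty$ $\mu$-a.e.\ on a rectifiable curve $\Gamma$ by first bounding $J^*(\Haus^1\res\Gamma,\cdot)$ via the necessary half of the Carnot TST, and then transferring to $\mu$ ``at points where $\mu$ and $\Haus^1\res\Gamma$ are mutually comparable, which holds $\mu\res\Gamma$-a.e.'' That comparability does \emph{not} hold in general: again the Garnett--Killip--Schul measures give curves $\Gamma$ with $\mu(\Gamma)>0$ but $\mu\res\Gamma$ singular to $\Haus^1\res\Gamma$, so the sets where the two measures are comparable can be $\mu\res\Gamma$-null. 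The paper avoids any such transfer by proving directly (Lemma~\ref{p:necc} and Theorem~\ref{t-necc}) that $J^*(\mu,\cdot)\in L^1(\mu\res\Gamma)$: one splits cubes according to whether $\beta^*(\mu,Q)$ is dominated by the flatness $\tb_\Gamma$ of the curve (Carleson-controlled via Theorem~\ref{t:sean}) or by mass of $\mu$ off $\Gamma$ (controlled via a Whitney decomposition of $G\setminus\Gamma$), and the density-normalization $\min\{1,\mu(2B_R)/\diam 2B_R\}$ built into $\beta^*$ is essential to making this dichotomy work. Your proposal omits this density-normalization and, consequently, has no way to handle the mass of $\mu$ that lives off the curve.

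A smaller but related concern: in your sufficient direction you speak of ``passing to subsets where $\mu$ is comparable to a multiple of $\Haus^1$.'' The proof uses only a \emph{one-sided} lower-density bound, not two-sided comparability; insisting on two-sided bounds would wrongly exclude measures like the GKS examples from the rectifiable part. The paper instead uses a flexible traveling salesman algorithm for ``floating'' point clouds (Proposition~\ref{p:goal}) together with Chebyshev-selected representative points $z_R$ (Lemma~\ref{l-centerofmass}) and a localization lemma (Lemma~\ref{l-localize}); these are the devices that replace the needed but unavailable comparability to $\Haus^1$.
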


The following consequence is immediate.

\begin{corollary}\label{c-main} A locally finite Borel measure $\mu$ on $G$ is 1-rectifiable if and only if $\lD1(\mu,x)>0$ and $J^*(\mu,x)<\infty$ at $\mu$-a.e.~$x\in G$. \end{corollary}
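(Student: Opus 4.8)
The plan is to read the corollary off directly from Theorem \ref{t-main} together with the uniqueness clause of the Lebesgue-type decomposition recorded in Lemma \ref{l:decomp}; no further geometry is involved, so the task is purely one of bookkeeping. Abbreviate
\[
 A := \{x\in G : \lD1(\mu,x)>0 \text{ and } J^*(\mu,x)<\infty\},
\]
so that \eqref{rect-part} and \eqref{pu-part} say precisely $\mu_\rect=\mu\res A$ and $\mu_\pu=\mu\res(G\setminus A)$, with $\mu_\rect$ 1-rectifiable and $\mu_\pu$ purely 1-unrectifiable. Note that ``$\lD1(\mu,x)>0$ and $J^*(\mu,x)<\infty$ for $\mu$-a.e.\ $x$'' is literally the statement $\mu(G\setminus A)=0$, which in turn is equivalent to $\mu_\pu=\mu\res(G\setminus A)$ being the zero measure.

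For the ``if'' direction I would simply observe that $\mu(G\setminus A)=0$ forces $\mu_\pu=0$ and hence $\mu=\mu\res A=\mu_\rect$, which Theorem \ref{t-main} guarantees is 1-rectifiable. For the ``only if'' direction, suppose $\mu$ is 1-rectifiable; then $\mu=\mu+0$ exhibits $\mu$ as the sum of a 1-rectifiable measure and a purely 1-unrectifiable one, so the uniqueness part of Lemma \ref{l:decomp} gives $\mu_\pu=0$, i.e.\ $\mu(G\setminus A)=0$, which is the desired almost-everywhere condition on $\lD1$ and $J^*$. (If one prefers to bypass uniqueness: a Borel restriction of a 1-rectifiable measure is again 1-rectifiable, so $\mu_\pu=\mu\res(G\setminus A)$ would be simultaneously 1-rectifiable and purely 1-unrectifiable, and only the zero measure has both properties.)

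There is no real obstacle here --- all the substance sits in Theorem \ref{t-main} and Lemma \ref{l:decomp}. The only point worth stating with care is the translation between the measure-theoretic statement $\mu_\pu=0$ and the pointwise statement that $\lD1(\mu,\cdot)>0$ and $J^*(\mu,\cdot)<\infty$ hold $\mu$-almost everywhere, which is immediate once one recalls that $\mu_\pu$ is exactly $\mu$ restricted to the complement of $A$.
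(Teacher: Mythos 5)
Your proposal is correct and matches the paper's (implied) reasoning: the paper simply labels the corollary as an immediate consequence of Theorem \ref{t-main}, and your unpacking—identifying the $\mu$-a.e.\ condition with $\mu_\pu = 0$ and invoking the uniqueness of the decomposition from Lemma \ref{l:decomp} for the ``only if'' direction—is exactly the intended bookkeeping. Both your primary argument and the alternative bypass (a restriction of a 1-rectifiable measure is 1-rectifiable, and only the zero measure is both 1-rectifiable and purely 1-unrectifiable) are sound.
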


The ``identifying functions'' $\lD1(\mu,\cdot)$ and $J^*(\mu,\cdot)$ play distinct roles in the main theorem. Roughly speaking, the first function $\lD1(\mu,x)$ detects metric dimension, while the second $J^*(\mu,x)$ detects Carnot geometry. Let us now describe them in more detail.

For every locally finite Borel measure $\mu$ on an arbitrary metric space $X$, the \emph{lower 1-density} $\lD1(\mu,\cdot):X\rightarrow[0,\infty]$ is defined by the rule
\begin{equation}
\label{d:lowerdensity}
\lD1(\mu,x)=\liminf_{r\downarrow 0} \frac{\mu(B(x,r))}{2r}\quad\text{for all }x\in X,\end{equation} where $B(x,r)$ is the closed ball with center $x\in X$ and radius $r>0$. The fact that in any metric space the lower 1-density is positive on the 1-rectifiable part of a locally finite measure follows from Cutler's theorem relating the lower density and packing measures (see Theorem \ref{t:cutler}). More specifically, the pointwise behavior of the lower 1-density can be used to identify the unique parts of a locally finite measure that are carried by or singular to Borel sets of finite 1-dimensional packing measure $\mathcal{P}^1$. Thus, since every rectifiable curve in a metric space has finite $\mathcal{P}^1$ measure, we obtain \begin{equation}\mu_\rect\leq \mu\res\{x\in X:\lD1(\mu,x)>0\}\quad\text{and}\quad \mu\res\{x\in X:\lD1(\mu,x)=0\}\leq \mu_\pu\end{equation} for any locally finite measure $\mu$ on $X$. For an in depth discussion, see \S\S\ref{ss:ident}--\ref{ss:rect-curves}, especially Corollary \ref{c-packing} and Remark \ref{r:lower-density}.

The \emph{density-normalized Jones function} $J^*(\mu,\cdot):G\rightarrow[0,\infty]$ connects the local geometry of a locally finite Borel measure $\mu$ on a Carnot group $G$ with the asymptotic geometry of rectifiable curves in $G$. When $G$ has step $s$, the function is defined by the rule \begin{equation}\label{Jstar} J^*(\mu,x):=\sum_{\substack{Q\in\Delta\\ \side Q\leq 1}} \beta^*(\mu,Q)^{2s}\diam Q\,\frac{\chi_Q(x)}{\mu(Q)}\quad\text{for all }x\in G,\end{equation} where $\Delta$ is a fixed system of ``dyadic cubes'' for $G$ (see \S\ref{ss:dyadic}) and $\beta^*(\mu,Q)$ is a certain \emph{anisotropic} measurement of the deviation of $\mu$ in a neighborhood of $Q$ from being a measure supported on a horizontal line in $G$. The definition of $\beta^*(\mu,Q)$ is based on the \emph{stratified $\beta$ numbers} of \cite{Li-TSP}. Roughly speaking, $J^*(\mu,x)$ is finite at some $x$ in the support of $\mu$ whenever the local dimension of $\mu$ at $x$ is less than 1 or $\mu$ has a measure-theoretic weak tangent at $x$. For a discussion of the underlying etymology and history of similar Jones-type geometric square functions in $\RR^n$, see \cite{BS1,BS-proc,badger-schul}. We postpone the precise definition of $\beta^*(\mu,Q)$ to \S\ref{ss:newbetas}. For now, let us simply remark that horizontal lines are the tangents to rectifiable curves in Carnot groups (see \cite[Theorem 2]{pansu}) and the definition of $\beta^*(\mu,Q)$ involves the step of the group. For example, when $G=\RR^n$ is a Euclidean space, the step $s=1$ and the horizontal lines are precisely the 1-dimensional affine subspaces of $\RR^n$. When $G$ is a Heisenberg group, the step $s=2$ and there is a 2-dimensional space of horizontal lines passing through each point in $G$ (see Figure \ref{fig:heisenberg}).

\begin{figure}\begin{center}\includegraphics[width=.8\textwidth]{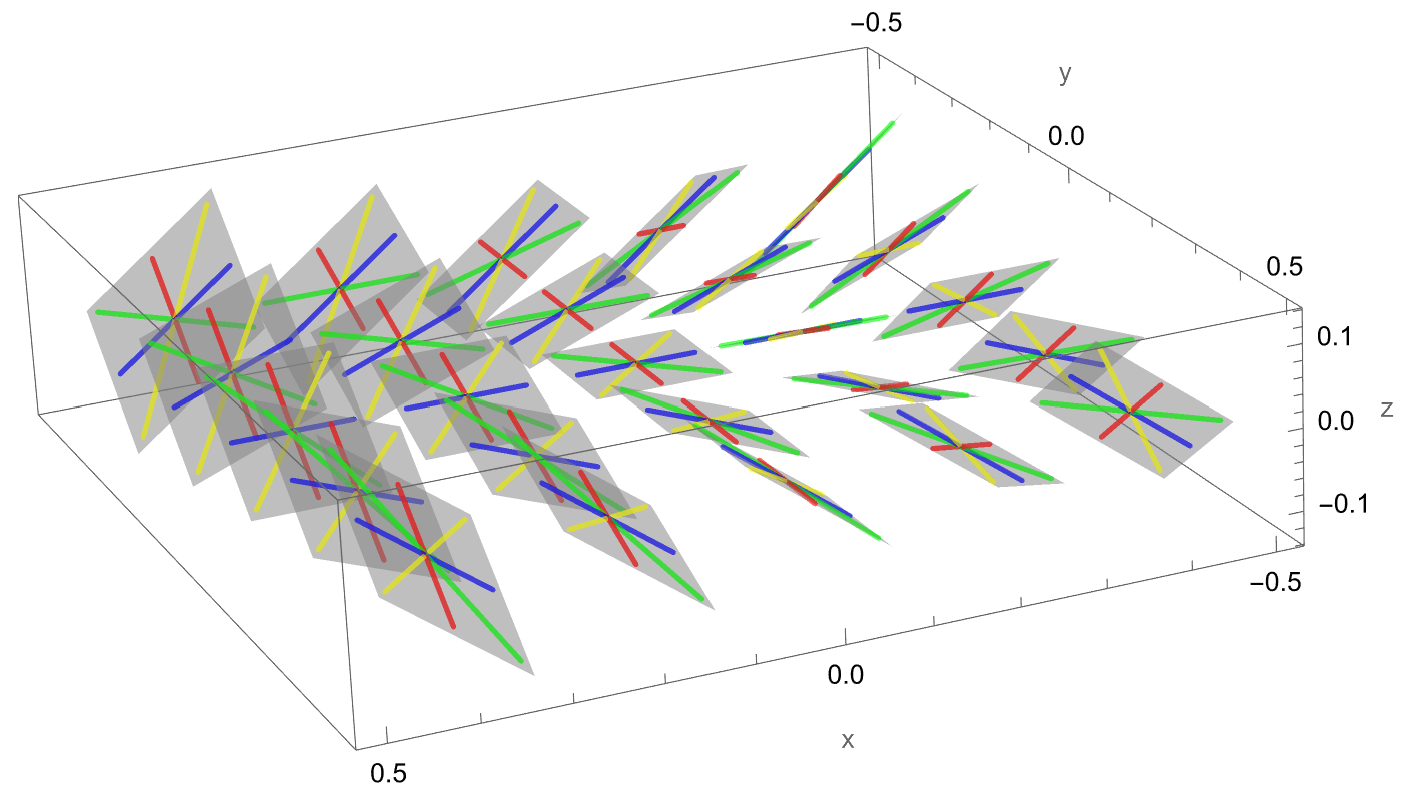}\end{center}\caption{The simplest example of a nonabelian Carnot group is the first Heisenberg group $H^1$, which is topologically equivalent to $\RR^3$ but is equipped with a metric so that $H^1$ has Hausdorff dimension 4. The step of $H^1$ is 2. In the illustration, we show 4 horizontal line segments at 25 points located in the $xy$-plane inside of $H^1$. }\label{fig:heisenberg}\end{figure}

\begin{remark}\label{r:metric} On any metric space $X$, the collections of rectifiable curves and 1-rectifiable measures on $X$ are invariant under bi-Lipschitz changes of metric on $X$. (Of course, the length of any given curve depends on the choice of metric.) In Theorem \ref{t-main}, there is partial flexibility in the choice of metric used to define the two identifying functions. The lower 1-density $\lD1(\mu,\cdot)$ may be defined with respect to any metric on $G$ that is bi-Lipschitz equivalent to a metric associated to a Hebisch-Sikora norm (e.g.~the Carnot-Carath\'eodory metric). However, the definition of the Jones function $J^*(\mu,\cdot)$ (in particular, that of $\beta^*(\mu,Q)$) is more sensitive and presently seems to require the use of metrics that are associated to Hebisch-Sikora norms on $G$ in order to use Lemma \ref{l:taylor} and Proposition \ref{p:proj-prop} in the proof of Proposition \ref{p:goal}.\end{remark}

Underpinning the main theorem is a characterization of subsets of rectifiable curves
with estimates on the length of the shortest curve containing a given set, usually called
the \emph{Analyst’s Traveling Salesman Theorem}. First established in $\RR^n$ by Jones \cite{Jones-TSP} when $n=2$ and by Okikiolu \cite{Ok-TST} when $n\geq 3$, the ATST was recently extended to arbitrary Carnot groups by the second author \cite{Li-TSP} (for earlier necessary or sufficient conditions, see \cite{CLZ-TSP, FFP, Juillet, Li-Schul1,Li-Schul2}). A key insight in \cite{Li-TSP} is that, to obtain a full characterization of subsets of rectifiable curves with effective estimates on length, the local deviation of the set from a horizontal line should incorporate distance in each layer of the Carnot group. Let us now state the theorem. Following \cite{Li-TSP}, for any nonempty set $E\subset G$ and ball $B(x,r)$, define the \emph{stratified $\beta$ number} for $E\cap B(x,r)$ by setting \begin{equation}\label{sean-beta} \beta_E(x,r)^{2s}:=\inf_L \sum_{i=1}^s \sup_{z\in E\cap B(x,r)} \left(\frac{d_i(\pi_i(z),\pi_i(L))}{r}\right)^{2i},\end{equation} where $L$ ranges over all horizontal lines in $G$, $\pi_i:G\rightarrow G_i$ is the projection of $G$ onto a layer $G_i=G/G^{(i+1)}$ of $G$, and $d_i(x,A)=\inf\{d_i(x,a):a\in A\}$ for some choice of metric $d_i$ associated to a Hebisch-Sikora norm on $G_i$ (see \S\ref{ss:Carnot}). When $G=\RR^n$, the step $s=1$, horizontal lines are 1-dimensional affine subspaces, $\pi_1$ is the identity, and the stratified $\beta$ number reduces to the usual Jones $\beta$ number.

\begin{theorem}[ATST in Carnot groups {\cite[Theorem 1.5]{Li-TSP}}] \label{t:sean} Let $G$ be a Carnot group with step $s$ and Hausdorff dimension $q$. For any set $E\subset G$, define the quantity \begin{equation}\beta(E):=\int_0^\infty \int_G \beta_E(x,r)^{2s} \diam B(x,r)\,\frac{dx}{r^q}\frac{dr}{r}.\end{equation} Then $E$ lies in a rectifiable curve if and only if $E$ is bounded and $\beta(E)<\infty$. Moreover, there exists $C>1$ depending only on $G$ and its underlying metrics $d_i$ so that: \begin{enumerate}
\item if $\Gamma$ is any curve containing $E$, then $\diam E+\beta(E)\leq C\Haus^1(\Gamma)$; and,
\item if $\diam E+\beta(E)<\infty$, then there exists a curve $\Gamma$ containing $E$ for which $\Haus^1(\Gamma)\leq C(\diam E +\beta(E))$.
\end{enumerate}
\end{theorem}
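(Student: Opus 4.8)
The plan is to prove the two quantitative estimates (1) and (2) separately, following the template of Jones' traveling-salesman construction together with its stratified refinement, so that the anisotropic scaling of the layers $G_i$ is built into each step. It is convenient to discretize first: writing $q=\sum_i i\dim G_i$ for the homogeneous dimension and using $\diam B(x,r)\approx r$ and $|B(x,r)|\approx r^q$, the Carleson-type integral $\beta(E)$ is comparable, up to a bounded factor depending only on $G$ and the $d_i$, to $\sum_{Q\in\Delta,\ \side Q\leq\diam E}\beta_E(\lambda B_Q)^{2s}\diam Q$, where $\Delta$ is the dyadic system of \S\ref{ss:dyadic}, $B_Q$ is a ball concentric with $Q$ of radius $\approx\side Q$, $\lambda>1$ is a fixed dilation constant, and $\beta_E(\lambda B_Q)$ denotes the quantity \eqref{sean-beta} evaluated on the dilate $\lambda B_Q$.

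For the necessity estimate (1), fix a curve $\Gamma\supset E$, which we may assume rectifiable (otherwise there is nothing to prove), and parametrize it by arclength; being a rectifiable curve in a Carnot group, $\Gamma$ is horizontal, with horizontal lines as its tangents (cf.\ \cite{pansu}). The core is a quantitative flatness lemma: if the part of $\Gamma$ meeting a ball $B(x,r)$ stays $\beta r$-far, in the stratified sense of \eqref{sean-beta}, from every horizontal line, then near $B(x,r)$ the curve uses extra length at least $c\,\beta^{2s}r$ beyond the length of a flat (horizontal-line) connection of the same endpoints. Here the layer exponents $2i$ are forced by homogeneity: to make its $i$-th layer projection attain $d_i$-distance $\delta_i$ from the approximating line, a horizontal curve confined to $B(x,r)$ must produce a genuinely degree-$i$ displacement of size $\approx\delta_i^{\,i}$ by bracketing its horizontal velocity against the line direction, and a Pythagorean ($L^2$-orthogonality) estimate shows this costs extra horizontal length $\gtrsim(\delta_i/r)^{2i}r$; summing the layer contributions reproduces $\beta_E(x,r)^{2s}r$. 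One then sums the local gain over the cubes $Q$ with $\beta_E(\lambda B_Q)$ large and controls overlaps by a Jones--Okikiolu--Schul bounded-multiplicity / martingale argument (each subarc of $\Gamma$ of a given length is charged by only boundedly many comparable cubes), obtaining $\beta(E)\leq C\Haus^1(\Gamma)$; combined with $\diam E\leq\diam\Gamma\leq\Haus^1(\Gamma)$ this gives (1).

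For the sufficiency estimate (2), assume $\diam E+\beta(E)<\infty$ and run the traveling-salesman construction adapted to $G$. For each scale $2^{-k}\lesssim\diam E$ choose a maximal $2^{-k}$-separated net $X_k\subset E$; build a connected network $\tau_k$ spanning $X_k$ by joining pairs of net points at distance $\lesssim2^{-k}$ through horizontal curves of comparable length (possible since $G$ is quasiconvex in the Carnot-Carath\'eodory metric) and by joining each point of $X_{k+1}$ to a nearby point of $X_k$, then extract $\Gamma$ as a limit of the $\tau_k$. The length bound takes the form $\Haus^1(\Gamma)\lesssim\diam E+\sum_k\sum_{Q\in\Delta_k}(\text{local excess of }\tau_k\cup\tau_{k+1}\text{ near }Q)$, and the key estimate is that this local excess near $Q$ is $\lesssim\bigl(\beta_E(\lambda B_Q)+\varepsilon_0\bigr)^{2s}\diam Q$ on the cubes with $\beta_E(\lambda B_Q)\geq\varepsilon_0$ and telescopes along a fixed strand on the remaining, ``flat'' cubes. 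Indeed, when $\beta_E(\lambda B_Q)$ is small the net points near $Q$ lie, layer by layer, within $\beta_E(\lambda B_Q)^{s/i}\side Q$ of a single horizontal line $L_Q$, so the pieces of $\tau_k$ and $\tau_{k+1}$ near $Q$ may be rerouted to run essentially along $L_Q$; by the same homogeneity computation as above, repairing the degree-$i$ discrepancies $\approx\beta_E(\lambda B_Q)^{s}(\side Q)^i$ costs extra length $\lesssim\beta_E(\lambda B_Q)^{2s}\side Q$. Summing the big-$\beta$ cubes contributes $\lesssim\beta(E)$ by the discretization above, while the flat cubes along each strand telescope to $\lesssim\diam E$.

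The main obstacle is this rerouting/connecting step in the sufficiency direction. In a non-abelian Carnot group there is no horizontal ``straight segment'' between two points, so one cannot simply connect consecutive net points by subarcs of the line $L_Q$ as in $\RR^n$; one must instead manufacture short horizontal curves that stay uniformly close to $L_Q$ in every layer, with the correct anisotropic error $\beta^{s/i}\side Q$ in layer $i$, and whose lengths add up correctly across scales. Arranging the scale-$k$ and scale-$(k+1)$ approximations to be compatible, so that the excesses genuinely telescope against $\diam E$ on the flat part and against $\beta(E)$ on the non-flat part, is the technically delicate heart of the proof. This is precisely why definition \eqref{sean-beta} must carry the layer-dependent exponents $2i$: that weighting is the unique one for which both the orthogonality estimate underlying (1) and the connecting estimate underlying (2) close up.
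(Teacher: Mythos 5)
The paper does not prove Theorem~\ref{t:sean}; it is quoted verbatim from \cite[Theorem 1.5]{Li-TSP}, so there is no ``paper's own proof'' to compare against. The necessity half is invoked but not reproved in Lemma~\ref{p:necc}, and the sufficiency half is only mirrored by a different result: Proposition~\ref{p:goal} in \S\ref{s:construction}, a variant for floating point clouds needed for Theorem~\ref{t-main}.

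Measured against that variant, your sufficiency outline misidentifies the technical heart. You propose manufacturing short horizontal connecting curves that stay anisotropically close to $L_Q$ in every layer, with error $\beta^{s/i}\side Q$ in layer $i$. The construction in \S\ref{s:construction} (following \cite{Li-TSP}) does not do this: it keeps the graph $\Gamma_k$ abstract, does all its bookkeeping via the \emph{projected length} $\ell(\cdot)$ in the first layer $\mathbb{R}^{n_1}$ (see \eqref{e:projected-length}), and only at the end realizes each abstract edge as a geodesic in $G$. The higher layers enter not through the connecting curves but through the flatness estimates (Lemma~\ref{l:taylor} and Proposition~\ref{p:proj-prop}), which show that near a horizontal line with stratified error $\varepsilon$ the projection $\pi$ is $(1+C\varepsilon^{2s})$-bi-Lipschitz at comparable scales; that is what lets the first-layer bookkeeping lift to $G$. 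There is no requirement that the realized edges track $L_Q$ in the higher layers at all, and trying to build curves that do so would be both harder and unnecessary. On the necessity side, your ``Pythagorean / $L^2$-orthogonality'' heuristic for why layer $i$ carries the exponent $2i$ is a reasonable moral for \eqref{sean-beta}, but it is not a proof: deriving ``extra length $\gtrsim\beta_E(x,r)^{2s}r$ per scale'' from stratified deviation is precisely the delicate content of \cite{Li-TSP} (building on \cite{Li-Schul1,Li-Schul2}), and your sketch leaves that load-bearing estimate, along with the rerouting estimate in sufficiency, unestablished. The discretization to a Carleson sum and the Jones--Okikiolu--Schul overlap bookkeeping are sound ingredients, but as written this is a plausible high-level outline rather than a proof.
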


To promote Theorem \ref{t:sean} to a characterization of 1-rectifiable measures on $G$, we need to first extend the algorithm for constructing a rectifiable curve through $E$ when $\beta(E)<\infty$ to an algorithm for drawing a curve through the Hausdorff limit of a sequence $(X_k)$ of ``point clouds''.
This algorithm has its origins in \cite{Jones-TSP} when $G=\RR^n$ and \cite{FFP} when $G$ is the (first) Heisenberg group.
In the original setting of the Analyst's Traveling Salesman Theorem, one can simply take $(X_k)$ to be a \emph{nested} sequence of $2^{-k}$-nets for $E$. However, in the setting of the main theorem, when trying to build a rectifiable curve charged by $\mu$, we only know how to locate families $X_k$ of $2^{-k}$-separated points that are nearby, but not necessarily on a set with positive measure (see Lemma \ref{l-centerofmass}) and we must allow $X_k$ to ``float'' as $k\rightarrow\infty$. This issue was resolved when $G=\RR^n$ by the first author and Schul \cite{badger-schul} by introducing ``extensions'' to ``bridges'' and reproving Jones' traveling salesman algorithm from first principles. In \S\ref{s:construction}, we integrate ideas from \cite{badger-schul} and \cite{Li-TSP} to establish a flexible traveling salesman algorithm in arbitrary Carnot groups (see Proposition \ref{p:goal}). There are additional technical challenges along the way. To name just one, the numbers $\beta^*(\mu,Q)$ appearing in Theorem \ref{t-main} are designed so that we can extract enough data points lying nearby a set with positive measure to which we can apply the traveling salesman algorithm. In \cite{badger-schul}, the extraction process involves a nice idea of Lerman \cite{Lerman}: convexity of the distance of a point to a Euclidean line $L$ and Jensen's inequality control the distance of the $\mu$-center-of-mass $z_Q$ in a bounded window $Q$ to the line $L$. Unfortunately, we cannot use this observation in higher step Carnot groups. To overcome this, in \S\ref{s-suff}, we must reorder steps in the  proof from \cite[\S5]{badger-schul} and employ an indirect argument using the Chebyshev inequality.

Interesting examples of 1-rectifiable and purely 1-unrectifiable Borel measures that are singular with respect to $\Haus^1$ and have compact support can be found in \cite{upper-conical}, \cite{MO-curves}, and \cite{MM1988}. Garnett, Killip, and Schul \cite{GKS} produced a family of 1-rectifiable measures $\mu$ on $\RR^n$ that are doubling in the strong sense that \begin{equation}\label{doubling-mu} 0<\mu(B(x,2r))\leq C\mu(B(x,r))<\infty\quad\text{for all $x\in\RR^n$ and all $r>0$}.\end{equation} Not only are their measures singular with respect to the Hausdorff measure $\Haus^1$, but they also satisfy the stronger condition $\lD1(\mu,x)=\infty$ $\mu$-a.e.~(see \cite[Example 1.15]{BS1}). In arbitrary metric spaces, Azzam and Morgoglou \cite{AM-metric} characterize 1-rectifiable doubling measures with \emph{$\sigma$-compact connected supports} in terms of $\lD1(\mu,x)$ alone, but leave open the question of existence of such measures. To close the paper, we extend the Garnett-Killip-Schul existence theorem to a broad class of metric spaces, including Carnot groups and Riemannian manifolds. While the construction of the measures in \cite{GKS} leaned on the product structure of Euclidean space, we show that this is not essential for the proof.

\begin{theorem}\label{t-gks} If $X$ is a complete, doubling, locally quasiconvex metric space, then there exists a doubling measure $\nu$ on $X$ with $\spt\nu = X$ such that $\nu$ is 1-rectifiable.\end{theorem}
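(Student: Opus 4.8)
\textbf{Proof proposal for Theorem~\ref{t-gks}.}
The plan is to carry out the Garnett--Killip--Schul construction intrinsically in $X$, replacing the dyadic lattice of $\RR^n$ by a system of dyadic cubes and replacing Euclidean line segments by short rectifiable arcs furnished by local quasiconvexity. Since $X$ is complete and doubling it is proper, so it admits a system of dyadic cubes $\Delta=\bigcup_k\Delta_k$ (Christ--David, in the form of Hyt\"onen--Kairema) with some contraction ratio $\delta\in(0,1)$: nested partitions of $X$ with $\diam Q\lesssim\delta^k$, an inscribed ball $B(z_Q,c\delta^k)\subset Q$, at most $M=M(X,\delta)$ children per cube, and thin boundaries. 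A Lebesgue-number argument over a compact exhaustion of $X$ upgrades local quasiconvexity to the statement that, in any fixed bounded region and below a scale depending on that region, $X$ is uniformly $C_*$-quasiconvex; hence the centers of the at most $M$ children of a cube $Q\in\Delta_k$ can be strung together by a bounded traveling-salesman tour of rectifiable arcs, giving each such $Q$ a ``spanning arc'' $\sigma_Q$ that passes through $z_Q$ and all of the child centers, lies in a fixed dilate of $Q$, and has $\Length\sigma_Q\le C_0\delta^k$.

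Following \cite{GKS}, the measure is produced as a countable sum $\nu=\sum_i c_i\nu_i$ in which $\{\Gamma_i\}$ is a countable, \emph{dense} family of rectifiable curves and each $\nu_i$ is a probability measure carried by $\Gamma_i$ and singular with respect to $\Haus^1$. Each $\Gamma_i$ is a finite-length curve built by concatenating spanning arcs $\sigma_Q$ over a subtree of $\Delta$ that is kept sparse enough — retaining few enough of the children of each retained cube, which is possible once $\delta$ is fixed small relative to the doubling constant — that the length sum $C_0\sum_k(\#\text{retained cubes at scale }k)\,\delta^k$ converges; the index $i$ runs over one ``instance'' of the construction seeded at each cube of $\Delta$. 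The family $\{\Gamma_i\}$, the weights $c_i$, and the measures $\nu_i$ are chosen so that at every scale and location the curves of the family carry positive, scale-balanced mass: for every ball $B\subset X$, $0<\nu(B)<\infty$ and $\nu(2B)\le C\,\nu(B)$. Granting this, $\spt\nu=X$ and $\nu$ is doubling on $X$; and since $\nu\bigl(X\setminus\bigcup_i\Gamma_i\bigr)\le\sum_i c_i\,\nu_i\bigl(X\setminus\Gamma_i\bigr)=0$, the measure $\nu$ is $1$-rectifiable.

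The heart of the matter — and the main obstacle — is producing a family whose superposition is genuinely doubling on all of $X$. The two demands pull against each other: full support and the doubling inequality on a space of dimension larger than $1$ force the curves to be laid densely and at every scale with masses that remain comparable across each cube boundary, while rectifiability of the individual $\Gamma_i$ forces the subtree of cubes each curve follows to be sparse, which tends to leave ``gaps'' that a naive mass assignment fills in an unbalanced way. The resolution, which is the technical core of \cite{GKS}, is a careful multiscale allocation in which the mass not retained on a given curve is redistributed onto curves of the family pinned to nearby cubes, in proportions tuned so that the accumulated mass ratios across every pair of adjacent scales stay bounded; carrying this out is where the argument lives, and it is what \cite{GKS} accomplished using the product structure of $\RR^n$ (to organize the curves via a space-filling-type bookkeeping). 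In the metric setting that bookkeeping is replaced by the dyadic cube combinatorics, and the one genuinely new ingredient is the substitution of $C_*$-quasiconvex arcs for line segments: the bound $\Length\sigma_Q\le C_0\delta^k$, which is exactly what local quasiconvexity provides at small scales, is precisely what keeps each $\Gamma_i$ of finite length and makes the length sums telescope. Installing the dyadic cubes and invoking the standard implication ``dyadic doubling together with thin cube boundaries $\Rightarrow$ metric doubling'' are then routine.
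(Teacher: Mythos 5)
Your proposal does not match the paper's (or GKS's) actual architecture, and the mismatch conceals a genuine gap. The paper does not build $\nu$ as a sum $\sum_i c_i\nu_i$ of measures each carried by a single curve; it starts from a background doubling measure $\mu$ on $X$ (whose existence, for a complete doubling space, is a separate known theorem — you never invoke this, though it is the indispensable starting point) and then multiplies $\mu$ by an iterated cascade of weights $f_Q = a_Q\chi_{R_Q} + \delta\chi_{Q\setminus R_Q}$ over a dyadic system, where $R_Q$ is a designated deep descendant of $Q$ kept quantitatively interior to $Q$. With this architecture, doubling of the limit $\nu$ is a \emph{local} computation: two points in nearby cubes of the same generation can disagree in at most two ancestral weight factors (the common-ancestor generation and the last generation), so the density ratio $\frac{d\nu_n}{d\mu}$ at nearby points stays in $[\,(\delta/C_2)^2,(C_2/\delta)^2\,]$ independently of $n$, and doubling of $\nu$ is inherited directly from doubling of $\mu$. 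Rectifiability is then proved \emph{afterward}, by an Azuma-martingale estimate showing that most of $\nu(Q)$ concentrates on descendants $S$ that chose the distinguished child $R_T$ in all but a $2\delta$-fraction of generations, and by connecting the centers of these few high-density cubes with quasiconvex arcs to produce one rectifiable $\Gamma(Q)$ per cube with $\nu(\Gamma(Q)\cap Q)\geq\varepsilon\nu(Q)$, then exhausting.

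In your version, the two hard properties trade places: rectifiability is trivial by fiat ($\nu$ is a sum over curves), but doubling becomes the whole theorem, and you do not supply a mechanism for it. The sentence ``the family $\{\Gamma_i\}$, the weights $c_i$, and the measures $\nu_i$ are chosen so that ... $\nu(2B)\le C\nu(B)$'' is exactly the statement to be proved, not a step, and the next paragraph concedes as much (``carrying this out is where the argument lives'') while attributing the resolution to a bookkeeping you attribute to the product structure of $\RR^n$ and then declare replaceable by ``dyadic cube combinatorics.'' But the paper's point is precisely that the product structure is \emph{not} what makes doubling work — what makes it work is modulating a pre-existing doubling measure by bounded-ratio cascade weights — and nothing in your scheme replaces that mechanism. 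Your one concrete new ingredient (spanning arcs of length $\lesssim\delta^k$ via local quasiconvexity) does appear in the paper, but only in the curve-building stage after $\nu$ is already defined and already doubling; importing it earlier does not help establish doubling. As written the proposal reduces the theorem to an unproved assertion about mass allocation.
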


It is still an open problem to characterize subsets of rectifiable curves in an arbitrary Banach or metric space. See \cite{Badger-McCurdy-1,Badger-McCurdy-2, DS-metric, ENV-Banach, Hah05} for some partial results and discussion of the main difficulties. On the other hand, Schul \cite{Schul-Hilbert} successfully reformulated the Analyst's Traveling Salesman Theorem so that it holds in an arbitrary (finite or infinite-dimensional) Hilbert space with dimension-independent constants. Gaps in the proof of the theorem in \cite{Schul-Hilbert} were recently discovered, but these have now been filled-in; see  \cite{Badger-McCurdy-1}, \cite{Badger-McCurdy-2}, and \cite{Krandel-Hilbert}. Naples \cite{Naples-TST} has implemented a version of Theorem \ref{t-main} for pointwise doubling measures on infinite-dimensional Hilbert spaces. Progress on traveling salesman type theorems for various higher-dimensional objects has been made in  \cite{AS-TST, BNV, Balogh-Zust, Hyde-TST,Villa-TST}.

The rest of the paper is arranged as follows. In \S\ref{s-prelim}, we collect background results in geometric measure theory and metric geometry, including definitions of Hausdorff and packing measures, rectifiable curves, Carnot groups, and metric cubes. A version of the Analyst's Traveling Salesman Theorem for floating point clouds in a Carnot group is the topic of \S\ref{s:construction}. In \S\ref{ss:newbetas}, we define the anisotropic, stratified beta numbers $\beta^*(\mu,Q)$. In \S\ref{s-suff}, we show how positivity of the lower density $\lD1(\mu,x)$ and finiteness of the Jones function $J^*(\mu,x)$ for $x\in A$ yield rectifiability of $\mu\res A$. In \S\ref{s-necc}, we show that $J^*$ is locally integrable on any rectifiable curve, which yields necessary conditions for 1-rectifiability. The proof of Theorem \ref{t-main} is recorded in \S\ref{s-proofs}, using results from \S\S \ref{s-suff} and \ref{s-necc}. The proof of Theorem \ref{t-gks}, in \S\ref{s-GKS}, may be read independently of \S\S \ref{s:construction}--\ref{s-proofs}.

\subsection*{Acknowledgements} The authors are grateful to an anonymous referee for bringing an error in the statement and proof of \cite[Lemma 5.6]{badger-schul} to their attention (see Remark \ref{r-localize}). To correct the mistake, one should replace the set $A:=\{x\in\Top(\mathcal{T}):S_{\mathcal{T},b}(\mu,x)\leq N\}$ with the set $A:=\{x\in\leaves(\mathcal{T}):S_{\mathcal{T},b}(\mu,x)\leq N\}$. We implement this change in Lemma \ref{l-localize} and illustrate its correct use in the proof of Theorem \ref{t-sec5main}. The corrected lemma should also be applied in the proofs of \cite[Theorems 5.1 and 7.4]{badger-schul} and \cite[Theorem 2.5]{badger-naples}.

\section{Preliminaries}
\label{s-prelim}

\subsection{Implicit constants} When working on a fixed metric space $X$ (on a Carnot group $G$ in \S\S\ref{s:construction}--\ref{s-proofs} and on a complete, doubling, locally convex metric space $X$ in \S\ref{s-GKS}), we may write $a \lesssim b$ to indicate that $a \leq Cb$ for some positive and finite constant $C$ that may depend on $X$, including its metric and dimensions, but (without further qualification) is otherwise independent of a choices of particular sets or measures on $X$. We write $a\sim b$ if $a\lesssim b$ and $b\lesssim a$. We may specify the dependence of implicit constants on additional parameters $c,d,\dots$ by writing $a\lesssim_{c,d,\dots} b$ and $a\sim_{c,d,\dots} b$.

\subsection{Measures and the identification problem}\label{ss:ident} To set our conventions, we recall that a \emph{measurable space} $(X,\mathcal{M})$ is a nonempty set $X$ paired with a $\sigma$-algebra $\mathcal{M}$ on $X$, i.e.~a nonempty collection of subsets of $X$ that is closed under complements and countable unions; a \emph{measure} on $(X,\mathcal{M})$ is a function $\mu:\mathcal{M}\rightarrow [0,\infty]$ such that $\mu(\emptyset)=0$ and $\mu(\bigcup_1^\infty A_i)=\sum_1^\infty \mu(A_i)$ whenever $A_1,A_2,\dots\in\mathcal{M}$ are pairwise disjoint. In particular, a \emph{Borel measure} $\mu$ on a metric space $X$ is a measure defined on some measurable space $(X,\mathcal{M})$ such that $\mathcal{M}$ contains every Borel set in $X$. Given a measure $\mu$ on $(X,\mathcal{M})$ and a set $E\in\mathcal{M}$, the \emph{restriction} of $\mu$ to $E$ is the measure $\mu\res E$ defined by the rule $\mu\res E(A)=\mu(A\cap E)$ for all $A\in\mathcal{M}$.

Given a measure $\mu$ on $(X,\mathcal{M})$ and a non-empty family $\mathcal{N}$ of sets in $\mathcal{M}$, we say that $\mu$ is \emph{carried by} $\mathcal{N}$ if $\mu\left(X\setminus\bigcup_1^\infty N_i\right)=0\quad\text{for some sequence $N_1,N_2,\dots\in\mathcal{N}$}.$ At the other extreme, we say that $\mu$ is \emph{singular to} $\mathcal{N}$ if $\mu(N)=0$ for every $N\in\mathcal{N}$. For example, when $\mathcal{N}$ is the set of rectifiable curves in a metric space $X$, we recover the definition of 1-rectifiable and purely 1-unrectifiable measures recorded in the introduction. We have the following convenient form of the Lebesgue decomposition theorem; a detailed proof is written in the appendix of \cite{BV}.

\begin{lemma} \label{l:decomp} Let $(X,\mathcal{M})$ be a measurable space and let $\mathcal{N}$ be a nonempty collection of sets in $\mathcal{M}$. For every $\sigma$-finite measure $\mu$ on $(X,\mathcal{M})$, there is a unique decomposition $\mu=\mu_\mathcal{N}+\mu_\mathcal{N}^\perp$ as a sum of measures on $(X,\mathcal{M})$ such that $\mu_\mathcal{N}$ is carried by $\mathcal{N}$ and $\mu_\mathcal{N}^\perp$ is singular to $\mathcal{N}$.
\end{lemma}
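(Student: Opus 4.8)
The plan is to construct $\mu_{\mathcal{N}}$ by extracting, in a $\sigma$-finite fashion, an "optimal" countable subfamily of $\mathcal{N}$ along which $\mu$ is maximally concentrated, and then to let $\mu_{\mathcal{N}}^\perp$ be the residual mass. First I would reduce to the finite case: since $\mu$ is $\sigma$-finite, write $X=\bigcup_j X_j$ with $\mu(X_j)<\infty$ and the $X_j$ disjoint; if we prove the decomposition for each finite measure $\mu\res X_j$ (relative to the same family $\mathcal{N}$, or rather $\{N\cap X_j:N\in\mathcal{N}\}$) and sum, we get a decomposition for $\mu$, and uniqueness for the pieces will give uniqueness for the sum. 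So assume $\mu$ is finite.

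Now the core construction. Let $\mathcal{C}$ be the collection of all sets of the form $S=\bigcup_{i=1}^\infty N_i$ with $N_i\in\mathcal{N}$; note $\mathcal{C}\subset\mathcal{M}$ is closed under countable unions. Put $m=\sup\{\mu(S):S\in\mathcal{C}\}\le\mu(X)<\infty$. Choose $S_k\in\mathcal{C}$ with $\mu(S_k)\uparrow m$ and set $S_\infty=\bigcup_k S_k\in\mathcal{C}$; then $\mu(S_\infty)=m$ and $S_\infty$ is itself a countable union of members of $\mathcal{N}$. Define
\begin{equation*}
\mu_{\mathcal{N}}:=\mu\res S_\infty,\qquad \mu_{\mathcal{N}}^\perp:=\mu\res(X\setminus S_\infty).
\end{equation*}
By construction $\mu=\mu_{\mathcal{N}}+\mu_{\mathcal{N}}^\perp$, and $\mu_{\mathcal{N}}$ is carried by $\mathcal{N}$ since $\mu_{\mathcal{N}}(X\setminus S_\infty)=0$ and $S_\infty=\bigcup_i N_i$. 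It remains to check that $\mu_{\mathcal{N}}^\perp$ is singular to $\mathcal{N}$: given $N\in\mathcal{N}$, the set $S_\infty\cup N$ again lies in $\mathcal{C}$, so $\mu(S_\infty\cup N)\le m=\mu(S_\infty)$, forcing $\mu(N\setminus S_\infty)=0$, i.e.\ $\mu_{\mathcal{N}}^\perp(N)=0$. This is the maximality argument that makes the whole thing work, and it is the one step that genuinely uses finiteness of $\mu$ — in the $\sigma$-finite case one runs it layer by layer on the $X_j$, which is why the reduction above comes first.

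For uniqueness, suppose $\mu=\rho_1+\rho_2=\rho_1'+\rho_2'$ with $\rho_1,\rho_1'$ carried by $\mathcal{N}$ and $\rho_2,\rho_2'$ singular to $\mathcal{N}$. Let $T=\bigcup_i N_i$ and $T'=\bigcup_i N_i'$ be carrier sets for $\rho_1$ and $\rho_1'$ respectively, with $\rho_1(X\setminus T)=\rho_1'(X\setminus T')=0$. Since $\rho_2$ is singular to $\mathcal{N}$ we get $\rho_2(N_i')=0$ for all $i$, hence $\rho_2(T')=0$, and likewise $\rho_2'(T)=0$. Then for any $A\in\mathcal{M}$, writing $A=(A\cap(T\cup T'))\sqcup(A\setminus(T\cup T'))$ and comparing the two decompositions on each piece — on $T\cup T'$ the singular parts vanish, off $T\cup T'$ the carried parts vanish — yields $\rho_1(A)=\rho_1'(A)$ and $\rho_2(A)=\rho_2'(A)$. (One can instead cite the standard general fact that a "carried/singular" splitting with respect to any fixed family is unique; the argument is the same mutual-absolute-continuity bookkeeping as in the classical Lebesgue decomposition.)

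I do not expect a serious obstacle here: the only subtlety is making sure $\mathcal{C}$ is closed under countable unions so that the supremum $m$ is attained by a single set $S_\infty\in\mathcal{C}$, and handling $\sigma$-finiteness by the preliminary disjoint decomposition. Both are routine, and as the excerpt notes, a fully detailed proof already appears in the appendix of \cite{BV}; the sketch above is exactly that argument.
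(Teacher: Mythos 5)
Your argument is correct, and since the paper does not reproduce a proof but instead points to the appendix of \cite{BV}, there is nothing to contrast against; the maximality construction you sketch (take the supremum of $\mu$ over countable unions of members of $\mathcal{N}$, show the supremum is attained by a single carrier set, restrict to it and its complement) is exactly the standard argument used in \cite{BV} and elsewhere. One small note on the uniqueness step: the phrase ``on $T\cup T'$ the singular parts vanish'' needs the extra observation that $\mu(T\bigtriangleup T')=0$ (which follows by testing the two decompositions against $T\setminus T'$ and $T'\setminus T$), after which $\rho_2(T)\le\rho_2(T')+\mu(T\setminus T')=0$ and likewise for $\rho_2'$; as written it reads as if $\rho_2$ trivially vanishes on $T$, which is not immediate from singularity alone.
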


\begin{remark} The proof of Lemma \ref{l:decomp} is abstract and does not provide any concrete method to produce sets $N_1,N_2,\dots\in\mathcal{N}$ such that $\mu_\mathcal{N}(X\setminus\bigcup_1^\infty N_i)=0$. The \emph{identification problem} (see \cite{ident}) is to find pointwise defined properties $P(\mu,x)$ and $Q(\mu,x)$ such that $$\mu_\mathcal{N} = \mu\res\{x\in X:P(\mu,x)\text{ holds}\}\quad\text{and}\quad \mu_\mathcal{N}^\perp = \mu \res\{x\in X:Q(\mu,x)\text{ holds}\}$$ for every (locally) finite measure $\mu$ on $X$. An ideal solution should involve the geometry of the space $X$ and the sets in $\mathcal{N}$.
\end{remark}

On a metric space $X$, we let $U(x,r)$ and $B(x,r)$ denote the open and closed balls with center $x\in X$ and radius $r>0$, respectively. Let $E\subset X$ and let $\delta>0$. A \emph{$\delta$-cover} of $E$ is a finite or infinite sequence of sets $E_1,E_2,\dots\subset X$ such that $E\subset\bigcup_i E_i$ and $\diam E_i\leq \delta$ for all $i$, where $\diam A$ denotes the diameter of a set $A$. A \emph{$\delta$-packing} in $E$ is a finite or infinite sequence $B_1,B_2,\dots$ of pairwise disjoint closed balls centered in $E$ such that $2\rad B_i\leq \delta$ for all $i$, where $\rad B$ denotes the radius of a ball $B$. For any $E\subset X$, $s\geq 0$, and $\delta>0$, we define $$\Haus^s_\delta(E)= \inf\left\{\sum_i (\diam E_i)^s: E_1,E_2,\dots\text{ is a $\delta$-cover of $E$}\right\},$$ $$ \Haus^s(E)=\lim_{\delta\downarrow 0} \Haus^s_\delta(E)=\sup_{\delta>0} \Haus^s_\delta(E),$$ $$P^s_\delta(E)=\sup\left\{\sum_i(2\rad B_i)^s:B_1,B_2,\dots\text{ is a $\delta$-packing in $E$}\right\},$$ $$P^s(E)=\lim_{\delta\downarrow 0} P_\delta^s(E)=\inf_{\delta>0} P_\delta^s(E),$$ $$\mathcal{P}^s(E)=\inf\left\{\sum_i P^s(E_i):E\subset\bigcup_{i=1}^\infty E_i\right\}.$$ We call $\Haus^s$ the \emph{$s$-dimensional Hausdorff measure} and call $\Pack^s$ the \emph{$s$-dimensional packing measure}; both $\Haus^s$ and $\Pack^s$ are Borel regular metric outer measures on $X$, and in particular, $\Haus^s$ and $\Pack^s$ are measures when restricted to the $\sigma$-algebra of Borel sets. The auxiliary quantity $P^s$ is called the \emph{$s$-dimensional packing premeasure}. We caution the reader that the premeasure $P^s$ is generally not an outer measure---it is monotone, but is not countably subadditive. Note that we have adopted the ``radius'' definition of the packing measure instead of the ``diameter'' definition. The next estimate (valid on any metric space!) is a special case of \cite[Theorem 3.16]{Cutler-density-theorem}.

\begin{theorem}[see {Cutler \cite{Cutler-density-theorem}}] \label{t:cutler} Let $\mu$ be a finite Borel measure on a metric space $X$, let $E\subset X$ be Borel, and let $s>0$. If $a\leq \liminf_{r\downarrow 0} (2r)^{-s} \mu(B(x,r))\leq b$ for all $x\in E$, then $$a\Pack^s(E)\leq \mu(E) \leq 2^s b\Pack^s(E),$$ where we take the left hand side to be $0$ if $a=0$ or $\Pack^s(E)=0$ and take the right hand side to be $\infty$ if $b=\infty$ or $\Pack^s(E)=\infty$.
\end{theorem}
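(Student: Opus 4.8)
The plan is to prove the two inequalities $a\Pack^s(E)\leq\mu(E)$ and $\mu(E)\leq 2^s b\Pack^s(E)$ separately, each by a standard Vitali-type covering argument adapted to packings. Throughout, I would reduce to the case where the density hypothesis is strict on all of $E$ (replacing $a$ by $a-\varepsilon$ and $b$ by $b+\varepsilon$ and letting $\varepsilon\downarrow0$ at the end), and I would freely assume $0<a\leq b<\infty$ and $0<\Pack^s(E)<\infty$, since the degenerate cases are handled by the stated conventions.

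For the \emph{lower bound} $a\Pack^s(E)\leq\mu(E)$: fix $\delta>0$ and let $B_1,B_2,\dots$ be any $\delta$-packing in $E$, say $B_i=B(x_i,r_i)$ with $x_i\in E$ and $2r_i\leq\delta$. Since $\liminf_{r\downarrow0}(2r)^{-s}\mu(B(x_i,r))\geq a$, for each $i$ I can choose a radius $\rho_i\leq r_i$ (hence still at most $\delta/2$) with $\mu(B(x_i,\rho_i))\geq (a-\varepsilon)(2\rho_i)^s$ — but I actually want the packing radii themselves, so instead I argue: the quantity $P^s_\delta$ is a supremum over $\delta$-packings, and for the \emph{fixed} packing $\{B_i\}$ I need a lower density bound at the given radii. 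This forces the correct order of operations: one should not fix the packing first. Instead, given a Borel set $E$ with $\liminf\geq a$, for each $x\in E$ and each small scale there are arbitrarily small radii $r$ with $\mu(B(x,r))\geq(a-\varepsilon)(2r)^s$; by a Vitali $5r$-covering lemma (valid in any metric space for a fine cover by balls of uniformly bounded radius) one extracts a countable disjoint subfamily $\{B(x_i,r_i)\}$ that is a $\delta$-packing in $E$ with $\sum_i\mu(B(x_i,r_i))\geq(a-\varepsilon)\sum_i(2r_i)^s$, and whose $5$-enlargements cover $E$. Taking the supremum of $\sum_i(2r_i)^s$ over such packings and using $\sum_i\mu(B(x_i,r_i))\leq\mu(X)<\infty$ (disjointness) gives $(a-\varepsilon)P^s_\delta(E)\leq\mu(X)$ — not quite what I want; the sharp route is to intersect with a fixed open set $O\supseteq E$ and let $O$ shrink, so that $\sum\mu(B(x_i,r_i))\leq\mu(O)$, yielding $(a-\varepsilon)P^s(E)\leq\mu(O)$ and then $(a-\varepsilon)P^s(E)\leq\mu(E)$ after passing to the infimum over open $O\supseteq E$ (outer regularity of the finite Borel measure $\mu$ on the metric space $X$). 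Finally, applying this to an arbitrary Borel cover $E\subset\bigcup E_j$ and summing gives $(a-\varepsilon)\Pack^s(E)\leq\sum_j\mu(E_j\cap E)=\mu(E)$ after a further routine manipulation; let $\varepsilon\downarrow0$.

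For the \emph{upper bound} $\mu(E)\leq2^s b\Pack^s(E)$: again reduce to $\liminf<b+\varepsilon$ on $E$, so for each $x\in E$ there exist arbitrarily small radii $r$ with $\mu(B(x,r))\leq(b+\varepsilon)(2r)^s$. Fix $\delta>0$; the collection of such balls with $2r\leq\delta$ is a fine (Vitali) cover of $E$. Apply the Vitali $5r$-covering lemma to extract a countable disjoint subfamily $\{B(x_i,r_i)\}$ whose enlargements $\{B(x_i,5r_i)\}$ cover $E$. Then, using monotonicity of $\mu$ and countable subadditivity,
\begin{equation*}
\mu(E)\leq\sum_i\mu(B(x_i,5r_i))\leq(b+\varepsilon)\sum_i(10r_i)^s=5^s(b+\varepsilon)\sum_i(2r_i)^s.
\end{equation*}
The family $\{B(x_i,r_i)\}$ is a $\delta$-packing in $E$, so $\sum_i(2r_i)^s\leq P^s_\delta(E)$, and letting $\delta\downarrow0$ gives $\mu(E)\leq5^s(b+\varepsilon)P^s(E)$; a cover-and-sum step then replaces $P^s$ by $\Pack^s$. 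This yields the bound with constant $5^s$ rather than $2^s$; sharpening to $2^s$ requires the more careful Besicovitch-free argument from \cite{Cutler-density-theorem}, where one controls the overlap of the chosen balls directly against the measure rather than through a crude enlargement — concretely, one shows that the disjoint balls can be chosen so that $E$ minus their union has $\Haus^s_\delta$-measure, hence $\mu$-measure, zero, and then $\mu(E)=\sum_i\mu(E\cap B(x_i,r_i))\leq(b+\varepsilon)\sum_i(2r_i)^s$ directly.

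The main obstacle is obtaining the \emph{sharp} constant $2^s$ in the upper bound: the naive $5r$-covering lemma only gives $5^s$ (or $3^s$ with a sharper Vitali constant), and closing the gap to $2^s$ requires the refined exhaustion argument of Cutler — iteratively selecting disjoint balls realizing the density bound so that the leftover set is $\Haus^s_\delta$-null and therefore $\mu$-null, which in turn uses that $\mu$ is finite and that $\Haus^s\ll\Pack^s$-type comparisons hold on the leftover. Since the excerpt cites this as a special case of \cite[Theorem 3.16]{Cutler-density-theorem}, for the purposes of this paper it suffices to invoke that reference for the sharp constant; the lower bound and the $5^s$-version of the upper bound are elementary as sketched above.
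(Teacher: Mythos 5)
The paper does not actually prove this statement; it invokes it directly as a special case of \cite[Theorem 3.16]{Cutler-density-theorem}, so any proof you give is by construction a different route, and the relevant question is whether your sketch is sound. It has two genuine problems. In the lower bound you correctly notice the obstacle (the liminf hypothesis gives $\mu(B(x,r))\geq(a-\varepsilon)(2r)^s$ only for $r$ below an $x$-dependent threshold, so you cannot apply it to a fixed packing directly), but you then draw the wrong conclusion and pivot to a Vitali $5r$-covering extraction. This is the wrong tool: Vitali produces \emph{one} well-chosen $\delta$-packing, whereas $P^s_\delta(E)$ is a supremum over \emph{all} $\delta$-packings, so the line ``taking the supremum of $\sum_i(2r_i)^s$ over such packings \ldots gives $(a-\varepsilon)P^s_\delta(E)\leq\mu(X)$'' does not follow --- you have only bounded the sub-supremum over Vitali-extracted packings. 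The correct and elementary fix is the one you abandoned: set $E_n:=\{x\in E : (2r)^{-s}\mu(B(x,r))\geq a-\varepsilon \text{ for all } 0<r\leq 1/n\}$, so $E=\bigcup_n E_n$ increasingly; for $\delta\leq 2/n$ \emph{every} $\delta$-packing centered in $E_n$ obeys the density bound at its own radii, so disjointness and outer regularity give $P^s_\delta(E_n)\leq(a-\varepsilon)^{-1}\mu(O)$ (with a further exhaustion of $E_n$ by $\{x\in E_n : d(x,X\setminus O)>1/m\}$ so the packing balls actually lie in $O$), and then $\Pack^s(E_n)\leq P^s(E_n)$ together with continuity from below of the Borel measure $\Pack^s$ finishes. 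No covering lemma is needed on this side.

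In the upper bound there is a smaller gap and an acknowledged one. The step $\mu(B(x_i,5r_i))\leq(b+\varepsilon)(10r_i)^s$ does not follow from your choice of $r_i$: the density bound was arranged at radius $r_i$, not $5r_i$. This is repairable by putting $B(x,r)$ in your fine cover only when the bound holds at radius $5r$, which is possible since $\liminf\leq b$ supplies arbitrarily small good radii. After that fix your argument yields $\mu(E)\leq 5^s b\,\Pack^s(E)$, which you rightly flag as weaker than the stated $2^s$; you defer to \cite{Cutler-density-theorem} for the sharp constant, which is the honest thing to do, and indeed your sketch of Cutler's exhaustion (``the leftover set is $\Haus^s_\delta$-null and therefore $\mu$-null'') would require an absolute continuity that is not in the hypotheses, so the deferral is necessary, not optional. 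Worth noting: for the way Theorem \ref{t:cutler} is actually used in this paper (Corollaries \ref{c-packing} and \ref{c-ac}, where one either takes $a=1/k$ or $b=0$), the constant is immaterial, so the $5^s$-version, once the lower bound and the $5r$-detail are repaired, would suffice for every downstream application --- but it does not prove the theorem as stated.
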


We can now use Cutler's theorem to solve an instance of the identification problem.

\begin{corollary}\label{c-packing} Let $X$ be a metric space, let $s>0$, and let $\mathcal{N}$ be the collection of all Borel sets $E\subset X$ with $\Pack^s(E)<\infty$. For every Borel measure $\mu$ on $X$ such that $\mu$ is finite on bounded sets, the parts $\mu_\mathcal{N}$ carried by $\mathcal{N}$ and $\mu_\mathcal{N}^\perp$ singular to $\mathcal{N}$ (see Lemma \ref{l:decomp}) are identified pointwise by the positivity of the lower $s$-density: $$\mu_\mathcal{N}=\mu\res\{x\in X: \textstyle\liminf_{r\downarrow 0} (2r)^{-s}\mu(B(x,r))>0\},$$ $$\mu_\mathcal{N}^\perp=\mu\res\{x\in X:\textstyle\liminf_{r\downarrow 0} (2r)^{-s}\mu(B(x,r))=0\}.$$
\end{corollary}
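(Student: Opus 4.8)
The plan is to write the two candidate pieces explicitly as restrictions of $\mu$ to the level sets of the lower $s$-density, verify the two defining properties of the Lebesgue decomposition with respect to $\mathcal{N}$, and then invoke uniqueness in Lemma \ref{l:decomp}. Set $\underline{D}^s(\mu,x):=\liminf_{r\downarrow 0}(2r)^{-s}\mu(B(x,r))$ and
$$P:=\{x\in X:\underline{D}^s(\mu,x)>0\},\qquad Z:=\{x\in X:\underline{D}^s(\mu,x)=0\},$$
so that $X=P\sqcup Z$ and $\mu=\mu\res P+\mu\res Z$. Fixing $x_0\in X$, the closed balls $B(x_0,k)$ are bounded, hence have finite $\mu$-measure, and they exhaust $X$; thus $\mu$ is $\sigma$-finite and Lemma \ref{l:decomp} applies. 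Granting for the moment that $P$ and $Z$ are Borel, it then suffices to show (1) $\mu\res P$ is carried by $\mathcal{N}$ and (2) $\mu\res Z$ is singular to $\mathcal{N}$, since uniqueness in Lemma \ref{l:decomp} will then force $\mu_\mathcal{N}=\mu\res P$ and $\mu_\mathcal{N}^\perp=\mu\res Z$.

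Because Theorem \ref{t:cutler} is stated for a \emph{finite} measure, the first real step is a localization. For $k\in\N$ set $C_k:=\{x:k-1\le d(x,x_0)<k\}$ and $C_k':=\{x:k-2<d(x,x_0)<k+1\}$; these are Borel, the $C_k$ cover $X$, each $C_k'$ is bounded and open, and a one-line triangle-inequality check shows $B(x,r)\subseteq C_k'$ whenever $x\in C_k$ and $0<r<1$. Hence $\nu_k:=\mu\res C_k'$ is a finite Borel measure that agrees with $\mu$ on all small balls centered in $C_k$, so
$$\liminf_{r\downarrow 0}(2r)^{-s}\nu_k(B(x,r))=\underline{D}^s(\mu,x)\qquad\text{for every }x\in C_k.$$

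For (1), I would write $P=\bigcup_{k,n\in\N}E_{k,n}$ with $E_{k,n}:=\{x\in C_k:\underline{D}^s(\mu,x)\ge 1/n\}$, a Borel subset of $C_k$. Applying Theorem \ref{t:cutler} to $\nu_k$ and $E_{k,n}$ with $a=1/n$ (and $b=\infty$) gives $\tfrac1n\Pack^s(E_{k,n})\le\nu_k(E_{k,n})\le\mu(C_k')<\infty$, whence $\Pack^s(E_{k,n})<\infty$ and $E_{k,n}\in\mathcal{N}$; since $P\subseteq\bigcup_{k,n}E_{k,n}$, the measure $\mu\res P$ is carried by the countable subfamily $\{E_{k,n}\}\subseteq\mathcal{N}$. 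For (2), fix an arbitrary $N\in\mathcal{N}$ and put $F_k:=N\cap Z\cap C_k$, a Borel subset of $C_k$ with $\Pack^s(F_k)\le\Pack^s(N)<\infty$ by monotonicity of $\Pack^s$. Since $F_k\subseteq Z$, the density condition on $F_k$ reads $\liminf_{r\downarrow0}(2r)^{-s}\nu_k(B(x,r))=0$, so Theorem \ref{t:cutler} with $a=b=0$ yields $\mu(F_k)=\nu_k(F_k)\le 2^s\cdot 0\cdot\Pack^s(F_k)=0$; summing over $k$ gives $\mu(N\cap Z)=0$, and as $N$ was arbitrary, $\mu\res Z$ is singular to $\mathcal{N}$.

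The only point requiring care — and what I regard as the technical heart — is the Borel measurability of $\underline{D}^s(\mu,\cdot)$, which is what makes $E_{k,n}$ and $F_k$ Borel and so legitimizes the two applications of Cutler's theorem. I would reduce to the finite measures $\nu_k$ via the local comparison above, then use that for a finite Borel measure $\nu$ the map $r\mapsto\nu(B(x,r))$ is non-decreasing and right-continuous and the map $x\mapsto\nu(B(x,r))$ is upper semicontinuous (both by continuity from above); right-continuity in $r$ lets one restrict the defining $\liminf$ to rational radii, exhibiting $\underline{D}^s(\nu,\cdot)$ as a countable supremum of countable infima of upper semicontinuous functions, hence Borel. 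Everything else is bookkeeping with the annuli $C_k\subseteq C_k'$ together with the two one-line invocations of Theorem \ref{t:cutler}.
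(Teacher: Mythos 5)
Your proof is correct and follows essentially the same route as the paper's: write each part of the decomposition as a restriction of $\mu$ to a level set of the lower $s$-density, localize so that Theorem~\ref{t:cutler} can be applied to a finite measure, and then invoke the uniqueness clause of Lemma~\ref{l:decomp}. The paper uses balls $B(x_0,l)$ together with continuity from below rather than your annuli $C_k\subset C_k'$, and it passes over the Borel measurability of $\underline{D}^s(\mu,\cdot)$ that you handle explicitly, but these are cosmetic differences, not a different argument.
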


\begin{proof} Fix any base point $x_0\in X$. The set $A=\{x\in X:\liminf_{r\downarrow 0} (2r)^{-s}\mu(B(x,r))>0\}$ can be written as a countable union of sets $$A_{k,l}=\{x\in B(x_0,l):\textstyle\liminf_{r\downarrow 0} (2r)^{-s}\mu(B(x,r))>1/k\},$$ where $k$ and $l$ range over all positive integers. Using Cutler's theorem, we determine that $\Pack^s(A_{k,l}) \leq k\,\mu(A_{k,l})\leq k\,\mu(B(x_0,l))<\infty$ for each $k$ and $l$. Therefore, $\mu\res A$ is carried by sets of finite $\Pack^s$ measure. Similarly, let $B=\{x\in X:\liminf_{r\downarrow 0} (2r)^{-s}\mu(B(x,r))=0\}$ and suppose $\Pack^s(E)<\infty$. Then $$\mu\res B(E) = \lim_{l\rightarrow\infty} \mu\res (B\cap U(x_0,l))(E)\leq 2^s\cdot 0\cdot \Pack^s(E)=0,$$ by continuity from below and the upper bound in Cutler's theorem with $b=0$. Thus, $\mu\res B$ is singular to sets of finite $\Pack^s$ measure. Clearly $\mu=\mu\res A + \mu \res B$. By uniqueness of the decomposition in Lemma \ref{l:decomp}, we confirm that $\mu_\mathcal{N}=\mu\res A$ and $\mu_\mathcal{N}^\perp=\mu\res B$.\end{proof}

\begin{corollary}\label{c-ac} Let $X$ be a metric space, let $s>0$, and let $\mathcal{N}$ be the collection of all Borel sets $E\subset X$ with $\Pack^s(E)=0$. For every Borel measure $\mu$ on $X$ such that $\mu$ is finite on bounded sets, the parts $\mu_\mathcal{N}$ carried by $\mathcal{N}$ and $\mu_\mathcal{N}^\perp$ singular to $\mathcal{N}$ (see Lemma \ref{l:decomp}) are identified pointwise by the magnitude of the lower $s$-density: $$\mu_\mathcal{N}=\mu\res\{x\in X: \textstyle\liminf_{r\downarrow 0} (2r)^{-s}\mu(B(x,r))=\infty\},$$ $$\mu_\mathcal{N}^\perp=\mu\res\{x\in X:\textstyle\liminf_{r\downarrow 0} (2r)^{-s}\mu(B(x,r))<\infty\}.$$ In particular, $\mu\ll\Pack^s$ if and only if $\liminf_{r\downarrow 0} (2r)^{-s}\mu(B(x,r))<\infty$ $\mu$-a.e.\end{corollary}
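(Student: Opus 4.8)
The plan is to imitate the proof of Corollary~\ref{c-packing} almost verbatim, making only two adjustments: where that argument applies the lower bound of Cutler's theorem (Theorem~\ref{t:cutler}) with a fixed constant $a=1/k$, I would instead apply it with an arbitrary $a>0$ and let $a\to\infty$; and where it applies the upper bound with $b=0$, I would apply it with a finite constant $b=k$ and exploit the hypothesis $\Pack^s(E)=0$ rather than $\liminf=0$. Fix a base point $x_0\in X$ and set $A=\{x\in X:\liminf_{r\downarrow0}(2r)^{-s}\mu(B(x,r))=\infty\}$ and $B=X\setminus A$, so that $\mu=\mu\res A+\mu\res B$. As in the preceding corollaries, $A$ and $B$ are Borel: for each $r>0$ the map $x\mapsto\mu(B(x,r))$ is upper semicontinuous, and since $r\mapsto\mu(B(x,r))$ is non-decreasing and right-continuous the $\liminf$ as $r\downarrow0$ may be computed along rational radii.

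First I would show $\mu\res A$ is carried by $\mathcal{N}$. For each positive integer $l$, the measure $\mu\res B(x_0,l+1)$ is finite (because $\mu$ is finite on bounded sets) and coincides with $\mu$ on every ball of radius at most $1$ centred in $B(x_0,l)$, so its lower $s$-density is still identically $\infty$ on $A\cap B(x_0,l)$. The lower bound in Theorem~\ref{t:cutler}, applied with an arbitrary $a>0$, then gives $a\,\Pack^s(A\cap B(x_0,l))\le\mu(A\cap B(x_0,l))\le\mu(B(x_0,l))<\infty$; letting $a\to\infty$ forces $\Pack^s(A\cap B(x_0,l))=0$. Hence each $A\cap B(x_0,l)$ belongs to $\mathcal{N}$, and since these sets cover $A$ while $\mu\res A(X\setminus A)=0$, the measure $\mu\res A$ is carried by $\mathcal{N}$.

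Next I would show $\mu\res B$ is singular to $\mathcal{N}$. Write $B=\bigcup_{k}B_k$ with $B_k=\{x:\liminf_{r\downarrow0}(2r)^{-s}\mu(B(x,r))<k\}$, and let $E$ be any Borel set with $\Pack^s(E)=0$. By monotonicity of $\Pack^s$ we have $\Pack^s(E\cap B_k\cap B(x_0,l))=0$ for all $k,l$; since the lower $s$-density is $<k$ on $B_k$, the upper bound in Theorem~\ref{t:cutler} (applied to the finite measure $\mu\res B(x_0,l+1)$, exactly as in the previous step) gives $\mu(E\cap B_k\cap B(x_0,l))\le 2^s k\,\Pack^s(E\cap B_k\cap B(x_0,l))=0$. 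Taking unions over $l$ and then over $k$ yields $\mu(E\cap B)=0$, i.e.\ $\mu\res B(E)=0$; as $E\in\mathcal{N}$ was arbitrary, $\mu\res B$ is singular to $\mathcal{N}$.

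Finally, the decomposition $\mu=\mu\res A+\mu\res B$ now satisfies the hypotheses of Lemma~\ref{l:decomp}, whose uniqueness clause identifies $\mu_\mathcal{N}=\mu\res A$ and $\mu_\mathcal{N}^\perp=\mu\res B$, which are precisely the two asserted formulas. The ``in particular'' clause follows since $\mu\ll\Pack^s$ means exactly that every set in $\mathcal{N}$ is $\mu$-null, i.e.\ $\mu$ is singular to $\mathcal{N}$, i.e.\ $\mu_\mathcal{N}=0$; by the formula just obtained this is equivalent to $\mu(A)=0$, i.e.\ $\liminf_{r\downarrow0}(2r)^{-s}\mu(B(x,r))<\infty$ for $\mu$-a.e.\ $x$. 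I do not expect a genuine obstacle here --- the content is essentially a re-run of Corollary~\ref{c-packing}. The one point to get right is that the strengthening from ``finite $\Pack^s$'' to ``zero $\Pack^s$'' comes entirely from the $a\to\infty$ limit in Cutler's lower bound, together with the routine reduction to a finite Borel measure (via restriction to $B(x_0,l+1)$) needed to invoke Theorem~\ref{t:cutler}.
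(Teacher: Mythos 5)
Your proof is correct and follows exactly the route the paper intends, which simply leaves the argument to the reader with the instruction to mimic Corollary~\ref{c-packing}; your adaptations (letting $a\to\infty$ in Cutler's lower bound to get $\Pack^s=0$, and using finite $b=k$ with the hypothesis $\Pack^s(E)=0$ for the singular part) are the right ones, and your ``in particular'' argument matches the paper's remark. The explicit restriction to the finite measure $\mu\res B(x_0,l+1)$ before invoking Theorem~\ref{t:cutler} is a careful touch that the paper's proof of Corollary~\ref{c-packing} glosses over but tacitly requires.
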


\begin{proof} We leave the proof that $\mu_\mathcal{N}$ and $\mu_\mathcal{N}^\perp$ are identified by the given formulas to the reader. (Just mimic the proof of Corollary \ref{c-packing}.) For the last remark, notice that $\mu\ll\Pack^s$ if and only if $\mu(E)=0$ whenever $\Pack^s(E)=0$. Thus, $\mu\ll\Pack^s$ if and only if $\mu$ is singular to sets of zero $\Pack^s$ measure.\end{proof}

\begin{remark}\label{r:packing-to-hausdorff} Analogous results hold with the \emph{Hausdorff measures} replacing the packing measures and \emph{upper densities} defined using $\limsup$ replacing lower densities defined using $\liminf$. The proof of Theorem \ref{t:cutler} for Hausdorff measures is considerably easier and can be proved using Vitali's $5r$-covering lemma (see \cite{Mattila} or \cite{Juha-book}) and the definition of $\Haus^s$.\end{remark}

\subsection{Rectifiable curves}\label{ss:rect-curves} The length of a curve in a metric space can be defined either \emph{intrinsically} in terms of the variation of a parameterization of the curve or \emph{extrinsically} using the 1-dimensional Hausdorff measure of the trace of the curve. It is well known that a curve has finite extrinsic length if and only if it admits a parameterization with finite intrinsic length; for a detailed explanation, see \cite{AO-curves}. The following theorem originated in the 1920s (see \cite{AO-curves} for a reference).

\begin{theorem}[Wa\.zewski's Theorem] \label{t:wazewski} Let $X$ be a metric space. For any nonempty set $\Gamma\subset X$, the following are equivalent: \begin{enumerate}
\item $\Gamma$ is compact and connected, and $\Haus^1(\Gamma)<\infty$;
\item $\Gamma=f([0,1])$ for some continuous map $f:[0,1]\rightarrow X$ such that $\var(f)=\sup_{t_0<t_1<\cdots<t_n} \sum_1^n \dist(f(t_{i-1}),f(t_i))<\infty$;
\item $\Gamma=f([0,1])$ for some Lipschitz continuous map $f:[0,1]\rightarrow X$.
\end{enumerate} Moreover, any set $\Gamma$ satisfying (1), (2), or (3) is the image of a Lipschitz continuous map $f:[0,1]\rightarrow X$ with $|f(t)-f(s)|\leq L|t-s|$ for all $s,t\in[0,1]$, where $f$ is essentially 2-to-1 and $L=\var(f)=2\Haus^1(\Gamma)$.\end{theorem}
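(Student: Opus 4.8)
The plan is to prove the cycle of implications $(3)\Rightarrow(2)$, $(3)\Rightarrow(1)$, $(2)\Rightarrow(3)$, $(1)\Rightarrow(3)$, and to read off the ``moreover'' clause from the proof of $(1)\Rightarrow(3)$. Two of these are immediate: $(3)\Rightarrow(2)$ because an $L$-Lipschitz map on $[0,1]$ has $\var(f)\le L<\infty$; and $(3)\Rightarrow(1)$ because a continuous image of the compact connected interval $[0,1]$ is compact and connected, while an $L$-Lipschitz map satisfies $\Haus^1(f([0,1]))\le L\,\Haus^1([0,1])=L<\infty$. The implication $(2)\Rightarrow(3)$ is the arc-length reparametrization, which I would also reuse as the very last step of $(1)\Rightarrow(3)$: given continuous $f$ with $V:=\var(f)<\infty$ (assume $V>0$, else $\Gamma$ is a point), set $s(t):=\var(f|_{[0,t]})$; this is nondecreasing with $s(0)=0$, $s(1)=V$, and continuous because $f$ is continuous and of bounded variation. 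Since $\dist(f(t),f(t'))\le|s(t)-s(t')|$, the rule $g(s(t)):=f(t)$ is well defined on $s([0,1])=[0,V]$ and $1$-Lipschitz there with $g([0,V])=\Gamma$; an affine rescaling of the domain to $[0,1]$ yields a $V$-Lipschitz surjection whose variation is exactly $V$.

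The substance is $(1)\Rightarrow(3)$; write $\ell:=\Haus^1(\Gamma)$. First I would show that $\Gamma$ is \emph{arcwise connected}: any two of its points are joined by an injective rectifiable arc lying in $\Gamma$, of length $\le\ell$. This is part of the classical structure theory of continua of finite length (cf.\ the reference \cite{AO-curves}); the mechanism is that finiteness of $\Haus^1(\Gamma)$ bounds the number of points in an efficient $\varepsilon$-chain between two prescribed points by $\lesssim \ell/\varepsilon$, so a diagonal/compactness argument as $\varepsilon\downarrow0$ produces a path, and deleting loops produces an arc. Next, fix a countable dense set $\{x_i\}\subset\Gamma$ and build nested finite topological trees $\{x_1\}=T_1\subset T_2\subset\cdots\subset\Gamma$, where $T_{n+1}$ adjoins to $T_n$ the initial segment of an arc from $x_{n+1}$ to $T_n$, stopped at its first hitting time of $T_n$; then $\overline{\bigcup_n T_n}=\Gamma$. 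Each $T_n$ is a finite union of arcs glued at endpoints, so doubling every edge makes it an Eulerian multigraph, and a closed Eulerian walk gives a $1$-Lipschitz parametrization $g_n$ of $T_n$ of total length $2\Haus^1(T_n)$ that is $2$-to-$1$ off the finite set of branch points and the basepoint. Choosing the walks compatibly (the walk of $T_{n+1}$ is that of $T_n$ with the out-and-back detour along the new arc spliced in), the rescaled maps on a common interval are equi-Lipschitz and uniformly Cauchy, hence converge uniformly to some $f$; its image is closed, contained in $\Gamma$, and contains $\bigcup_n T_n$, so equals $\Gamma$, and $\var(f)\le\liminf_n 2\Haus^1(T_n)\le 2\ell<\infty$. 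This already establishes $(3)$, and applying the arc-length reparametrization above to $f$ produces an $L$-Lipschitz surjection $[0,1]\to\Gamma$ with $L=\var(f)$.

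It remains to pin down the quantitative data $L=\var(f)=2\ell$ and the essential $2$-to-$1$ property. For this I would prove that $\Haus^1\big(\Gamma\setminus\bigcup_n T_n\big)=0$, whence $\Haus^1(T_n)\uparrow\ell$; combined with the compatibility of the traversals (each point of $\bigcup_n T_n$ is retained as a doubly-visited point of the limiting walk) this gives $\#f^{-1}(y)=2$ for $\Haus^1$-a.e.\ $y\in\Gamma$ — the exceptional set being the countable set of branch points together with the $\Haus^1$-null set $\Gamma\setminus\bigcup_n T_n$ — and then the Banach indicatrix identity $\var(f)=\int_\Gamma \#f^{-1}(y)\,d\Haus^1(y)$ forces $\var(f)=2\ell$. \textbf{The main obstacle} is precisely this $\Haus^1$-exhaustion statement (that nested spanning trees through a dense set capture a continuum of finite length up to an $\Haus^1$-null set), together with the arcwise-connectivity estimate in the previous paragraph: these are exactly where the hypothesis $\Haus^1(\Gamma)<\infty$ does real work, and both are delicate classical facts about sets of finite length that I would reproduce from, or defer to, the reference \cite{AO-curves}, keeping the write-up focused on assembling them into the parametrization and computing its indicatrix.
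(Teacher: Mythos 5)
The paper does not prove Wa\.zewski's theorem; it is stated as a classical result from the 1920s with \cite{AO-curves} given as a modern reference, so there is no internal argument to compare against. Your sketch follows the standard route: the easy implications are right, the arc-length reparametrization for $(2)\Rightarrow(3)$ (including continuity of $t\mapsto\var(f|_{[0,t]})$) is correct, and the plan for $(1)\Rightarrow(3)$ via arcwise connectivity, nested finite trees $T_n$ through a dense set, Euler tours of doubled trees, compatible splicing, a uniform limit, and the Banach indicatrix is exactly the argument one finds in the literature.

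On the obstacle you single out, $\Haus^1\bigl(\Gamma\setminus\bigcup_n T_n\bigr)=0$: this is indeed the crux, but it is worth recording that it holds automatically for your construction and can be proved directly, without first invoking a structure theorem for continua of finite length. Write $T_\infty=\bigcup_n T_n$ and $E=\Gamma\setminus T_\infty$. If $\Haus^1(E)>0$, then since $\Haus^1(T_\infty)\le\Haus^1(\Gamma)<\infty$, the standard upper-density estimate for Hausdorff measure (valid in any metric space; cf.~Remark~\ref{r:packing-to-hausdorff}) gives a point $y\in E$ with $\lim_{r\downarrow 0}(2r)^{-1}\Haus^1(T_\infty\cap B(y,r))=0$. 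On the other hand, for every small $r$ there is some $x_i\in B(y,r/2)$; for all $n\ge i$ the continuum $T_n$ then contains $x_i$ and, once $\diam T_n>r$, also a point of $\Gamma\setminus B(y,r)$, so since $d(\cdot,y)$ is $1$-Lipschitz the image $d(T_n\cap B(y,r),y)\supset[r/2,r]$ and hence $\Haus^1(T_\infty\cap B(y,r))\ge r/2$. These two statements contradict each other, so $\Haus^1(E)=0$. With exhaustion secured, your indicatrix computation pins down $\var(f)=2\Haus^1(\Gamma)$ and essential $2$-to-$1$-ness; the one remaining bookkeeping item in a full write-up is to check that the two preimages of a generic (non-branch, non-basepoint) point of a $T_n$ persist as two distinct preimages after passing to the spliced limit, since that is what drives $\var(f)\ge 2\Haus^1(\Gamma)$.
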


A \emph{rectifiable curve} $\Gamma$ in a metric space $X$ is any nonempty set satisfying one of the three conditions in Wa\.zewski's theorem. To test whether a given set $\Gamma$ is a rectifiable curve it is usually easiest to check (1).
In fact, according to the following lemma, a weaker assumption suffices in complete metric spaces.

A set $Y\subset X$ is said to be \emph{$r$-separated} if $\dist(y,z)\geq r$ for all $y,z\in Y$. If, in addition, $\dist(x,Y)<r$ for all $x\in X$, then we call $Y$ an \emph{$r$-net} for $X$. Recall also that $B\subset X$ is \emph{totally bounded} if for every $r>0$, the set $B$ can be covered by a finite number of balls of radius $r$. It is well-known that a metric space $X$ is compact if and only if $X$ is complete and totally bounded.

\begin{lemma}
\label{l-waz-2}
  Let $X$ be a complete metric space. If a nonempty set $\Gamma \subset X$ is closed, connected, and $\Haus^1(\Gamma)<\infty$, then $\Gamma$ is compact, and thus,  $\Gamma$ is a rectifiable curve.
\end{lemma}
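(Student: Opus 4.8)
The plan is to reduce the lemma to Wa\.zewski's theorem (Theorem~\ref{t:wazewski}). Since $X$ is complete and $\Gamma\subset X$ is closed, $\Gamma$ is itself a complete metric space, and (a complete metric space is compact if and only if it is totally bounded) it suffices to prove that $\Gamma$ is totally bounded. Granting this, $\Gamma$ is compact, connected, and satisfies $\Haus^1(\Gamma)<\infty$, so condition (1) of Theorem~\ref{t:wazewski} holds and $\Gamma$ is a rectifiable curve. Thus the entire content of the lemma is the implication: \emph{a connected subset of a metric space with finite $\Haus^1$ measure is totally bounded}.

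I would prove this implication by contradiction. If $\Gamma$ fails to be totally bounded, a greedy selection produces some $r>0$ and an infinite $r$-separated sequence $x_1,x_2,\dots\in\Gamma$: having chosen $x_1,\dots,x_n$, the balls $B(x_1,r),\dots,B(x_n,r)$ cannot cover $\Gamma$ (else $\Gamma$ would be totally bounded), so one may pick $x_{n+1}\in\Gamma\setminus\bigcup_{i\le n}B(x_i,r)$. For each $i$ set $\Gamma_i=\{x\in\Gamma:\dist(x_i,x)\le r/3\}$. The triangle inequality together with $r$-separation shows the sets $\Gamma_i$ are pairwise disjoint, and each $\Gamma_i$ is closed, hence $\Haus^1$-measurable.

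The heart of the matter is the uniform lower bound $\Haus^1(\Gamma_i)\ge r/3$, and this is where connectedness enters. The function $g_i:=\dist(x_i,\cdot)$ is $1$-Lipschitz on $\Gamma$, so $g_i(\Gamma)$ is a subinterval of $[0,\infty)$; it contains $g_i(x_i)=0$ and, since there is some $x_j$ with $j\ne i$ and $g_i(x_j)\ge r$, it contains a value $\ge r$. Hence $g_i(\Gamma)\supset[0,r/3]$. Because $g_i^{-1}([0,r/3])$ is exactly $\Gamma_i$, every $t\in[0,r/3]$ is attained at a point of $\Gamma_i$, so $g_i(\Gamma_i)\supset[0,r/3]$. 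As $1$-Lipschitz maps do not increase $\Haus^1$, we conclude $\Haus^1(\Gamma_i)\ge\Haus^1([0,r/3])=r/3$. Using that $\Haus^1$ is additive on disjoint Borel sets, $\Haus^1(\Gamma)\ge\sum_{i=1}^n\Haus^1(\Gamma_i)\ge nr/3$ for every $n$, contradicting $\Haus^1(\Gamma)<\infty$.

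I do not anticipate a serious obstacle; the argument is soft. The only points needing a little care are choosing the radius of the neighborhoods $\Gamma_i$ small enough (here $r/3$ rather than $r/2$) that they are genuinely pairwise disjoint, and recalling that $\Haus^1$ — though merely an outer measure in general — restricts to a countably additive measure on the Borel $\sigma$-algebra, which is all that is needed to sum the estimates. The lower bound $\Haus^1(\Gamma_i)\ge r/3$ is just the familiar fact that a connected set mapping onto an interval of length $t$ under a $1$-Lipschitz function has $\Haus^1$ measure at least $t$ (equivalently, a connected set containing two points at distance $t$ has $\Haus^1$ measure at least $t$), applied locally around each $x_i$.
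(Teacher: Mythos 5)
Your proof is correct and follows essentially the same route as the paper: argue that $\Gamma$, being closed in a complete space, is complete; show by contradiction that failure of total boundedness produces an infinite $r$-separated family of pairwise disjoint balls each meeting $\Gamma$ in a set of $\Haus^1$-measure at least $r/3$; sum to contradict $\Haus^1(\Gamma)<\infty$; then invoke Wa\.zewski's theorem. The only difference is one of detail: where the paper simply asserts that connectedness gives $\Haus^1(\Gamma\cap B)\ge r/3$ for each ball, you spell out the standard justification via the $1$-Lipschitz map $g_i=\dist(x_i,\cdot)$ whose image is an interval; this is the intended argument and fills in nothing that changes the structure of the proof.
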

\begin{proof} Equipped with the subspace topology, $\Gamma$ is complete since it is a closed subset of a complete metric space.
Suppose that $\Gamma$ is not compact. Then it cannot be totally bounded. Hence there exists an infinite $r$-net $Y \subset \Gamma$ for some $r\in(0,\diam \Gamma)$. By the triangle inequality, the collection $\mathcal{B} := \left\{B\left(y,r/3\right) \right\}_{y \in Y}$ is pairwise disjoint. Because $\Gamma$ is connected, $\Haus^1(\Gamma\cap B)\geq r/3$ for all $B\in\mathcal{B}$. Therefore,
\[
\mathcal{H}^1(\Gamma) \geq \sum_{B \in \mathcal{B}} \mathcal{H}^1(\Gamma \cap B) \geq \sum_{B \in \mathcal{B}} r/3.
\]
Since the collection $\mathcal{B}$ is infinite, this implies that $\mathcal{H}^1(\Gamma) = \infty$, which is a contradiction.
Therefore, $\Gamma$ must be compact, and by Theorem~\ref{t:wazewski}, $\Gamma$ is a rectifiable curve.
\end{proof}

\begin{remark}\label{r:lower-density} Since every rectifiable curve $\Gamma$ admits a Lipschitz parameterization, it follows that $\Pack^1(\Gamma)\lesssim_L \Pack^1([0,1])<\infty$ (e.g.~see \cite[Lemma 2.8]{BS1}). Hence every 1-rectifiable measure $\mu$ on $X$ is carried by sets of finite $\mathcal{P}^1$ measure. Thus, if $\mu$ is a Borel measure on $X$ that is finite on bounded sets, then the 1-rectifiable part of $\mu$ (cf.~Theorem \ref{t-main}) satisfies \begin{equation}\mu_\rect \leq \mu\res\{x\in X: \textstyle\liminf_{r\downarrow 0} (2r)^{-1}\mu(B(x,r))>0\}\end{equation}  by Corollary \ref{c-packing}. In particular, if $\mu$ is a 1-rectifiable measure on a metric space and $\mu$ is finite on bounded sets, then the lower 1-density $\lD1(\mu,x)=\liminf_{r\downarrow 0} (2r)^{-1}\mu(B(x,r))>0$ at $\mu$-a.e.~$x\in X$. This observation significantly generalizes \cite[Theorem 7.9]{Mattila}, which says that $\lD1(\Haus^1\res \Gamma,x)>0$ at $\Haus^1$-a.e.~$x\in \Gamma$ for any rectifiable curve $\Gamma$ in $\RR^n$. \end{remark}

\subsection{Carnot groups}\label{ss:Carnot}

A connected, simply connected Lie group $G$ is called a \emph{step $s$ Carnot group}
if its associated Lie algebra $\mathfrak{g}$ satisfies
$$
\mathfrak{g} = V_1 \oplus \cdots \oplus V_s,
\quad [V_1,V_i] = V_{i+1}  \text{ for } i=1,\dots,s-1,
\quad [V_1,V_s] =\{0\},
$$
where $V_1,\dots,V_s$ are non-zero subspaces of $\mathfrak{g}$.
We call this a {\em stratification} of the Lie algebra $\mathfrak{g}$.
Choose a basis $\{X_1,\dots,X_N\}$ of $\mathfrak{g}$
so that
$$
\left\{X_{\sum_{j=1}^{i-1} (\dim V_j) +1},\dots,X_{\sum_{j=1}^{i} (\dim V_j)} \right\}
\text{ is a basis of } V_i
\text{ for each } i \in \{1,\dots,s\}.
$$
For any $x \in G$,
we may use
the exponential map $\text{exp}:\mathfrak{g} \to G$
to uniquely write $x = \text{exp}(x_1X_1 + \cdots + x_N X_N)$ for some $(x_1,\dots,x_N) \in \mathbb{R}^N$.
In other words, we can identify $G$ with $\mathbb{R}^N$ via the relationship
$x \leftrightarrow (x_1,\dots,x_N)$.
These are called the {\it exponential coordinates} of $G$.
We will actually group coordinates by the layer that the corresponding basis elements are in.  Thus, we will actually write
$$
x = (x_1,\dots ,x_s),
$$
where $x_i \in \R^{n_i}$ and $n_i = \dim V_i$.
Under this identification, we have $p^{-1} = -p$ for any $p \in G$.
Denote by $|\cdot|$ the Euclidean norm in $G = \mathbb{R}^N$ relative to the above choice of basis.

For each $r \in \{2,\dots,s\}$, we also define the normal subgroups
$$G^{(r)} = \exp(V_r \oplus \cdots \oplus V_s).$$
In terms of exponential coordinates, these are the subspaces of $\R^N$ spanned by the coordinates corresponding to vectors $X_i \in V_r \oplus \cdots \oplus V_s$. For a general discussion of Carnot groups, see \cite{Italians}.

We can express group multiplication in $G$ on the level of the Lie algebra using the Baker-Campbell-Hausdorff (BCH) formula:
\begin{align}\label{e-BCH}
  \log(\exp(X)\exp(Y)) = \sum_{k > 0} \frac{(-1)^{k-1}}{k} \underset{\substack{r_i+s_i > 0,\\r_i,s_i \geq 0,\\1 \leq i \leq k}}{\sum} a(r_1,s_1,\dots,r_k,s_k) [X^{r_1} Y^{s_1} \cdots X^{r_k} Y^{s_k}]
\end{align}
Here the bracket term denotes iterated Lie brackets
\begin{align*}
  [ X^{r_1} Y^{s_1} \dotsm X^{r_n} Y^{s_n} ] = [\underbrace{X,[X,\dotsm[X}_{r_1} ,[ \underbrace{Y,[Y,\dotsm[Y}_{s_1} ,\,\dotsm\, [ \underbrace{X,[X,\dotsm[X}_{r_n} ,[ \underbrace{Y,[Y,\dotsm Y}_{s_n}] \cdots ].
\end{align*}
We have explicit formulas for group multiplication in terms of exponential coordinates:
\begin{align*}
  (x_1,\dots,x_s) \cdot (y_1,\dots,y_s) = (x_1 + y_1, x_2 + y_2 + P_2, \dots, x_s + y_s + P_s).
\end{align*}
Here each $P_i$ is a polynomial of $(x_1,\dots,x_{i-1})$ and $(y_1,\dots,y_{i-1})$, where $x_i$ and $y_i$ are vectors in $\R^{n_i}$.  We call the $P_i$'s the \emph{BCH polynomials}. We use the following lemma in \S\ref{s:construction}.

\begin{lemma}[{\cite[Lemma 4.1]{Li-TSP}}] \label{l:BCH-bound}
  There exists some constant $C > 0$ depending only on $G$ so that if $|y_i| \leq \eta$ and $|x_i| \leq 1$ for all $i \in \{1,\dots,k-1\}$ and any $\eta \in (0,1)$, then
  \begin{align*}
    |P_k(x_1,\dots,x_{k-1},y_1,\dots,y_{k-1})| \leq C\eta.
  \end{align*}
\end{lemma}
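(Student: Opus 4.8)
The plan is to use nothing more than the algebraic structure of the Baker--Campbell--Hausdorff polynomials together with one elementary vanishing observation. First I would record that $P_k$ is a genuine polynomial with coefficients depending only on $G$: since $G$ is nilpotent of step $s$, every iterated bracket in \eqref{e-BCH} of length exceeding $s$ vanishes, so the sum defining $\log(\exp(X)\exp(Y))$ is finite; collecting the resulting terms by layer expresses each $P_k$ as a finite sum $\sum_\alpha c_\alpha m_\alpha$, where the $m_\alpha$ are monomials in the scalar coordinates of $x_1,\dots,x_{k-1},y_1,\dots,y_{k-1}$ and the coefficient vectors $c_\alpha \in \R^{n_k}$ depend only on $G$.

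The key point is then that every monomial $m_\alpha$ appearing with $c_\alpha \neq 0$ must contain at least one $y$-coordinate as a factor. This is because the identity element of $G$ is $e=(0,\dots,0)$, so the group law $x\cdot e = x$ forces, in the $k$-th layer, $x_k + 0 + P_k(x_1,\dots,x_{k-1},0,\dots,0) = x_k$; that is, $P_k$ restricted to $\{y=0\}$ is the zero polynomial, which means no monomial in the $x$-variables alone (including the constant monomial) survives. Granting this, the estimate is immediate: the hypothesis $|y_i|\leq\eta$ gives $|(y_i)_j|\leq\eta$ for every scalar coordinate and $|x_i|\leq 1$ gives $|(x_i)_j|\leq 1$, so each surviving monomial, being a product of scalar coordinates at least one of which is a $y$-coordinate, satisfies $|m_\alpha|\leq \eta^{p_\alpha}\leq\eta$, where $p_\alpha\geq 1$ is the number of $y$-factors in $m_\alpha$ and we used $\eta\in(0,1)$. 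Summing over the finitely many monomials yields $|P_k| \leq \bigl(\sum_\alpha |c_\alpha|\bigr)\eta =: C\eta$ with $C$ depending only on $P_k$, hence only on $G$.

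I do not expect a substantive obstacle here; the entire content is the observation that $P_k$ vanishes on $\{y=0\}$. The only care needed is bookkeeping: making precise that $P_k$ is a finite polynomial with $G$-dependent coefficients (standard from nilpotency and the truncation of \eqref{e-BCH}), and the trivial passage from the vector-norm hypotheses $|x_i|\leq 1$, $|y_i|\leq\eta$ to the corresponding bounds on individual scalar coordinates, together with the harmless factor $\sqrt{n_k}$ (or componentwise estimation) incurred by passing from the coordinates of $P_k$ back to $|P_k|$.
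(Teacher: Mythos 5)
Your proof is correct. The paper does not prove this lemma itself but cites it as {\cite[Lemma 4.1]{Li-TSP}}, so there is no in-paper argument to compare against; your reasoning---that nilpotency truncates the BCH series so each $P_k$ is a polynomial with $G$-dependent coefficients, that $x\cdot e=x$ forces $P_k(x,0)\equiv 0$ so every surviving monomial contains a $y$-factor, and that the hypotheses $|x_i|\leq 1$, $|y_i|\leq\eta<1$ then bound each monomial by $\eta$---is the natural and standard route and matches the spirit of the cited reference. One small remark worth making explicit if you write this up: you should also observe that the grading $[V_i,V_j]\subset V_{i+j}$ is what guarantees $P_k$ depends only on $x_1,\dots,x_{k-1},y_1,\dots,y_{k-1}$ (and not on the $k$-th or higher layers), which is implicitly used when you invoke the hypotheses on those ranges of indices; aside from that, the argument is complete.
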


There is a natural family of automorphisms known as \emph{dilations} on $G$ indexed by $t > 0$.
Given $t>0$, we define
$$
\delta_t(x) = \delta_t(x_1,\dots,x_s)  = \left( t x_1,t^2x_2, \dots, t^sx_s \right).
$$
It follows that $\{\delta_t\}_{t>0}$ is a one parameter family,
\emph{i.e.}~$\delta_u \circ \delta_t = \delta_{ut}$.

A \emph{homogeneous norm} $N : G \to [0,\infty)$ is a function satisfying the following properties:
\begin{enumerate}[label=(\arabic*)]
  \item $N(g) = 0 ~\Leftrightarrow ~g = 0$,
  \item $N(g^{-1}) = N(g)$,
  \item $N(gh) \leq N(g) + N(h)$.
  \item $N(\delta_t(g)) = tN(g)$ for all $t > 0, g \in G$.
\end{enumerate}
The first three properties ensure that if we define $d(g,h) = N(g^{-1}h)$, then $d$ is a left-invariant metric on $G$.  The last property ensures that the metric scales with dilations, \emph{i.e.}~for all $t > 0$ and $g,h \in G$ we have
\begin{align*}
  d(\delta_t(g),\delta_t(h)) = td(g,h).
\end{align*}
Thus, we see that dilations and homogeneous norms on Carnot groups behave like scalar multiplication and linear norms. That is to say, Carnot groups may be viewed as nonabelian generalizations of vector spaces.  In fact, the class of abelian Carnot groups are precisely the Euclidean spaces. Finally, we mention that it is well known that any two metrics on a Carnot group $G$ induced by homogeneous norms are bi-Lipschitz equivalent.

We now define a family of homogeneous norms that exist for all Carnot groups.  Given a parameter $\eta > 0$, consider $B_{\R^N}(\eta)$, the Euclidean ball around 0 in $G$ with respect to the Euclidean norm $|\cdot|$.  We then define an associated Minkowski gauge on $G$ by
\begin{align*}
  N_\eta(g) = \inf\{ r > 0 : g \in \delta_r(B_{\R^N}(\eta)) \}.
\end{align*}
It is a theorem of Hebisch and Sikora \cite{Hebisch-Sikora} that, for any Carnot group $G$, there exists $\eta_0 > 0$ such that $N_\eta$ is a homogeneous norm for all $0<\eta<\eta_0$.  As Euclidean balls of different radii are not homothetic under the dilations of $G$, we obtain a family of non-isometric norms $\{N_\eta\}_{0<\eta < \eta_0}$.  We call these the \emph{Hebisch-Sikora norms} on $G$.

Define $\pi : G \to \R^{n_1}$
to be the projection of $G$ onto its first layer.
Further, for each $r=1,\dots, s-1$, we let $\pi_r : G \to G_r := G/G^{(r+1)}$.
We endow $G$ with a metric $d$ that arises from a Hebisch-Sikora norm $N$ chosen so that the projected unit ball of $N$ in each $G_r$ also forms the unit ball of a Hebisch-Sikora norm. In particular, this choice ensures that each projection $\pi_r$ is 1-Lipschitz. We note that the norms may be considered ``nested'' in the following sense:
if $N$ and $N'$ are norms of $G_r$ and $G_{r+1}$, then
\begin{align*}
  N(x_1,\dots,x_r) = N'(x_1,\dots,x_r,0)
\end{align*}
by the convexity of balls centered at 0. By abusing notation, we will use $N$ to denote all of these norms. We now record another lemma
which will be important in \S\ref{s:construction}.
\begin{lemma}[{\cite[Lemma 6.6]{Li-TSP}}] \label{l:taylor}
  For any $\alpha \in (0,1)$,
  there exists a constant $C > 0$ so that if $N(x_1,\dots,x_{s-1},0) \in [\alpha,1]$ and $|y| \leq 1/C$, then
  \begin{align*}
    0\leq N(x_1,\dots,x_{s-1},y)-N(x_1,\dots,x_{s-1},0)\leq C |y|^2.
  \end{align*}
\end{lemma}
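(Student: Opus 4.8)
The plan is to work directly with the Minkowski-gauge description of the norm. Write $N=N_\eta$; since $\delta_r(B_{\R^N}(\eta))=\{p=(p_1,\dots,p_s):\sum_{i=1}^s|p_i|^2 r^{-2i}\le\eta^2\}$, for $p\neq 0$ the function
\[
\Phi_p(r):=|\delta_{1/r}(p)|^2=\sum_{i=1}^s|p_i|^2 r^{-2i}
\]
is continuous and strictly decreasing from $+\infty$ to $0$ on $(0,\infty)$, so $N(p)$ is the unique $r>0$ with $\Phi_p(r)=\eta^2$. Set $p:=(x_1,\dots,x_{s-1},0)$, $p':=(x_1,\dots,x_{s-1},y)$, $t:=N(p)\in[\alpha,1]$, and $r^*:=N(p')$; both $p$ and $p'$ are nonzero since $N(p)\geq\alpha>0$. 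One has $\Phi_p(t)=\sum_{i=1}^{s-1}|x_i|^2 t^{-2i}=\eta^2$ and $\Phi_{p'}(r)=\Phi_p(r)+|y|^2 r^{-2s}$, so $\Phi_{p'}(t)=\eta^2+|y|^2 t^{-2s}\geq\eta^2=\Phi_{p'}(r^*)$; strict monotonicity of $\Phi_{p'}$ then forces $t\leq r^*$, which is the lower bound. (Alternatively the lower bound follows from the symmetry of $B_{\R^N}(\eta)$ under $y\mapsto-y$ together with convexity, as in the ``nested norms'' remark preceding the lemma.)

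For the quadratic upper bound I would first confine $r^*$ to $(t,2t)$. Since $(2t)^{-2i}\leq 2^{-2}t^{-2i}$ for $i\geq1$, we get $\Phi_{p'}(2t)\leq \tfrac14\eta^2+2^{-2s}|y|^2 t^{-2s}$, and because $t\geq\alpha$ this is $\leq\eta^2$ as soon as $|y|^2\leq\tfrac34\eta^2(2\alpha)^{2s}$; by monotonicity of $\Phi_{p'}$ this yields $r^*\leq 2t$. Next, the mean value theorem applied to $\Phi_{p'}$ on $[t,r^*]$ gives some $\xi\in(t,2t)$ with
\[
|y|^2 t^{-2s}=\Phi_{p'}(t)-\Phi_{p'}(r^*)=\bigl(-\Phi_{p'}'(\xi)\bigr)(r^*-t).
\]
Finally, $-\Phi_{p'}'(r)=\sum_{i=1}^{s-1}2i|x_i|^2 r^{-2i-1}+2s|y|^2 r^{-2s-1}\geq\tfrac{2}{r}\sum_{i=1}^{s-1}|x_i|^2 r^{-2i}$, and on $(t,2t)$ we have $r^{-2i}\geq 2^{-2s}t^{-2i}$ and $\tfrac{2}{r}\geq\tfrac1t\geq1$ (here $t\leq1$ enters), whence $-\Phi_{p'}'(\xi)\geq 2^{-2s}\sum_{i=1}^{s-1}|x_i|^2 t^{-2i}=2^{-2s}\eta^2$. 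Combining, $r^*-t\leq 2^{2s}\eta^{-2}|y|^2 t^{-2s}\leq 2^{2s}\eta^{-2}\alpha^{-2s}|y|^2$. Taking $C:=\max\bigl\{2^{2s}\eta^{-2}\alpha^{-2s},\ (\tfrac34\eta^2(2\alpha)^{2s})^{-1/2}\bigr\}$, the hypothesis $|y|\leq 1/C$ both activates the bootstrap and delivers the asserted estimate with this $C$.

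I do not expect a genuine obstacle: once the gauge is written out explicitly, the entire argument is elementary single-variable calculus on the decreasing function $\Phi_{p'}$. The one point needing care is the order of the steps — the lower bound on $-\Phi_{p'}'$ is valid only on the interval $(t,2t)$, so the smallness threshold for $|y|$ must be pinned down by the bootstrap \emph{before} the mean value theorem is invoked, rather than read off from it afterwards.
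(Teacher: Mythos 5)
The paper does not give a proof of this lemma; it is quoted verbatim from \cite[Lemma 6.6]{Li-TSP} and used as a black box (in Proposition~\ref{p:proj-prop} and \S\ref{s:construction}). So there is no internal argument to compare against. Your proof is correct and self-contained, and it proceeds by the natural route of unwinding the Hebisch--Sikora norm as a Minkowski gauge: identifying $N(p)$ with the root of the strictly decreasing function $\Phi_p(r)=\sum_i|p_i|^2 r^{-2i}$, establishing $t:=N(p)\le r^*:=N(p')$ directly from $\Phi_{p'}(t)\ge\eta^2$, bootstrapping $r^*\le 2t$ by showing $\Phi_{p'}(2t)\le\eta^2$ once $|y|$ is small enough, and then controlling $r^*-t$ via the mean value theorem together with the lower bound $-\Phi_{p'}'(\xi)\ge 2^{-2s}\eta^2$ on $(t,2t)$. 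The order of the steps is handled correctly (the bootstrap fixes the smallness threshold before the derivative bound is used), the hypotheses $t\in[\alpha,1]$ enter exactly where needed (through $t\ge\alpha$ for the threshold and $t^{-2s}\le\alpha^{-2s}$, and through $t\le1$ for $1/t\ge1$), and the final choice of $C$ as a maximum legitimately makes a single constant play both roles demanded by the statement. The parenthetical alternative for the lower bound, via the symmetry $y\mapsto -y$ and convexity of $\delta_{r}(B_{\R^N}(\eta))$, is also valid. I see no gap.
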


Finally, a set $L \subset G$ is called a \emph{horizontal line}
if it is a coset of a 1-dimensional subspace in $\R^{n_1} \times \{0\} \subset G$.
In other words,
$$
L = x \cdot \{ (sv,0,\dots,0) \, : \, s \in \R \}
\quad
\text{for some } x \in G, \, v \in \R^{n_1}.
$$
By the definition of the norm on $G$, horizontal lines are isometric copies of $\R$ in $G$.

Using the BCH formulas, one can show that the Jacobian of left translation on $G$ is 1. This tells us that the Lebesgue measure on the underlying manifold $\R^N$ of $G$ is a Haar measure.  By looking at the anisotropic scaling of the dilation $\delta_\lambda$, we see that the Lebesgue measure of balls in $G$ satisfy
\begin{equation} \label{Lebesgue-q}
  |B(x,r)| = c_G r^q\quad\text{for all $x\in G$ and $r>0$},
\end{equation}
where $c_G=|B(0,1)|$ is the Lebesgue measure of the unit ball and $q = \sum_{k=1}^s k \dim V_k$ is the \emph{homogeneous dimension} of $G$.  Therefore, the Lebesgue measure on any Carnot group $G$ is $q$-uniform, Ahlfors $q$-regular, and doubling. Furthermore, it follows from a standard packing argument that any ball in $G$ of radius $r$ may be covered by at most $C(q,\varepsilon)$ balls of radius $\varepsilon r$.

\subsection{Dyadic cubes in ``finite-dimensional'' metric spaces}\label{ss:dyadic} We shall need access to a certain decomposition of an arbitrary Carnot group into a system of ``dyadic cubes'', where cubes of the same ``side length'' are pairwise disjoint. In the harmonic analysis literature, such systems are often called \emph{Christ} or \emph{Christ-David cubes} after constructions by David \cite{David-cubes} and Christ \cite{Christ-cubes} (see e.g.~\cite{what-is-a-cube}), but similar decompositions in a metric space were given earlier by Larman \cite{Larman-dimension}.  Here we quote (a special case of) a recent streamlined construction of cubes by K\"aenm\"aki, Rajala, and Suomala \cite{KRS-cubes}, which can be carried out in any metric space which is ``finite-dimensional'' in the weak sense that every ball $B$ is totally bounded. For simplicity, we record the KRS construction with the scaling parameter $1/2$; see \cite{KRS-cubes} for the general case, which allows for \emph{any} scaling parameter between $0$ and $1$.

Recall that $U(x,r)$ and $B(x,r)$ denote open and closed balls in $X$, respectively.

\begin{theorem}[{\cite[Theorem 2.1, Remark 2.2]{KRS-cubes}}]
\label{t-KRS} Let $X$ be any metric space with totally bounded balls. Suppose that we are given $x_0\in X$ and a family $(X_k)_{k\in\ZZ}$ of $2^{-k}$-nets for $X$ such that $x_0\in X_k\subset X_{k+1}$ for all $k\in\ZZ$. Then there exist a family of collections $\Delta_k = \{ Q_{k,i} \, : \, i \in N_k \subset \mathbb{N} \}_{k\in\ZZ}$
of Borel sets (``cubes'') with the following properties:
\begin{enumerate}
    \item partitioning: $X = \bigcup_i Q_{k,i}$ for every $k \in \mathbb{Z}$,
    \item nesting: $Q_{k,i}\cap Q_{m,j}=\emptyset$ or $Q_{k,i}\subset Q_{m,j}$ whenever $k\geq m$,
    \item centers and roundness: for every $Q_{k,i}$, there is a point $x_{k,i} \in X_k$ such that
    $$
    U(x_{k,i},\tfrac 16 \cdot 2^{-k}) \subset Q_{k,i} \subset B(x_{k,i},\tfrac83 \cdot 2^{-k}),
    $$
    \item inheritance: $\{ x_{k,i} \, : \, i \in N_k \} \subset \{ x_{k+1,i} \, : \, i \in N_{k+1} \}$ for all $k\in \mathbb{Z}$.
    \item origin: for every $k \in \mathbb{Z}$, there exists $Q_{k,i}$ such that
    $$
    U(x_0, \tfrac16 \cdot 2^{-k}) \subset Q_{k,i}.
    $$
\end{enumerate}
\end{theorem}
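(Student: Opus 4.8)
The plan is to realize the cubes as the fibers of a coherent family of ``itinerary'' maps $a_k\colon X\to X_k$, one for each $k\in\ZZ$, where $a_k(w)$ is intended to be the center of the unique level-$k$ cube containing $w$. Granting such a family, one sets $Q_{k,x}:=a_k^{-1}(x)$ for $x\in X_k$ and declares $x$ itself to be the center $x_{k,i}$ of this cube. Everything in the statement except the roundness bounds in (3) is then bookkeeping. The partition property (1), together with the fact that cubes of the same level are disjoint, holds because each $a_k$ is a function. The dichotomy in (2) holds provided the family is \emph{coherent}, meaning $a_m=\rho_m\circ a_k$ for all $m\le k$, where $\rho_m\colon X_k\to X_m$ is an \emph{ancestor map} fixed below: indeed $a_k(w)=x$ then forces $a_m(w)=\rho_m(x)$, so $Q_{k,x}\subset Q_{m,\rho_m(x)}$, and any two cubes from levels $m\le k$ are either nested in this way or disjoint. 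Property (4) is automatic once every point of $X_k$ is a center: since $X_k\subset X_{k+1}$, every level-$k$ center is literally a level-$(k+1)$ center. Property (5) will be a special case of (3), because $x_0\in X_k$ for every $k$, so $x_0$ is a center at every level and the cube that contains $x_0$ has center $x_0$.

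First I would fix the ancestor maps. Using only that each $X_{k-1}$ is $2^{-(k-1)}$-dense in $X$, define $\rho_{k-1}\colon X_k\to X_{k-1}$ by $\rho_{k-1}(x)=x$ if $x\in X_{k-1}$, and otherwise let $\rho_{k-1}(x)$ be a nearest point of $X_{k-1}$ to $x$, with ties broken by a fixed enumeration of the countable set $X_{k-1}$, so that $\dist(x,\rho_{k-1}(x))<2^{-(k-1)}$. Composing yields ancestor maps $\rho_m\colon X_k\to X_m$ for every $m\le k$ with $\rho_m|_{X_m}=\mathrm{id}$, i.e.\ a bi-infinite rooted forest whose level-$k$ vertices are the points of $X_k$, and a telescoping estimate along ancestor chains gives
\[
\dist\big(z,\rho_k(z)\big)\ \le\ \sum_{j=k+1}^{\ell}2^{-(j-1)}\ <\ 2\cdot 2^{-k}\qquad(z\in X_\ell,\ k\le\ell).
\]
The second ingredient is the family $(a_k)$, and here lies the subtlety: the obvious choices — assigning $w$ to a nearest point of $X_k$ at each level independently, or refining ``nearest child'' downward from a fixed reference level — are either not coherent or lose a definite fraction of $2^{-k}$ per scale, and so do not deliver the sharp roundness constants. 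Instead one builds $(a_k)$ by a greedy scheme run simultaneously across all scales that keeps $w$ inside its current cube unless the net structure forces it out, so that $\dist(w,a_\ell(w))\lesssim 2^{-\ell}$ for all $w$ and $\ell$ and each $Q_{k,x}$ ends up being, up to the greedy corrections, a union $\bigcup_{\ell\ge k}\bigcup_{z}$ of $2^{-\ell}$-thick pieces around the level-$\ell$ descendants $z$ of $x$. Since each $a_k$ is assembled from countably many nearest-point selections among countable sets, every fiber $Q_{k,x}=a_k^{-1}(x)$ is Borel (a countable Boolean combination of open and closed balls).

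The substantive point, and the step I expect to be the main obstacle, is the roundness in (3) — and with it (5) — with the explicit constants $\tfrac16$ and $\tfrac83$. The outer inclusion $Q_{k,i}\subset B(x_{k,i},\tfrac83\cdot 2^{-k})$ is the soft half: any $w\in Q_{k,x}$ lies within a fixed multiple of $2^{-\ell}$ of some level-$\ell$ descendant of $x$ for each large $\ell$, so the telescoping bound gives $\dist(w,x)<2\cdot 2^{-k}+c\,2^{-\ell}$, and letting $\ell\to\infty$ one gets $\dist(w,x)\le 2\cdot2^{-k}<\tfrac83\cdot 2^{-k}$. The inner inclusion $U(x_{k,i},\tfrac16\cdot 2^{-k})\subset Q_{k,i}$ is where the work concentrates: one must verify that the greedy itinerary of a point starting $\tfrac16 2^{-k}$-close to a center $x$ can never leave the subtree of $x$, which requires tracking scale by scale exactly how far such a point can drift under the ancestor relation, and crucially uses that the nets are $2^{-k}$-\emph{separated}, so that distinct candidate centers at each level stand $2^{-k}$ apart (this separation also makes the inner balls of distinct cubes disjoint, consistently with (1)--(2)). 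This constant bookkeeping is the technical heart of the argument and is precisely what is carried out in \cite{KRS-cubes}; the same scheme applies verbatim for any scaling parameter in $(0,1)$ in place of $1/2$, with the constants adjusted accordingly. Finally, the standing hypothesis that balls are totally bounded enters only to guarantee that the nets $X_k$ exist and are countable, which is all that is needed for the selections above to make sense and for the cubes to be Borel.
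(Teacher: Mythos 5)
Observe first that the paper does not prove this theorem: it is a quoted result, and the paper's only remark about it is the parenthetical note that one should invoke \cite{KRS-cubes} with scaling parameter $r=1/4$ and then duplicate generations to pass to dyadic scales. So there is no internal proof to compare your attempt against beyond that remark.

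Your proposal sets up a reasonable architecture — nearest-point ancestor maps $\rho_m$ on the nested nets, coherent itinerary maps $a_k\colon X\to X_k$, cubes as fibers $a_k^{-1}(x)$ — and correctly diagnoses the central tension: independent nearest-point selection at each scale gives closeness $d(w,a_k(w))<2^{-k}$ but breaks coherence, while coherent top-down refinement restores coherence but, a priori, forfeits a definite fraction of $2^{-k}$ per scale. But that tension is never resolved. ``A greedy scheme run simultaneously across all scales that keeps $w$ inside its current cube unless the net structure forces it out'' is not a definition of the maps $a_k$, and you concede explicitly that producing such maps and verifying the inner and outer radii in (3) is ``precisely what is carried out in \cite{KRS-cubes}.'' Since, as you yourself observe, (1), (2), (4), and (5) are bookkeeping once (3) is in hand — with (5) and the surjectivity implicit in (4) both resting on the inner ball — item (3) is the entire content of the theorem, and the proposal leaves it unproved. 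Two downstream claims inherit the gap: the telescoping outer bound $d(w,a_k(w))<2\cdot 2^{-k}$ presupposes $d(w,a_\ell(w))\lesssim 2^{-\ell}$ as $\ell\to\infty$, which is exactly what the unspecified scheme is supposed to supply; and Borel measurability of the fibers cannot be checked without knowing what $a_k$ actually is. So what you have is a correct identification of what needs to be done, not a proof of it.
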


(To derive Theorem \ref{t-KRS} as stated, invoke the theorem in \cite{KRS-cubes} with $r=1/4$ and duplicate every generation of 4-adic cubes. The resulting cubes are the dyadic cubes.)

Given a fixed system of KRS cubes $(\Delta_k)_{k\in\ZZ}$ and $Q=Q_{k,i}\in \Delta_k$, we let $x_Q=x_{k,i}$ denote its \emph{center} and let $\side Q=2^{-k}$ denote its \emph{side length}. Furthermore, we define $$\lambda U_Q=U(x_Q, \tfrac16\lambda \cdot2^{-k})\quad\text{and}\quad \lambda B_Q=B(x_Q, \tfrac{8}{3}\lambda\cdot2^{-k})$$ for all $\lambda>0$. Given $Q\in\Delta_k$ and $R\in\Delta_{k+1}$, we say that $R$ is a \emph{child} of $Q$ if $R\subset Q$. Let $\Child(Q)$ denote the set of all children of $Q$. Extending this metaphor, we may define \emph{grandchildren}, \emph{descendants}, \emph{parents}, \emph{grandparents}, \emph{ancestors}, and \emph{siblings} in the natural way as convenient. Finally, we assign $\Delta=\bigcup_{k\in\ZZ}\Delta_k$; i.e.~$\Delta$ is the set of all cubes.

\begin{definition}\label{trees-and-leaves} We say that $\mathcal{T}\subset\Delta$ is a \emph{tree of cubes} if $\mathcal{T}$ has a unique maximal element $\Top(\mathcal{T})$ such that if $Q\in\mathcal{T}$, then $P\in\mathcal{T}$ for all $P\in\Delta$ with $Q\subset P\subset \Top(\mathcal{T})$. For each level $l\geq 0$, let $\mathcal{T}_l$ denote the collection of all cubes $Q\in\mathcal{T}$ with $\side Q=2^{-l}\side \Top(\mathcal{T})$. An \emph{infinite branch} of $\mathcal{T}$ is a chain $\Top(\mathcal{T})\equiv Q_0\supset Q_1\supset Q_2\supset\cdots$ with $Q_l\in\mathcal{T}_l$ for all $l\geq 0$. We define the \emph{set of leaves} of $\mathcal{T}$ by $$\leaves(\mathcal{T}):= \bigcup\left\{\bigcap_{l=0}^\infty Q_l:Q_0\supset Q_1\supset Q_2\supset\cdots\text{ is an infinite branch of $\mathcal{T}$}\right\}.$$\end{definition}

\begin{remark} Because $X$ has totally bounded balls, $\#\mathcal{T}_l<\infty$ for all $l\geq 0$. Using K\"onig's lemma (i.e.~in a graph with infinitely many vertices, each of which has finite degree, there exists an infinite path), it can thus be shown that $\leaves(\mathcal{T})=\bigcap_{l=0}^\infty\bigcup\mathcal{T}_l$. In particular, $\leaves(\mathcal{T})$ is a Borel set, since cubes in $\Delta$ are Borel.\end{remark}

\begin{definition}[cf.~{\cite[p.~18]{badger-schul}}] \label{S-functions} For any locally finite Borel measure $\mu$ on $X$, tree of cubes $\mathcal{T}$, and function $b:\mathcal{T}\rightarrow [0,\infty)$, we define the \emph{$\mu$-normalized sum function} $$S_{\mathcal{T},b}(\mu,x):=\sum_{Q\in\mathcal{T}} b(Q)\, \frac{\chi_Q(x)}{\mu(Q)}\in[0,\infty]\quad\text{for all }x\in X,$$ where we interpret $0/0=0$ and $1/0=\infty$.\end{definition}

The following lemma is a slight variation on the Hardy-Littlewood maximal theorem for dyadic cubes in $\RR^n$. The proof works in the metric setting, because the system of cubes $\Delta$ satisfies properties (1) and (2) in Theorem \ref{t-KRS}.

\begin{lemma}[localization, {cf.~\cite[Lemma 5.6]{badger-schul}}]
\label{l-localize}
Let $\mu$ be a locally finite Borel measure on $X$, let $\mathcal{T}$ be a tree of cubes, and let $b:\mathcal{T}\rightarrow[0,\infty)$. Fix $0<N<\infty$ and
define
\begin{equation} \label{A-def}
A := \left\{ x \in \leaves(\mathcal{T}) : S_{\mathcal{T},b}(\mu,x)\leq N\right\}.
\end{equation} If $\mu(A)>0$ and $0<\varepsilon<1$, then
there is a set $\mathcal{G} \subset \mathcal{T}$
such that
\begin{enumerate}
    \item $\mathcal{G}$ is a tree of cubes with $\Top(\mathcal{G}) = \Top(\mathcal{T})$,
    \item $\mu(A \cap \leaves(\mathcal{G})) \geq (1-\varepsilon)\mu(A)$, and
    \item $\sum_{Q \in \mathcal{G}} b(Q) < (N/\varepsilon)\mu(\Top(\mathcal{T}))$.
\end{enumerate}
\end{lemma}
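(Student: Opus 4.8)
The plan is a Calder\'on--Zygmund stopping-time argument on the tree $\mathcal{T}$, in the spirit of the dyadic maximal theorem quoted before the lemma, with care taken that the ``good set'' is captured by the \emph{leaves} of the selected subtree rather than merely sitting inside its top cube. Write $R_0:=\Top(\mathcal{T})$; since $R_0$ is contained in a ball and $\mu$ is locally finite we have $0<\mu(A)\le\mu(R_0)<\infty$. The one place the hypothesis on $A$ enters is the ``energy estimate'': by Tonelli's theorem and the definition of $A$,
\begin{equation*}
\sum_{Q\in\mathcal{T}}\frac{b(Q)}{\mu(Q)}\,\mu(A\cap Q)=\int_A S_{\mathcal{T},b}(\mu,x)\,d\mu(x)\le N\mu(A),
\end{equation*}
where, with the stated conventions, a cube $Q$ with $\mu(Q)=0$ and $b(Q)>0$ has $A\cap Q=\emptyset$ and so contributes nothing on the left.

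Next I would fix the threshold $t:=\varepsilon\,\mu(A)/\mu(R_0)\in(0,\varepsilon)$ and let $\mathcal{S}$ be the collection of cubes $Q\in\mathcal{T}$ with $Q\ne R_0$ that are maximal with respect to inclusion subject to the stopping condition ``$\mu(Q)=0$ or $\mu(A\cap Q)<t\,\mu(Q)$''. By the nesting of cubes, distinct members of $\mathcal{S}$ are pairwise disjoint, so $\sum_{Q\in\mathcal{S}}\mu(Q)=\mu\big(\bigcup_{Q\in\mathcal{S}}Q\big)\le\mu(R_0)$. I then set $\mathcal{G}:=\{Q\in\mathcal{T}:Q\not\subseteq Q'\text{ for every }Q'\in\mathcal{S}\}$. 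Since $\mu(A\cap R_0)=\mu(A)\ge\varepsilon\mu(A)=t\,\mu(R_0)$ and $\mu(R_0)>0$, the top cube $R_0$ is not in $\mathcal{S}$, so $R_0\in\mathcal{G}$; and since $\mathcal{G}$ contains every ancestor (in $\mathcal{T}$, up to $R_0$) of each of its members -- a cube contained in some $Q'\in\mathcal{S}$ forces all its descendants to be contained in $Q'$ as well -- the collection $\mathcal{G}$ is a tree of cubes with $\Top(\mathcal{G})=R_0$. This is conclusion (1).

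The crux is the identity $A\cap\leaves(\mathcal{G})=A\setminus\bigcup_{Q\in\mathcal{S}}Q$. For ``$\supseteq$'': if $x\in A$ lies in no member of $\mathcal{S}$, then, since $x\in A\subseteq\leaves(\mathcal{T})$, there is an infinite branch $R_0=Q_0\supset Q_1\supset\cdots$ of $\mathcal{T}$ with $x\in\bigcap_\ell Q_\ell$; each $Q_\ell$ contains $x$, hence cannot be contained in any member of $\mathcal{S}$ (that would put $x$ in such a member), so every $Q_\ell\in\mathcal{G}$ and $x\in\bigcap_\ell Q_\ell\subseteq\leaves(\mathcal{G})$. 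For ``$\subseteq$'': if $x\in\leaves(\mathcal{G})$ lies on an infinite branch $Q_0\supset Q_1\supset\cdots$ of $\mathcal{G}$, and if $x\in Q'$ for some $Q'\in\mathcal{S}$ with $\side Q'=2^{-m}\side R_0$, then $Q'$ and $Q_m$ are cubes of $\Delta$ of equal side length both containing $x$, so $Q'=Q_m$ by the partitioning and nesting in Theorem~\ref{t-KRS}; but $Q_m\in\mathcal{G}$ while $\mathcal{S}\cap\mathcal{G}=\emptyset$, a contradiction -- so no point of $\leaves(\mathcal{G})$ lies in $\bigcup_{Q\in\mathcal{S}}Q$. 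Conclusion (2) follows: each $Q\in\mathcal{S}$ satisfies $\mu(A\cap Q)\le t\,\mu(Q)$ in both cases of the stopping condition, so $\mu\big(A\cap\bigcup_{Q\in\mathcal{S}}Q\big)=\sum_{Q\in\mathcal{S}}\mu(A\cap Q)\le t\,\mu(R_0)=\varepsilon\mu(A)$, whence $\mu(A\cap\leaves(\mathcal{G}))=\mu(A)-\mu\big(A\cap\bigcup_{Q\in\mathcal{S}}Q\big)\ge(1-\varepsilon)\mu(A)$.

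For conclusion (3), observe that every $Q\in\mathcal{G}$ has $\mu(Q)>0$ and $\mu(A\cap Q)\ge t\,\mu(Q)$: otherwise $Q$ would be $R_0$ (excluded) or would satisfy the stopping condition and thus be contained in a maximal such cube, i.e.\ in a member of $\mathcal{S}$, contrary to $Q\in\mathcal{G}$. Using $\mu(Q)\le t^{-1}\mu(A\cap Q)$ for $Q\in\mathcal{G}$, then Tonelli, then $\mathcal{G}\subseteq\mathcal{T}$, and finally the energy estimate,
\begin{equation*}
\sum_{Q\in\mathcal{G}}b(Q)=\sum_{Q\in\mathcal{G}}\frac{b(Q)}{\mu(Q)}\,\mu(Q)\le\frac1t\sum_{Q\in\mathcal{G}}\frac{b(Q)}{\mu(Q)}\,\mu(A\cap Q)\le\frac1t\int_A S_{\mathcal{T},b}(\mu,x)\,d\mu(x)\le\frac{N\mu(A)}{t}=\frac{N}{\varepsilon}\,\mu(\Top(\mathcal{T})),
\end{equation*}
which gives (3) (the strict inequality coming from slack in the chain -- e.g.\ $\mu(R_0)\le t^{-1}\mu(A\cap R_0)$ is strict for the top cube since $\varepsilon<1$, the degenerate case $b(R_0)=0$ being routine). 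I expect the only real obstacle to be the bookkeeping in the third paragraph: pinning down $\leaves(\mathcal{G})$ as $A\setminus\bigcup_{Q\in\mathcal{S}}Q$ is exactly where the definition of leaves via infinite branches is indispensable -- and, as the authors note in the acknowledgements, where the analogous lemma in \cite{badger-schul} must take $A\subseteq\leaves(\mathcal{T})$ rather than $A\subseteq\Top(\mathcal{T})$. Everything else is the standard dyadic maximal-function computation.
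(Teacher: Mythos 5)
Your proof is correct and takes essentially the same approach as the paper: a Calder\'on--Zygmund stopping-time argument on the tree, with your stopping family $\mathcal{S}$ (maximal cubes satisfying the low-density condition) playing the role of the paper's maximal bad cubes $\mathcal{B}$, and your $\mathcal{G}$ coinciding with the paper's good cubes. You supply more detail than the paper on the key set identity $A\cap\leaves(\mathcal{G})=A\setminus\bigcup_{Q\in\mathcal{S}}Q$ (the paper asserts the inclusion $A\setminus\leaves(\mathcal{G})\subset\bigcup_{\mathcal{B}}R$ without proof), which is valuable given that this is precisely the point that was botched in the original reference, as the authors note in the acknowledgements. One small wrinkle: you use strict inequality ($<$) in the stopping condition, so good cubes satisfy only $\mu(A\cap Q)\ge t\,\mu(Q)$, and your chain for (3) a priori yields $\le$, not $<$; your parenthetical about ``slack in the chain'' handles $b(R_0)>0$ but waves at the case $b(R_0)=0$. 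The paper sidesteps this by using a non-strict stopping condition ($\le$), so that good cubes satisfy $\mu(A\cap Q)>t\,\mu(Q)$ strictly, which feeds a strict inequality directly into the final sum (and the case $b\equiv 0$ on $\mathcal{G}$ is trivial since the right side of (3) is positive). It would be cleanest to adopt $\le$ in your stopping rule; everything else in your argument goes through unchanged.
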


\begin{proof} Suppose that $\mu$, $\mathcal{T}$, $b$, $N$, $A$, and $\varepsilon$ are given as in the statement of the lemma. Note that $\mu(\Top(\mathcal{T}))>0$, because $\mu(A)>0$. Declare a cube $Q\in\mathcal{T}$ to be \emph{bad} if there exists $R\in\mathcal{T}$ such that $Q\subset R$ and \begin{equation}\label{bad-def} \mu(A\cap R)\leq \frac{\varepsilon\mu(A)}{\mu(\Top(\mathcal{T}))}\mu(R).\end{equation} By design, this definition ensures that every child of a bad cube in $\mathcal{T}$ is bad too.

We say that a cube $Q\in\mathcal{T}$ is \emph{good} if $Q$ is not bad. Note that if $R\in\mathcal{T}$ and $\Top(\mathcal{T})\subset R$, then $R=\Top(\mathcal{T})$ and \begin{equation}\mu(A\cap\Top(\mathcal{T})) = \mu(A) > \frac{\varepsilon\mu(A)}{\mu(\Top(\mathcal{T}))}\mu(\Top(\mathcal{T})),\end{equation} since $\varepsilon<1$ and $\mu(A)>0$. Hence $\Top(\mathcal{T})$ is a good cube. Let $\mathcal{G}$ denote the set of all good cubes. Because $\Top(\mathcal{T})$ is in $\mathcal{G}$ and every parent of a good cube in $\mathcal{T}$ is again a good cube, we conclude that $\mathcal{G}$ is a subtree of $\mathcal{T}$ with $\Top(\mathcal{G})=\Top(\mathcal{T})$. This verifies (1).

Next, we check (2). There are two cases. First, if there are no bad cubes, then $\mathcal{G}=\mathcal{T}$ and we trivially have $\mu(A\cap\leaves(\mathcal{G}))=\mu(A)> (1-\varepsilon)\mu(A)$. Otherwise, there is at least one bad cube. Let $\mathcal{B}$ denote the set of all maximal bad cubes, i.e.~ the set of all bad cues that are not properly contained in another bad cube. Note that $\mathcal{B}$ is pairwise disjoint. Thus, using \eqref{bad-def}, we see that \begin{equation}\begin{split}\label{total-bad-mass} \mu(A\setminus(A\cap\leaves(\mathcal{G}))) \leq \sum_{R\in\mathcal{B}} \mu(A\cap R)
&\leq \frac{\varepsilon\mu(A)}{\mu(\Top(\mathcal{T}))} \sum_{R\in\mathcal{B}} \mu(R) \\
&\leq \frac{\varepsilon\mu(A)}{\mu(\Top(\mathcal{T}))}\mu(\Top(\mathcal{T}))
=\varepsilon\mu(A).\end{split}\end{equation} Thus, $\mu(A\cap\leaves(\mathcal{G}))=\mu(A)-\mu(A\setminus(A\cap\leaves(\mathcal{G})))
\geq (1-\varepsilon)\mu(A)$.

Finally, using the definitions of $A$ and $S_{\mathcal{T},b}$, Tonelli's theorem, and the definition of good cubes, we obtain \begin{equation}\begin{split}
N\mu(A) \geq \int_A S_{\mathcal{T},b}(x)\,d\mu(x)
&=\int_A\sum_{Q\in\mathcal{T}}b(Q)\frac{\chi_Q(x)}{\mu(Q)}\,d\mu(x)\\
&=\sum_{Q\in\mathcal{T}} b(Q) \frac{\mu(A\cap Q)}{\mu(Q)}
> \sum_{Q\in\mathcal{G}} b(Q) \frac{\varepsilon\mu(A)}{\mu(\Top(\mathcal{T}))}.
\end{split}\end{equation} Rearranging yields (3).
\end{proof}

\begin{remark}\label{r-localize} As stated, \cite[Lemma 5.6]{badger-schul} is false in general. Let us describe the problem. Instead of using \eqref{A-def}, the set $A$ in \cite{badger-schul} was defined as $A:= \{x\in\Top(\mathcal{T}):S_{\mathcal{T},b}(x)\leq N\}$. It was then asserted without justification that $$A\setminus \bigcup_{\text{bad cubes }Q\in\mathcal{T}} Q= A\cap \leaves(\mathcal{G}),$$ which is not true unless $\leaves(\mathcal{T})=\Top(\mathcal{T})$.
\end{remark}

Mimicking the usual construction of Whitney cubes in $\RR^n$, we may use a system of KRS cubes to build Whitney cubes in the complement of any closed set.

\begin{lemma}\label{l-whitney} If $E\subsetneq X$ is a nonempty closed set, then there exists a family $\mathcal{W}$ of cubes in $\Delta$ with the following properties. \begin{enumerate}
\item partitioning: $X\setminus E=\bigcup_{W\in\mathcal{W}} W$ and $W_1\cap W_2\neq\emptyset$ if and only if $W_1=W_2$;
\item size and location: $\diam W \leq \dist(W,E)$ for all $W\in\mathcal{W}$,\end{enumerate} where $\dist(W,E)=\inf_{w\in W}\inf_{x\in E} d(w,x)$. Moreover, if there exists a constant $c>0$ such that $\diam U(x,r)\geq cr$ whenever $x\in X$, $r>0$, and $U(x,r)\neq X$, then \begin{enumerate}\setcounter{enumi}{2} \item $\dist(W,E)<(128/c)\diam W$ for all $W\in \mathcal{W}$.\end{enumerate}
\end{lemma}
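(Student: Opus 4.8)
The plan is to transport the classical Whitney decomposition to the KRS cube system $\Delta$. For $x\in X\setminus E$ write $Q_k(x)$ for the unique member of $\Delta_k$ containing $x$ (Theorem~\ref{t-KRS}(1)); by nesting these form a chain $Q_k(x)\subset Q_{k-1}(x)$, and by Theorem~\ref{t-KRS}(3) one has $U(x_{Q_k(x)},\tfrac16 2^{-k})\subset Q_k(x)\subset B(x_{Q_k(x)},\tfrac83 2^{-k})$, so in particular $\diam Q_k(x)\le\tfrac{16}{3}2^{-k}$. Put $\mathcal F:=\{Q\in\Delta:\diam Q\le\dist(Q,E)\}$. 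Since $E$ is closed, $\rho:=\dist(x,E)>0$, and $\dist(Q_k(x),E)\ge\rho-\diam Q_k(x)$ while $\dist(Q_k(x),E)\le\rho$; hence $\{k\in\ZZ:Q_k(x)\in\mathcal F\}$ contains all sufficiently large $k$ (where $\diam Q_k(x)$ is tiny) and is bounded below (for $k$ in the set, $\diam Q_k(x)\le\rho<\infty$, which fails once $2^{-k}$ is large: there $Q_k(x)=X$, ruled out of $\mathcal F$ because $E\ne\emptyset$ gives $\dist(X,E)=0<\diam X$, or else $\diam Q_k(x)>\rho$). Thus $k(x):=\min\{k:Q_k(x)\in\mathcal F\}$ is well-defined; set $Q(x):=Q_{k(x)}(x)$ and $\mathcal W:=\{Q(x):x\in X\setminus E\}$. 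Property~(2) then holds for $\mathcal W$ by construction.

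Next I would establish property~(1). Each $W=Q(x)\in\mathcal W$ satisfies $W\subset X\setminus E$: if some $e\in W\cap E$, then $\dist(W,E)=0$, forcing $\diam W=0$, so $W$ is a singleton lying in $E$, contradicting $x\in W\setminus E$. Hence $\bigcup_{W\in\mathcal W}W\subset X\setminus E$, and the reverse inclusion is immediate since $x\in Q(x)$. For disjointness, suppose $W_1=Q(x_1)\in\Delta_{k_1}$ and $W_2=Q(x_2)\in\Delta_{k_2}$ intersect; by nesting one contains the other, say $W_1\subset W_2$, so $k_1\ge k_2$. Since $x_1\in W_1\subset W_2$ and $\Delta_{k_2}$ partitions $X$, we get $W_2=Q_{k_2}(x_1)$; as $W_2\in\mathcal F$, minimality of $k(x_1)$ forces $k_2\ge k_1$, hence $k_1=k_2$ and $W_1=W_2$. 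This proves (1).

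Finally, assume the hypothesis of (3): $\diam U(y,r)\ge cr$ whenever $U(y,r)\ne X$. Fix $W=Q(x)=Q_k(x)\in\mathcal W$ with center $x_W$. Since $E\ne\emptyset$ and $E\subsetneq X$, the cube $W$ is not all of $X$ (otherwise $\dist(X,E)=0$ would force $\diam X=0$, impossible as $\lvert X\rvert\ge2$), so $U(x_W,\tfrac16 2^{-k})\subset W\subsetneq X$, whence $\diam W\ge c\cdot\tfrac16 2^{-k}$, i.e.\ $2^{-k}\le\tfrac6c\diam W$. Let $\hat W=Q_{k-1}(x)$ be the parent of $W$; by minimality of $k(x)$ we have $\hat W\notin\mathcal F$, i.e.\ $\dist(\hat W,E)<\diam\hat W$. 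Using $x\in\hat W\subset B(x_{\hat W},\tfrac{16}{3}2^{-k})$, the bound $\diam\hat W\le\tfrac{32}{3}2^{-k}$, and the triangle inequality routed through a point of $\hat W$, I would estimate
\begin{align*}
\dist(W,E)&\le d(x,E)\le d(x,x_{\hat W})+d(x_{\hat W},E)\\
&\le\tfrac{16}{3}2^{-k}+\bigl(\tfrac{16}{3}2^{-k}+\dist(\hat W,E)\bigr)<\tfrac{32}{3}2^{-k}+\diam\hat W\le\tfrac{64}{3}2^{-k},
\end{align*}
so that combining with $2^{-k}\le\tfrac6c\diam W$ yields $\dist(W,E)<\tfrac{128}{c}\diam W$, as desired.

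The routine Whitney bookkeeping transfers essentially verbatim from $\RR^n$. The two places that need care are: the attainment of the minimum defining $k(x)$ (equivalently, the existence of maximal ``good'' cubes), which uses that $E$ is nonempty and closed and that cube sizes are ultimately controlled by generation, a point that is transparent in the quasiconvex/Carnot setting of interest but slightly delicate for very irregular $X$; and the propagation of the rather loose roundness constants $\tfrac16,\tfrac83$ from Theorem~\ref{t-KRS}(3) through the chain of inequalities for (3), where one is forced to pass through a point of the parent cube $\hat W$ in order to replace $\dist(\hat W,E)$ by $\diam\hat W$. I expect this last constant-chasing to be the step most prone to slips.
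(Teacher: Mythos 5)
Your proof is correct and follows essentially the same route as the paper: both take the maximal cubes satisfying $\diam Q\le\dist(Q,E)$ as the Whitney family, derive (1) and (2) from nesting and maximality, and obtain (3) by comparison with the parent cube, using the roundness of KRS cubes to lower-bound $\diam W$ via $\diam U_W\ge c\cdot\tfrac16\side W$. Your constant-chasing is routed slightly differently (through $x$ and $x_{\hat W}$, versus the paper's chain $\dist(W,E)\le\dist(V,E)+\diam V<2\diam B_V\le(128/c)\diam U_W\le(128/c)\diam W$) but lands on the same bound, and the existence point you flag (that good cubes do not persist to arbitrarily coarse scales, so $k(x)$ is well-defined) is treated equally tersely in the paper.
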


\begin{proof}Given a nonempty closed set $E$ with nonempty complement, take $\mathcal{W}$ to be any maximal family of cubes $W\in\Delta$ such that $\dist(W,E)\geq \diam W$. The partitioning property follows from maximality and properties (1), (2), and (3) of Theorem \ref{t-KRS}. Let $W\in\mathcal{W}$. One the one hand, $\dist(W,E)\geq \diam W$ by definition of the family. On the other hand, let $V$ be the parent of $W$ in $\Delta$. Then $\dist(V,E)<\diam V$ by maximality. Thus, under the extra assumption on the diameters of open balls, \begin{equation*}\dist(W,E)\leq \dist(V,E)+\diam V < 2\diam B_V \leq (128/c)\diam U_W \leq (128/c)\diam W. \qedhere\end{equation*}\end{proof}

\begin{remark}\label{r:halving} Suppose that $X$ is a doubling metric measure space in the sense that there is a Borel measure $\mu$ on $X$ and constant $C>0$ such that \eqref{doubling-mu} holds for all $x\in X$ and $r>0$. By (2) and (3) in Theorem \ref{t-KRS}, for any $Q\in \Delta_k$ and $R\in\Child(Q)$, we have $$Q\subset B(x_R,\diam B_Q)\subset B(x_R, \tfrac{16}{3}\cdot 2^{-k})\text{ and } B(x_R, \tfrac{1}{12} \cdot 2^{-k}-\varepsilon)\subset U(x_R,\tfrac{1}{6}\cdot2^{-(k+1)})\subset U_R$$
for any $0 < \varepsilon < \tfrac{1}{12} \cdot 2^{-k}$.
Doubling of the measure at $x_R$ yields $\mu(Q)\leq C^7\mu(U_R)$ for all $R\in\Child(Q)$. Hence $$\mu(Q)=\sum_{R\in\Child(Q)}\mu(R) \geq \sum_{R\in\Child(Q)}\mu(U_R) \geq C^{-7}\mu(Q)\cdot\#\Child(Q).$$ That is, $\#\Child(Q)\leq  C^7$ for every KRS cube $Q$.\end{remark}

\section{Traveling salesman algorithm in Carnot groups} \label{s:construction}

From here through the end of \S\ref{s-proofs}, let $G$ be a fixed Carnot group that is homeomorphic to $\RR^n$ and has step $s$ and homogeneous dimension $q$. Also, choose metrics $d_i$ associated to a Hebisch-Sikora norm on $G_i=G/G^{(i+1)}$ for all $1\leq i\leq s$.

In this section, our goal is to prove the following traveling salesman type criterion for existence of a rectifiable curve passing through the Hausdorff limit of a sequence of point clouds. Crucially, the weak coherence condition $(V_\two)$ only requires that each cloud lie nearby, but not necessarily on, the rectifiable curve. We will use this flexibility in the proof of Lemma~\ref{l-makeacurve}. In the Euclidean setting, Proposition \ref{p:goal} is due to the first author and Schul \cite{badger-schul}, based in part on earlier constructions in \cite{Jones-TSP} and \cite{Lerman}. There are at least two difficulties in extending this criterion to arbitrary Carnot groups. The first challenge is in the statement of the criterion. The number $\alpha_{k,v}$ is a penalty term that bounds the \emph{stratified distance} to a horizontal line $\ell_{k,v}$ of points $x$ in the clouds $V_{k-1}$ \emph{and} $V_k$ that lie nearby the point $v$ in $V_k$; the correct dependence on the step $s$ in \eqref{e:line-containment} and \eqref{e:Gamma-bound} was only recently identified by the second author \cite{Li-TSP}. Another technical challenge for higher step groups appears in the proof. In the Euclidean case, all length estimates can be stated in terms of the total Hausdorff measure of line segments of approximating curves. However, in the general Carnot setting, we need to employ two notions: edge length of projections of abstract graphs $\Gamma_k$ connecting $V_k$ onto the horizontal layer of $G$ and Hausdorff measure of geometric realizations $\widehat{\Gamma}_k$ of the graphs in the whole space $G$ (see \S\ref{ss:start}).

\begin{proposition}[traveling salesman criterion for point clouds] \label{p:goal}
  Let $x_0 \in G$, let $C^\star \geq 1$, and let $r_0 > 0$.  Suppose that $(V_k)_{k=0}^\infty$ is a sequence of nonempty finite subsets of $B(x_0, C^\star r_0)$ such that
  \begin{itemize}
    \item[$(V_I)$]\label{V1} $d(v,v') \geq 2^{-k}r_0$ for all distinct points $v,v' \in V_k$,
    \item[$(V_\two)$]\label{V2} for all $v_k \in V_k$, there exists $v_{k+1} \in V_{k+1}$ such that $d(v_{k+1},v_k) \leq C^\star2^{-k}r_0$,
    \item[$(V_\three)$]\label{V3} for all $v_k \in V_k$, there exists $v_{k-1} \in V_{k-1}$ such that $d(v_{k-1},v_k) \leq C^\star2^{-k}r_0$.
  \end{itemize}
  Suppose also that, for all $k \geq 1$ and all $v \in V_k$, there is a horizontal line $\ell_{k,v}$ in $G$ and a number $\alpha_{k,v} \geq 0$ such that
    \begin{align}
    x \in \ell_{k,v} \cdot \delta_{2^{-k}r_0}(B_{\R^n}(\alpha_{k,v}^s))\quad\text{for all }x \in (V_{k-1} \cup V_k) \cap B(v, 65C^\star 2^{-k}r_0). \label{e:line-containment}
  \end{align}
  Finally, suppose that
   \begin{align}
    \sum_{k=1}^\infty \sum_{v \in V_k} \alpha_{k,v}^{2s} 2^{-k}r_0 < \infty. \label{e:Gamma-bound}
  \end{align}
  Then the sets $V_k$ converge in the Hausdorff metric to a compact set $V \subset B(x_0, C^\star r_0)$ and there exists a rectifiable curve $\Gamma \subset B(x_0,3C^\star r_0)$ such that $V\subset\Gamma$ and
  \begin{align}
    \cH^1(\Gamma) \lesssim_{G,C^\star} r_0 + \sum_{k=1}^\infty \sum_{v \in V_k} \alpha_{k,v}^{2s} 2^{-k} r_0. \label{e:Gamma-conclusion}
  \end{align}
\end{proposition}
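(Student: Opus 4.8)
The plan is to build the curve $\Gamma$ as a Hausdorff limit of explicit ``connecting curves'' $\widehat{\Gamma}_k$, following the Jones/Okikiolu/Badger--Schul scheme but tracking edge lengths in the horizontal layer separately from Hausdorff length in $G$, as the introduction to \S\ref{s:construction} indicates. First I would establish existence and basic geometry of the Hausdorff limit: conditions $(V_I)$ and $(V_\two)$ give that the sets $V_k$ form a Cauchy sequence in the Hausdorff metric on compact subsets of the complete space $B(x_0,C^\star r_0)$, so $V_k \to V$ with $V$ compact; the uniform separation $(V_I)$ controls $\#V_k \leq C(G,C^\star) \, 2^{kq}$ (say) via a packing argument against Lebesgue measure \eqref{Lebesgue-q}, and $(V_\three)$ guarantees no point of $V_k$ is isolated from the earlier scales so that the limit is ``filled in.''

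Next I would construct, for each $k$, an abstract graph $\Gamma_k$ on the vertex set $V_k$ by connecting each $v \in V_k$ to a bounded number of nearby points (its ``$(V_\two)$-ancestor'' in $V_{k-1}$ and the $(V_I)$-separated neighbors within $\sim C^\star 2^{-k}r_0$), realize each edge geometrically in $G$ by a short curve (e.g.\ a horizontal segment concatenation, or a controlled-length path joining the endpoints), and take $\widehat{\Gamma}_k$ to be the union; the ambient Hausdorff length of $\widehat{\Gamma}_k$ at the base scale is $\lesssim_{G,C^\star} r_0$ from the vertex count at scale $0$. The key estimate is the \emph{incremental length bound}: passing from $\widehat{\Gamma}_{k-1}$ to $\widehat{\Gamma}_k$, the added length near a vertex $v \in V_k$ is controlled by how far the local configuration $(V_{k-1}\cup V_k)\cap B(v, 65C^\star 2^{-k}r_0)$ deviates from the horizontal line $\ell_{k,v}$. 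Here is where \eqref{e:line-containment} enters: the points lie in the anisotropic box $\ell_{k,v}\cdot\delta_{2^{-k}r_0}(B_{\R^n}(\alpha_{k,v}^s))$, so their displacement transverse to $\ell_{k,v}$ in layer $i$ is $\lesssim (\alpha_{k,v}^s)^i\, 2^{-ik}r_0$; projecting to the horizontal layer (using that $\pi_i$ is $1$-Lipschitz for the nested Hebisch--Sikora norms, \S\ref{ss:Carnot}) and applying Lemma~\ref{l:taylor} and Lemma~\ref{l:BCH-bound} to pass between ``horizontal edge length'' and true length in $G$ converts the transverse deviations into a length increment $\lesssim \alpha_{k,v}^{2s}\,2^{-k}r_0$ per vertex. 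Summing over $v\in V_k$ and then over $k$ and using \eqref{e:Gamma-bound} gives $\sum_k (\text{length added at scale }k) \lesssim_{G,C^\star} r_0 + \sum_k\sum_{v\in V_k}\alpha_{k,v}^{2s}\,2^{-k}r_0 < \infty$.

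With the uniform length bound $\Haus^1(\widehat{\Gamma}_k) \leq M$ for all $k$, together with connectedness of each $\widehat{\Gamma}_k$ (which I would arrange in the construction, chaining each new vertex to an old one) and the containment $\widehat{\Gamma}_k \subset B(x_0, 3C^\star r_0)$, I would pass to a subsequential Hausdorff limit $\Gamma$: limits of uniformly-length-bounded connected compact sets are connected and compact with $\Haus^1(\Gamma) \leq \liminf \Haus^1(\widehat{\Gamma}_k) \leq M$ by lower semicontinuity of $\Haus^1$ under Hausdorff convergence of connected sets (Go\l\c ab's theorem, valid in complete metric spaces). Then Lemma~\ref{l-waz-2} upgrades ``closed, connected, $\Haus^1<\infty$'' to ``rectifiable curve.'' Finally $V \subset \Gamma$: each $v\in V_k$ is a vertex of $\widehat{\Gamma}_k$, and $(V_\two)$ propagates it forward through all later $\widehat{\Gamma}_m$ within distance $\sum_{j\geq k} C^\star 2^{-j}r_0 \lesssim 2^{-k}r_0$, so every point of $V = \lim V_k$ is a limit of points of the $\widehat{\Gamma}_m$ and hence lies in $\Gamma$.

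The main obstacle is the incremental length estimate in its ``stratified'' form: in $\RR^n$ one simply bounds the new length by the $\beta$-number times the scale, but in a step-$s$ Carnot group the transverse deviation in layer $i$ scales like the $i$-th power of $\alpha_{k,v}^s$, and one must show these all feed into a single clean bound of order $\alpha_{k,v}^{2s}\,2^{-k}r_0$ rather than, say, $\alpha_{k,v}^{2}\,2^{-k}r_0$ (which would fail to be summable against \eqref{e:Gamma-bound}). This is exactly the subtle exponent bookkeeping from \cite{Li-TSP}, and it requires careful use of Lemma~\ref{l:taylor} (the second-order Taylor control of the norm in the top layer, giving the $|y|^2$ and hence the doubling of the exponent $s \mapsto 2s$) and Lemma~\ref{l:BCH-bound} (to absorb lower-layer corrections of the BCH polynomials when concatenating edge-realizations). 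A secondary technical point is to keep the combinatorics of $\Gamma_k$ under control so that each vertex has uniformly bounded degree and $\widehat{\Gamma}_k$ stays connected while only adding $O(1)$ edges per new vertex; the separation condition $(V_I)$ and the doubling of $G$ make this routine but it must be set up compatibly with the length accounting. I expect the ``float'' of the clouds (that $V_{k+1}$ need not contain $V_k$) to cause only bookkeeping overhead here, handled exactly as in \cite{badger-schul} by allowing the new curve to bridge from the old vertices via the $(V_\three)$-neighbors.
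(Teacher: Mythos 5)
Your proposal correctly identifies the Jones/Okikiolu/Badger--Schul skeleton that the paper follows: build abstract graphs $\Gamma_k$ on the clouds $V_k$, realize them geometrically in $G$, pass to a Hausdorff limit via Go\l ab semicontinuity and Lemma~\ref{l-waz-2}, locate the exponent doubling $s\mapsto 2s$ in the second-order Taylor control of the Hebisch--Sikora norm (Lemma~\ref{l:taylor}), and use $(V_\three)$ to handle the ``float'' of the clouds. These are the right ingredients.

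However, there is a genuine gap in the core length estimate, which is where almost all of the work lies. You claim that the ``added length near a vertex $v\in V_k$'' is ``$\lesssim \alpha_{k,v}^{2s}2^{-k}r_0$ per vertex,'' and then sum over $v$ and $k$. Taken at face value this cannot be right: the edges actually drawn near $v$ in the new graph have length $\sim 2^{-k}r_0$, not $\alpha_{k,v}^{2s}2^{-k}r_0$, and summing $2^{-k}r_0$ over the $\gtrsim 2^{k}$ vertices of $V_k$ gives $\gtrsim r_0$ at \emph{every} scale, so $\sum_k$ diverges. The correct statement is a telescope on \emph{total} length, not a sum of per-scale increments: one must show $\ell(\Gamma_k)\leq\ell(\Gamma_{k-1})+C\sum_{v\in V_k}\alpha_{k,v}^{2s}2^{-k}r_0$ (plus bridge corrections), where the length of the new chain near $v$ is \emph{paid for} by the length of the old chain through the almost-isometry of the horizontal projection on nearly flat configurations; only the $(1+C\alpha_{k,v}^{2s})$ distortion factor from Proposition~\ref{p:proj-prop} survives as a correction. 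Setting up this telescope is exactly what forces the machinery your sketch omits: the Case~I/Case~II dichotomy (when $\alpha_{k,v}\geq\varepsilon$ the projection is useless and you pay the full $2^{-k}r_0\lesssim\varepsilon^{-2s}\alpha_{k,v}^{2s}2^{-k}r_0$); the bridges and extensions that span gaps where the projected points are far apart; and, most critically, the \emph{phantom length} prepayment at terminal vertices of local chains, without which there is no ``old edge'' to pay for a new one and the telescope cannot be closed. ``Chaining each new vertex to an old one'' with $O(1)$ edges is not a substitute for this bookkeeping.

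A secondary issue: the paper works with the \emph{projected} edge length $\ell(\Gamma_k)$ in $\RR^{n_1}$ and only afterwards converts to $\Haus^1$ in $G$; this conversion (the estimate \eqref{e:haus-len-bound}) is a separate step, again needing Proposition~\ref{p:proj-prop} and the Case~I bound $2^{-k}\lesssim\varepsilon^{-2s}\alpha^{2s}2^{-k}$, and does not follow merely from $\pi$ being 1-Lipschitz.
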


\begin{remark}\label{r:sean-tubular-beta} The motivation for the requirement \eqref{e:line-containment} on $\alpha_{k,v}$ comes from \cite{Li-TSP}. Recall that the stratified $\beta$ number $\beta_E(x,r)$ is defined by \eqref{sean-beta}. By \cite[Proposition 1.6]{Li-TSP}, \begin{equation}\label{tubular-beta} \beta_E(x,r)\sim\inf_L \inf\{\varepsilon>0:E\cap B(x,r) \subset L\cdot \delta_r(B_{\RR^n}(\varepsilon^s))\},\end{equation} where $B(x,r)$ is a ball in $G$, $B_{\RR^n}(\varepsilon^s)$ is a Euclidean ball about the origin of the manifold $\mathbb{R}^n$ underlying $G$, and $\varepsilon$ represents the ``width'' of a tubular neighborhood $L\cdot \delta_r(B_{\RR^n}(\varepsilon^s))$ of the horizontal line $L$ formed using the group multiplication, the group dilation, and the step of the group. The implicit constant in \eqref{tubular-beta} depends on $n$, $s$, and the choice of the metric $d_i$ in each layers $G_i$ of $G$, but is otherwise independent of $E$, $x$, and $r$.\end{remark}

The following auxiliary result captures an essential bi-Lipschitz property of projections near points that are relatively ``flat'', i.e.~close to a horizontal line relative to their scale of separation. It replaces \cite[Lemma 8.3]{badger-schul}, which was an application of the Pythagorean theorem in $\RR^n$.

\begin{proposition}
\label{p:proj-prop}
Assume $G$ is a Carnot group of step $s$,
  and let $\pi:G \to \mathbb{R}^{n_1}$ be the projection to the first layer of $G$.  For any $\alpha > 1$, there exist positive constants $C$ and $\varepsilon_0$ depending on $G$ and $\alpha$ so that if $L \subset G$ is a horizontal line, $P:G \to \pi(L)$ is the composition of $\pi$ with the orthogonal projection in $\mathbb{R}^{n_1}$ onto $\pi(L)$,
  and $a,b \in L \cdot B_{\mathbb{R}^n}(\varepsilon^s)$
  for some $\varepsilon < \varepsilon_0$ so that $d(a,b) \in [1,\alpha]$
  then
  \begin{align*}
    \frac{d(a,b)}{1 + C\varepsilon^{2s}} \leq |P(a) - P(b)| \leq d(a,b).
  \end{align*}
\end{proposition}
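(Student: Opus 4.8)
The plan is to reduce to a clean model computation after normalizing the horizontal line. Since the assertion is invariant under left translation by a point of $L$ and under the anisotropic dilations $\delta_t$ (both of which fix the class of horizontal lines, preserve $d$ up to the relevant scaling, and commute appropriately with $\pi$), I would first conjugate so that $L$ passes through the origin; then $\pi(L)$ is a $1$-dimensional subspace of $\R^{n_1}$ and $P$ is genuinely a linear orthogonal projection on the first layer. Writing $a = \ell_a \cdot u_a$ and $b = \ell_b \cdot u_b$ with $\ell_a,\ell_b\in L$ and $u_a,u_b\in B_{\R^n}(\varepsilon^s)$, the key point is that for horizontal lines through $0$ the first-layer projection $\pi$ restricted to $L$ is an isometry onto $\pi(L)$ (this is exactly the statement that horizontal lines are isometric copies of $\R$ sitting in the first layer), so $|P(a)-P(b)|$ should be comparable to $d(\ell_a,\ell_b)$, and the whole problem becomes: how far is $d(\ell_a,\ell_b)$ from $d(a,b)$ when $a,b$ lie within Euclidean distance $\varepsilon^s$ of $L$?

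The upper bound $|P(a)-P(b)|\le d(a,b)$ is immediate: $P$ is the composition of the $1$-Lipschitz projection $\pi$ (which is $1$-Lipschitz by the nested choice of Hebisch--Sikora norms in \S\ref{ss:Carnot}) with a linear orthogonal projection on Euclidean $\R^{n_1}$, which is also $1$-Lipschitz there; hence $P$ is $1$-Lipschitz from $(G,d)$ to $\pi(L)$. So the content is the lower bound. For that, I would use Lemma \ref{l:taylor}: after the normalization, the point $a^{-1}b$ can be written in exponential coordinates as $(w_1,\dots,w_{s-1},w_s)$ where $(w_1,\dots,w_{s-1},0)$ records the ``horizontal-line part'' of the displacement and the perturbation coming from $u_a,u_b$ is controlled, via the BCH bound Lemma \ref{l:BCH-bound}, to have size $O(\varepsilon^s)$ in each layer (here one uses $d(a,b)\in[1,\alpha]$ to keep all coordinates bounded, so that the hypotheses $N(x_1,\dots,x_{s-1},0)\in[\alpha',1]$ of Lemma \ref{l:taylor} can be arranged after rescaling by a bounded factor, shrinking $\varepsilon_0$ as needed). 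Applying Lemma \ref{l:taylor} gives
\begin{equation*}
0 \le N(w_1,\dots,w_{s-1},w_s) - N(w_1,\dots,w_{s-1},0) \le C|w_s|^2 \lesssim \varepsilon^{2s},
\end{equation*}
and, iterating the same Taylor-type estimate down the layers (or invoking it once in the form already packaged in \cite{Li-TSP}), one controls $N(w_1,\dots,w_{s-1},0)$ against $N(\pi(w_1))=|w_1|$ up to an error $O(\varepsilon^{2s})$ as well. Since $N(a^{-1}b)=d(a,b)\sim 1$ and $|P(a)-P(b)| = |w_1^{\parallel}|$ where $w_1^{\parallel}$ is the component of $w_1$ along $\pi(L)$, and since flatness forces $w_1$ to be nearly parallel to $\pi(L)$ (the transverse component is $O(\varepsilon^s)$, hence contributes $O(\varepsilon^{2s})$ to $|w_1|^2$), we get $d(a,b) \le |P(a)-P(b)| + C\varepsilon^{2s}$, which after dividing by $d(a,b)\ge 1$ rearranges to $d(a,b)/(1+C\varepsilon^{2s}) \le |P(a)-P(b)|$, as claimed.

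The main obstacle is bookkeeping the layer-by-layer error propagation in a Carnot group of general step: one must ensure that a perturbation of Euclidean size $\varepsilon^s$ of the endpoints produces an error of size only $\varepsilon^{2s}$ — not merely $\varepsilon^s$ — in the homogeneous norm, and this quadratic gain is exactly what Lemma \ref{l:taylor} is built to deliver, but chaining it through all $s-1$ higher layers while keeping the normalization hypotheses valid (the ``$N(\cdots,0)\in[\alpha,1]$'' condition) requires care about which quantities are bounded above and below. I expect the cleanest route is to not iterate by hand but to cite the relevant estimate from \cite{Li-TSP} in the form of Lemma \ref{l:taylor} applied once to the top layer together with the tubular-neighborhood description \eqref{tubular-beta} of flatness, absorbing the lower-layer analysis into the constant $C$; the choice of $\varepsilon_0$ is then dictated by the smallness requirement $|y|\le 1/C$ in Lemma \ref{l:taylor} after the bounded rescaling that turns $d(a,b)\in[1,\alpha]$ into the normalized regime.
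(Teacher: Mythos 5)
Your proposal is correct and follows essentially the same route as the paper: normalize $L$ through the origin, use the BCH estimate (Lemma~\ref{l:BCH-bound}) to show all higher-layer coordinates of $a^{-1}b$ are $O(\varepsilon^s)$, iterate Lemma~\ref{l:taylor} to peel off layers and obtain $d(a,b)\le|\pi(b)-\pi(a)|+O(\varepsilon^{2s})$, and then a Pythagorean estimate converts $|\pi(b)-\pi(a)|$ to $|P(a)-P(b)|$ at the cost of another $O(\varepsilon^{2s})$. One small remark on the closing algebra: the paper uses the explicit lower bound $|P(a)-P(b)|\ge 1/4$ (forced by $d(a,b)\ge 1$ and the triangle inequality once $\varepsilon_0$ is small) to pass from the additive bound $d(a,b)\le|P(a)-P(b)|+C_3\varepsilon^{2s}$ to the multiplicative form; dividing by $d(a,b)\ge 1$ as you propose yields $(1-C\varepsilon^{2s})d(a,b)\le|P(a)-P(b)|$, which is nominally weaker than the target since $1/(1+x)\ge 1-x$, but it still gives the stated conclusion after doubling the constant (take $\varepsilon_0$ small enough that $C\varepsilon^{2s}\le 1/2$, whence $1-C\varepsilon^{2s}\ge 1/(1+2C\varepsilon^{2s})$).
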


\begin{proof}
  The right hand inequality restates the fact that the projections which comprise $P$ are 1-Lipschitz.  We will prove the left hand inequality.
  We may without loss of generality assume that the horizontal line $L$ contains the origin.
  In particular, this means that $L$ has the form $\{(ut,0,\dots,0) : t \in \R\}$ for some $u \in \R^{n_1}$.
  We also suppose that $a \in 0 \cdot B_{\R^n}(\varepsilon^s)$ and $u$ was chosen so that $b \in (u,0,\dots,0) \cdot B_{\R^n}(\varepsilon^s)$.  Hence
  \begin{align}
    \pi(a), \pi(b) \in \pi(L) + B_{\R^{n_1}}(\varepsilon^s). \label{e:close-euc-line}
  \end{align}
  By choosing $\varepsilon_0$ sufficiently small, we can use the triangle inequality to guarantee that $|\pi(b) - \pi(a)| \geq 1/2$, $|P(b) - P(a)| \geq 1/4$, and $|u| \leq 2\alpha$.

To continue, let us prove that there exists a constant $C_0 > 0$ so that $$a^{-1}b = (\pi(b) - \pi(a), \xi_2,\dots,\xi_s)$$ and each $\xi_i \in \R^{n_i}$ has norm $|\xi_i| \leq C_0 \varepsilon^s$.  We will actually prove the statement for $\delta_{1/2\alpha}(a^{-1}b)$ (with the first layer properly rescaled) as it will allow us to use Lemma \ref{l:BCH-bound}. Rescaling back by $\delta_{2\alpha}$ then gives the corresponding statement for $a^{-1}b$. The fact that the coordinate in the first layer of $\delta_{1/2\alpha}(a^{-1}b)$ is $\frac{1}{2\alpha} (\pi(b) - \pi(a))$ is clear by the Baker-Campbell-Hausdorff formula \eqref{e-BCH}.  By our assumptions on $a,b$, we have
  \begin{align*}
    \delta_{1/2\alpha}(a^{-1}b) = (x_1,\dots,x_s) \cdot (u',0,\dots,0) \cdot (y_1,\dots,y_s),
  \end{align*}
  where $|x_i|,|y_i| \leq \varepsilon^s/2\alpha$ and $|u'| = |u|/2\alpha \leq 1$.  Two applications of Lemma \ref{l:BCH-bound} gives the result.

Now, by Lemma \ref{l:taylor}, we have $d(a,b) = N(a^{-1}b) \leq N(\pi(b) - \pi(a), \xi_2,\dots, \xi_{s-1}) + C_1 \varepsilon^{2s}$
  for some constant $C_1>0$.
  Iterating this gives a constant $C_2 > 0$ so that
  \begin{align*}
    d(a,b) \leq N(\pi(b) - \pi(a)) + C_2 \varepsilon^{2s} = |\pi(b) - \pi(a)| + C_2 \varepsilon^{2s}.
  \end{align*}
  Recalling \eqref{e:close-euc-line}, the Pythagorean theorem gives $|\pi(b) - \pi(a)| \leq |P(a) - P(b)| + 10\varepsilon^{2s}$.  Altogether, we get a constant $C_3>0$ such that
  \begin{align*}
    d(a,b) \leq |P(a) - P(b)| + C_3 \varepsilon^{2s}.
  \end{align*}
  Since $|P(a) - P(b)| \geq 1/4$, we have proven the desired inequality.
\end{proof}

\subsection{Start of the proof of Proposition \ref{p:goal}} \label{ss:start-start}

The rest of this section is devoted to the proof of
Proposition~\ref{p:goal}. We follow the general outline of the proof in the Euclidean case (see \cite[\S8.1]{badger-schul}). We shall refer the reader to the original proof for arguments that are essentially metric and highlight the changes that are necessary for the Carnot setting. The details are rather technical. As such, the reader who is willing to assume the veracity of Proposition \ref{p:goal} is encouraged to jump to \S\ref{ss:newbetas}.

Without loss of generality, we can
rescale the metric on $G$ using a dilation
so that $r_0 = 1$.
By (the proof of) Lemma 8.2 of \cite{badger-schul}, the sets $V_k$ converge in the Hausdorff metric to a compact set $V \subset B(x_0,C^\star)$.
Note that, if $\# V_k = 1$ for all $k$, then $V$ is a singleton, and so the result trivially holds.  Assume, therefore, that there is some least $k_0 \geq 0$ so that that $\# V_k \geq 2$ for all $k \geq k_0$.

\subsection{The construction} \label{ss:algo}
We will inductively construct a sequence of \emph{abstract} graphs $\Gamma_k$ on the vertices of $\bigcup_j V_j$.   The abstract edges will simply be unordered pairs of vertices. On occasion, we may refer to connected families of edges as ``curves''. (In the Euclidean case \cite{badger-schul}, the edges in $\Gamma_k$ were realized geometrically as line segments.)

To begin, we will define the {\it extension} of a vertex.  Given $v \in V_k$, we define $E[k,v]$ in the following way.  Let $v_0 = v$.  Once $v_i \in V_{k+i}$ has been defined, choose $v_{i+1}$ to be a closest point in $V_{k+i+1}$ to $v_i$.  The extension $E[k,v]$ is then defined as $E[k,v] = \{ (v_i, v_{i+1}) \}_{i=0}^\infty$.
Given distinct vertices $v,v' \in V_k$, define the {\em bridge}
\begin{align*}
  B[k,v,v'] = E[k,v] \cup \{(v,v')\} \cup E[k,v'].
\end{align*} Bridges will be used to span large ``gaps'' between vertices in $V_k$.

\subsubsection{Initial curve $\Gamma_{k_0}$} We remark that either $k_0=0$ and $V_0\subset B(x_0,C^\star)$ by assumption, or $k_0\geq 1$ and $V_{k_0}\subset B(x,C^\star 2^{-k_0})$ by $(V_\three)$, where $V_{k_0-1}=\{x\}$.
We construct the initial graph $\Gamma_{k_0}$ by including every edge $(v',v'')$ with $v',v'' \in V_{k_0}$. That is,
\begin{equation}\label{e:Gamma0-def}
  \Gamma_{k_0} := \bigcup_{v',v''\in V_{k_0}}(v',v'').
\end{equation}
\subsubsection{Future curves $\Gamma_{k}$}

Suppose that $\Gamma_{k_0},\dots,\Gamma_{k-1}$ have been defined for some $k\geq k_0+1$. In order to define the next set $\Gamma_k$, we first describe the edge set in $\Gamma_k$ locally nearby each vertex $v\in V_k$. We will then declare $\Gamma_k$ to be the union of new parts of the curve together with the bridges from previous generations.
That is,
if $\Gamma_{k,v}$ denotes the new part of $\Gamma_k$ nearby $v$, then
\begin{equation}\label{e:Gamma-def}
  \Gamma_k:=\bigcup_{v\in V_k} \Gamma_{k,v} \cup \bigcup_{j=k_0}^{k-1} \bigcup_{B[j,w',w'']\subset \Gamma_{j}} B[j,w',w''].
\end{equation}

For each $k \geq k_0$ and $v \in V_k$, define $B_{k,v} := B(v,65C^\star 2^{-k})$. According to $(V_I)$, there is some constant $M>0$ such that $\# (V_k \cap B_{k,v}) \leq M$ for all $k \geq k_0$ and every $v \in V_k$. Let $\varepsilon>0$ be a small parameter, depending only on $G$, chosen according to various needs below. In particular, when $\varepsilon>0$ is sufficiently small, we can invoke Proposition \ref{p:proj-prop}.

Fix an arbitrary vertex $v \in V_k$. We will define $\Gamma_{k,v}$ in two cases.

\textbf{Case I:} Suppose $\alpha_{k,\hat{v}}\geq \varepsilon$ for some $\hat{v} \in V_k \cap B_{k,v}$.

To construct $\Gamma_{k,v}$, consider each pair of vertices $v',v''\in V_k\cap B_{k,v}$.
If
$|\pi(v') - \pi(v'')| < 30C^\star 2^{-k}$, include the edge $(v',v'')$ in $\Gamma_{k,v}$. Otherwise, include the bridge $B[k,v',v'']$.
In other words,
$$
\Gamma_{k,v} = \bigcup_{v',v'' \in V_k} \left(
\bigcup_{|\pi(v')-\pi(v'')|< 30C^\star2^{-k}} (v',v'')
\cup
\bigcup_{|\pi(v')-\pi(v'')| \geq 30C^\star2^{-k}} B[k,v',v'']
\right)
$$
This ends the description of $\Gamma_{k,v}$ in \textbf{Case I}.

\textbf{Case II:} Suppose $\alpha_{k,\hat{v}}<\varepsilon$ for every $\hat{v} \in V_k \cap B_{k,v}$.

Identify the projected horizontal line $\pi(\ell_{k,v})$ with $\RR$. (In particular, pick directions ``left'' and ``right''.)
Let $\pi_{k,v}:G \to \mathbb{R}$ denote the projection $P$
defined in Proposition~\ref{p:proj-prop} composed with this identification.
 By \eqref{e:line-containment}, ($V_I$), and Proposition~\ref{p:proj-prop},
 the map $\pi_{k,v}$ is bi-Lipschitz on $(V_k \cup V_{k-1}) \cap B_{k,v}$
 with
 \begin{equation}
     \label{e:flat}
     d(z',z'') \leq (1+C\varepsilon^{2s})|\pi_{k,v}(z') - \pi_{k,v}(z'')|
     \qquad
     \forall z',z'' \in (V_k \cup V_{k-1}) \cap B_{k,v}.
 \end{equation}
 In particular, both $V_k\cap B_{k,v}$ and $V_{k-1}\cap B_{k,v}$ can be arranged linearly along $\ell_{k,v}$.
 That is, if we set $v_0=v\in V_k$, we can write
 $$
 v_{-l},\dots, v_{-1},v_0,v_1,\dots,v_m
 $$
 to denote the vertices in $V_k\cap B_{k,v}$ arranged from left to right according to the relative order of $\pi_{k,v}(v_i)$ in $\RR$, where $l,m\geq 0$.
In other words,
 $$
 \pi_{k,v}(v_{-l}) < \cdots < \pi_{k,v}(v_{-1}) < \pi_{k,v}(v_{0}) < \pi_{k,v}(v_{1}) < \cdots < \pi_{k,v}(v_{m}).
 $$
We start by  describing the ``right half" $\Gamma_{k,v}^R$ of $\Gamma_{k,v}$.
Starting from $v_0$ and working to the right, include each edge $(v_i,v_{i+1})$ in $\Gamma_{k,v}^R$ until $|\pi(v_{i+1}) - \pi(v_i)| \geq 30 C^\star 2^{-k}$, $v_{i+1}\not\in B(v,30 C^\star 2^{-k})$, or $v_{i+1}$ is undefined (because $i=m$). Let $t\geq 0$ denote the number of edges that were included in $\Gamma_{k,v}^R$.

\textbf{Case II-NT:} If $t\geq 1$ (that is, at least one edge was included), then we say that the vertex \emph{$v$ is not terminal to the right}, and we are done describing $\Gamma_{k,v}^R$.

\textbf{Case II-T1 and Case II-T2:} If $t=0$ (that is, no edges were included), then we say that the vertex \emph{$v$ is terminal to the right} and continue our description of $\Gamma_{k,v}^R$, splitting into subcases depending on how $\Gamma_{k-1}$ looks near $v$. Let $w_v$ be a vertex in $V_{k-1}$ that is closest to $v$.
As mentioned above, we may enumerate the vertices in $V_{k-1}\cap B_{k,v}$ starting from $w_v$ and moving right (with respect to the identification of $\ell_{k,v}$ with $\RR$) by $$w_v=w_{v,0},w_{v,1},\dots,w_{v,s}$$
i.e.~$\pi_{k,v}(w_{v,0}) < \cdots < \pi_{k,v}(w_{v,s})$.
Let $w_{v,r}$ denote the rightmost vertex that appears in $V_{k-1}\cap B(v,C^\star 2^{-(k-1)})$. There are two alternatives:
\begin{enumerate}
  \item[\textbf{T1:}] If $r=s$ or if $|\pi(w_{v,r})-\pi(w_{v,r+1})|\geq 30 C^\star 2^{-(k-1)}$, then we set $\Gamma_{k,v}^R=\{v\}$.
  \item[\textbf{T2:}] If $|\pi(w_{v,r})-\pi(w_{v,r+1})|<30 C^\star 2^{-(k-1)}$, then $v_{1}$ exists by ($V_\two$) (and $|\pi(v)-\pi(v_1)|\geq 30 C^\star 2^{-k}$). In this case, we set $\Gamma_{k,v}^R=B[k,v,v_1]$.
\end{enumerate} The first alternative defines \textbf{Case II-T1}. The second alternative defines \textbf{Case II-T2}. This concludes the description of $\Gamma_{k,v}^R$.

We define the ``left half" $\Gamma_{k,v}^L$ of $\Gamma_{k,v}$ symmetrically. Also, define the terminology \emph{$v$ is not terminal to the left} and \emph{$v$ is terminal to the left} by analogy with the corresponding terminology to the right. Having separately defined both the ``left half" $\Gamma_{k,v}^L$ and the ``right half" $\Gamma_{k,v}^R$ of $\Gamma_{k,v}$, we now declare $$\Gamma_{k,v}:=\Gamma_{k,v}^L\cup \Gamma_{k,v}^R.$$ This concludes the construction of $\Gamma_{k,v}$ in \textbf{Case II}.

\subsection{Connectedness}\label{ss:connected} The graph $\Gamma_{k_0}$ is connected as it is the complete graph on $V_{k_0}$. The graphs $\Gamma_k$ are locally connected nearby each vertex in $V_k$ by construction of the $\Gamma_{k,v}$. Together with the fact that $\Gamma_k$ includes all bridges appearing in $\Gamma_{k-1}$ and that bridges include extensions to all future generations, it can be shown that $\Gamma_k$ is globally connected. See \cite[\S8.3]{badger-schul} for sample details.

\subsection{Start of the length estimates}\label{ss:start}

Let $\pi : G \to \R^{n_1}$ be the horizontal projection.
 Given $E$, a nonempty collection of abstract edges of $\bigcup_{k=k_0}^\infty V_k$ (for example $\Gamma_k$), we define its \emph{projected length} $\ell(E)$ by
\begin{align} \label{e:projected-length}
  \ell(E) := \sum_{(u,v) \in E} |\pi(u) - \pi(v)|.
\end{align} (This concept did not appear in \cite{badger-schul}.)
We remark that the projected length may be larger than the length of the curve in $\R^{n_1}$ formed by projecting $\bigcup_{k=k_0}^\infty V_k$ into $\R^{n_1}$ and connecting pairs of points whose vertices in $E$ are contained in an edge. The difference is that the quantity above might over-count the length since the projected line segments could overlap.

Our primary task is to verify the following bound on $\ell(\Gamma_k)$:
\begin{lemma} \label{l:abs-goal}
  There exists a constant $C > 0$ depending only on $G$ and $C^\star$ so that
  \begin{equation}\label{e:abs-goal}
    \ell(\Gamma_k) \leq C\left(2^{-k_0} + \sum_{j={k_0+1}}^k\sum_{v\in V_j} \alpha_{j,v}^{2s} 2^{-j} \right)\quad\text{for all }k\geq k_0+1.
  \end{equation}
\end{lemma}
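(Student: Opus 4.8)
The plan is to establish \eqref{e:abs-goal} by induction on $k$, following the scheme of the Euclidean argument in \cite[\S8.4]{badger-schul} but working throughout with the projected length $\ell(\cdot)$ of the abstract graphs (rather than the ambient length of geometric realizations) and using Proposition~\ref{p:proj-prop} in place of the Pythagorean theorem. Two preliminary bounds set the stage. First, since $\pi$ is $1$-Lipschitz, $(V_\two)$ forces each extension $E[j,v]=\{(v_i,v_{i+1})\}_{i\ge 0}$ (with $v_i\in V_{j+i}$, $v_{i+1}$ a nearest point of $V_{j+i+1}$) to obey $\ell(E[j,v])\le\sum_{i\ge 0}C^\star 2^{-(j+i)}\lesssim_{C^\star} 2^{-j}$, so every bridge $B[j,w,w']$ created at scale $j$ satisfies $\ell(B[j,w,w'])\le|\pi(w)-\pi(w')|+\ell(E[j,w])+\ell(E[j,w'])\lesssim_{C^\star}2^{-j}$. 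Second, by $(V_I)$ together with $(V_\three)$ (recall $V_{k_0-1}$ is a singleton when $k_0\ge1$, while $V_0\subset B(x_0,C^\star)$ when $k_0=0$) the set $V_{k_0}$ lies in a ball of radius $\lesssim_{C^\star}2^{-k_0}$ and, by the packing estimate for $G$, has $\lesssim_{G,C^\star}1$ elements; since $\Gamma_{k_0}$ is the complete graph on $V_{k_0}$, this gives the base case $\ell(\Gamma_{k_0})\lesssim_{G,C^\star}2^{-k_0}$.

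For the inductive step I would partition the edges of $\Gamma_k$ into (i) the non-bridge edges of the local pieces $\Gamma_{k,v}$, (ii) the bridges created at scale $k$ (those occurring inside some $\Gamma_{k,v}$ in \textbf{Case I} or \textbf{Case II-T2}), and (iii) the bridges inherited from scales $<k$, and bound the projected length of each family. The heart of the estimate is family (i) in the flat \textbf{Case II}. There $\alpha_{k,\hat v}<\varepsilon$ for every $\hat v\in V_k\cap B_{k,v}$, so after rescaling by $\delta_{2^k}$ (which, with $r_0=1$, turns \eqref{e:line-containment} into containment of $(V_{k-1}\cup V_k)\cap B_{k,v}$ in a Euclidean $\alpha_{k,v}^s$-neighborhood of a horizontal line at unit scale) the sharp form of Proposition~\ref{p:proj-prop}, applied with tube-width $\alpha_{k,v}$, gives
\[
  d(v_i,v_{i+1})\le(1+C\alpha_{k,v}^{2s})\,\bigl|\pi_{k,v}(v_i)-\pi_{k,v}(v_{i+1})\bigr|
\]
for consecutive vertices $v_i$ of $V_k\cap B_{k,v}$ along $\ell_{k,v}$. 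Since $\bigl|\pi_{k,v}(a)-\pi_{k,v}(b)\bigr|\le|\pi(a)-\pi(b)|\lesssim d(a,b)$ and the $\pi_{k,v}(v_i)$ are monotone, summing the consecutive edges of $\Gamma_{k,v}$ telescopes, and the edges of $\Gamma_k$ that ``refine'' a given edge $e'$ of $\Gamma_{k-1}$ have total projected length at most $(1+C\alpha_{k,v}^{2s})|\pi(e')|$ for the relevant refining vertex $v$. Summing over $e'\in\Gamma_{k-1}$, using that $|\pi(e')|\lesssim_{C^\star}2^{-k}$ and that, by $(V_I)$ and the doubling of $G$, each $v\in V_k$ refines only $\lesssim_{G,C^\star}1$ edges $e'$ (and conversely), shows that the flat part of family (i) contributes at most $\ell(\Gamma_{k-1})+C\sum_{v\in V_k}\alpha_{k,v}^{2s}2^{-k}$.

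The non-flat \textbf{Case I} edges and the bridges (ii)--(iii) must then be absorbed into an error $C\sum_{v\in V_k}\alpha_{k,v}^{2s}2^{-k}$; afterwards \eqref{e:abs-goal} follows by telescoping from $\Gamma_{k_0}$. For \textbf{Case I}: by $(V_I)$ each window $B_{k,v}$ meets $\lesssim_{G,C^\star}1$ points of $V_k$, so $\Gamma_{k,v}$ and any scale-$k$ bridge it spawns have projected length $\lesssim_{G,C^\star}2^{-k}$, which can be charged to a vertex $\hat v\in V_k\cap B_{k,v}$ with $\alpha_{k,\hat v}\ge\varepsilon$; as each such $\hat v$ is charged by $\lesssim_{G,C^\star}1$ vertices and $2^{-k}\le\varepsilon^{-2s}\alpha_{k,\hat v}^{2s}2^{-k}$ with $\varepsilon$ a fixed function of $G$, the total is $\lesssim_{G,C^\star}\sum_{v\in V_k}\alpha_{k,v}^{2s}2^{-k}$. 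For the inherited bridges (iii) and the \textbf{Case II-T2} bridges (ii), the point is that a \textbf{Case II-T2} bridge $B[k,v,v_1]$ adds no genuinely new length: its bridge edge refines the edge of $\Gamma_{k-1}$ along which the scale-$(k-1)$ curve was continuing (this is precisely the T2 alternative), and its two extensions supersede -- at half the scale -- the extensions of the analogous bridge at scale $k-1$, so that the bridge contributions telescope across scales and stay bounded by the right-hand side of \eqref{e:abs-goal}.

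I expect the main obstacle to be exactly this bridge bookkeeping: keeping precise track of which edges and extensions of $\Gamma_{k-1}$ are superseded by scale-$k$ bridges, so that families (ii) and (iii) telescope rather than accumulate, and dovetailing this with the flat refinement estimate for family (i) and with the global connectedness of $\Gamma_k$ (in particular, checking that every non-bridge edge appearing in a flat $\Gamma_{k,v}$ genuinely refines an edge of $\Gamma_{k-1}$). As in \cite{badger-schul}, this is the delicate part of the argument. By contrast the genuinely Carnot ingredients are localized: working with projected length in the first layer $\R^{n_1}$, and replacing the Euclidean Pythagorean identity by Proposition~\ref{p:proj-prop}, whose distortion factor carries the exponent $2s$ in line with the stratified $\beta$-number \eqref{sean-beta} and the summability hypothesis \eqref{e:Gamma-bound}.
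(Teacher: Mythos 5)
Your overall architecture — induction over $k$, partitioning edges of $\Gamma_k$ into local non-bridge edges, new bridges, and inherited bridges, and using Proposition~\ref{p:proj-prop} in place of the Pythagorean theorem — matches the shape of the paper's argument (which proves the recursive inequality \eqref{e:sum-c} and then telescopes), and your handling of the base case, the extension-length bound, and the \textbf{Case~I} charging are all sound. But there is a genuine gap in the step you yourself flag as delicate, and it does not close as you envision it.

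The problem is that bridges created in \textbf{Case~II-T2} (and the \textbf{Case~II-T1} terminations) do \emph{not} ``add no genuinely new length'' that can be charged to $\Gamma_{k-1}$ by direct refinement. Concretely, in the T2 case the available old edge $(w_{v,r},w_{v,r+1})\in\edges(k-1)$ has $|\pi(w_{v,r})-\pi(w_{v,r+1})|<30C^\star2^{-(k-1)}$, while the new bridge span has $|\pi(v)-\pi(v_1)|\ge 30C^\star2^{-k}$ and one must also pay for the bridge extensions and for the edges of $\cE_k$ that enter the window near $v$ and $v_1$; these amounts are comparable to $2^{-k}$, not to $\sum_v\alpha_{k,v}^{2s}2^{-k}$ (which can be arbitrarily small since we are in the flat case $\alpha_{k,\cdot}<\varepsilon$). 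Likewise in T1 there is literally nothing in $\Gamma_{k-1}$ to the right of $w_{v,r}$ that can pay for the curve near $v$ at scale $k$. So a naive estimate $\ell(\Gamma_k)\le\ell(\Gamma_{k-1})+C\sum_v\alpha_{k,v}^{2s}2^{-k}$ is simply false near terminal vertices, and the claimed ``telescoping of bridges'' never materializes: a T2 bridge at scale $k$ need not be a descendant of any bridge at scale $k-1$, since $v$ may have been non-terminal (or not in $V_{k-1}$ at all) one generation earlier.

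The paper closes this gap by two devices that your proposal omits entirely. First, the \emph{phantom length} prepayment (\S\ref{ss:phantom}): each initial vertex carries a reserve $p_{k_0,v}=3C^\star2^{-k_0}$, and this reserve is passed along, deleted, and regenerated at terminal vertices via the \emph{terminal vertex property} and \emph{bridge property}, so that the ``missing'' $O(2^{-k})$ near T1/T2 vertices is paid out of prepaid reserves rather than out of $\ell(\Gamma_{k-1})$. Second, the recursive inequality \eqref{e:sum-c} is deliberately \emph{not} of the form $\ell(\Gamma_k)\le\ell(\Gamma_{k-1})+\mathrm{error}$; it retains a self-payment term $\tfrac{5}{6}\ell(\bridges(k))$ on the right, and only after summing over $k$ does one absorb $\tfrac56\sum_j\ell(\bridges(j))\le\tfrac56\ell(\Gamma_k)$ into the left side to get \eqref{e:abs-goal}. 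Your proposal reproduces the local ``flat refinement'' estimate (the paper's Fourth Estimate), but without the phantom-length bookkeeping and the $\tfrac56$-absorption trick, families (ii)--(iii) cannot be bounded by $C\sum_v\alpha_{k,v}^{2s}2^{-k}$ alone, and the induction does not close.
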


For convenience, in the rest of this section, we write $a\lesssim b$ to denote $a\lesssim_{G,C^\star} b$. Let us first see how Proposition \ref{p:goal} follows from this lemma.

\begin{proof}[Proof of Proposition \ref{p:goal} given Lemma \ref{l:abs-goal}]
First, assume that for some constant $C_1>0$ depending on at most $G$ and $C^\star$, we know that for all $k \geq k_0+1$,
\begin{align}
    \sum_{(u,v) \in \Gamma_k} d(u,v)
    \leq C_1 \left( \ell(\Gamma_k) + \sum_{j=k_0+1}^k \sum_{v \in V_j} \alpha_{j,v}^{2s} 2^{-j} \right). \label{e:haus-len-bound}
\end{align} Let $\widehat{\Gamma}_k$ be a \emph{geometric realization} of $\Gamma_k$ in $G$ formed by drawing a geodesic in $G$ for each edge $(u,v)\in\Gamma_k$ and taking the closure of the union of these geodesics. Observe that $\widehat{\Gamma}_k\subset B(x_0,3C^\star)$ by the triangle inequality, since $u,v\in B(x_0,C^\star)$ for each $(u,v)\in\Gamma_k$ and $\diam B(x_0,C^\star)=2C^\star$.
Together, \eqref{e:Gamma-bound}, \eqref{e:abs-goal}, and \eqref{e:haus-len-bound} yield
\begin{align}
    \cH^1(\widehat\Gamma_k) \leq C_2 \left( 2^{-k_0} + \sum_{j=k_0+1}^\infty \sum_{v \in V_j} \alpha_{j,v}^{2s} 2^{-j} \right)<\infty \quad\text{for all }k \geq k_0+1, \label{e:haus-corollary}
\end{align} where $C_2$ is a constant depending on at most $G$ and $C^\star$. Let $(\widehat \Gamma_{k_j})_{j=1}^\infty$ be any subsequence of $(\widehat\Gamma_k)_{k=k_0}^\infty$ that converges in the Hausdorff metric, say $\Gamma=\lim_{j\rightarrow\infty} \widehat{\Gamma}_{k_j}$. Then by Go\l{}ab's semicontinuity theorem, which is valid in any metric space (see \cite{AO-curves}), $\Gamma$ is a rectifiable curve and $\Haus^1(\Gamma)\leq \liminf_{j\rightarrow\infty} \Haus^1(\widehat{\Gamma}_{k_j})<\infty$ by \eqref{e:haus-corollary}. That is to say, $\Gamma$ satisfies \eqref{e:Gamma-conclusion}. Also, we know that $\Gamma\subset B(x_0,3C^\star)$, since each $\widehat\Gamma_k\subset B(x_0,3C^\star)$. Back in \S\ref{ss:start-start}, we noted that $V_{k_j}$ converges in the Hausdorff metric to a compact set $V \subset B(x_0,C^\star)$. Since $V_k \subset \widehat\Gamma_k$, it follows that $V \subset \Gamma$, as well. Therefore, we have reduced the proof of Proposition \ref{p:goal}, given Lemma \ref{l:abs-goal}, to verifying \eqref{e:haus-len-bound}.

Suppose first that $(u,v) \in \Gamma_k$ is a pair which is not part of an extension $E[i,z]$ included in $\Gamma_k$.
If this edge was added to $\Gamma_{j,w}$ in \textbf{Case I} above (noting that it is only possible for $j < k$ when $(u,v)$ is the ``central span'' of a bridge $B[j,u,v]$), then $u,v \in V_j \cap B_{j,w}$ and $\alpha_{j,\hat{v}} \geq \varepsilon$
for some $\hat{v} \in V_j \cap B_{j,w}$. Thus,
\begin{equation*}
  d(u,v) \leq \diam B_{j,w}\leq 130C^\star2^{-j} \leq 130C^\star \varepsilon^{-2s}\alpha_{j,\hat{v}}^{2s}2^{-j}.
  \end{equation*}
Since each $B_{j,w}$ contains boundedly many pairs $(u,v)$ depending only on $G$ and $C^\star$, and further, each $\hat{v}$ is selected by a bounded number of points $w$, we may choose $C_1$ large enough so that the sum of $d(u,v)$ over all such pairs $(u,v)$ is bounded from above by
\begin{equation*}
C_1  \varepsilon^{-2s} \sum_{j=k_0+1}^k \sum_{\hat{v} \in V_j} \alpha_{j,\hat{v}}^{2s} 2^{-j}.
  \end{equation*}
If $(u,v)$ was added in \textbf{Case II}, then we get from \eqref{e:flat} that
  \begin{align*}
    d(u,v) \leq (1 + C\varepsilon^{2s}) |\pi(u) - \pi(v)|.
  \end{align*}
  Choosing $C_1 \geq 1 + C\varepsilon^{2s}$ ensures that the sum of $d(u,v)$ over all pairs $(u,v)$ discussed here is bounded from above by
  \begin{equation*}
  C_1 \sum_{(u,v) \in \Gamma_k} |\pi(u) - \pi(v)| = C_1 \ell(\Gamma_k).
  \end{equation*}

We now bound the length of all extensions $E[j,w]$ in $\Gamma_k$.
If $E[j,w]$ was added to $\Gamma_{j,v}$ in \textbf{Case I} for some $v \in V_j$, then there is some $\hat{v} \in V_j \cap B_{j,v}$ so that $\alpha_{j,\hat{v}} \geq \varepsilon$.  We then get
\begin{align}
 \sum_{(u',u'') \in E[j,w]} d(u',u'') \leq C^\star 2^{-j+1} \leq 2C^\star \varepsilon^{-2s} \alpha_{j,\hat{v}}^{2s} 2^{-j}. \label{e:ext-bound}
\end{align}
As each $\Gamma_{j,v}$ can only have boundedly many such extensions and each $V_j \cap B_{j,v}$ has boundedly many elements, we may conclude that the sum of $d(u',u'')$ over all edges $(u',u'')$ in such extensions is bounded by
a constant multiple of
\begin{align*}
    2C^\star \varepsilon^{-2s} \sum_{j=k_0}^k \sum_{v \in V_j} \alpha_{j,v}^{2s}2^{-j}.
\end{align*}
For extensions contained in a bridge $B[j, w, w']$ that were added in \textbf{Case II}, we get a bound as follows:
  \begin{align*}
    \sum_{(u',u'') \in E[j,w]} d(u',u'') + \sum_{(u',u'') \in E[j,w']} d(u',u'') \overset{\eqref{e:ext-bound}}{\leq} 4C^\star 2^{-j} \leq \frac{4}{30}|\pi(w) - \pi(w')|.
  \end{align*}
Thus, by increasing the lower bound $C_1\geq 1+C\varepsilon^{2s}$ to $C_1\geq 2+C\varepsilon^{2s}$, we can account for all such extensions. This completes the proof of \eqref{e:haus-len-bound}.
\end{proof}

The rest of this section is now dedicated to proving Lemma \ref{l:abs-goal}. Roughly speaking, we would like to bound the length of $\Gamma_{k_0}$ by $C 2^{-k_0}$ and to bound $\ell(\Gamma_k)$ by $\ell(\Gamma_{k-1})+C \sum_{v\in V_k} \alpha_{k,v}^{2s} 2^{-k}$ for all $k\geq k_0$ and some $C$ independent of $k$.
At each step, we will ``pay" for the length of $\Gamma_k$ with the length of $\Gamma_{k-1}$ plus some extra accumulation $C\sum_{v\in V_k}\alpha_{k,v}^{2s} 2^{-k}$.
The main difficulty arises when attempt to ``pay'' for an edge $(v',v'')$ in $\Gamma_k$ when
either of its vertices
is close to a terminal vertex from \textbf{Case II} of the construction.
This is because, in this case, the old curve may not be long enough to ``pay'' for
$|\pi(v')-\pi(v'')|$.
To address this issue, we will take advantage of a ``prepayment'' technique called phantom length originating in Jones' original traveling salesman construction \cite{Jones-TSP} (also see \cite{Lerman}).

\subsection{Phantom length}\label{ss:phantom} Below, it will be convenient to have notation to refer to the vertices appearing in a bridge. For each extension $E[k,v]=\bigcup_{i=0}^\infty (v_i,v_{i+1})$, we define the corresponding \emph{extension index set} $I[k,v]$ by $$I[k,v]=\{(k+i,v_i):i\geq 0\}.$$ For each bridge $B[k,v',v'']$, we define the corresponding \emph{bridge index set} $I[k,v',v'']$ by $$I[k,v',v'']=I[k,v']\cup I[k,v''].$$

Following \cite{badger-schul}, for all $k\geq k_0$ and $v\in V_k$, we define the \emph{phantom length associated with the pair} $(k,v)$ as $p_{k,v} := 3 C^\star 2^{-k}$.
If $B[k,v',v'']$ is a bridge between vertices $v',v''\in V_k$, then the totality $p_{k,v',v''}$ of phantom length associated to pairs in $I[k,v',v'']$ is given by $$p_{k,v',v''}:=3C^\star\left(2^{-k}+2^{-(k+1)}+\cdots\right)+3C^\star\left(2^{-k}+2^{-(k+1)}+\cdots\right) =12 C^\star 2^{-k}.$$

During the proof, we will track phantom length at certain pairs $(k,v)$ with $v\in V_k$ as we now describe. For the initial generation, define the \emph{index set} $\phan(k_0)$ by $$\phan(k_0):= \{(k_0,v):v\in V_{k_0}\}.$$ Suppose that $\phan(k_0),\dots,\phan(k-1)$ have been defined for some $k\geq k_0+1$, where the index sets already defined satisfy the following two properties.
\begin{itemize}
\item \emph{Bridge property:} For all $j \in \{ k_0 , \dots, k-1 \}$, if a bridge $B[j,w',w'']$ was introduced in $\Gamma_j$, then $\phan(j)$ contains $I[j,w',w'']$.
\item \emph{Terminal vertex property:} Let $w\in V_{k-1}$ and suppose $\ell$ is a horizontal line with
$$
y \in \ell \cdot \delta_{2^{-(k-1)}} (B_{\mathbb{R}^n}(\varepsilon^s))
\quad\text{for all }y\in V_{k-1}\cap B(w,30 C^\star 2^{-(k-1)}).
$$
Let $\pi_\ell:G \to \mathbb{R}$ be the composition of $\pi$ with the orthogonal projection in $\R^{n_1}$ onto $\ell$ and the identification of $\ell$ with $\mathbb{R}$ as before. If there does not exist $$w'\in V_{k-1}\cap B(w,30 C^\star 2^{-(k-1)})\quad\text{with } \pi_\ell(w') < \pi_\ell(w)$$ or there does not exist $$w''\in V_{k-1}\cap B(w,30 C^\star 2^{-(k-1)})\quad\text{with }\pi_\ell(w'') > \pi_\ell(w),$$ then $(k-1,w)\in \phan(k-1)$.
\end{itemize}
(Note that $\phan(k_0)$ satisfies both properties trivially since, by definition, $\phan(k_0)$ includes $(k_0,v)$ for every $v\in V_{k_0}$.)
We will form $\phan(k)$ via $\phan(k-1)$ as follows. Initialize the set $\phan(k)$ to be equal to $\phan(k-1)$. Next, delete all pairs $(k-1,w)$ and $(k,z)$ appearing in $\phan(k-1)$ from $\phan(k)$. Lastly, for each vertex $v\in V_k$, include additional pairs in $\phan(k)$ according to the following rules: \begin{itemize}
\item \textbf{Case I:} Suppose that $v\in V_k$ and $\alpha_{k,w}\geq \varepsilon$ for some $w \in V_k \cap B_{k,v}$. Include $(k,v')$ in $\phan(k)$ for all vertices $v'\in V_k\cap B_{k,v}$ and include $I[k,v',v'']$ as a subset of $\phan(k)$ for every bridge $B[k,v',v'']$ in $\Gamma_{k,v}$.
\item \textbf{Case II:} Suppose that $v\in V_k$ and $\alpha_{k,w}<\varepsilon$ for all $w \in V_k \cap B_{k,v}$.
\begin{itemize}
    \item \textbf{Case II-NT:} Suppose $\Gamma_{k,v}^R$ or $\Gamma_{k,v}^L$ is defined by \textbf{Case II-NT}. Do nothing.
\item \textbf{Case II-T1:} Suppose $\Gamma_{k,v}^R$ or $\Gamma_{k,v}^L$ is defined by \textbf{Case II-T1}. Include $(k,v)\in \phan(k)$.
\item \textbf{Case II-T2:} Suppose $\Gamma_{k,v}^R$ or $\Gamma_{k,v}^L$ is defined by \textbf{Case II-T2}.
When $\Gamma_{k,v}^R$ is defined by \textbf{Case II-T2}, include $I[k,v,v_1]$ as a subset of $\phan(k)$. When $\Gamma_{k,v}^L$ is defined by \textbf{Case II-T2}, include $I[k,v_{-1},v]$ as a subset of $\phan(k)$. In particular, note that $(k,v)$ is included in $\phan(k)$.
\end{itemize}
\end{itemize}
The phantom length associated to deleted pairs will be available to pay for the length of edges in $\Gamma_k$ near terminal vertices in $V_k$ and to pay for the phantom length of pairs in $\phan(k)\setminus\phan(k-1)$. Verification that $\phan(k)$ satisfies the bridge and terminal vertex properties is the same as the Euclidean case. See \cite[p.~30]{badger-schul} for details.

\subsection{Proof of \eqref{e:abs-goal} given \eqref{e:sum-c}} The projected length of a set of edges is defined in \eqref{e:projected-length}. Suppose that there exists $C=C(G,C^\star)$ such that for all $k\geq k_0+1$,
\begin{equation}\begin{split}
&\ell(\edges(k)) +\ell(\bridges(k)) + \sum_{(j,u) \in \phan(k)} p_{j,u} \\
 &\quad\leq \ell(\edges(k-1)) + \sum_{(j,u) \in \phan(k-1)} p_{j,u} +C\sum_{v\in V_k} \alpha_{k,v}^{2s} 2^{-k}+ \frac{5}{6}\,\ell(\bridges(k)),\label{e:sum-c}\end{split}\end{equation} where $\edges(k)$ denotes the set of all pairs $(v',v'')$ included in $\Gamma_{k}$ that are not part of a bridge $B[j,w',w'']$ included in $\Gamma_k$, $\bridges(k)$ denotes the union of all bridges $B[k,v',v'']$ included in $\Gamma_{k}$, and $\phan(k)$ is defined in \S \ref{ss:phantom}. Recall the definition of $\Gamma_k$ in \eqref{e:Gamma-def} and also that $\Gamma_{k_0}$ contains no bridges. Applying \eqref{e:sum-c} telescopically $k-k_0$ times yields
\begin{equation*}\begin{split}
  &\ell(\Gamma_k) = \ell(\edges(k))+\sum_{j=k_0+1}^k \ell(\bridges(j))\\
  &\quad \leq \underbrace{\ell(\edges(k_0)) + \sum_{(j,u)\in \phan(k_0)} p_{j,u}}_I + C \sum_{j=k_0+1}^k \sum_{v \in V_j} \alpha_{j,v}^{2s} 2^{-j} + \underbrace{\frac{5}{6} \sum_{j=k_0+1}^k \ell(\bridges(j))}_{\two}.
\end{split}\end{equation*} Since $V_{k_0}\subset B(x,C^\star 2^{-k_0})$ for some $x$ and $V_{k_0}$ is $2^{-k_0}$-separated, the number of points in $V_{k_0}$ is bounded, depending only on $G$ and $C^\star$. It follows that $I\lesssim_{G,C^\star} 2^{-k_0}$. Also, since $\Gamma_k$ includes all bridges introduced in $\Gamma_{k_0+1},\dots, \Gamma_{k}$, we have $\two\leq \frac{5}{6}\ell(\Gamma_k)$. Thus,
\begin{align*}
  \frac{1}{6} \ell(\Gamma_k) \lesssim_{G,C^*} 2^{-k_0} + \sum_{j=k_0+1}^k \sum_{v \in V_j} \alpha_{j,v}^{2s} 2^{-j}.
\end{align*}
This proves \eqref{e:abs-goal} given \eqref{e:sum-c}.

\subsection{Proof of \eqref{e:sum-c}} \label{ss:sum-b} This section corresponds to \cite[\S9.4]{badger-schul}. Fix $k \geq k_0+1$. Our goal is to prove \eqref{e:sum-c}. As the projection $\pi: G \to \R^{n_1}$ is 1-Lipschitz, we have from \eqref{e:line-containment} that
\begin{align}
  \sup_{x \in (V_k \cup V_{k-1}) \cap B_{k,v}} \dist_{\R^{n_1}}(\pi(x), \pi(\ell_{k,v})) \leq \alpha_{k,v}^s 2^{-k}. \label{e:proj-alpha}
\end{align}
By an abuse of notation, we will refer to the projected line $\pi(\ell_{k,v})$ in $\R^{n_1}$ as $\ell_{k,v}$. It should always be clear from context to which line we are referring. Moreover, we will write $\pi_{k,v} : \R^{n_1} \to \R$ to denote orthogonal projection onto $\pi(\ell_{k,v})$ composed with identification of the line with $\mathbb{R}$. By \eqref{e:proj-alpha}, the sets $\pi(V_k)$ satisfy \cite[(8.1)]{badger-schul} with ``error'' $\alpha_{k,v}^s$. Thus, the estimate \eqref{e:sum-c} is almost a direct application of the proof of \cite[Proposition 8.1]{badger-schul}, except for the fact that $\pi(V_k)$ is not necessarily $2^{-k}$ separated. In \cite{badger-schul}, the separation condition is primarily used to get a bound on $\#\pi(V_k)$, but in our context this conclusion follows from a bound on $\#V_k$. We sketch some details for the reader's convenience.

It follows from the construction that for all $k\geq k_0$, $$(v',v'') \in \edges(k) \Longrightarrow |\pi(v') - \pi(v'')|<30 C^\star 2^{-k},$$ $$B[k,v',v'']\subset \bridges(k)\Longrightarrow 30 C^\star 2^{-k}\leq |\pi(v') - \pi(v'')| <130 C^\star 2^{-k}.$$ Furthermore, if $B[k,v',v'']\subset\bridges(k)$, then
\begin{equation*}\begin{split}
  \ell(B[k,v',v'']) &=  |\pi(v') - \pi(v'')|+\ell(E[k,v'])+\ell(E[k,v''])\\
  &\overset{\eqref{e:ext-bound}}\leq |\pi(v') - \pi(v'')|+ 4C^\star 2^{-k}<1.14 |\pi(v')-\pi(v'')|,
\end{split}\end{equation*} where, in addition to \eqref{e:ext-bound}, we used the fact that $\pi$ is $1$-Lipschitz.

Each graph $\Gamma_k$ gives rise to a geometric realization of $\pi(\Gamma_k)$ in $\R^{n_1}$ by taking a union of line segments in $\R^{n_1}$ corresponding to abstract edges:
\begin{align*}
  \cE_k := \bigcup_{(u,v) \in \Gamma_k} [\pi(u), \pi(v)].
\end{align*}
Since $\Gamma_k$ is connected, $\cE_k$ is as well. The length of an edge in $\Gamma_k$ agrees with the Hausdorff measure $\Haus^1$ of the corresponding line segment in $\cE_k$.  We will call line segments in $\cE_k$ ``edges'' and unions of line segments with the extensions at their endpoints ``bridges'' using the same classification as in \S\ref{ss:algo}.
Given $v \in V_k$, we let $\cE_{k,v}$ denote the associated line segments from $\Gamma_{k,v}$.

Edges and bridges forming $\cE_k$ and ``new" phantom length associated to pairs in the set $\phan(k)\setminus\phan(k-1)$ may enter the local picture $\cE_{k,v}$ of $\cE_k$ near $\pi(v)$ for several vertices $v\in V_k$, but they each only need to be accounted for once to estimate the left hand side of \eqref{e:sum-c}. Continuing to follow \cite{badger-schul}, we prioritize as follows:
\begin{enumerate}
\item[1.] \textbf{Case I} edges, \textbf{Case I} bridges, \textbf{Case I} phantom length.
\item[2.] \textbf{Case II-T1} phantom length and edges that are near \textbf{Case II-T1} terminal vertices (where here and below \emph{near} means at a distance at most $2C^\star 2^{-k}$);
\item[3.] \textbf{Case II-T2} bridges, \textbf{Case II-T2} phantom length, and (parts of) edges that are near \textbf{Case II-T2} terminal vertices;
\item[4.] remaining (parts of) edges, which are necessarily not near \textbf{Case I} vertices and \textbf{Case II-T1} and \textbf{Case II-T2} terminal vertices.
\end{enumerate}

\textbf{First Estimate (Case I):} This is analogous to the estimates on \cite[p.~33]{badger-schul}. Since $\#(V_k\cap B_{k,v})\lesssim_{G,C^*}1$, we may charge the length of edges, new bridges, and new phantom length appearing in $B_{k,v}$ to $\alpha_{k,u}^{2s}2^{-k}$ for some vertex $u\in B_{k,v}$ with $\alpha_{k,u}\geq \varepsilon$.

\textbf{Second Estimate (Case II-T1):} As long as we choose $\varepsilon$ to be small enough so that $2(1+C\varepsilon^{2s})<2.5$, where $C$ is the constant in Proposition \ref{p:proj-prop}, this estimate is the same as the one on \cite[p.~33]{badger-schul}. Use Proposition \ref{p:proj-prop} in place of \cite[Lemma 8.3]{badger-schul}.

\textbf{Third Estimate (Case II-T2):} This estimate introduces the term $\frac{5}{6}\ell(\bridges(k))$ in \eqref{e:sum-c}. While it is similar to the estimate on \cite[pp.~33--34]{badger-schul}, the proof there uses a notion of the ``core'' of a bridge, which we have not introduced. Thus, we record some details.
Suppose that $\alpha_{k,u} < \varepsilon$ for all $u \in V_k \cap B_{k,v}$
and $v$ is \textbf{T2} terminal to the right. (The case when $v$ is terminal to the left can be handled analogously.)
Let $v_1\in V_k$ and $w_{v,r},w_{v,r+1}\in V_{k-1}$
denote vertices appearing in the definition of $\Gamma_{k,v}^R$. We will pay for $p_{k,v,v_1}$,
the projected length of the bridge $B[k,v,v_1]$,
and the length (Hausdorff measure) of the part of any segments in $\mathcal{E}_k$ inside of
$B_{\R^{n_1}}(\pi(v),2C^\star 2^{-k}) \cup B_{\R^{n_1}}(\pi(v_1), 2C^\star 2^{-k})=:BB$
with at least one endpoint which is the projection of a point in $B(v,2C^\star 2^{-k}) \cup B(v_1,2C^\star 2^{-k}) =: U$.

First, the totality $p_{k,v,v_1}$ of phantom length associated to all vertices in $B[k,v,v_1]$ is $12C^\star 2^{-k}$.
Second,
$$
\ell(B[k,v,v_1])\overset{\eqref{e:ext-bound}}{\leq} 4C^\star 2^{-k}
+
|\pi(v) - \pi(v_1)|
\leq
8C^\star2^{-k}+ |\pi(w_{v,r}) - \pi(w_{v,r+1})|
$$
because $d(v,w_{v,r})<2C^\star 2^{-k}$
and
$d(v_1,w_{v,r+1})< 2C^\star 2^{-k}$.
Finally, by our choice of $\varepsilon$ in the \textbf{Second Estimate}
as before,
since $\alpha_{k,v}<\varepsilon$ and $\alpha_{k,v_1}<\varepsilon$,
the total length of parts of edges inside $BB$ does not exceed $5 C^\star 2^{-k}$. Altogether,
\begin{align*}
  \ell(B[k,v,v_1]) + p_{k,v,v_1} &+ \sum_{\substack{(v',v'')\in\edges(k) \\ \{v',v''\} \cap U \neq \emptyset}} \Haus^1 \left([\pi(v'), \pi(v'')] \cap BB\right) \\
& \leq |\pi(w_{v,r}) - \pi(w_{v,r+1})| + 8 C^\star 2^{-k} + 12 C^\star 2^{-k}  + 5 C^\star 2^{-k} \\
&\leq
|\pi(w_{v,r}) - \pi(w_{v,r+1})| + \frac{25}{30} |\pi(v) - \pi(v_1)|.
\end{align*} In the last inequality, we used $|\pi(v) - \pi(v_1)| \geq 30C^\star 2^{-k}$. In fact, this is the entire rationale for the requiring bridges to have large spans. We remark that $(w_{v,r},w_{v,r+1}) \in \edges(k-1)$ and the assignment $v\mapsto (w_{v,r},w_{v,r+1})$ when $v$ is \textbf{T2} terminal to the right is one-to-one.

We have now paid for all phantom length, all bridges, and those parts of edges that are within a ball of radius $2C^\star 2^{-k}$ from the projection of a \textbf{Case II-T1} and \textbf{Case II-T2} terminal vertex.
The next estimate will pay for all remaining edge lengths.

\textbf{Fourth Estimate (Case II-NT):}
Suppose $(v',v'') \in \edges(k)$ is an edge for which
the length of $[\pi(v'),\pi(v'')]$ has not yet been fully paid,
and fix a point $y \in V_{k-1}$ so that $d(y, v') < C^\star2^{-k}$.
Then $\alpha_{k,v'}<\varepsilon$ and $\alpha_{k,v''}<\varepsilon$,
and there are $u',u'' \in \R^{n_1}$ such that
$[u',u'']$ is the largest closed subinterval of $[\pi(v'),\pi(v'')]$ so that $u'$ and $u''$ lie at distance at least $2C^\star 2^{-k}$ from the projections of \textbf{II-T1} and \textbf{II-T2} terminal vertices of $V_k \cap B_{k,v'}$.
Only $\Haus^1([u',u''])$ remains to be paid for as we have already paid for the rest of the length of $[\pi(v'),\pi(v'')]$ in the \textbf{Second} and \textbf{Third Estimate}.
By Proposition \ref{p:proj-prop} and \eqref{e:proj-alpha},
\begin{align*}
  |u' - u''| &\leq (1+C\alpha_{k,v'}^{2s}) |\pi_{k,v'}(u') - \pi_{k,v'}(u'')|\\
  &\leq \Haus^1([\pi_{k,v'}(u'),\pi_{k,v'}(u'')])+C \alpha_{k,v'}^{2s} |\pi(v')-\pi(v'')|\\
          &\leq \Haus^1([\pi_{k,v'}(u'),\pi_{k,v'}(u'')])+30 C^\star C \alpha_{k,v'}^{2s} 2^{-k}.
\end{align*} This is analogous to the first displayed equation in the Fourth Estimate on \cite[p.~34]{badger-schul}, except that we have replaced $90=3\cdot 30$ with $30C$, where $C$ is from Proposition \ref{p:proj-prop}. The argument on \cite[pp.~34--35]{badger-schul} shows how to efficiently charge $\Haus^1([\pi_{k,v'}(u'),\pi_{k,v'}(u'')])$ to $\ell(\edges(k-1))$ and $\sum_{u\in V_k} \alpha_{k,u}^{2s}2^{-k}$.

Carefully tallying the four estimates above, one obtains \eqref{e:sum-c}.

\section{Stratified \texorpdfstring{$\beta$}{beta} numbers for locally finite measures}\label{ss:newbetas}

We continue to let $G$ denote the Carnot group fixed at the start of \S\ref{s:construction}. Further, from here through the end of \S\ref{s-proofs}, we let $\Delta=\bigcup_{k\in\ZZ}\Delta_k$ be a fixed system of ``dyadic cubes'' on $G$ given by Theorem \ref{t-KRS} with respect to a fixed family of nested $2^{-k}$-nets $(X_k)_{k\in\ZZ}$ for $G$.

 Motivated by \cite{badger-schul} and \cite{Li-TSP}, we wish to design a useful gauge of how close a locally finite measure $\mu$ on $G$ is to being supported on a horizontal line in a neighborhood of a cube $Q\in\Delta$, which both allows for the possibility of non-doubling measures and incorporates distance in each of the layers $G_1,\dots,G_s$ of $G$. The definition of $\beta^*(\mu,Q)$ proceeds in several stages.

\begin{definition} For all $x,y \in G$ and $r > 0$, define
\begin{align*}
  \tilde{\beta}(x,y;r)^{2s} := \sum_{i=1}^s \left( \frac{d_i(\pi_i(x),\pi_i(y))}{r} \right)^{2i}.
\end{align*} Further, define $\tilde{\beta}(x,E;r):=\inf_{y\in E} \tilde{\beta}(x,y;r)$ for all nonempty $E\subset G$.
\end{definition}

\begin{definition}[non-homogeneous stratified $\beta$ numbers] \label{nh-beta} Let $\mu$ be a locally finite Borel measure on $G$. For any Borel set $Q$, with $0<\diam Q<\infty$, and any horizontal line $L$, define
$$
\beta(\mu,Q,L)^{2s}
:=
\int_Q \tilde{\beta}(z,L;\diam Q)^{2s}\, \frac{d\mu(z)}{\mu(Q)}.
$$ Further, define $\beta(\mu,Q):=\inf_{L} \beta(\mu,Q,L)$, where $L$ runs over all horizontal lines in $G$.
\end{definition}

\begin{definition} For $Q \in \Delta_k$, $k\in\ZZ$, we define the family $\near(Q)$ of cubes \emph{near} $Q$ by
$$
\near (Q) := \{ R \in \Delta_{k-1} \cup \Delta_{k} \, : \, 2B_R \cap 588 B_Q \neq \emptyset \},
$$
where $588B_Q=B(x_Q,1568\cdot 2^{-k})$ and $x_Q$ is the center of $Q$.\end{definition}


\begin{definition}[anisotropic stratified $\beta$ numbers] Let $\mu$ be a locally finite Borel measure on $G$. For every $Q\in\Delta$, define
\begin{align*}
  \beta^*(\mu,Q)^{2s} := \inf_L \max_{R \in \near(Q)}  \beta(\mu,2B_R,L)^{2s} \min\left\{1, \frac{\mu(2B_R)}{\diam 2B_R} \right\}
\end{align*}
where the infimum is over the set of all horizontal lines in $G$.\end{definition}

\begin{remark} The numbers $\beta^*(\mu,Q)$ are a rough gauge of how far $\mu\res 588B_Q$ is from a measure supported on a horizontal line. They are \emph{anisotropic} insofar as the normalizations $$\frac{1}{\mu(2B_R)}\min\left\{1, \frac{\mu(2B_R)}{\diam 2B_R}\right\}$$ of the integral of the scale-invariant stratified distance of points in $2B_R$ to a horizontal line $L$ against the measure $\mu$, i.e. $$\sum_{i=1}^s\int_{2B_R} \left(\frac{d_i(\pi_i(z),\pi_i(L))}{\diam 2B_R}\right)^{2i}\,d\mu(z),$$ vary independently in the regions $2B_R$ that emanate in \emph{different directions and distances} from the central region $2B_Q$ inside of the window $588B_Q$.
\end{remark}

\begin{remark}\label{J-is-S} Let $x\in G$, let $\mathcal{T}$ denote the \hyperref[trees-and-leaves]{tree of cubes} $Q\in\Delta$ such that $x\in Q$ and $\side Q\leq 1$, and let $b(Q)=\beta^*(\mu,Q)^{2s}\diam Q$ for all $Q\in\mathcal{T}$. Then $J^*(\mu,x)=S_{\mathcal{T},b}(\mu,x)$, where $J^*(\mu,x)$ is given by \eqref{Jstar} and $S_{\mathcal{T},b}(\mu,\cdot)$ is given by Definition \ref{S-functions}.\end{remark}

\begin{remark}\label{near-contained} Let $Q\in\Delta_k$ and let $R\in\near(Q)\cap \Delta_{k-1}$. Then $$U(x_R,\tfrac13 \cdot 2^{-k})=U_R\subset R\subset 2B_R\subset B(x_R,\tfrac{32}{3}\cdot 2^{-k}).$$ Because $2B_R\cap 588B_Q\neq \emptyset$, we conclude that \begin{equation}\label{3R-contain} 2B_R\subset B(x_Q,1568\cdot 2^{-k}+\diam 2B_R)\subset B(x_Q,1592 \cdot 2^{-k})=597B_Q.\end{equation} Further, since cubes in $\near(Q)\cap\Delta_{k-1}$ are pairwise disjoint, a volume doubling argument yields $\# \near(Q)\cap\Delta_{k-1} \lesssim 1$, where the implicit constant depends only on $G$. A similar computation shows that $2B_R\subset 597 B_Q$ for all $R\in\near(Q)\cap \Delta_k$ and $\#\near(Q)\cap\Delta_{k}\lesssim 1$, as well.\end{remark}

\begin{remark}
Midpoint convexity of $x\mapsto x^p$ when $p>1$ gives us a quasitriangle inequality for the stratified distance:
\begin{align}
  \tb(x,y;r)^{2s} \leq 2^{2s-1} \left(\tb(x,z;r)^{2s} + \tb(z,y;r)^{2s} \right). \label{e:quasi-triangle}
\end{align}
We also have a change of scales inequalities:
\begin{align}\label{quasimonotone}
  \tb(x,y;t)\leq \tb(x,y;r) \leq \frac{t}{r}\tb(x,y;t)\quad\text{whenever $t\geq r>0$.}
\end{align}\end{remark}

\section{Rectifiability of sets on which the Jones function is finite}
\label{s-suff}

Suppose that $\mu$ is a locally finite Borel measure on $G$.
For each cutoff $c>0$, we define the truncated beta number $\beta^{*,c}(\mu,Q)$ for $Q\in\Delta$ by ignoring cubes $R\in\near(Q)$ on which $\mu$ has small 1-dimensional density. That is,
\begin{align}
\beta^{*,c}(\mu,Q)^{2s} := \inf_{L} \max \left\{ \beta(\mu,2B_R,L)^{2s} \min\{c,1\} : R \in \near(Q),\ \frac{\mu(2B_R)}{\diam 2B_R} \geq c \right\},
\end{align} where as usual the infimum runs over all horizontal lines in $G$ and $\beta(\mu,2B_R,L)^{2s}$ is defined in Definition \ref{nh-beta}. If there are no $R\in\near(Q)$ with $\mu(2B_R)\geq c\diam 2B_R$, simply assign $\beta^{*,c}(\mu,Q)=0$.
The associated density-normalized Jones function is defined by
\begin{equation}
J^{*,c}(\mu,x) := \sum_{Q \in \Delta_+} \beta^{*,c}(\mu,Q)^{2s} \diam(Q)\, \frac{\chi_Q(x)}{\mu(Q)}\quad\text{for all }x\in G,\end{equation} where $\Delta_+$ is the set of cubes of side length at most 1. It is immediate from the definitions that $\beta^{*,c}(\mu,Q)\leq \beta^*(\mu,Q)$ for all $Q\in\Delta$ and $J^{*,c}(\mu,x)\leq J^*(\mu,x)$ for all $x\in G$.

This section is devoted to the proof of the following theorem.
\begin{theorem}
\label{t-sec5main}
Let $\mu$ be a locally finite Borel measure on $G$. For every $c>0$,
\begin{equation}\label{mu-c}
\mu \res \{ x \in G :  \lD{1}(\mu,x)> 2c \text{ and } J^{*,c}(\mu,x) < \infty \}
\end{equation}
is 1-rectifiable.
\end{theorem}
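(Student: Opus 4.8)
The plan is to reduce Theorem~\ref{t-sec5main} to the point-cloud traveling salesman criterion of Proposition~\ref{p:goal}. Write $A := \{x\in G : \lD1(\mu,x) > 2c \text{ and } J^{*,c}(\mu,x) < \infty\}$. Since $J^{*,c}(\mu,\cdot)$ is a $\mu$-normalized sum function over the tree of all cubes of side length at most $1$ (cf.~Remark~\ref{J-is-S}), I would first intersect $A$ with a dyadic cube $Q_0\in\Delta$ of side length $1$ and apply the localization lemma, Lemma~\ref{l-localize}, to the tree $\mathcal T$ of descendants of $Q_0$ with $b(Q)=\beta^{*,c}(\mu,Q)^{2s}\diam Q$: for the set $A_N := \{x\in\leaves(\mathcal T) : S_{\mathcal T,b}(\mu,x)\le N\}$ with $\mu(A_N)>0$, one obtains a subtree $\mathcal G$ with $\Top(\mathcal G)=Q_0$, with $\mu(A_N\cap\leaves(\mathcal G))\ge(1-\varepsilon)\mu(A_N)$, and crucially with the total sum $\sum_{Q\in\mathcal G}\beta^{*,c}(\mu,Q)^{2s}\diam Q < (N/\varepsilon)\mu(Q_0) < \infty$. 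Since $G$ is a countable union of such $Q_0$ and $A=\bigcup_N A_N$ up to a $\mu$-null set, it suffices to prove that $\mu\res(A_N\cap\leaves(\mathcal G))$ is carried by a single rectifiable curve, and then let $N\to\infty$ and $\varepsilon\to 0$.

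Next I would extract the point clouds. Fix a base point and a large constant $C^\star$; the idea is that for each scale $k$ one picks a maximal $2^{-k}$-separated set $V_k$ of points lying \emph{near} a positive-$\mu$-measure portion of $A_N\cap\leaves(\mathcal G)$ — this is where the lower-density hypothesis $\lD1(\mu,x)>2c$ enters, guaranteeing (via a covering/Chebyshev argument, as promised in the introduction's reference to Lemma~\ref{l-centerofmass}) that such separated points can be found nearby sets of positive measure, and simultaneously that the beta numbers $\beta^{*,c}$ control the stratified distance of these points to a horizontal line. One then has to verify the coherence conditions $(V_I)$, $(V_\two)$, $(V_\three)$ of Proposition~\ref{p:goal} for this sequence — separation is automatic from maximality, and the ``nearby at the next scale'' conditions follow because the clouds all shadow the same limiting set. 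The key input is that for each $v\in V_k$ there is a horizontal line $\ell_{k,v}$ and a number $\alpha_{k,v}$ satisfying the tube containment \eqref{e:line-containment}, with $\alpha_{k,v}$ comparable (via Remark~\ref{r:sean-tubular-beta} and the definition of $\beta^{*,c}$ via the cubes in $\near(Q)$) to the beta number of a cube of side $\sim 2^{-k}$ containing or adjacent to $v$; the truncation at level $c$ is exactly what lets one pass from the measure-theoretic $\beta^{*,c}(\mu,Q)$ to a \emph{set}-theoretic tube bound, because on the cubes that survive the truncation $\mu$ has density $\ge c$ and Chebyshev converts ``small integral of stratified distance against $\mu$'' into ``most points lie in a thin tube'' (and then one shrinks $65C^\star B_{k,v}$ slightly, or re-selects $V_k$, so that \emph{all} chosen points lie in the tube).

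The summability hypothesis \eqref{e:Gamma-bound} of Proposition~\ref{p:goal}, namely $\sum_k\sum_{v\in V_k}\alpha_{k,v}^{2s}2^{-k}<\infty$, is then matched against the finiteness of $\sum_{Q\in\mathcal G}\beta^{*,c}(\mu,Q)^{2s}\diam Q$ coming from localization: each $v\in V_k$ is charged to boundedly many cubes $Q$ of side $\sim 2^{-k}$ in $\mathcal G$, with $\alpha_{k,v}^{2s}2^{-k}\lesssim\beta^{*,c}(\mu,Q)^{2s}\diam Q$, so the double sum over $V_k$ is dominated by a constant times the localized Jones sum, hence finite. Proposition~\ref{p:goal} then produces a rectifiable curve $\Gamma$ containing the Hausdorff limit $V$ of the $V_k$. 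Finally one must check that $\mu\res(A_N\cap\leaves(\mathcal G))$ is actually carried by $\Gamma$ (not just that its support is near $\Gamma$): this again uses the lower density bound to show that $\mu$-a.e.\ point of $A_N\cap\leaves(\mathcal G)$ is a limit of points of the $V_k$, hence lies in $V\subset\Gamma$ — one argues that any point $x$ of $A_N\cap\leaves(\mathcal G)$ failing to be in $V$ would have a ball missing all $V_k$ for large $k$, contradicting the selection of $V_k$ near positive-measure sets around $x$. Running over the countably many cubes $Q_0$, thresholds $N$, and $\varepsilon\downarrow 0$ (using uniqueness in the Lebesgue-type decomposition, Lemma~\ref{l:decomp}, to patch the pieces), we conclude $\mu\res A$ is $1$-rectifiable.

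I expect the main obstacle to be the passage from the \emph{measure-theoretic} beta number $\beta^{*,c}(\mu,Q)$ to the \emph{geometric} tube-containment \eqref{e:line-containment} required by Proposition~\ref{p:goal}: in the Euclidean case of \cite{badger-schul} one uses Lerman's center-of-mass trick (convexity of distance to a line plus Jensen), but, as the introduction flags, that is unavailable in higher step, so one must instead run the indirect Chebyshev argument — choosing the point clouds $V_k$ so that, after discarding a small-measure bad set at each scale, \emph{every} selected point is forced into the thin tube — and carefully keeping the discarded bad masses summable so that positive $\mu$-measure survives in the limit. Coordinating the quantifiers (the cutoff $c$, the threshold $N$, the localization error $\varepsilon$, the small geometric parameter in Proposition~\ref{p:proj-prop}, and the radius inflation factors like $65C^\star$ versus the $588 B_Q$ used to define $\near(Q)$) so that all the inequalities line up is the delicate bookkeeping part.
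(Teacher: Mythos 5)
Your overall route matches the paper: localize the Jones sum with Lemma~\ref{l-localize}, extract point clouds, feed them into the traveling salesman criterion Proposition~\ref{p:goal} (the paper factors the last two steps through Lemma~\ref{l-makeacurve}), and iterate. You also correctly identify the Lerman-vs.-Chebyshev obstruction. But there is a real structural gap in where the lower-density hypothesis enters, and as written your plan cannot close.

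The issue is this. You apply Lemma~\ref{l-localize} to the tree $\mathcal T$ of \emph{all} descendants of a top cube $Q_0$, so $\leaves(\mathcal T)=Q_0$, $A_N=\{x\in Q_0 : J^{*,c}(\mu,x)\le N\}$, and the subtree $\mathcal G$ that comes out need not satisfy $\mu(2B_Q)\geq c\diam 2B_Q$ for $Q\in\mathcal G$. That condition --- \eqref{e-lowerreg} of Lemma~\ref{l-makeacurve} --- is indispensable: $\beta^{*,c}(\mu,Q)$ only records $\beta(\mu,2B_R,\ell_Q)$ over cubes $R\in\near(Q)$ with $\mu(2B_R)\geq c\diam 2B_R$, so on a low-density cube $R$ nothing whatsoever is controlled, and the bound $\alpha_{k,v}^{2s}\lesssim\max\{c^{-1},1\}\,\beta^{*,c}(\mu,Q)^{2s}$ breaks. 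The Chebyshev selection of Lemma~\ref{l-centerofmass} does not need the lower density; what the lower density buys is precisely the per-cube density bound on the tree, and you have placed it in the wrong phase of the argument (the point-cloud/tube step rather than the tree-construction step). Relatedly, your claim $A=\bigcup_N A_N$ up to a null set drops the density restriction from $A$, which signals the bookkeeping has lost track of the hypothesis.

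The paper's fix is a preliminary pruning step you omit: for each $x$ with $\lD1(\mu,x)>2c$, take a radius $r_x$ with $\mu(B(x,r))>4cr$ for $r\le r_x$, let $Q_x$ be the maximal cube containing $x$ with $\tfrac83\side Q\le\min\{r_x,\tfrac38\}$, and set
\[
\mathcal T_x=\bigl\{Q\subset Q_x : \mu(2B_R)\geq c\diam 2B_R \ \text{for all } R\in\Delta \text{ with } Q\subset R\subset Q_x\bigr\}.
\]
Then $x\in\leaves(\mathcal T_x)$, the family $\{Q_x\}$ is countable, and Lemma~\ref{l-localize} is applied to $\mathcal T_x$ rather than to all of $\Delta(Q_0)$; the resulting $\mathcal G\subset\mathcal T_x$ automatically inherits \eqref{e-lowerreg}, which is exactly what Lemma~\ref{l-makeacurve} needs to convert $\beta^{*,c}$ into the tube bounds $\alpha_{k,v}$ for Proposition~\ref{p:goal}. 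With this insertion, the rest of your plan (maximal $2^{-k}r_0$-separated clouds $V_k$ from the points $z_Q$, $Q\in\mathcal T$, the line $\ell_{k,v}=\ell_{Q_{k,v}}$, summability of $\sum\alpha_{k,v}^{2s}2^{-k}r_0$ from the localized Jones sum, and $\leaves(\mathcal G)\subset V\subset\Gamma$) proceeds as you sketch.
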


Our main tool for constructing a rectifiable curve passing through a set of points is Proposition \ref{p:goal}. In order to find (countably many) rectifiable curves covering the set where $\lD1(\mu,x)$ is positive and $J^{*,c}(\mu,x)$ is finite, we need to extract enough data to input to the proposition. In \cite{badger-schul}, the convexity of the Euclidean distance of a point to a line was used to find points $z_Q$ (centers of mass) for each $Q\in\Delta$ for which we could control the distance of $z_Q$ to any line $L$ using $\beta$ numbers. This approach is not available in an arbitrary Carnot group $G$, so we reverse the process. First, we associate a special line $\ell_Q$ to each $Q\in\Delta$. In particular, with $\mu$ and $c>0$ fixed, for each $Q\in\Delta$, choose any horizontal line $\ell_Q$ so that \begin{equation} \label{special-line}
\max \left\{\beta(\mu, 2B_R, \ell_Q)^{2s}\min\{c,1\}:R\in\near(Q),\frac{\mu(2B_R)}{\diam 2B_R}\geq c\right\} \leq 2 \beta^{*,c}(\mu, Q)^{2s}.\end{equation} If there are no $R\in\near(Q)$ such that $\mu(2B_R)\geq c\diam 2B_R$, choose $\ell_Q$ arbitrarily or leave $\ell_Q$ undefined---we will never refer to it. Once we have fixed these lines, we may show that there exist points $\{z_R\}_{R\in\Delta}$ for which we can control the distance of $z_R$ to $\ell_Q$ whenever $R\in\near(Q)$ and $\mu(2B_R)\geq c\diam 2B_R$.

\begin{lemma}
\label{l-centerofmass} There exist points $\{z_R\}_{R\in\Delta}$ such that $z_R\in 2B_R$ for each $R\in\Delta$ and
\begin{equation}\label{z-to-l}
\tb(z_R,\ell_Q;\diam 2B_Q)\lesssim \tb(z_R,\ell_Q;\diam 2B_R) \lesssim \beta(\mu,2B_R,\ell_Q)\end{equation} for each $R$ and $Q$ in $\Delta$ with $R\in\near(Q)$ and $\mu(2B_R)\geq c\diam 2B_R$.\end{lemma}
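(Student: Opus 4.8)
\emph{Proof proposal.} The plan is to bypass the center-of-mass construction of \cite{badger-schul}, which used convexity of the Euclidean distance-to-a-line together with Jensen's inequality, and instead extract the points $z_R$ by a pigeonhole argument based on Chebyshev's inequality. The crucial structural fact is that, for a fixed $R\in\Delta$, only boundedly many cubes $Q$ satisfy $R\in\near(Q)$: if $R\in\Delta_j$ then any such $Q$ lies in $\Delta_j\cup\Delta_{j+1}$, and since $2B_R\cap 588B_Q\neq\emptyset$ its center $x_Q$ lies within $O_G(2^{-j})$ of $x_R$; because cube centers in a fixed generation are separated at the scale of that generation and $G$ enjoys the packing property recorded at the end of \S\ref{ss:Carnot}, this gives $M_R:=\#\{Q\in\Delta:R\in\near(Q)\}\lesssim_G 1$.

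For $R$ with $\mu(2B_R)<c\diam 2B_R$ there is nothing to prove, so I set $z_R=x_R\in 2B_R$. Fix $R$ with $\mu(2B_R)\geq c\diam 2B_R$, so that $\mu(2B_R)>0$, and let $\mathcal{Q}_R=\{Q\in\Delta:R\in\near(Q)\}$; for every $Q\in\mathcal{Q}_R$ the line $\ell_Q$ from \eqref{special-line} is defined, since $R$ itself witnesses the density condition for $Q$. By Definition \ref{nh-beta},
$$
\int_{2B_R}\tb(z,\ell_Q;\diam 2B_R)^{2s}\,d\mu(z)=\mu(2B_R)\,\beta(\mu,2B_R,\ell_Q)^{2s},
$$
so Chebyshev's inequality bounds the exceptional set $E_Q:=\{z\in 2B_R:\tb(z,\ell_Q;\diam 2B_R)^{2s}>2M_R\,\beta(\mu,2B_R,\ell_Q)^{2s}\}$ (read, when $\beta(\mu,2B_R,\ell_Q)=0$, as the $\mu$-null set where $\tb(\cdot,\ell_Q;\diam 2B_R)>0$) by $\mu(E_Q)\leq \mu(2B_R)/(2M_R)$. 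Summing over $\mathcal{Q}_R$ gives $\mu\big(\bigcup_{Q\in\mathcal{Q}_R}E_Q\big)\leq \mu(2B_R)/2<\mu(2B_R)$, so I may choose $z_R\in 2B_R\setminus\bigcup_{Q\in\mathcal{Q}_R}E_Q$.

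For such a $z_R$ we have $\tb(z_R,\ell_Q;\diam 2B_R)^{2s}\leq 2M_R\,\beta(\mu,2B_R,\ell_Q)^{2s}$ for all $Q\in\mathcal{Q}_R$; taking $2s$-th roots and using $M_R\lesssim_G 1$ yields the second inequality in \eqref{z-to-l}. The first inequality is then a change-of-scales statement: $R\in\near(Q)$ with $Q\in\Delta_k$ forces $R\in\Delta_{k-1}\cup\Delta_k$, so $\diam 2B_R/\diam 2B_Q$ is bounded above and below by constants depending only on $G$, and applying \eqref{quasimonotone} to $\tb(z_R,\ell_Q;\cdot)$ gives $\tb(z_R,\ell_Q;\diam 2B_Q)\lesssim_G\tb(z_R,\ell_Q;\diam 2B_R)$. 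Chaining the two bounds proves \eqref{z-to-l}.

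I expect the only delicate points to be the uniform counting bound $M_R\lesssim_G 1$ — which is what guarantees that the final implicit constant depends on $G$ alone and not on $c$ or $\mu$ — and the bookkeeping needed to keep the argument valid in the degenerate case $\beta(\mu,2B_R,\ell_Q)=0$ and for cubes $R$ of arbitrary side length. Everything else is routine once the Chebyshev pigeonhole replaces the Euclidean convexity argument.
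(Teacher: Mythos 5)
Your proposal is correct and, despite the framing, is essentially the paper's own proof: the authors likewise abandon the Euclidean center-of-mass/Jensen argument and use Chebyshev's inequality together with the bound $\#\{Q:R\in\near(Q)\}\lesssim_G 1$ to pigeonhole a point $z_R\in 2B_R$ avoiding the union of exceptional sets, then pass to the first inequality in \eqref{z-to-l} via \eqref{quasimonotone}. The only cosmetic differences are that you explicitly record the degenerate case $\beta(\mu,2B_R,\ell_Q)=0$ and assign $z_R=x_R$ for cubes failing the density threshold, neither of which changes the substance.
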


\begin{proof} Fix $R\in\Delta$. Since $\diam 2B_Q\leq \diam 2B_R\leq 2\diam 2B_Q$ when $R \in \near(Q)$, the first inequality in \eqref{z-to-l} follows from \eqref{quasimonotone}, so it suffices to prove the second inequality. By definition, for any horizontal line $L$, $$\beta(\mu,2B_R,L)^{2s}=\int_{2B_R} \tb(z,L;\diam 2B_R)^{2s} \frac{d\mu(z)}{\mu(2B_R)}.$$ Thus, for each horizontal line $\ell_Q$ associated to some $Q\in\Delta$, Chebyshev's inequality gives
\begin{align*}
  \mu\left(\left\{ z \in 2B_R : \tilde{\beta}(z,\ell_Q;\diam 2B_R)^{2s} \geq C \beta(\mu,2B_R,\ell_Q)^{2s} \right\}\right)
    \leq \frac{\mu(2B_R)}{C}\quad\text{for all }C>1.\end{align*}
By an argument similar to Remark \ref{near-contained}, there exists a constant $N=N(G)<\infty$ such that $\#\{Q\in\Delta:R\in\near(Q)\} \leq N$. Choosing $C=2N>1$, it follows that $$
\mu\left( \bigcup_{\{Q:R\in\near(Q)\}} \{ z \in 2B_R : \tilde{\beta}(z,\ell_Q;\diam 2B_R)^{2s} \geq  2N \beta(\mu,2B_R,\ell_Q)^{2s} \}\right) \leq \frac{1}{2}\mu(2B_R).$$ Therefore, as long as $\mu(2B_R)>0$, there exists $z_R\in 2B_R$ such that \begin{equation}\label{zr-def} \tb(z_R,\ell_Q;\diam 2B_R)^{2s} \leq 2N \beta(\mu,2B_R,\ell_Q)^{2s}\end{equation} for all $Q\in\Delta$ such that $R\in\near(Q)$. Pick one such point for each $R\in\Delta$ such that $\mu(2B_R)>0$. (This includes all cubes $R\in\Delta$ such that $\mu(2B_R)\geq c\diam 2B_R$. For any $R\in\Delta$ with $\mu(2B_R)=0$, choose $z_R=x_R$ if desired.)
\end{proof}

The following lemma describe a scenario in which the whole set of leaves of a tree is contained in a rectifiable curve. Moreover, the length of such a curve can be controlled by the diameter or side length of the top cube and a sum involving $\beta^{*,c}(\mu,Q)^{2s}$.

\begin{lemma}\label{l-makeacurve} Let $\mu$ and $c$ be fixed as above. Suppose that $\mathcal{T}$ is a \hyperref[trees-and-leaves]{tree of cubes} such that
\begin{equation}
    \label{e-lowerreg}
\mu(2B_Q) \geq c \diam(2B_Q) \quad
\text{for all } Q \in \mathcal{T}\qquad\text{and}
\end{equation}
\begin{equation}\label{2s-sum-finite}
S_\mathcal{T}=\sum_{Q \in \mathcal{T}}\beta^{*,c}(\mu,Q)^{2s} \diam(Q) <\infty.
\end{equation} Then there exists a rectifiable curve $\Gamma$ with $\leaves(\mathcal{T}) \subset \Gamma$ such that
\begin{equation}\label{T-H1-bound} \mathcal{H}^1(\Gamma) \lesssim \side \Top(\mathcal{T}) +  \max\{c^{-1}, 1\} S_\mathcal{T}.\end{equation}
\end{lemma}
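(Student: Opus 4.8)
The plan is to realize $\leaves(\mathcal{T})$ as a subset of a curve obtained from Proposition \ref{p:goal} by manufacturing an appropriate sequence of point clouds $(V_k)$ from the centers $z_R$ supplied by Lemma \ref{l-centerofmass}. First I would set $r_0 = \side\Top(\mathcal{T})$ (after a harmless dilation, $r_0 = 1$) and, for each generation $k \geq 0$, let $V_k := \{ z_R : R \in \mathcal{T}_k \}$, where $\mathcal{T}_k$ is the set of cubes in $\mathcal{T}$ of side length $2^{-k}r_0$ and $z_R \in 2B_R$ is the point from Lemma \ref{l-centerofmass}; note \eqref{e-lowerreg} guarantees $\mu(2B_R) \geq c\diam 2B_R > 0$ so each $z_R$ is well-defined, and $\leaves(\mathcal{T}) = \bigcap_k \bigcup \mathcal{T}_k$ so the Hausdorff limit of $V_k$ will contain $\leaves(\mathcal{T})$. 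The verification of the hypotheses $(V_I)$, $(V_\two)$, $(V_\three)$ of Proposition \ref{p:goal} is then a routine consequence of the geometry of KRS cubes (Theorem \ref{t-KRS}): distinct cubes $R, R'$ in $\mathcal{T}_k$ satisfy $U_R \cap U_{R'} = \emptyset$, which forces $d(z_R, z_{R'})$ to be comparable to $2^{-k}r_0$ up to a constant depending only on $G$ (one may need to pass to a sub-collection or shrink $r_0$ by a fixed constant to get exactly $(V_I)$, or instead rescale the separation constant), while the nesting and inheritance properties (properties (2) and (4) of Theorem \ref{t-KRS}) give, for $R \in \mathcal{T}_k$ with parent $R^+ \in \mathcal{T}_{k-1}$ and any child $R^- \in \mathcal{T}_{k+1}$, that $z_{R^+}, z_{R^-}$ lie within $O(2^{-k}r_0)$ of $z_R$, which yields $(V_\two)$ and $(V_\three)$ with some $C^\star = C^\star(G)$. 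Also all $z_R$ with $R$ a descendant of $\Top(\mathcal{T})$ lie in $B(x_{\Top(\mathcal{T})}, O(r_0))$, giving the containment in $B(x_0, C^\star r_0)$.

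Next I would produce the lines $\ell_{k,v}$ and penalties $\alpha_{k,v}$. For $v = z_R \in V_k$ with $R \in \mathcal{T}_k$, take $\ell_{k,v} := \ell_R$, the horizontal line chosen for $R$ in \eqref{special-line}, and set $\alpha_{k,v} := C_0\, \beta^{*,c}(\mu, R)$ for a suitable constant $C_0 = C_0(G, c)$ to be fixed below. The containment \eqref{e:line-containment} is the crux: I must show that every $z_S$ with $S \in \mathcal{T}_{k-1} \cup \mathcal{T}_k$ and $z_S \in B(z_R, 65 C^\star 2^{-k} r_0)$ lies in the tubular neighborhood $\ell_R \cdot \delta_{2^{-k}r_0}(B_{\R^n}(\alpha_{k,v}^s))$. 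Such an $S$ has $2B_S$ meeting a large dilate of $B_R$, hence $S \in \near(R)$ up to the bookkeeping constants built into the definition of $\near$ (the constants $588$, $597$, $65 C^\star$ in the excerpt are tuned precisely so that the balls $2B_S$ for relevant $S$ lie inside $597 B_R \subset 588$-window, cf. Remark \ref{near-contained}); moreover \eqref{e-lowerreg} gives $\mu(2B_S) \geq c\diam 2B_S$, so $S$ is one of the cubes over which the max in \eqref{special-line} is taken. Therefore $\beta(\mu, 2B_S, \ell_R)^{2s} \min\{c,1\} \leq 2\beta^{*,c}(\mu,R)^{2s}$, and Lemma \ref{l-centerofmass} converts this into a stratified-distance bound $\tb(z_S, \ell_R; \diam 2B_R)^{2s} \lesssim \beta(\mu, 2B_S, \ell_R)^{2s} \lesssim \max\{c^{-1},1\}\, \beta^{*,c}(\mu,R)^{2s}$; finally, the characterization \eqref{tubular-beta} of the tubular-neighborhood description of the stratified $\beta$ (Remark \ref{r:sean-tubular-beta}), together with $\diam 2B_R \sim 2^{-k} r_0$, translates the stratified-distance control into the Euclidean tube containment \eqref{e:line-containment}, with $\alpha_{k,v} \sim \max\{c^{-1},1\}^{1/(2s)} \beta^{*,c}(\mu,R)$ — which pins down $C_0$.

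With the hypotheses in place, \eqref{e:Gamma-bound} reduces to $\sum_{k \geq 1}\sum_{v \in V_k} \alpha_{k,v}^{2s} 2^{-k}r_0 \sim \max\{c^{-1},1\} \sum_{R \in \mathcal{T}} \beta^{*,c}(\mu,R)^{2s}\diam R = \max\{c^{-1},1\}\, S_{\mathcal{T}} < \infty$ by \eqref{2s-sum-finite} (using $\diam R \sim 2^{-k}r_0 = \side R$). Proposition \ref{p:goal} then yields a rectifiable curve $\Gamma \subset B(x_0, 3C^\star r_0)$ with $V := \lim_k V_k \subset \Gamma$ and $\Haus^1(\Gamma) \lesssim_{G} r_0 + \max\{c^{-1},1\}\, S_{\mathcal{T}}$, which is exactly \eqref{T-H1-bound} once we check $\leaves(\mathcal{T}) \subset V$: any point of $\leaves(\mathcal{T})$ lies in a nested sequence $Q_0 \supset Q_1 \supset \cdots$ along an infinite branch, and since $z_{Q_k} \in 2B_{Q_k}$ with $\diam 2B_{Q_k} \to 0$ and the $Q_k$ shrink to the limit point, the points $z_{Q_k}$ converge to that point, so it belongs to the Hausdorff limit $V$. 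The main obstacle I anticipate is the bookkeeping of constants in the middle paragraph — verifying that the enlargement factors hidden in $\near(Q)$, in the window $597 B_Q$, and in the radius $65 C^\star 2^{-k}r_0$ of \eqref{e:line-containment} are mutually consistent, so that every competitor point $z_S$ that Proposition \ref{p:goal} forces me to place in the tube is genuinely indexed by a cube $S \in \near(R)$ with $\mu(2B_S)$ large; this is precisely where the specific numerology of the definitions in \S\ref{ss:newbetas} has to be invoked, and where \eqref{e-lowerreg} is essential so that no relevant cube is discarded by the density cutoff in $\beta^{*,c}$.
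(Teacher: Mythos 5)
Your construction of $V_k$ has a real gap: taking $V_k = \{z_R : R \in \mathcal{T}_k\}$ does \emph{not} satisfy $(V_I)$, and the reasoning you offer for it is incorrect. Lemma \ref{l-centerofmass} only places $z_R$ somewhere in $2B_R = B(x_R, \tfrac{16}{3}\cdot 2^{-k}r_0)$, which is a ball of radius comparable to $2^{-k}r_0$ around $x_R$; disjointness of the much smaller inner balls $U_R = U(x_R, \tfrac16\cdot 2^{-k}r_0)$ gives no lower bound at all on $d(z_R, z_{R'})$ for distinct $R, R' \in \mathcal{T}_k$ — those points can coincide. You do hedge (``pass to a sub-collection or shrink $r_0$''), and passing to a maximal $2^{-k}r_0$-separated subset $V_k$ of $Z_k := \{z_R : R\in\mathcal{T}_k\}$ is indeed the right move (shrinking $r_0$ fixes nothing, since the issue is not the scale but that two centers can coincide). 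But you treat the pruning as a cosmetic afterthought, and it is not: once $V_k \subsetneq Z_k$, your arguments for $(V_\two)$, $(V_\three)$, and for $\leaves(\mathcal{T}) \subset V$ all break. For $(V_\two)$ you reason that the child's point $z_{R^-}$ is close to $z_R$, but $z_{R^-}$ need not survive the pruning; the fix (which is what the paper does) is to use \emph{maximality} of $V_{k+1}$ in $Z_{k+1}$ to find some surviving $z_S \in V_{k+1}$ within $2^{-(k+1)}r_0$ of $z_{R^-}$, and then chain the triangle inequality through $x_R, x_{R^-}, z_{R^-}, z_S$. Similarly, your final argument that $z_{Q_k}\to y$ for a leaf $y$ only shows $y\in\lim_k Z_k$, not $y\in V = \lim_k V_k$; again you need maximality to produce $v_k\in V_k$ with $d(v_k, z_{Q_k}) < 2^{-k}r_0$.

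Apart from this, the rest of the proposal matches the paper's route: choosing $\ell_{k,v} := \ell_{Q_{k,v}}$ from \eqref{special-line}, showing that every competitor $z_S$ in the window $B(v, 65C^\star 2^{-k}r_0)$ comes from a cube $S\in\near(Q_{k,v})$ satisfying the density cutoff by \eqref{e-lowerreg}, converting the averaged $\beta$ bound into a pointwise stratified-distance bound on $z_S$ via Lemma \ref{l-centerofmass}, and then invoking \eqref{tubular-beta} to get the tube containment \eqref{e:line-containment} with $\alpha_{k,v}^{2s} \lesssim \max\{c^{-1},1\}\,\beta^{*,c}(\mu, Q_{k,v})^{2s}$. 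The summation to get \eqref{e:Gamma-bound} and the final appeal to Proposition \ref{p:goal} are correct (and the fact that $V_k$ is a \emph{subset} of $Z_k$ only makes the sum smaller). So the gap is localized, fixable, and the fix is precisely the maximal separated subset device; but as written the point-cloud verification does not go through.
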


\begin{proof}If the \hyperref[trees-and-leaves]{set of leaves} of the tree is empty, the conclusion is trivial. Thus, we assume that $\leaves(\mathcal{T})\neq\emptyset$. Without loss of generality, we may further assume that every cube in $\mathcal{T}$ intersects $\leaves(\mathcal{T})$. (Delete any cubes without this property.) Let $\{\ell_Q\}_{Q\in\Delta}$ be given by \eqref{special-line} and let $\{z_R\}_{R\in\Delta}$ be given by Lemma \ref{l-centerofmass}.

We employ a traveling salesman algorithm for constructing rectifiable curves in Carnot groups from \S\ref{s:construction}. In particular, we will apply Proposition~\ref{p:goal} with parameters $$C^\star = 24\quad\text{and}\quad r_0 = \side\Top(\mathcal{T}).$$ To do so, we must identify a sequence $(V_k)_{k\geq 0}$ of point clouds satisfying conditions \hyperref[V1]{$(V_I)$}, \hyperref[V2]{$(V_\two)$}, \hyperref[V3]{$(V_\three)$} of the proposition and sequences $(\ell_{k,v})_{k\geq 0,v\in V_k}$ of lines and $(\alpha_{k,v})_{k\geq 0,v\in V_k}$ of linear approximation errors satisfying \eqref{e:line-containment} and \eqref{e:Gamma-bound}.

\emph{Point Clouds.} For each $k \geq 0$, define $Z_k:=\{z_Q:Q\in\mathcal{T}\text{ and }\side Q=2^{-k}r_0\}$ and choose $V_k$ to be a maximal $2^{-k}r_0$-separated subset of $Z_k$. By definition, $V_k$ satisfies $(V_I)$.

Suppose that $v_k \in V_k$ for some $k\geq 0$. Then $v_k=z_Q$ for some $Q \in \mathcal{T}$ with $\side Q=2^{-k}r_0$. Because every cube in $\mathcal{T}$ is part of an infinite chain, there exists $R\in \mathcal{T}$ with $\side R = (1/2)\side Q$ and $R \subset Q$. By maximality of $V_{k+1}$ in $Z_{k+1}$, there is $S\in\mathcal{T}$ with $\side S=\side R$ such that $z_S\in V_{k+1}$ and $d(z_S, z_R) \leq 2^{-(k+1)}r_0$. Hence $v_{k+1}:=z_S$ satisfies
\begin{align*}
d(v_k,v_{k+1}) =d(z_Q,z_S)&\leq d(z_Q, x_Q)+d(x_Q,x_R) + d(x_R,z_R) + d(z_R,z_S) \\ &\leq \left(\tfrac{16}{3}+\tfrac{8}{3}+\tfrac{8}{3}+\tfrac12\right)\cdot 2^{-k}r_0 < 12 \cdot2^{-k}r_0.
\end{align*}
Therefore, ($V_\two$) holds.

Similarly, suppose that $v_k \in V_k$ for some $k \geq 1$, again say that $v_k=z_Q$ for some $Q\in\mathcal{T}$ with $\side Q = 2^{-k}r_0$. Let $P\in\mathcal{T}$ be the parent of $Q$, which satisfies $\side P=2\side Q$ and $Q\subset P$. By maximality of $V_{k-1}$ in $Z_{k-1}$, there is $O\in\mathcal{T}$ with $\side O=\side P$ such that $z_O\in V_{k-1}$ and $d(z_O,z_P)\leq 2^{-(k-1)}r_0$. Hence $v_{k-1}:=z_O$ satisfies
\begin{align*}
d(v_k,v_{k-1}) = d(z_Q,z_O) &\leq d(z_Q,x_Q)+d(x_Q,x_P)+d(x_P,z_P)+d(z_P,z_O) \\
 &\leq  \left(\tfrac{16}{3} + \tfrac{16}{3} + \tfrac{32}{3} + 2\right)\cdot 2^{-k}r_0 < 24 \cdot 2^{-k}r_0
\end{align*} Therefore, ($V_\three$) holds.

\emph{Horizontal Lines and Linear Approximation Errors.} Next, we will describe how to choose the horizontal lines $\ell_{k,v}$ and errors
$\alpha_{k,v}$ for use in Proposition~\ref{p:goal}. For each $k \geq 0$ and $v \in V_k$, let $Q_{k,v}$ denote the cube $Q \in \mathcal{T}$ such that $\side Q=2^{-k}r_0$ and $v = z_Q$. Then let $\ell_{k,v}=\ell_{Q_{k,v}}$ be the horizontal line chosen just before Lemma \ref{l-centerofmass} to satisfy \eqref{special-line}.

Suppose that $k\geq 1$, let $v \in V_k$, let $Q=Q_{k,v}$, and let $$x\in (V_{k-1}\cup V_k)\cap B(v,65C^\star 2^{-k}r_0) = (V_{k-1}\cup V_k)\cap B(v,1560\cdot 2^{-k}r_0).$$ We must bound the distance of $x$ to $\ell_{k,v}$. Since $x\in V_{k-1}\cup V_k$, we can express $x=z_R$ for some $R=R_x\in\mathcal{T}$ with $\side Q\leq \side R\leq 2\side Q$. Note that $x\in 2B_R$ and $$d(x,x_Q)\leq d(x,v)+d(v,x_Q)\leq
1560\cdot 2^{-k}r_0 + \tfrac{16}{3}\cdot 2^{-k}r_0<1568\cdot 2^{-k}r_0.$$ Thus, $x\in 2B_R\cap 588 B_Q$, whence $R\in\near(Q)$. By Lemma \ref{l-centerofmass} and \eqref{quasimonotone}, we obtain $$\tb(x,\ell_{k,v};2^{-k}r_0)^{2s}\sim\tb(x,\ell_{k,v};\diam 2B_Q)^{2s}=\tb(z_R,\ell_{Q};\diam 2B_Q)^{2s} \lesssim \beta(\mu,2B_R,\ell_{Q})^{2s}.$$ Taking the maximum over all admissible $x$ and invoking \eqref{special-line} and \eqref{e-lowerreg}, we obtain $$\sup_{x\in (V_{k-1},V_k)\cap B(v,65C^\star2^{-k}r_0)}
\tb(x,\ell_{k,v};2^{-k}r_0)^{2s} \lesssim \beta^{*,c}(\mu,Q)^{2s} \max\{c^{-1},1\}.$$ By \cite[Proposition 1.6]{Li-TSP} or \cite[Lemma 6.2]{Li-TSP}, it follows that there exists $\alpha_{k,v}$ such that $\alpha_{k,v}^{2s}\lesssim \beta^{*,c}(\mu,Q)^{2s}\max\{c^{-1},1\}$ and $$x\in \ell_{k,v}\cdot \delta_{2^{-k}r_0}(B_{\RR^n}(\alpha_{k,v}^s))\quad\text{for all }x\in (V_{k-1},V_k)\cap B(v,65C^\star2^{-k}r_0).$$ In other words, the errors $\alpha_{k,v}$ satisfy \ref{e:line-containment}.
Moreover,
\begin{align*}
    \sum_{k=1}^\infty \sum_{v \in V_k} \alpha_{k,v}^{2s} 2^{-k}r_0
    \lesssim
    \max\{c^{-1}, 1\} \sum_{Q \in \mathcal{T}}\beta^{*,c}(\mu,Q)^{2s} \diam(Q) \sim \max\{c^{-1},1\}S_\mathcal{T} < \infty
\end{align*} by \eqref{2s-sum-finite}. This verifies \eqref{e:Gamma-bound}.

\emph{The Rectifiable Curve.} Therefore, by Proposition~\ref{p:goal}, there exists a rectifiable curve $\Gamma$ in $G$ such that the Hausdorff distance limit $V=\lim_{k\to \infty} V_k$ is contained in $\Gamma$. Moreover,
\begin{align*}
\mathcal{H}^1(\Gamma) &\lesssim r_0 + \sum_{k=1}^\infty \sum_{v \in V_k} \alpha_{k,v}^{2s} 2^{-k}r_0 \lesssim \side\Top(\mathcal{T}) + \max\{c^{-1},1\} S_\mathcal{T}.
\end{align*} In other words, \eqref{T-H1-bound} holds. It remains to prove that $\leaves(\mathcal{T}) \subset \Gamma$ and suffices to show that $\leaves(\mathcal{T})\subset V$.
Pick $y \in \leaves(\mathcal{T})$ so that $y = \lim_{k \to \infty}y_k$
for some sequence of points $y_k \in Q_{k}$, for some infinite chain
$Q_0 \supset Q_1 \supset Q_2 \supset\cdots$ in $\mathcal{T}$.
By maximality of $V_k$ in $Z_k$, for each $k\geq 0$ we may find $v_k\in V_k$ such that $d(v_k,z_{Q_k})<2^{-k}r_0$. Hence
$$ d(y,V) \leq d(y,y_k) + d(y_k,z_{Q_k}) + d(z_{Q_k},v_k) \leq d(y,y_k) + \diam 2B_{Q_k} + 2^{-k}r_0 \rightarrow 0
$$ as $k\rightarrow\infty$, since $\lim_{k\to\infty} y_k=y$.
Thus,  $y \in V$, and therefore, $\leaves(\mathcal{T})\subset V\subset \Gamma$.
\end{proof}

We are ready to prove the theorem.

\begin{proof}[Proof of Theorem~\ref{t-sec5main}]
Let $\mu$ be a locally finite Borel measure on $G$ and $c>0$ be given. We wish to show that the measure $\mu_c$ defined by \eqref{mu-c} is 1-rectifiable. That is, we wish to find a sequence $\Gamma_1,\Gamma_2,\dots$ of rectifiable curves such that $\mu_c(G\setminus\bigcup_1^\infty \Gamma_i)=0$.

Suppose that $x \in G$ has $\lD1(\mu,x) > 2c$. Then there is some radius $r_x>0$ such that $$
\mu(B(x,r)) > 4cr
\quad
\text{for all } 0<r \leq r_x.
$$ Thus, for any $Q \in \Delta$ containing $x$ with $\frac83\side Q\leq r_x$, we have $B(x,\frac83\side Q)\subset 2B_Q$ and
$$
\mu(2B_Q) \geq \mu(B(x,\tfrac83\side Q)) \geq \tfrac{32}{3} c \side Q =c\diam 2B_Q.
$$
Choose $Q_x \in \Delta$ to be the maximal cube containing $x$ with $\frac83\side Q\leq r_x$ and $\side Q\leq 1$.
Then $x \in \leaves(\mathcal{T}_x)$, where
$$
\mathcal{T}_x := \left\{ Q \in \Delta \, : \, Q \subset Q_x \text{ and } \mu(2B_R) \geq c \diam(2B_R) \text{ for all } R \in \Delta \text{ with } Q \subset R \subset Q_x \right\}.
$$
Note that $\mathcal{T}_x=\mathcal{T}_y$ whenever $Q_x=Q_y$ and the collection $\{ Q_x : \lD1(\mu,x)>2c\}$ of cubes is countable, since it is a subset of the countable family $\Delta$. Thus, we may choose a sequence $\{ x_i \}_{i=1}^\infty$ of points in $G$
such that $\underline{D}^1(\mu,x_i) > 2c$ for each $i\geq 1$ and $$\{ x \in G  : \lD1(\mu,x)>2c\}\subset\bigcup_{i =1}^\infty \leaves(\mathcal{T}_{x_i}).$$
Therefore,
\begin{equation*}
\{ x \in G : \underline{D}^1(\mu,x) > 2c,\, J^{*,c}(\mu,x) < \infty \}
\subset
\bigcup_{i =1}^\infty \bigcup_{M=1}^\infty \{ x \in \leaves(\mathcal{T}_{x_i}) :  J^{*,c}(\mu,x) \leq M \}.\end{equation*}
This shows that to prove the measure $\mu_c$ defined in \eqref{mu-c} is 1-rectifiable, it suffices to prove that each measure $\mu\res \{x\in \leaves(T_{x_i}):J^{*,c}(\mu,x)\leq M\}$ is 1-rectifiable.

Fix $i\geq 1$ and $M\geq 1$. Since $\side Q_{x_i}\leq 1$, the set $\{x\in \leaves(\mathcal{T}_{x_i}): J^{*,c}(\mu,x)\leq M\}$ is contained in
$$
A := \left\{ x \in \leaves(\mathcal{T}_{x_i}) \, : \, \sum_{Q \in \mathcal{T}_{x_i}} \beta^{*,c}(\mu,Q)^{2s} \diam Q\, \frac{\chi_Q(x)}{\mu(Q)} \leq M \right\}.
$$ To complete the proof of the theorem, it is enough to prove that $\mu\res A$ is 1-rectifiable. If $\mu(A)=0$, we are done. Suppose that $\mu(A)>0$. By Lemma \ref{l-localize}, applied with the function $b(Q)\equiv\beta^{*,c}(\mu,Q)^{2s}\diam Q$ and $\varepsilon=1/k$, for each $k\geq 2$, there is a subtree $\mathcal{G}_k$ of $\mathcal{T}_{x_i}$
such that $\mu(A \cap \leaves(\mathcal{G}_k)) \geq (1-1/k)\mu(A)$
and $$\sum_{Q \in \mathcal{G}_k} \beta^{*,c}(\mu,Q)^{2s} \diam(Q) < k M\, \mu(Q_{x_i})<\infty.$$
Since the tree $\mathcal{G}_k$ satisfies \eqref{e-lowerreg} and \eqref{2s-sum-finite},
Lemma~\ref{l-makeacurve} produces
a rectifiable curve $\Gamma_k$ in $G$ such that $\leaves(\mathcal{G}_k)\subset \Gamma_k$ and
$$
\mu(A \setminus \Gamma_k)
=
\mu(A) - \mu(A \cap \Gamma_k)
\leq
\mu(A) - \mu(A \cap \leaves(\mathcal{G}_k))
\leq
(1/k)\cdot \mu(A).
$$ Therefore, $\mu\res A$ is 1-rectifiable: \begin{equation*}
\mu \left(A \setminus \textstyle\bigcup_{k=2}^\infty \Gamma_k \right)
\leq
\inf_{k \geq 2} \mu(A \setminus \Gamma_k) \leq \inf_{k \geq 2}  (1/k)\cdot \mu(A)= 0. \qedhere\end{equation*}
\end{proof}

By repeating the arguments above, making minor changes as necessary, one can obtain the following two variants of Theorem \ref{t-sec5main}. For some sample details, see \cite[Lemmas 5.4 and 7.3]{badger-schul}. For all $Q\in\Delta$, define $\beta^{**}(\mu,Q)=\inf_L \max_{R\in\near(Q)} \beta(\mu,2B_R,L)$, where the infimum is over all horizontal lines in $G$. Also define \begin{equation}J^{**}(\mu,x)=\sum_{Q\in\Delta_+} \beta^{**}(\mu,Q)^{2s}\diam Q\,\frac{\chi_Q(x)}{\mu(Q)}\quad\text{for all }x\in G.\end{equation}
\begin{theorem}\label{t-allcubes} If $\mu$ is a locally finite Borel measure on $G$, then the measure given by $\mu\res\{x\in G:J^{**}(\mu,x)<\infty\}$ is 1-rectifiable.
\end{theorem}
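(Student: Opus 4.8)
The plan is to imitate the proof of Theorem~\ref{t-sec5main}, replacing $\beta^{*,c}(\mu,Q)$ by $\beta^{**}(\mu,Q)$ throughout and discarding everything tied to the density cutoff $c$ (in particular the lower-regularity hypothesis \eqref{e-lowerreg} and the factors $\max\{c^{-1},1\}$). The structural simplification that makes this work is that $\beta^{**}$ carries no density normalization: if for each $Q\in\Delta$ we fix a horizontal line $\ell_Q$ with $\max_{R\in\near(Q)}\beta(\mu,2B_R,\ell_Q)^{2s}\leq 2\beta^{**}(\mu,Q)^{2s}$, then $\beta(\mu,2B_R,\ell_Q)^{2s}\leq 2\beta^{**}(\mu,Q)^{2s}$ for \emph{every} $R\in\near(Q)$, with no hypothesis on $\mu(2B_R)$.

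First I would record the $\beta^{**}$-analogue of Lemma~\ref{l-makeacurve}: if $\mathcal{T}$ is a tree of cubes with $\mu(2B_Q)>0$ for all $Q\in\mathcal{T}$ and $S^{**}_{\mathcal{T}}:=\sum_{Q\in\mathcal{T}}\beta^{**}(\mu,Q)^{2s}\diam Q<\infty$, then there is a rectifiable curve $\Gamma\supset\leaves(\mathcal{T})$ with $\mathcal{H}^1(\Gamma)\lesssim\side\Top(\mathcal{T})+S^{**}_{\mathcal{T}}$. Its proof copies that of Lemma~\ref{l-makeacurve} essentially verbatim. The positivity $\mu(2B_R)>0$ is all that Lemma~\ref{l-centerofmass} needs to produce the points $z_R$, and with $\ell_Q$ chosen as above the estimate \eqref{zr-def}, hence \eqref{z-to-l}, holds for all $R\in\near(Q)$; one extracts point clouds $V_k$ from $\{z_Q:Q\in\mathcal{T},\ \side Q=2^{-k}r_0\}$ with $r_0=\side\Top(\mathcal{T})$ and $C^\star=24$, checks $(V_I)$, $(V_\two)$, $(V_\three)$ exactly as before, takes $\ell_{k,v}=\ell_{Q_{k,v}}$, and obtains $\alpha_{k,v}$ with $\alpha_{k,v}^{2s}\lesssim\beta^{**}(\mu,Q_{k,v})^{2s}$ and \eqref{e:line-containment} (via \cite[Proposition~1.6]{Li-TSP}); then $\sum_{k\ge1}\sum_{v\in V_k}\alpha_{k,v}^{2s}2^{-k}r_0\lesssim S^{**}_{\mathcal{T}}<\infty$, so Proposition~\ref{p:goal} delivers $\Gamma$, and $\leaves(\mathcal{T})\subset V\subset\Gamma$ by the same limiting argument.

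Next I would assemble the theorem as in the proof of Theorem~\ref{t-sec5main}. Since $\mu(G\setminus\spt\mu)=0$, it suffices to rectify $\mu\res(\{J^{**}(\mu,\cdot)<\infty\}\cap\spt\mu)$. For each side-$1$ cube $Q_0\in\Delta_0$ put $\mathcal{T}^{Q_0}:=\{Q\in\Delta:Q\subset Q_0\}$; by nesting, for $x\in Q_0$ one has $\sum_{Q\in\mathcal{T}^{Q_0}}\beta^{**}(\mu,Q)^{2s}\diam Q\,\chi_Q(x)/\mu(Q)=J^{**}(\mu,x)$, and $x\in\leaves(\mathcal{T}^{Q_0})$. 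Hence $\{J^{**}<\infty\}\cap\spt\mu=\bigcup_{Q_0\in\Delta_0}\bigcup_{M\in\N}A_{Q_0,M}$ with $A_{Q_0,M}:=\{x\in\leaves(\mathcal{T}^{Q_0})\cap\spt\mu:J^{**}(\mu,x)\le M\}$, a countable union, so it is enough to rectify one $A:=A_{Q_0,M}$. If $\mu(A)=0$ we are done; otherwise $\mu(Q_0)>0$, and since $A$ agrees off a $\mu$-null set with the set required by Lemma~\ref{l-localize} for $\mathcal{T}=\mathcal{T}^{Q_0}$, $b(Q)=\beta^{**}(\mu,Q)^{2s}\diam Q$, $N=M$, for each integer $k\ge2$ that lemma gives a subtree $\mathcal{G}_k$ with $\Top(\mathcal{G}_k)=Q_0$, $\mu(A\cap\leaves(\mathcal{G}_k))\ge(1-1/k)\mu(A)$, and $\sum_{Q\in\mathcal{G}_k}\beta^{**}(\mu,Q)^{2s}\diam Q<kM\mu(Q_0)<\infty$. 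Prune $\mathcal{G}_k$ to $\mathcal{G}_k^\sharp:=\{Q\in\mathcal{G}_k:\mu(2B_Q)>0\}$, again a tree with top $Q_0$ since $\mu(2B_P)\ge\mu(2B_Q)$ whenever $P$ is the parent of $Q$; because $A\subset\spt\mu$ and $2B_Q$ contains an open ball around every point of $Q$, every cube on the branch of any $x\in A\cap\leaves(\mathcal{G}_k)$ lies in $\mathcal{G}_k^\sharp$, so $A\cap\leaves(\mathcal{G}_k)\subset\leaves(\mathcal{G}_k^\sharp)$ and $\mu(A\cap\leaves(\mathcal{G}_k^\sharp))\ge(1-1/k)\mu(A)$. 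Applying the $\beta^{**}$-analogue of Lemma~\ref{l-makeacurve} to $\mathcal{G}_k^\sharp$ produces a rectifiable curve $\Gamma_k\supset\leaves(\mathcal{G}_k^\sharp)$, whence $\mu(A\setminus\Gamma_k)\le(1/k)\mu(A)$; therefore $\mu(A\setminus\bigcup_{k\ge2}\Gamma_k)=0$ and $\mu\res A$ is 1-rectifiable.

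The only point requiring genuine care, beyond bookkeeping, is the $\beta^{**}$-analogue of Lemma~\ref{l-makeacurve}: one must confirm that the construction in \S\ref{s:construction}--\S\ref{s-suff} survives the removal of the lower-regularity hypothesis, and the key observation is exactly that both the selection of $\ell_Q$ against $\beta^{**}$ and the Chebyshev estimate \eqref{zr-def} use nothing about $\mu(2B_R)$ except positivity, which is ensured on the pruned trees $\mathcal{G}_k^\sharp$. Everything else is the standard hygiene of working inside $\spt\mu$ so that the relevant cubes have positive measure and of matching the set $A$ to the precise hypotheses of Lemma~\ref{l-localize}; compare \cite[Lemmas~5.4 and~7.3]{badger-schul}.
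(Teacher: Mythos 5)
Your proof is correct and carries out the modification the paper intends when it says to ``repeat the arguments above, making minor changes as necessary.'' The pruning step $\mathcal{G}_k \mapsto \mathcal{G}_k^\sharp$ is the right way to supply the positivity hypothesis that the $\beta^{**}$ setting lacks compared to \eqref{e-lowerreg}, and you correctly verify that it preserves the tree structure (via $2B_Q\subset 2B_P$), the leaves carrying the mass of $A\subset\spt\mu$, and the bound on $S^{**}_{\mathcal{T}}$, so the $\beta^{**}$-analogue of Lemma~\ref{l-makeacurve} applies exactly as you describe.
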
 With $\beta(\mu,Q)$ as in Definition \ref{nh-beta}, define \begin{equation}\tJ(\mu,x) = \sum_{Q\in\Delta_+} \beta(\mu,2B_Q)^{2s}\diam Q\, \frac{\chi_Q(x)}{\mu(Q)}\quad\text{for all }x\in G.\end{equation}
\begin{theorem}\label{t-doubling} If $\mu$ is a locally finite Borel measure on $G$, then the measure $$\mu\res\left\{x\in G: \limsup_{r\downarrow 0}\frac{\mu(B(x,2r))}{\mu(B(x,r))}<\infty\text{ and }\tJ(\mu,x)<\infty\right\}$$ is 1-rectifiable.\end{theorem}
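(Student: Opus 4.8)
The plan is to mimic the proof of Theorem~\ref{t-sec5main} (cf.\ also Theorem~\ref{t-allcubes}), following the doubling case of \cite[Lemma~7.3 and Theorem~7.4]{badger-schul}, with the lower regularity condition \eqref{e-lowerreg} replaced by local doubling. First I would reduce to a countable union. Since $\mu(G\setminus\spt\mu)=0$, it suffices to treat, for each $C_0\in\N$, the set $E_{C_0}:=\{x\in\spt\mu:\limsup_{r\downarrow0}\mu(B(x,2r))/\mu(B(x,r))<C_0\}$. For $x\in E_{C_0}$ there is $r_x>0$ with $0<\mu(B(x,r))$ and $\mu(B(x,2r))\le C_0\,\mu(B(x,r))$ for all $0<r\le r_x$, and arguing as in Remark~\ref{r:halving} this self-improves to a uniform neighborhood-doubling estimate: there is $C_1=C_1(C_0,G)$ so that $0<\mu(2B_R)\le C_1\,\mu(2B_Q)$ for every cube $Q\ni x$ with $\side Q$ small enough and every $R\in\near(Q)$. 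Taking $Q_x$ maximal subject to this (and $\side Q_x\le1$) and letting $\mathcal{T}_x$ be the tree of $Q\subset Q_x$ along which uniform neighborhood-doubling holds, we get $x\in\leaves(\mathcal{T}_x)$; countably many trees $\mathcal{T}_{x_i}$ cover $E_{C_0}$, so intersecting with $\{\tJ(\mu,\cdot)\le M\}$, $M\in\N$, the theorem reduces to showing $\mu\res A$ is $1$-rectifiable, where $A:=\{x\in\leaves(\mathcal{T}_{x_i}):\sum_{Q\in\mathcal{T}_{x_i}}\beta(\mu,2B_Q)^{2s}\diam Q\,\chi_Q(x)/\mu(Q)\le M\}$ and $\mu(A)>0$.

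From here the scheme is verbatim that of Theorem~\ref{t-sec5main}: apply Lemma~\ref{l-localize} with $b(Q)=\beta(\mu,2B_Q)^{2s}\diam Q$ and $\varepsilon=1/k$ to get subtrees $\mathcal{G}_k\subset\mathcal{T}_{x_i}$ with $\mu(A\cap\leaves(\mathcal{G}_k))\ge(1-1/k)\mu(A)$ and $\sum_{Q\in\mathcal{G}_k}\beta(\mu,2B_Q)^{2s}\diam Q<\infty$; build for each $k$ a rectifiable curve $\Gamma_k\supset\leaves(\mathcal{G}_k)$ (the analog of Lemma~\ref{l-makeacurve}), feeding point clouds $V_k$ (maximal $2^{-k}r_0$-separated subsets of $\{z_Q:Q\in\mathcal{G}_k,\ \side Q=2^{-k}r_0\}$) into Proposition~\ref{p:goal} with $C^\star=24$, $r_0=\side\Top(\mathcal{G}_k)$; union over $k$ to conclude $\mu(A\setminus\bigcup_k\Gamma_k)=0$. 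Conditions $(V_I)$--$(V_\three)$ and the reduction of Proposition~\ref{p:goal} to the length bound are unchanged.

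The one genuinely new point is the choice of the horizontal lines $\ell_{k,v}$ and errors $\alpha_{k,v}$ fed to Proposition~\ref{p:goal}. Unlike $\beta^{*,c}$ and $\beta^{**}$, which build a maximum over $\near(Q)$ into their definition, the single-ball number $\beta(\mu,2B_Q)$ carries no information about $\mu$ on the much larger window $B(v,65C^\star2^{-k}r_0)$ probed by the traveling salesman algorithm; this gap must be bridged by a chaining argument powered by the doubling hypothesis. Specifically: cover $597B_Q$ by boundedly many balls $2B_{R_1},\dots,2B_{R_m}$ with $R_j\in\Delta_{k-1}\cup\Delta_k$ near $Q$, $m=m(G,C^\star)$, consecutive balls overlapping in a fixed fraction of their (by doubling, $\mu$-comparable) mass; pick near-optimal horizontal lines $L_j$ for $\beta(\mu,2B_{R_j})$. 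A geometric lemma --- two horizontal lines that each approximate a common substantial portion of $\mu$ well must be close in every layer metric $d_i$, proved via the layerwise projections and Lemma~\ref{l:taylor} in the spirit of Proposition~\ref{p:proj-prop} --- then forces, using the quasitriangle inequality \eqref{e:quasi-triangle} and the scale-change inequalities \eqref{quasimonotone}, that $\tb(z,L_1;\diam2B_Q)^{2s}\lesssim_{C_1}\sum_j\beta(\mu,2B_{R_j})^{2s}$ for $z\in2B_{R_j}$, hence $\beta(\mu,597B_Q,\ell_Q)^{2s}\lesssim_{C_1}\sum_j\beta(\mu,2B_{R_j})^{2s}$ for a common line $\ell_Q$. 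Running Chebyshev exactly as in Lemma~\ref{l-centerofmass} then supplies points $z_R\in2B_R$ with $\tb(z_R,\ell_Q;\diam2B_R)^{2s}\lesssim\beta(\mu,597B_Q)^{2s}$ for all $R\in\near(Q)$, and \cite[Proposition~1.6]{Li-TSP} converts this into admissible errors $\alpha_{k,v}^{2s}\lesssim\beta(\mu,597B_{Q_{k,v}})^{2s}$ satisfying \eqref{e:line-containment} (with $Q_{k,v}$ the cube with $v=z_{Q_{k,v}}$, as in Lemma~\ref{l-makeacurve}). Summing over the tree, using that each $R$ lies in the cover of only $O_{G,C^\star}(1)$ cubes $Q$ and that $\mu(2B_R)\sim_{C_1}\mu(R)\lesssim_{C_1}\mu(Q)$ when $R\in\near(Q)$, one gets $\sum_{k,v}\alpha_{k,v}^{2s}2^{-k}r_0\lesssim_{C_0,G}\side\Top(\mathcal{G}_k)+\sum_{Q\in\mathcal{G}_k}\beta(\mu,2B_Q)^{2s}\diam Q<\infty$ (the $\side\Top$ term absorbs the $O(1)$ cubes whose window $597B_Q$ overshoots $\Top(\mathcal{G}_k)$), so \eqref{e:Gamma-bound} holds and Proposition~\ref{p:goal} delivers $\Gamma_k$ with $\leaves(\mathcal{G}_k)\subset\lim_kV_k\subset\Gamma_k$.

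The main obstacle is the geometric closeness-of-lines lemma driving the chaining: it is essentially the only place where local doubling is used, and it is delicate in the Carnot setting because $\tb$ weights the layers $G_1,\dots,G_s$ anisotropically, so ``two horizontal lines are close'' must be quantified in a form that survives \eqref{e:quasi-triangle} and \eqref{quasimonotone} (as usual, when the relevant $\beta$'s fail to be small one argues trivially from the a priori bound $\beta\lesssim1$). A secondary bookkeeping point is that $\sum_{Q\in\mathcal{G}_k}\beta(\mu,2B_Q)^{2s}\diam Q<\infty$ controls the ``fattened'' sum $\sum_{Q\in\mathcal{G}_k}\beta(\mu,597B_Q)^{2s}\diam Q$ only after a doubling-powered reindexing that bounds how many windows $597B_Q$ a given cube $R$ can meet; this is routine but must be carried out with care near $\Top(\mathcal{G}_k)$.
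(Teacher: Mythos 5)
Your overall framework is sound: restrict to countably many trees on which a uniformized pointwise--doubling condition holds, apply Lemma~\ref{l-localize}, feed a Lemma~\ref{l-makeacurve}--type construction into Proposition~\ref{p:goal}, and you correctly isolate the one genuinely new step --- the non-homogeneous number $\beta(\mu,2B_Q)$ alone does not control $\mu$ on the window $B(v,65C^\star2^{-k}r_0)$ that the algorithm actually probes. However, the mechanism you propose for bridging this gap --- chain near-optimal lines across an overlapping cover of $597B_Q$ by invoking a ``two near-optimal lines must be close'' lemma --- has a genuine gap. That lemma requires the measure in the overlap to \emph{pin down} its near-optimal horizontal line, and pointwise doubling does not provide this: if $\mu\res 2B_{R_j}$ is concentrated on a short arc or tiny ball (which doubling at a single faraway point $x$ in no way precludes), then $\beta(\mu,2B_{R_j})$ can be essentially zero while the infimizing line is undetermined up to an error far larger than $\beta$. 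Relatedly, the $\mu$-comparability of consecutive covering balls that your chaining relies on requires doubling at points \emph{in those balls}, but the hypothesis only localizes the doubling at $x$, and a covering ball $2B_{R_j}$ of $597B_Q$ far from $x$ may have arbitrarily small --- indeed zero --- $\mu$-mass, so the chain can break.

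The argument the paper intends (following \cite[Lemmas 5.4, 7.3]{badger-schul}) avoids this entirely and is simpler: choose a fixed number $m_0$ of generations (determined by the KRS roundness constants) so that the ancestor $S=S(Q)$ of $Q$ at distance $m_0$ satisfies $597B_Q\subset 2B_S$, and take $\ell_Q$ to be a near-optimizer for the \emph{single} number $\beta(\mu,2B_S)$. Then for any $R\in\near(Q)$ with $R\in\mathcal{T}$, $R$ meets $\leaves(\mathcal{T})$ at some point $y$ where the uniformized doubling holds, and since $B(y,\tfrac{8}{3}\side R)\subset 2B_R\subset 2B_S\subset B(y,C2^{m_0}\side R)$, a bounded number of doublings gives $\mu(2B_S)\lesssim_{C_0}\mu(2B_R)$; combined with monotonicity \eqref{quasimonotone} this yields $\beta(\mu,2B_R,\ell_Q)^{2s}\lesssim_{C_0}\beta(\mu,2B_S)^{2s}$ directly, with no comparison of near-optimal lines needed. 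Chebyshev then produces the points $z_R$ exactly as in Lemma~\ref{l-centerofmass}, and summing $\beta(\mu,2B_{S(Q)})^{2s}\diam Q$ over $\mathcal{T}$ reindexes (with multiplicity $\lesssim M^{m_0}$, plus $O_{m_0}(1)$ cubes near $\Top(\mathcal{T})$ handled by $\beta\lesssim 1$) to the sum $\sum_{S\in\mathcal{T}}\beta(\mu,2B_S)^{2s}\diam S$ controlled by Lemma~\ref{l-localize}. I recommend replacing the chaining step by this single--ancestor enlargement.
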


\section{Finiteness of the Jones function on rectifiable curves}
\label{s-necc}

In this section, we show that finiteness of the Jones function defined in \eqref{Jstar} is necessary for a measure to be carried by rectifiable curves; cf.~\cite[\S4]{badger-schul}.
\begin{theorem}
\label{t-necc}
  If $\mu$ is a locally finite Borel measure on a Carnot group $G$ and $\Gamma$ is a rectifiable curve in $G$, then the function $J^*(\mu,\cdot) \in L^1(\mu\res\Gamma)$.  In particular, $J^*(\mu,x) < \infty$ for $\mu$-a.e.~$x \in \Gamma$.
\end{theorem}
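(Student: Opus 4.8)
The plan is to integrate $J^*(\mu,\cdot)$ against $\mu\res\Gamma$ via Tonelli's theorem and then, for each dyadic cube $Q$ meeting $\Gamma$, to split $\beta^*(\mu,Q)$ into a ``geometric'' part governed by the metric geometry of $\Gamma$ (summable by the Analyst's Traveling Salesman Theorem) and a ``measure-defect'' part governed by the $\mu$-mass lying off $\Gamma$ (controlled by a Carleson-type packing, as in \cite[\S4]{badger-schul}). Concretely, using \eqref{Jstar}, Remark \ref{J-is-S}, and Tonelli,
\[
\int_\Gamma J^*(\mu,x)\,d\mu(x)=\sum_{\substack{Q\in\Delta\\ \side Q\le 1}}\beta^*(\mu,Q)^{2s}\,\diam Q\,\frac{\mu(\Gamma\cap Q)}{\mu(Q)}
\]
(with $0/0=0$); since $\mu(\Gamma\cap Q)\le\mu(Q)$, only cubes with $Q\cap\Gamma\ne\emptyset$ and $\mu(Q)>0$ contribute, and as $\Gamma$ is compact these all lie in a fixed bounded set on which $\mu$ is finite. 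For each such $Q\in\Delta_k$ I would fix a horizontal line $L_Q$ nearly attaining the infimum in the tubular-neighborhood description \eqref{tubular-beta} of $\beta_\Gamma(x_Q,C_0\,\side Q)$, with $C_0$ a fixed constant making $597B_Q\subset B(x_Q,C_0\,\side Q)$; by \cite[Proposition 1.6]{Li-TSP} and the change-of-scales inequality \eqref{quasimonotone}, and since $2B_R\subset 597B_Q$ with $\diam 2B_Q\le\diam 2B_R\le 2\diam 2B_Q$ for $R\in\near(Q)$ (Remark \ref{near-contained}), this gives $\sup_{z\in\Gamma\cap 2B_R}\tb(z,L_Q;\diam 2B_R)^{2s}\lesssim\beta_\Gamma(x_Q,C_0\,\side Q)^{2s}$.

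Next, for $R\in\near(Q)$, the identity $\beta(\mu,2B_R,L_Q)^{2s}\min\{1,\mu(2B_R)/\diam 2B_R\}=\max\{\mu(2B_R),\diam 2B_R\}^{-1}\int_{2B_R}\tb(z,L_Q;\diam 2B_R)^{2s}\,d\mu(z)$, together with the splitting of the integral over $2B_R\cap\Gamma$ and $2B_R\setminus\Gamma$, yields (using the sup bound above and $\mu(2B_R\cap\Gamma)\le\mu(2B_R)$ on the first piece, and $\tb\lesssim 1$ on $2B_R$ on the second)
\[
\beta^*(\mu,Q)^{2s}\lesssim\beta_\Gamma(x_Q,C_0\,\side Q)^{2s}+\max_{R\in\near(Q)}\min\!\Bigl\{1,\ \tfrac{1}{\diam 2B_R}\textstyle\int_{2B_R\setminus\Gamma}\tb(z,L_Q;\diam 2B_R)^{2s}d\mu(z)\Bigr\}
\]
(here $\#\near(Q)\lesssim 1$ by Remark \ref{near-contained}). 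Inserting the first summand into the Tonelli sum and using $\mu(\Gamma\cap Q)/\mu(Q)\le 1$ and $\diam Q\sim\diam 2B_R$ produces a contribution $\lesssim\sum_{Q\cap\Gamma\ne\emptyset,\ \side Q\le 1}\beta_\Gamma(x_Q,C_0\,\side Q)^{2s}\diam Q$; each summand here is bounded by a constant multiple of the integral of $\beta_\Gamma(x,r)^{2s}\diam B(x,r)\,r^{-q}\,dx\,\tfrac{dr}{r}$ over $x\in U(x_Q,\tfrac16\side Q)$ and $r$ in a dyadic band $\sim\side Q$, and these $(x,r)$-regions are pairwise disjoint over all $(k,Q)$, so the whole sum is $\lesssim\beta(\Gamma)$, which is finite by Theorem \ref{t:sean}(1) applied with $E=\Gamma$ (so $\diam\Gamma+\beta(\Gamma)\le C\Haus^1(\Gamma)<\infty$).

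The main obstacle is the remaining measure-defect sum $\sum_{Q}\bigl[\max_{R\in\near(Q)}\min\{1,(\diam 2B_R)^{-1}\int_{2B_R\setminus\Gamma}\tb(z,L_Q;\diam 2B_R)^{2s}d\mu(z)\}\bigr]\diam Q\,\mu(\Gamma\cap Q)/\mu(Q)$, over cubes $Q$ with $Q\cap\Gamma\ne\emptyset$ and $\side Q\le 1$. This is exactly where the anisotropic normalization $\min\{1,\mu(2B_R)/\diam 2B_R\}$ in the definition of $\beta^*$ is indispensable: it caps each summand so the off-$\Gamma$ mass can be paid for by a packing argument rather than by the generally divergent bound $\min\{1,\cdot\}\le 1$. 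Following \cite[\S4]{badger-schul}, I would decompose $G\setminus\Gamma$ by a Whitney family (Lemma \ref{l-whitney}), so that a point $z$ with $\dist(z,\Gamma)\sim 2^{-j}$ enters a window $2B_R\subset 597B_Q$ only for $\side Q\gtrsim 2^{-j}$, and at those scales $\tb(z,L_Q;\diam 2B_R)^{2s}$ decays geometrically in $\side Q/\dist(z,\Gamma)$ (the first layer contributing the square of the distance-to-line ratio, the higher layers even less, via \eqref{quasimonotone}); summing this decay against the weights $\diam Q\cdot\mu(\Gamma\cap Q)/\mu(Q)$ and using the bounded overlap of the windows, the bound $\#\{Q:R\in\near(Q)\}\lesssim 1$, and $\mu(\Gamma\cap Q)\le\mu(Q)$ should match each unit of off-$\Gamma$ mass to a boundedly summable block and bound the sum by $C(\Haus^1(\Gamma)+\mu(\Gamma))$. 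The technical heart, and the place this differs from the Euclidean argument, is this bookkeeping, for which I would rely on the comparability of stratified $\beta$-numbers (\cite[Proposition 1.6, Lemma 6.2]{Li-TSP}) and the inequalities \eqref{e:quasi-triangle}, \eqref{quasimonotone}. Combining the two estimates gives $\int_\Gamma J^*(\mu,\cdot)\,d\mu<\infty$, hence $J^*(\mu,\cdot)\in L^1(\mu\res\Gamma)$ and in particular $J^*(\mu,x)<\infty$ for $\mu$-a.e.\ $x\in\Gamma$.
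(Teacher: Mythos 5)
Your overall strategy matches the paper's: integrate $J^*$ against $\mu\res\Gamma$ via Tonelli, show $\sum_{Q\cap\Gamma\ne\emptyset}\beta^*(\mu,Q)^{2s}\diam Q$ is finite by splitting each $\beta^*(\mu,Q)^{2s}$ into a ``geometric'' part controlled by the ATST and a ``measure-defect'' part controlled by a Whitney/Carleson packing of the mass off $\Gamma$. Your treatment of the geometric part (dyadic sum $\lesssim\beta(\Gamma)\lesssim\Haus^1(\Gamma)$ via disjoint $(x,r)$-blocks) is fine and makes explicit a step the paper glosses over. You should, however, replace $\mu$ by $\nu=\mu\res K$ for a compact $K$ containing the relevant cubes, as the paper does, so that the final bound $\nu(G\setminus\Gamma)<\infty$ (your $\mu(\Gamma)$ is presumably a slip for this quantity).

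The genuine gap is in the measure-defect part, and it occurs precisely where the paper introduces the $\Delta_\Gamma$/$\Delta_\nu$ dichotomy and the near/far absorption. You claim that $\tb(z,L_Q;\diam 2B_R)^{2s}$ decays geometrically in $\side Q/\dist(z,\Gamma)$; this is only true of $\tb(z,\Gamma;\diam 2B_R)^{2s}$. The line $L_Q$ approximates $\Gamma$ only to within $\beta_\Gamma(x_Q,C_0\side Q)\cdot\side Q$, so for $z$ at Whitney distance $2^{-j}\ll\side Q$, the distance from $z$ to $L_Q$ can be $\gg\dist(z,\Gamma)$, and the claimed decay fails. To fix this one must apply the quasi-triangle inequality \eqref{e:quasi-triangle} to pass from $L_Q$ to $\Gamma$, which introduces the error
\[
\frac{\mu(2B_R\setminus\Gamma)}{\diam 2B_R}\sup_{w\in\Gamma\cap B(x_Q,C_0\side Q)}\tb(w,L_Q;\diam 2B_R)^{2s}\lesssim\frac{\mu(2B_R\setminus\Gamma)}{\diam 2B_R}\,\beta_\Gamma(x_Q,C_0\side Q)^{2s},
\]
and the displayed $\min\{1,\cdot\}$ cap does \emph{not} reduce this to $\lesssim\beta_\Gamma^{2s}$ when $\mu(2B_R\setminus\Gamma)/\diam 2B_R$ is large and $\beta_\Gamma$ is small --- exactly the regime where the cap saturates at $1$. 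The paper's $\Delta_\nu$ case avoids this by construction: there $\tb_\Gamma(AB_Q)\le(\varepsilon/2A)\beta^*(\nu,Q)$, so the quasi-triangle error is $\lesssim\varepsilon^{2s}\beta^*(\nu,Q)^{2s}$ and is absorbed into the left-hand side. The paper's near/far sets $N_R,F_R$ are \emph{not} $2B_R\cap\Gamma$ and $2B_R\setminus\Gamma$; they are defined by the threshold $\varepsilon\beta^*(\nu,Q)$, which is what enables the absorption. Your decomposition can be made to work without the dichotomy, but the order of operations matters: apply the quasi-triangle \emph{before} you discard the $\max\{\mu(2B_R),\diam 2B_R\}$ denominator, i.e.~estimate
\[
\frac{1}{\max\{\mu(2B_R),\diam 2B_R\}}\int_{2B_R\setminus\Gamma}\tb(z,L_Q;\diam 2B_R)^{2s}\,d\mu(z),
\]
where the error term is now multiplied by $\mu(2B_R\setminus\Gamma)/\max\{\mu(2B_R),\diam 2B_R\}\le 1$ and hence is $\lesssim\beta_\Gamma^{2s}$; what remains is the Whitney-summable quantity $\frac{1}{\diam 2B_R}\int_{2B_R\setminus\Gamma}\tb(z,\Gamma;\diam 2B_R)^{2s}\,d\mu(z)$. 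As written, your displayed inequality throws away exactly the factor needed to close the argument.
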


At the core of Theorem \ref{t-necc} is the following computation, which incorporates and extends the necessary half of Theorem \ref{t:sean}. A minor difficulty in the proof of Lemma \ref{p:necc} compared with the proof of the corresponding statement in \cite[\S4]{badger-schul} is that we need to use \eqref{e:quasi-triangle}. Recall that $\Delta_{+}$ is the set of $Q\in\Delta$ with $\side Q\leq 1$.

\begin{lemma}\label{p:necc}
If $\nu$ is a finite Borel measure on $G$ and $\Gamma$ is a rectifiable curve in $G$, then
  \begin{align}\label{necc-goal}
    \sum_{\substack{Q\in\Delta_{+}\\ \nu(Q \cap \Gamma) > 0}} \beta^*(\nu, Q)^{2s} \diam Q \lesssim \Haus^1(\Gamma) + \nu(G \backslash \Gamma).
  \end{align}
\end{lemma}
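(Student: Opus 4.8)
The plan is to adapt the argument of \cite[\S4]{badger-schul}, substituting the stratified quasi-triangle inequality \eqref{e:quasi-triangle} for the Euclidean Pythagorean estimates used there. Fix once and for all a large constant $A=A(G)$, chosen via Remark \ref{near-contained} so that $B_Q^\sharp:=B(x_Q,A\,\side Q)\supset 2B_R$ for every $Q\in\Delta$ and every $R\in\near(Q)$. For each $Q\in\Delta$ with $\nu(Q\cap\Gamma)>0$ we have $\Gamma\cap B_Q^\sharp\neq\emptyset$, and we fix a horizontal line $\ell_Q$ realizing, up to a bounded factor, the tubular description of $\beta_\Gamma(x_Q,A\,\side Q)$ from Remark \ref{r:sean-tubular-beta}: thus $\Gamma\cap B_Q^\sharp$ lies in a tube about $\ell_Q$ of width $\lesssim\beta_\Gamma(x_Q,A\,\side Q)$. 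Testing the infimum defining $\beta^*(\nu,Q)$ against $\ell_Q$, unwinding Definition \ref{nh-beta}, using the identity $\tfrac1a\min\{1,\tfrac ab\}=\tfrac1{\max\{a,b\}}$, and noting $\diam Q\sim\diam 2B_R$ for $R\in\near(Q)$, one obtains
\begin{equation}\label{necc-reduce}
\beta^*(\nu,Q)^{2s}\,\diam Q\ \le\ \sum_{R\in\near(Q)}\frac{\diam Q}{\max\{\nu(2B_R),\diam 2B_R\}}\int_{2B_R}\tb(z,\ell_Q;\diam 2B_R)^{2s}\,d\nu(z).
\end{equation}

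The crucial ingredient is the pointwise bound: for every $R\in\near(Q)$ and every $z\in 2B_R$,
\begin{equation}\label{necc-ptwise}
\tb(z,\ell_Q;\diam 2B_R)^{2s}\ \lesssim\ \Bigl(\frac{\dist(z,\Gamma)}{\diam 2B_R}\Bigr)^{2}+\beta_\Gamma(x_Q,A\,\side Q)^{2s}.
\end{equation}
To prove this, choose $\gamma_0\in\Gamma$ with $d(z,\gamma_0)=\dist(z,\Gamma)$ (possible since $\Gamma$ is compact); since $\dist(z,\Gamma)\lesssim\diam 2B_R\lesssim\side Q$ and $d(z,x_Q)\lesssim\side Q$, enlarging $A$ if necessary gives $\gamma_0\in\Gamma\cap B_Q^\sharp$. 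With $r:=\diam 2B_R$, applying \eqref{e:quasi-triangle} with middle point $\gamma_0$ and then taking the infimum over $\ell_Q$ yields $\tb(z,\ell_Q;r)^{2s}\lesssim\tb(z,\gamma_0;r)^{2s}+\tb(\gamma_0,\ell_Q;r)^{2s}$. For the first term, each $\pi_i$ is $1$-Lipschitz, so $d_i(\pi_i(z),\pi_i(\gamma_0))\le d(z,\gamma_0)=\dist(z,\Gamma)$, and since $\dist(z,\Gamma)\lesssim r$ the bottom-layer ($i=1$) term dominates, giving $\tb(z,\gamma_0;r)^{2s}\lesssim(\dist(z,\Gamma)/r)^{2}$. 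For the second term, $\gamma_0\in\Gamma\cap B_Q^\sharp$ lies in the tube about $\ell_Q$; the tubular description (Remark \ref{r:sean-tubular-beta}, cf.\ \cite[Proposition 1.6]{Li-TSP}), the homogeneity of the Hebisch--Sikora norms, and \eqref{quasimonotone} (to pass between the comparable scales $A\,\side Q$ and $r$) give $\tb(\gamma_0,\ell_Q;r)^{2s}\lesssim\beta_\Gamma(x_Q,A\,\side Q)^{2s}$. This is precisely the step where \eqref{e:quasi-triangle} is needed.

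We now sum \eqref{necc-reduce} and \eqref{necc-ptwise} over all $Q\in\Delta_+$ with $\nu(Q\cap\Gamma)>0$. The part arising from the second term of \eqref{necc-ptwise}: since $\nu(2B_R)/\max\{\nu(2B_R),\diam 2B_R\}\le1$ and $\#\near(Q)\lesssim1$, it is at most a constant multiple of $\sum_{Q\in\Delta_+}\beta_\Gamma(x_Q,A\,\side Q)^{2s}\diam Q$, and a standard comparison of this sum over the dyadic net with the double integral defining $\beta(\Gamma)$ (using the monotonicity \eqref{quasimonotone}) bounds it by $\lesssim\beta(\Gamma)$, while $\beta(\Gamma)\lesssim\Haus^1(\Gamma)$ by Theorem \ref{t:sean}(1) with $E=\Gamma$. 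The part arising from the first term of \eqref{necc-ptwise}: bounding $\diam Q/\max\{\nu(2B_R),\diam 2B_R\}\le\diam Q/\diam 2B_R\sim1$ and interchanging the sum with the integral, we must estimate $\int_{G\setminus\Gamma}\dist(z,\Gamma)^2\bigl(\sum_{Q,R}\chi_{2B_R}(z)(\diam 2B_R)^{-2}\bigr)\,d\nu(z)$, where the inner sum runs over pairs with $\nu(Q\cap\Gamma)>0$ and $z\in 2B_R$. For fixed $z\notin\Gamma$, any such pair has $Q\cap\Gamma\neq\emptyset$ and $z\in 2B_R\subset 597B_Q$ (Remark \ref{near-contained}), forcing $\diam 2B_R\sim\side Q\gtrsim\dist(z,\Gamma)$; moreover at each dyadic scale only $\lesssim1$ such pairs occur. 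Hence, writing $\delta=\dist(z,\Gamma)$ and $Q\in\Delta_k$ (so $\side Q=2^{-k}$) and summing the geometric series, $\sum_{Q,R}\chi_{2B_R}(z)(\diam 2B_R)^{-2}\lesssim\sum_{2^{-k}\gtrsim\delta}2^{2k}\lesssim\delta^{-2}$, so this contribution is $\lesssim\int_{G\setminus\Gamma}\delta^2\cdot\delta^{-2}\,d\nu(z)=\nu(G\setminus\Gamma)$. Combining the two contributions gives \eqref{necc-goal}.

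The main obstacle is the pointwise bound \eqref{necc-ptwise}, and in particular its off-curve term: one must extract the exponent $2$ (the bottom-layer exponent) rather than $2s$, since it is exactly the convergent geometric series $\sum_{2^{-k}\gtrsim\delta}2^{2k}\sim\delta^{-2}$ that it generates which lets the off-$\Gamma$ mass be charged to $\nu(G\setminus\Gamma)$ without a logarithmic loss. A secondary technical point to check carefully is the net-to-integral comparison $\sum_{Q\in\Delta_+}\beta_\Gamma(x_Q,A\,\side Q)^{2s}\diam Q\lesssim\beta(\Gamma)$, which follows from \eqref{quasimonotone} by comparing the sum over the dyadic net with the double integral of Theorem \ref{t:sean}, exactly as in the Euclidean setting.
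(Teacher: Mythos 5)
Your proof is correct, and it takes a genuinely different route from the paper's. The paper splits the cubes with $\nu(Q\cap\Gamma)>0$ into two classes $\Delta_\Gamma$ and $\Delta_\nu$ according to whether $\tb_\Gamma(AB_Q)$ is comparable to or much smaller than $\beta^*(\nu,Q)$. On $\Delta_\Gamma$ they invoke the ATST directly, while on $\Delta_\nu$ they pick a line $L$ close to $\Gamma$, split each $2B_R$ into ``near'' and ``far'' pieces, absorb the $\varepsilon^{2s}\beta^*(\nu,Q)^{2s}$ error on the left-hand side, and then sum using a Whitney decomposition of $G\setminus\Gamma$ following \cite[(3.8)]{BS1}. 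You avoid the dichotomy, the absorption trick, and the Whitney cubes entirely by instead establishing the single pointwise estimate \eqref{necc-ptwise} from the quasi-triangle inequality \eqref{e:quasi-triangle} and the tubular description of $\beta_\Gamma$, and then summing the two resulting contributions over $Q$ separately: the $\beta_\Gamma$ term by a net-to-integral comparison plus Theorem \ref{t:sean}(1), the $\dist(\cdot,\Gamma)^2$ term by a direct geometric series over scales $2^{-k}\gtrsim\dist(z,\Gamma)$. Both proofs lean on the same two external inputs (the ATST and the tubular characterization \cite[Proposition 1.6]{Li-TSP}), and both quietly use the standard dyadic-net-to-double-integral comparison for $\beta(\Gamma)$. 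Your observation that the bottom-layer exponent $2$ in \eqref{necc-ptwise} is exactly what produces the convergent geometric series $\sum 2^{2k}\sim\dist(z,\Gamma)^{-2}$ is the right way to see why no Whitney decomposition is needed. One small point worth tightening: you enlarge $A$ mid-argument to ensure the nearest point $\gamma_0\in\Gamma$ to $z$ lands in $B_Q^\sharp$, but $A$ also fixes the ball on which $\ell_Q$ is chosen; it is cleaner to fix $A$ once at the outset large enough that both Remark \ref{near-contained} (so $2B_R\subset B_Q^\sharp$) and the bound $d(\gamma_0,x_Q)\lesssim\side Q$ (which uses $\Gamma\cap Q\neq\emptyset$) are simultaneously valid, which your opening sentence essentially does but the later ``enlarging $A$ if necessary'' muddies.
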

\begin{proof}
Given two sets $E, U \subset G$, define
\begin{align*}
  \tb_E(U) = \inf_L \sup_{z \in E \cap U} \tb(z,L;\diam U),
\end{align*} where as usual the infimum is over all horizontal lines in $G$. In particular, recalling \eqref{sean-beta}, we have $\tb_E(B(x,r)) \leq \beta_E(x,r)\leq 2\tb_E(B(x,r))$ for all $x\in G$ and $r>0$ by \eqref{quasimonotone}.

By \eqref{3R-contain}, $2B_R\subset 597 B_Q$ for all $R\in\near(Q)$. Fix an absolute constant $A=1200$ (this is an overestimate) and a small constant $\varepsilon > 0$ depending only on the step $s$ of $G$ to be determined later. Partition the set of cubes $Q\in\Delta_+$ that intersect the curve $\Gamma$ in a set of positive measure into two classes:
  \begin{align*}
    \Delta_\Gamma &= \{Q \in \Delta_+ : \nu(\Gamma \cap Q) > 0 \text{ and }(\varepsilon/2A) \beta^*(\nu,Q) \leq \tb_\Gamma(AB_Q)\}, \\
    \Delta_\nu &= \{ Q \in \Delta_+ : \nu(\Gamma \cap Q) > 0 \text{ and }(\varepsilon/2A) \beta^*(\nu,Q) > \tb_\Gamma(AB_Q)\}.
  \end{align*}
Then
  \begin{align*}
    \sum_{\substack{Q\in\Delta_{+}\\ \nu(Q \cap \Gamma) > 0}} \beta^*(\nu, Q)^{2s} \diam Q = \sum_{Q \in \Delta_\Gamma} \beta^*(\nu,Q)^{2s} \diam Q + \sum_{Q \in \Delta_\nu} \beta^*(\nu,Q)^{2s} \diam Q.
  \end{align*}
From the definition of $\Delta_\Gamma$, the Analyst's Traveling Salesman Theorem in Carnot groups (Theorem \ref{t:sean}), and \eqref{Lebesgue-q}, it follows that  \begin{align*}
    \sum_{Q \in \Delta_\Gamma} \beta^*(\nu,Q)^{2s} \diam Q
    &\leq
    \sum_{Q \in \Delta_\Gamma} (\varepsilon/2A)^{-2s} \tb_\Gamma(AB_Q)^{2s} \diam B_Q\\
    &\leq(\varepsilon/2A)^{-2s} \sum_{Q \in \Delta} \beta_\Gamma(x_Q,(8A/3)\side Q)^{2s} \diam B_Q \lesssim \Haus^1(\Gamma).
  \end{align*}
To complete the proof of \eqref{necc-goal}, we will show that $\sum_{Q \in \Delta_\nu} \beta^*(\nu,Q)^{2s} \diam Q \lesssim \nu(G \setminus \Gamma).$

Let $Q \in \Delta_\nu$.  By change of scales \eqref{quasimonotone}, the definition of $\tb_\Gamma(AB_Q)$, and the definition of the family $\Delta_\nu$, we can find a horizontal line $L$ in $G$ so that
  \begin{equation}\begin{split} \sup_{z\in\Gamma\cap AB_Q} \tb(z,L;\diam 2B_Q) \leq A\tb_\Gamma(AB_Q) < (\varepsilon/2) \beta^*(\nu,Q). \label{e:Gamma-ell-dist}
  \end{split}\end{equation}
For the same horizontal line $L$, we have that
  \begin{align*}
    \beta^*(\nu,Q)^{2s} \leq \max_{R \in \near(Q)} \beta(\nu, 2B_R, L)^{2s} \min \left\{ 1, \frac{\nu(2B_R)}{\diam 2B_R} \right\} =: \max_{R \in \near(Q)} \beta(\nu, 2B_R, L)^{2s} m_R.
  \end{align*}
Fix $R\in\near(Q)$ and divide $2B_R$ into two sets:
\begin{align*}
    N_R = \{y \in 2B_R : \tb(y,L;\diam 2B_R) \leq \varepsilon \beta^*(\nu,Q) \},\quad F_R = 2B_R\setminus N_R.
\end{align*}
Note that $F_R\subset G\setminus \Gamma$ by \eqref{e:Gamma-ell-dist}. To proceed, write
\begin{align}
    \beta(\nu, 2B_R,L)^{2s} m_R &= \int_{N_R\cup F_R} \tb(y,L;\diam 2B_R)^{2s} m_R \frac{d\nu(y)}{\nu(2B_R)} \nonumber \\
    &\leq \varepsilon^{2s} \beta^*(\nu,Q)^{2s} + \int_{F_R} \tb(y,L;\diam 2B_R)^{2s} m_R \frac{d\nu(y)}{\nu(2B_R)}. \label{e-ellipses}
\end{align}
Note that, since $Q\in\Delta_\nu$, if $\varepsilon$ is very small, then $\tb_\Gamma(AB_Q)$ is very small relative to $\beta^*(\nu,Q)$. This will allow us to effectively replace the horizontal line $L$ appearing in \eqref{e-ellipses} with $\Gamma$. For any $y\in 2B_R$, the inequalities \eqref{e:quasi-triangle}, \eqref{quasimonotone}, and \eqref{e:Gamma-ell-dist}, the fact that $2B_R\subset 597B_Q$ and $\nu(\Gamma\cap Q)>0$, and the choice $A=1200>2\cdot 597 + (\diam Q)/(\side Q)$ give us
\begin{equation*}\begin{split}
\tb(y,L;\diam 2B_R)^{2s} &\leq
 2^{2s-1}\left(\tb(y,\Gamma \cap AB_Q; \diam 2B_R)^{2s} + \sup_{z \in \Gamma \cap AB_Q} \tb(z, L; \diam 2B_R)^{2s}\right) \\ &<
 2^{2s-1} \tb(y,\Gamma \cap AB_Q; \diam 2B_R)^{2s} + (1/2)\varepsilon^{2s}\beta^*(\nu, Q)^{2s}\\ &=
 2^{2s-1} \tb(y,\Gamma; \diam 2B_R)^{2s} + (1/2)\varepsilon^{2s} \beta^*(\nu, Q)^{2s}.
\end{split}\end{equation*}
Combining the previous two displays and using $m_R\leq \nu(2B_R)/\diam 2B_R$, we have \begin{align*}
\beta(\nu,2B_R,L)^{2s} m_R
  &\leq (3/2)\varepsilon^{2s}\beta^*(\nu,Q)^{2s} + 2^{2s-1} \int_{F_R}  \tb(y, \Gamma; \diam 2B_R)^{2s}m_R \frac{d\nu(y)}{\nu(2B_R)} \\
  &\leq (3/2)\varepsilon^{2s}\beta^*(\nu,Q)^{2s} + 2^{2s-1} \int_{F_R} \tb(y, \Gamma; \diam 2B_R)^{2s} \frac{d\nu(y)}{\diam 2B_R}.
\end{align*}
Taking the maximum over all cubes $R \in \near(Q)$, choosing $\varepsilon$ sufficiently small depending only on $s$, rearranging, and using $\diam Q\leq \diam 2B_R$, we obtain
\begin{align*}
    \beta^*(\nu,Q)^{2s}\diam Q \leq 2^{2s} \max_{R \in \near(Q)} \int_{F_R} \tb(y, \Gamma; \diam 2B_R)^{2s} d\nu(y).
\end{align*} As we already noted, each $F_R\subset G\setminus \Gamma$. Thus, by Remark \ref{near-contained} and \eqref{quasimonotone}, \begin{align}\label{beta-nu-bound} \beta^*(\nu,Q)^{2s}\diam Q \lesssim \int_{597B_Q\setminus \Gamma} \tb(y,\Gamma;\side Q)^{2s}\,d\nu(y)\end{align}
Let $\mathcal{W}$ be a Whitney decomposition of $G\setminus \Gamma$ given by Lemma \ref{l-whitney}. For each $j\in\ZZ$, let $$\mathcal{W}_j=\{W\in\mathcal{W}: 2^{-(j+1)}<\dist(W,\Gamma)\leq 2^{-j}\}.$$ For any set $I$, also define $\mathcal{W}(I)=\{W\in\mathcal{W}:\nu(I\cap W)>0\}$ and $\mathcal{W}_j(I)=\mathcal{W}_j\cap\mathcal{W}(I)$. Then, continuing from \eqref{beta-nu-bound}, \begin{align*}\beta^*(\nu,Q)^{2s}\diam Q &\lesssim \sum_{W\in\mathcal{W}(597B_Q)} \sup_{y\in W} \tb(y,\Gamma,\side Q)^{2s}\,\nu(W\cap 597 B_Q)\\
&\lesssim \sum_{i=1}^s \sum_{W\in\mathcal{W}(597B_Q)}\sup_{y\in W} \left(\frac{d_i(\pi_i(y),\pi_i(\Gamma))}{\side Q}\right)^{2i}\nu(W\cap 597 B_Q).\end{align*} Suppose that $\side Q=2^{-k}$. If $W\in\mathcal{W}_j(597B_Q)$, then by bounding the distance of a point in $W\cap 597B_Q$ to a point in $\Gamma\cap Q$, we have $$2^{-(j+1)} \leq \dist(W,\Gamma)\leq \diam 597B_Q \leq 3184\cdot 2^{-k},$$ which implies that $j\geq k-12$. Also if $W\in\mathcal{W}_j$ and $y\in W$, then $d_i(\pi_i(y),\pi_i(\Gamma))\leq \dist(y,\Gamma)\leq \dist(W,\Gamma)+\diam W \leq 2\dist(W,\Gamma)\leq 2\cdot 2^{-j},$ where the first inequality holds because the projections $\pi_i:G\rightarrow G_i$ are 1-Lipschitz and the penultimate inequality is by property (2) of Lemma \ref{l-whitney}. Therefore, \begin{equation}\label{compare-to-bs1}\beta^*(\nu,Q)^{2s}\diam Q \lesssim \sum_{i=1}^s\sum_{j=-\log_2(\side Q)-12}^\infty \sum_{W\in\mathcal{W}_j(597B_Q)} \left(\frac{2^{-j}}{\side Q}\right)^{2i}\nu(W\cap 597B_Q).\end{equation} This estimate is valid for every $Q\in\Delta_\nu$.

Equation \eqref{compare-to-bs1} is analogous to \cite[(3.8)]{BS1} (with step $s=1$). Because the cubes in $\mathcal{W}$ are pairwise disjoint and each of the families $\{597B_Q:Q\in\Delta\text{ and }\side Q=2^{-k}\}$ have bounded overlap, we may repeat the computation in \cite{BS1} \emph{mutatis mutandis} $s$ times to obtain $\sum_{Q\in\Delta_\nu} \beta^*(\nu,Q)^{2s}\diam Q\lesssim \nu(G\setminus \Gamma).$
\end{proof}

We now apply the lemma to prove that $J^*(\mu,\cdot)$ is integrable on any rectifiable curve.

\begin{proof}[Proof of Theorem~\ref{t-necc}]
Let $\Gamma\subset G$ be a rectifiable curve. Integrating the Jones function,
  \begin{align*}
    \int_\Gamma J^*(\mu,x) ~d\mu(x) &= \sum_{Q \in \Delta_+} \beta^*(\mu,Q)^{2s} \frac{\diam(Q)}{\mu(Q)} \int_\Gamma \chi_Q(x) d\mu(x) \\
    &= \sum_{\substack{Q \in \Delta_+\\ \mu(Q \cap \Gamma) > 0}} \beta^*(\mu,Q)^{2s} \diam(Q) \frac{\mu(Q \cap \Gamma)}{\mu(Q)} \leq \sum_{\substack{Q \in \Delta_+\\ \mu(Q \cap \Gamma) > 0}} \beta^*(\mu,Q)^{2s} \diam(Q).
  \end{align*}
Let $K = \overline{\bigcup \{Q \in \Delta_+ : \mu(Q \cap \Gamma) > 0 \}}$ and put $\nu := \mu\res K$.  Since the set $K$ is compact and $\mu$ is locally finite, we have $\nu(G) = \mu(K) < \infty$.  Furthermore, $\mu\res Q = \nu\res Q$ whenever $Q \in \Delta_+$ and $\mu(Q \cap \Gamma) > 0$. Thus, by Lemma \ref{p:necc},
  \begin{equation*}
    \int_\Gamma J^*(\mu,x) ~d\mu(x) \leq \sum_{\substack{Q \in \Delta_+ \\ \nu(Q \cap \Gamma) > 0}} \beta^*(\nu,Q)^{2s} \diam(Q) \lesssim \cH^1(\Gamma) + \nu(G \setminus \Gamma) < \infty. \qedhere
  \end{equation*}
\end{proof}

\begin{corollary}\label{J-pu} Let $\mu$ be any locally finite Borel measure on $G$. Then the measure $\mu\res\{x\in G: J^*(\mu,x)=\infty\}$ is purely 1-unrectifiable.\end{corollary}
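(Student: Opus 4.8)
The plan is to deduce this immediately from Theorem~\ref{t-necc}. Set $A := \{x \in G : J^*(\mu,x) = \infty\}$; since $J^*(\mu,\cdot)$ is a Borel function (it is a countable sum of Borel functions built from the Borel quantities $\beta^*(\mu,Q)$, $\diam Q$, $\mu(Q)$, and $\chi_Q$), the set $A$ is Borel, so $\nu := \mu\res A$ is a well-defined locally finite Borel measure. To show $\nu$ is purely 1-unrectifiable, I must verify that $\nu(\Gamma) = 0$ for every rectifiable curve $\Gamma \subset G$.

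So fix an arbitrary rectifiable curve $\Gamma$. By Theorem~\ref{t-necc}, $J^*(\mu,\cdot) \in L^1(\mu\res\Gamma)$, and in particular $J^*(\mu,x) < \infty$ for $\mu$-a.e.\ $x \in \Gamma$; equivalently, $\mu(\{x \in \Gamma : J^*(\mu,x) = \infty\}) = 0$, i.e.\ $\mu(A \cap \Gamma) = 0$. Therefore $\nu(\Gamma) = \mu(A \cap \Gamma) = 0$. Since $\Gamma$ was an arbitrary rectifiable curve, the measure $\nu = \mu\res A$ is singular to the family of all rectifiable curves in $G$, which is precisely the definition of being purely 1-unrectifiable. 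This completes the proof; there is no real obstacle here, as the entire content has already been established in Theorem~\ref{t-necc} (whose own proof, via Lemma~\ref{p:necc}, carries the genuine difficulty).
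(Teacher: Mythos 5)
Your proof is correct and takes essentially the same approach as the paper: apply Theorem~\ref{t-necc} to conclude $J^*(\mu,\cdot)<\infty$ $\mu$-a.e.\ on any rectifiable curve $\Gamma$, hence $\mu(A\cap\Gamma)=0$, which is exactly pure 1-unrectifiability of $\mu\res A$. The extra remarks on Borel measurability of $A$ are harmless but not part of the paper's one-line argument.
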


\begin{proof} If $\Gamma$ is a rectifiable curve in $G$, then $J^*(\mu,x)<\infty$ at $\mu$-a.e.~$x\in \Gamma$ by Theorem \ref{t-necc}. That is to say, $\mu(\Gamma\cap\{x\in G:J^*(\mu,x)=\infty\})=0$ for every rectifiable curve $\Gamma$.\end{proof}

\section{Proof of Theorem \ref{t-main}}\label{s-proofs}

Equipped with the results from \S\S \ref{s-suff} and \ref{s-necc}, we are ready to the prove the main theorem.
Let $\mu$ be a locally finite Borel measure on $G$. Both the lower density $\lD1(\mu,\cdot)$ and the Jones function $J^*(\mu,\cdot)$ are Borel measurable. Hence
$$R = \left\{ x \in G  :  \underline{D}^1(\mu,x)>0 \text{ and } J^*(\mu,x) < \infty \right\}\text{ and }$$
$$P = \left\{ x \in G  :  \underline{D}^1(\mu,x) = 0 \text{ or } J^*(\mu,x) = \infty \right\}$$ are Borel sets and $G=R\cup P$. By the uniqueness clause of Lemma \ref{l:decomp}, if we show that $\mu\res R$ is 1-rectifiable and $\mu\res P$ is purely 1-unrectifiable, then $$\mu_\rect=\mu\res R\quad\text{and}\quad\mu_\pu=\mu\res P.$$
On the one hand, $J^{*,c}(\mu,x) \leq J^*(\mu,x)$ for all $x \in G$ and $c > 0$ (see \S\ref{s-suff}). Thus,
\begin{align*}
    R
    &=
    \left\{ x \in G  :  \underline{D}^1(\mu,x)>0 \text{ and } J^*(\mu,x) < \infty \right\}\\
    &\subset
    \bigcup_{n=1}^\infty
    \left\{ x \in G \, : \, \underline{D}^1(\mu,x)>2/n \text{ and } J^{*,1/n}(\mu,x) < \infty \right\}=:\bigcup_{n=1}^\infty R_n.
\end{align*} By Theorem \ref{t-sec5main}, $\mu \res R_n$ is 1-rectifiable for each $n\geq 1$. Therefore, $\mu\res R\leq \sum_{n=1}^\infty \mu\res R_n$ is 1-rectifiable, as well.
On the other hand, we can write
$$
P = \left\{ x \in G  :  J^*(\mu,x) = \infty \right\} \cup \left\{ x \in G  :  \underline{D}^1(\mu,x) = 0 \right\}=:P_1\cup P_2.
$$
The measure $\mu\res P_1$ is purely 1-unrectifiable by Corollary \ref{J-pu} and the measure $\mu\res P_2$ is purely 1-unrectifiable by Corollary \ref{c-packing} and Remark \ref{r:lower-density}. Since $\mu\res P\leq \mu\res P_1+\mu\res P_2$, $\mu\res P$ is also purely 1-unrectifiable. This completes the proof of Theorem \ref{t-main}.

\section{Garnett-Killip-Schul-type measures in metric spaces}
\label{s-GKS}

Towards Theorem \ref{t-gks}, suppose that $(X,d)$ is a complete metric space such that \begin{itemize}
\item $X$ is \emph{doubling}, i.e.~there exists a constant $C_{db}\geq 1$ such that every bounded set of diameter $D$ can be covered by $C_{db}$ or fewer sets of diameter $D/2$; and,
\item $X$ is \emph{locally quasiconvex}, i.e.~for every compact set $E\subset X$, there exists a constant $C_{qc, E}\geq 1$ such that for every $x,y\in E$ with $x\neq y$, there exists a parameterized curve $\gamma:[0,1]\rightarrow X$ such that $\gamma(0)=x$, $\gamma(1)=y$, and $\var(\gamma)\leq C_{qc, E}\, d(x,y)$.
\end{itemize} Because $X$ is complete and doubling, there exists a doubling measure $\mu$ on $X$, i.e.~a measure satisfying \eqref{doubling-mu} for all $x\in X$ and $r>0$; for a proof, see \cite[Theorem 3.1]{KRS-cubes} or \cite[Theorem 13.3]{Juha-book}. Let $C_\mu$ denote the doubling constant of $\mu$. Our goal is to construct a doubling measure $\nu$ on $X$ that is 1-rectifiable. We will explicitly construct $\nu$ and rectifiable curves $\Gamma$ with $\nu(\Gamma)>0$ in a manner similar to \cite{GKS}, which handled the particular case that $X=\RR^n$ and $\mu$ is the Lebesgue measure.

\subsection{Construction of the measure $\nu$}\label{ss:nu}

Fix any system $(\Delta_k)_{k\in\ZZ}$ of dyadic cubes on $X$ given by Theorem \ref{t-KRS}. We freely adopt the notation of \S\ref{ss:dyadic}. In particular, to each $Q\in\Delta := \bigcup_{k\in\ZZ}\Delta_k$, we may refer to the center $x_Q$, side length $\side Q$, inner ball $U_Q$, and outer ball $B_Q$ associated to $Q$. For any $j\geq 1$ and $Q \in \Delta_k$, let $\Delta_j(Q)=\{R\in\Delta_{k+j}:R\subset Q\}$ denote the collection of all $j$-th generation descendents of $Q$. Note that $\mu(Q)\geq \mu(U_Q)>0$ for all $Q\in\Delta$ because $\mu$ is doubling. We proved the following facts in Remark \ref{r:halving}.

\begin{lemma} \label{l:cube-doubling} There exists $C_1>0$ depending only on  $C_\mu$ such that $\mu(R)\geq C_1 \mu(Q)$ for all $R\in\Delta_1(Q)$.
\end{lemma}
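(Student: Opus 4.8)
The plan is to unpack the computation already recorded in Remark \ref{r:halving}. Fix $Q \in \Delta_k$ and $R \in \Delta_1(Q) = \Child(Q) \subset \Delta_{k+1}$. First I would invoke the centering and roundness property (3) of Theorem \ref{t-KRS} to control the geometry of $R$ inside $Q$: it gives $U(x_R, \tfrac16 \cdot 2^{-(k+1)}) \subset R \subset B(x_R, \tfrac83 \cdot 2^{-(k+1)})$ and $Q \subset B(x_Q, \tfrac83 \cdot 2^{-k})$. In particular $x_R \in R \subset Q$, so by the triangle inequality every point of $Q$ lies within $\tfrac83 \cdot 2^{-k} + \tfrac83 \cdot 2^{-k} = \tfrac{16}{3}\cdot 2^{-k}$ of $x_R$, i.e.\ $Q \subset B(x_R, \tfrac{16}{3}\cdot 2^{-k})$. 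On the other side, since $\tfrac16 \cdot 2^{-(k+1)} = \tfrac{1}{12}\cdot 2^{-k}$, one also has $B(x_R, \tfrac{1}{24}\cdot 2^{-k}) \subset U_R \subset R$.

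Next I would apply the doubling inequality \eqref{doubling-mu} for $\mu$ at the single point $x_R$. Since $\tfrac{16}{3}\cdot 2^{-k} = 2^{7}\cdot \tfrac{1}{24}\cdot 2^{-k}$, iterating \eqref{doubling-mu} seven times yields
$$
\mu(Q) \leq \mu\!\left(B\!\left(x_R, \tfrac{16}{3}\cdot 2^{-k}\right)\right) \leq C_\mu^{7}\,\mu\!\left(B\!\left(x_R, \tfrac{1}{24}\cdot 2^{-k}\right)\right) \leq C_\mu^{7}\,\mu(R),
$$
so the claim holds with $C_1 := C_\mu^{-7}$, which depends only on the doubling constant $C_\mu$ of $\mu$ (and hence only on the data of $X$).

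There is essentially no genuine obstacle here; the statement is a direct consequence of roundness of the dyadic cubes together with volume doubling. The only thing that needs a little care is the bookkeeping of the numerical constants and the distinction between open balls $U(x,r)$ and closed balls $B(x,r)$ when passing between Theorem \ref{t-KRS}(3), the inclusions $U_R \subset R$, and the doubling hypothesis; this is why the exponent $7$ appears (rather than $6$).
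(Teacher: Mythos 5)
Your proposal is correct and follows essentially the same route the paper takes, namely the computation recorded in Remark \ref{r:halving}: deduce $Q \subset B(x_R, \tfrac{16}{3}\cdot 2^{-k})$ and a comparably small closed ball inside $U_R \subset R$ from Theorem \ref{t-KRS}(3), then apply the doubling condition \eqref{doubling-mu} at $x_R$ seven times. The only cosmetic difference is that you use the closed ball $B(x_R, \tfrac{1}{24}\cdot 2^{-k})$ where the paper uses $B(x_R, \tfrac{1}{12}\cdot 2^{-k}-\varepsilon)$; both lead to the same constant $C_1 = C_\mu^{-7}$.
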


\begin{corollary} \label{l:growth} There exists $M\geq 1$ depending only on $C_\mu$ such that $\#\Delta_j(Q) \leq M^j$ for all $Q\in\Delta$ and $j>0$.
\end{corollary}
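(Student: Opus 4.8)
The plan is to deduce Corollary~\ref{l:growth} directly from Lemma~\ref{l:cube-doubling} by iterating the mass lower bound. First I would fix $Q\in\Delta_k$ and any $j>0$, and observe that the descendants in $\Delta_j(Q)$ are pairwise disjoint (by the nesting property (2) of Theorem~\ref{t-KRS}) and satisfy $\bigcup_{R\in\Delta_j(Q)} R = Q$ (by the partitioning property (1), restricted to cubes inside $Q$). Hence $\mu(Q) = \sum_{R\in\Delta_j(Q)} \mu(R)$.

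Next I would apply Lemma~\ref{l:cube-doubling} along each chain from $Q$ down to a $j$-th generation descendant $R$: writing $Q = Q_0 \supset Q_1 \supset \cdots \supset Q_j = R$ with $Q_{i+1}\in\Delta_1(Q_i)$, repeated use of the lemma gives $\mu(R)\geq C_1^{\,j}\mu(Q)$, where $C_1 = C_1(C_\mu)>0$ is the constant from Lemma~\ref{l:cube-doubling} (and we may assume $0<C_1<1$). Note $\mu(Q)>0$ since $\mu(Q)\geq\mu(U_Q)>0$ by doubling. Summing over $R\in\Delta_j(Q)$ then yields
\begin{equation*}
\mu(Q) = \sum_{R\in\Delta_j(Q)} \mu(R) \geq C_1^{\,j}\,\mu(Q)\cdot \#\Delta_j(Q),
\end{equation*}
and dividing by $\mu(Q)>0$ gives $\#\Delta_j(Q)\leq C_1^{-j} = M^j$ with $M := 1/C_1 \geq 1$, which depends only on $C_\mu$. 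This is exactly the claimed bound.

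There is really no substantive obstacle here; this is a routine telescoping argument. The only point requiring a word of care is ensuring the intermediate cubes $Q_1,\dots,Q_{j-1}$ in each chain actually lie in $\Delta$ and are genuine single-generation children of their predecessors, so that Lemma~\ref{l:cube-doubling} applies at each step — this is immediate from the KRS construction, since every cube in $\Delta_{k+i}$ contained in $Q$ has a unique parent in $\Delta_{k+i-1}$, which again is contained in $Q$ by nesting. A secondary point is that Lemma~\ref{l:cube-doubling} as stated in Remark~\ref{r:halving} already packages the relevant children count ($\#\Child(Q)\leq C_\mu^7$), so one could alternatively just take $M = C_\mu^7$ and iterate the child-count bound multiplicatively; I would present the mass-lower-bound version since it is the form literally stated in Lemma~\ref{l:cube-doubling}, but either route works and gives $M$ depending only on $C_\mu$.
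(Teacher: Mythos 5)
Your proof is correct and is essentially the paper's argument: the paper leaves the corollary as an immediate consequence of Remark \ref{r:halving}/Lemma \ref{l:cube-doubling}, namely that each cube has at most $C_\mu^7$ (equivalently $1/C_1$) children, iterated over $j$ generations. Your telescoped mass bound $\mu(R)\geq C_1^{\,j}\mu(Q)$ followed by summing over the disjoint cubes of $\Delta_j(Q)$ is the same counting argument packaged slightly differently, and your side remark about taking $M=C_\mu^7$ matches the paper's intended route exactly.
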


Next, let us show that each cube in $\Delta$ contains a descendent---within a few generations---that is quantitatively far away from the complement of the cube. A similar claim is proved in the paper \cite{KRS-cubes}.

\begin{lemma} \label{l:center}
  For any $n\in\ZZ$ and $Q \in \Delta_n$, there exists some $R \in \Delta_{7}(Q)$ such that $d(R,Q^c) := \inf_{x\in R}\inf_{y\not\in Q} d(x,y) > \frac{1}{8}\cdot 2^{-n}$.
\end{lemma}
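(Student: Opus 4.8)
\textbf{Proof strategy for Lemma \ref{l:center}.}

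The plan is to find a descendant of $Q$ whose outer ball $B_R$ sits well inside the inner ball $U_Q$ of $Q$, so that the Whitney-type estimates of Theorem \ref{t-KRS} force $R$ to be bounded away from $Q^c$. Fix $n\in\ZZ$ and $Q\in\Delta_n$, so $\side Q=2^{-n}$ and by property (3) of Theorem \ref{t-KRS} we have $U(x_Q,\tfrac16\cdot 2^{-n})\subset Q$. I would work at generation $n+7$, where the cubes have side length $2^{-(n+7)}=2^{-n}/128$, and the key point is that $\tfrac83\cdot 2^{-(n+7)}=\tfrac{8}{3\cdot 128}\cdot 2^{-n}=\tfrac{1}{48}\cdot 2^{-n}$ is small compared to the $\tfrac16\cdot 2^{-n}$ radius of the inner ball of $Q$.

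First I would locate, among the centers $\{x_{n+7,i}\}$, one that lies very close to $x_Q$. By property (5) of Theorem \ref{t-KRS} (origin), applied at generation $n+7$, there is a cube $R'\in\Delta_{n+7}$ with $U(x_0,\tfrac16\cdot 2^{-(n+7)})\subset R'$; but $x_0$ is not $x_Q$, so this does not directly help. Instead I would use that $X_{n+7}$ is a $2^{-(n+7)}$-net together with the nesting/inheritance chain: starting from the center $x_Q=x_{n,i}$, repeatedly pass to a child center using property (4) (inheritance), so $x_Q\in\{x_{n+1,j}\}\subset\cdots\subset\{x_{n+7,j}\}$; hence $x_Q$ \emph{is itself} the center of some cube $R\in\Delta_{n+7}$. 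Then by property (2) (nesting) either $R\subset Q$ or $R\cap Q=\emptyset$, and since $x_Q\in U_Q\subset Q$ and $x_Q\in R$, we must have $R\subset Q$, i.e.\ $R\in\Delta_7(Q)$. Now by roundness, $R\subset B(x_R,\tfrac83\cdot 2^{-(n+7)})=B(x_Q,\tfrac{1}{48}\cdot 2^{-n})$. For any $x\in R$ and any $y\notin Q$, since $U(x_Q,\tfrac16\cdot 2^{-n})\subset Q$ we have $d(x_Q,y)\geq \tfrac16\cdot 2^{-n}$, so by the triangle inequality
\[
d(x,y)\geq d(x_Q,y)-d(x_Q,x)\geq \tfrac16\cdot 2^{-n}-\tfrac{1}{48}\cdot 2^{-n}=\tfrac{7}{48}\cdot 2^{-n}>\tfrac18\cdot 2^{-n}.
\]
Taking the infimum over $x\in R$ and $y\notin Q$ gives $d(R,Q^c)>\tfrac18\cdot 2^{-n}$, as desired.

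The only point requiring care is the claim that $x_Q$ is genuinely a center at generation $n+7$: this is exactly what property (4) of Theorem \ref{t-KRS} provides, since $\{x_{n,i}\}\subset\{x_{n+1,i}\}\subset\cdots$, so there is no real obstacle, only bookkeeping. (Strictly, one should check $\tfrac83\cdot 2^{-7}<\tfrac16$, i.e.\ $2^{-7}<\tfrac{1}{16}$, which is clear; in fact any number of generations $m\geq 5$ works, and $7$ is chosen for convenience in the sequel.) If one prefers not to invoke inheritance in this form, the alternative is to pick any center $x_{n+7,i}$ within distance $2^{-(n+7)}$ of $x_Q$ (using that $X_{n+7}$ is a net), note the containing cube still lies in $B(x_Q,\tfrac{1}{48}\cdot 2^{-n}+2^{-(n+7)})=B(x_Q,\tfrac{1}{48}\cdot 2^{-n}+\tfrac{1}{128}\cdot 2^{-n})$, and redo the triangle inequality, which still comfortably beats $\tfrac18\cdot 2^{-n}$; but the inheritance argument is cleaner and shows $R\subset Q$ directly.
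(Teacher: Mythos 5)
Your proof is correct and follows essentially the same route as the paper's: both invoke the inheritance property (4) of Theorem \ref{t-KRS} to find $R\in\Delta_{n+7}$ with $x_R=x_Q$, then apply the triangle inequality using the roundness bounds $B_R\subset B(x_Q,\tfrac83\cdot2^{-(n+7)})$ and $U(x_Q,\tfrac16\cdot2^{-n})\subset Q$, arriving at the identical bound $\tfrac{7}{48}\cdot2^{-n}$. The only cosmetic difference is that you spell out the nesting argument showing $R\subset Q$, which the paper leaves implicit.
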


\begin{proof}
Fix $n\in\ZZ$ and $Q \in \Delta_n$.
By property (4) of Theorem \ref{t-KRS}, there exists $R\in\Delta_{n+7}$ such that $x_R=x_Q$. Therefore,
\begin{equation*}
d(R,Q^c) \geq d(B_R,U_Q^c)
\geq d(x_Q,U_Q^c) - \sup_{z\in B_R} d(z,x_Q)
\geq \tfrac16 \cdot 2^{-n}
-\tfrac83\cdot 2^{-(n+7)}=\tfrac{7}{48}\cdot 2^{-n}. \qedhere
\end{equation*}
\end{proof}

It will be convenient to thin $\Delta$ by skipping generations and to restrict to cubes starting from a fixed generation. For each integer $n\geq 0$, define \begin{equation}D_n = \Delta_{7n}\quad\text{and}\quad D = \bigcup_{n=0}^\infty D_n.\end{equation}
For all $Q \in D$ and $k\geq 0$, define $D_k(Q)$ to be the $k$-th generation descendants of $Q$ in $D$,  i.e.~$D_k(Q) := \{ R \in D_{n+k} : R \subset Q\}$.
By Lemma \ref{l:center}, for each $Q \in D_n$, we may choose some cube $R_Q \in D_1(Q)$
such that \begin{equation}\label{RQ-def} d(R_Q,Q^c) > \tfrac18\cdot2^{-7n}=16\cdot 2^{-7(n+1)}.\end{equation}

\begin{figure}
\begin{center}\includegraphics[width=.66\textwidth]{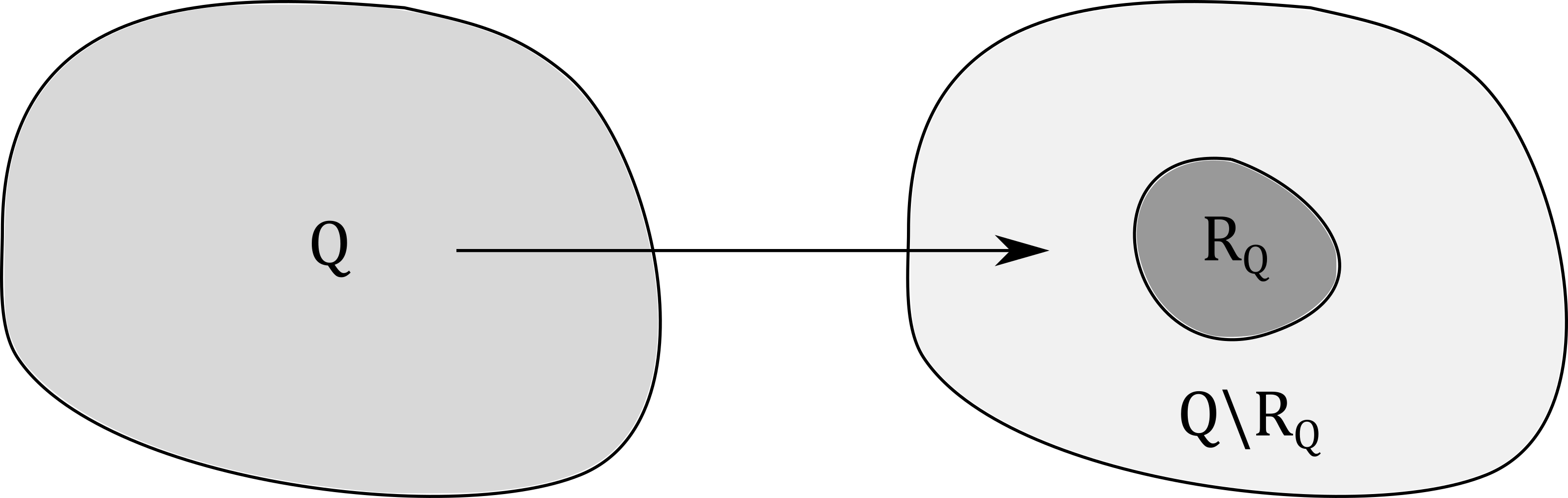}\end{center}
\caption{To define $f_Q\,d\mu$, redistribute the mass $\mu(Q)$ so that more mass is assigned to $R_Q$ and less mass is assigned to $Q\setminus R_Q$.}\label{fig:mu-to-nu}\end{figure}

Let $0<\delta\ll 1$ be a constant whose value will be fixed later; see \eqref{e:delta-bound}. For each $Q\in D$, we define a Borel measure $\nu_Q$ on $X$ that is absolutely continuous with respect to $\mu$ by defining its Radon-Nikodym derivative as a sum of indicator functions:
\begin{equation}\label{fQ-weight}
  f_Q := \frac{d\nu_Q}{d\mu} = a_Q \chi_{R_Q} + \delta \chi_{Q \setminus R_Q},
\end{equation} where $a_Q > 0$ is chosen so that $\nu_Q(Q) = \mu(Q)$. Note that $\nu_Q(Q^c)=0$. See Figure \ref{fig:mu-to-nu}.

\begin{lemma} \label{l:p-bound}  For all $Q \in D$, we have $\nu_Q(R_Q) \geq (1-\delta) \nu_Q(Q)$.
\end{lemma}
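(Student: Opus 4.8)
The plan is to unwind the definition of $\nu_Q$ and use the defining property $\nu_Q(Q)=\mu(Q)$ together with the lower regularity $\mu(R_Q)\geq C_1\mu(Q)$ from Lemma \ref{l:cube-doubling}. By \eqref{fQ-weight} we have $\nu_Q(Q\setminus R_Q)=\delta\,\mu(Q\setminus R_Q)\leq \delta\,\mu(Q)=\delta\,\nu_Q(Q)$, using only monotonicity of $\mu$ and $\nu_Q(Q)=\mu(Q)$. Since $\nu_Q$ is supported in $Q$ (because $\nu_Q(Q^c)=0$), we get $\nu_Q(R_Q)=\nu_Q(Q)-\nu_Q(Q\setminus R_Q)\geq (1-\delta)\nu_Q(Q)$, which is exactly the claim.

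Concretely, I would first record that $\nu_Q(Q\setminus R_Q)=\int_{Q\setminus R_Q}f_Q\,d\mu=\delta\,\mu(Q\setminus R_Q)$ straight from \eqref{fQ-weight}, and that $\mu(Q\setminus R_Q)\leq\mu(Q)$. Then combine with $\mu(Q)=\nu_Q(Q)$ to obtain $\nu_Q(Q\setminus R_Q)\leq\delta\,\nu_Q(Q)$. Finally, since $R_Q\subset Q$ and $\nu_Q$ assigns no mass outside $Q$, additivity gives $\nu_Q(R_Q)=\nu_Q(Q)-\nu_Q(Q\setminus R_Q)\geq(1-\delta)\nu_Q(Q)$. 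That completes the argument.

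There is essentially no obstacle here: the statement is a one-line consequence of the construction and requires neither the doubling property of $\mu$ nor the positivity constant $a_Q$ beyond knowing $a_Q>0$ so that $\nu_Q$ is a well-defined nonnegative measure with total mass $\mu(Q)$ on $Q$. The lemma is presumably stated as a separate item only because the bound $\nu_Q(R_Q)\geq(1-\delta)\nu_Q(Q)$ will be invoked repeatedly in the subsequent iteration that builds $\nu$ as a martingale-type limit of the $\nu_Q$'s; the point to be careful about when writing it is simply to justify the additivity step via $\nu_Q(Q^c)=0$, which was noted right after \eqref{fQ-weight}.

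\begin{proof}
By the definition \eqref{fQ-weight} of $f_Q$, we have
\[
\nu_Q(Q \setminus R_Q) = \int_{Q \setminus R_Q} f_Q \, d\mu = \delta\, \mu(Q \setminus R_Q) \leq \delta\, \mu(Q).
\]
Since $a_Q$ was chosen so that $\nu_Q(Q) = \mu(Q)$, this gives $\nu_Q(Q \setminus R_Q) \leq \delta\, \nu_Q(Q)$. Because $R_Q \subset Q$ and $\nu_Q(Q^c) = 0$, additivity of $\nu_Q$ yields
\[
\nu_Q(R_Q) = \nu_Q(Q) - \nu_Q(Q \setminus R_Q) \geq (1-\delta)\,\nu_Q(Q). \qedhere
\]
\end{proof}
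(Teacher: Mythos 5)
Your proof is correct and follows essentially the same route as the paper: compute $\nu_Q(Q\setminus R_Q)=\delta\,\mu(Q\setminus R_Q)\leq\delta\,\mu(Q)=\delta\,\nu_Q(Q)$ and subtract. One small note: your opening sentence mentions invoking the lower-regularity bound from Lemma \ref{l:cube-doubling}, but as you yourself observe later (and as your written proof confirms), that estimate is not needed here; the argument uses only the definition of $f_Q$, monotonicity of $\mu$, and the normalization $\nu_Q(Q)=\mu(Q)$.
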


\begin{proof} Because $\mu(Q)=\nu_Q(Q)$, we have \begin{align*}
\nu_Q(R_Q)= \nu_Q(Q) - \nu_Q(Q \setminus R_Q)&=\nu_Q(Q) - \delta\, \mu(Q \setminus R_Q)\\
&\geq \nu_Q(Q) - \delta\, \mu(Q)= (1-\delta)\nu_Q(Q).\qedhere\end{align*}
\end{proof}

\begin{lemma} \label{l:a-bound}
  There is a constant $C_2 \geq 1$ depending only on $C_\mu$ such that $\sup_{Q \in D} a_Q \leq C_2$.
\end{lemma}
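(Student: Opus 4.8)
The plan is to read off $a_Q$ explicitly from the normalization $\nu_Q(Q)=\mu(Q)$ and then bound it using the lower mass-regularity of cubes under passing to descendants. Unwinding the definition \eqref{fQ-weight}, the condition $\nu_Q(Q)=\mu(Q)$ reads
\[
\mu(Q)=\nu_Q(Q)=a_Q\,\mu(R_Q)+\delta\,\mu(Q\setminus R_Q),
\]
so $a_Q\,\mu(R_Q)=\mu(Q)-\delta\,\mu(Q\setminus R_Q)$. (In particular $a_Q\,\mu(R_Q)\geq(1-\delta)\mu(Q)>0$, since $\delta<1$, which together with $\mu(R_Q)\geq\mu(U_{R_Q})>0$ re-confirms that $a_Q$ is a well-defined positive number.) Dropping the nonnegative term $\delta\,\mu(Q\setminus R_Q)$ gives the one-sided bound
\[
a_Q\leq\frac{\mu(Q)}{\mu(R_Q)}.
\]

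First I would locate $R_Q$ inside the original system $\Delta$: if $Q\in D_n=\Delta_{7n}$, then $R_Q\in D_1(Q)$ means $R_Q\in\Delta_{7(n+1)}$ with $R_Q\subset Q$, i.e.\ $R_Q$ is a $7$th-generation descendant of $Q$ in $\Delta$. Next I would iterate Lemma \ref{l:cube-doubling} seven times along the chain $Q\supset\cdots\supset R_Q$ of cubes in $\Delta$: each step multiplies the measure by at least $C_1$, so $\mu(R_Q)\geq C_1^{7}\,\mu(Q)$, where $C_1$ depends only on $C_\mu$. Combining this with the displayed inequality yields $a_Q\leq C_1^{-7}$ for every $Q\in D$. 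Setting $C_2:=\max\{1,\,C_1^{-7}\}$, which depends only on $C_\mu$, gives $\sup_{Q\in D}a_Q\leq C_2$ as claimed.

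There is essentially no obstacle here; the only point requiring a moment's care is making sure the chain from $Q$ down to $R_Q$ really is a length-$7$ descendant chain in $\Delta$ (so that Lemma \ref{l:cube-doubling} applies at each of the seven steps), which is immediate from the definitions $D_n=\Delta_{7n}$ and $R_Q\in D_1(Q)$.
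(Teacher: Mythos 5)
Correct, and essentially the same as the paper's proof: solve the normalization $\nu_Q(Q)=\mu(Q)$ for $a_Q$ and bound $\mu(Q)/\mu(R_Q)$ by iterating Lemma~\ref{l:cube-doubling} seven times. Your version drops the nonnegative term $\delta\,\mu(Q\setminus R_Q)$ instead of solving for $a_Q$ exactly, which gives the same bound (and in fact your exponent $C_1^{-7}$, rather than $C_1^{7}$, is the correct sign given that $C_1\leq 1$).
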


\begin{proof}
  Since $D_1(Q)=\Delta_{7}(Q)$, iterating Lemma \ref{l:cube-doubling} gives $\mu(R_Q) \geq C_1^{-7} \mu(Q)$ for all $Q \in D$.  We defined $a_Q$ so that  \begin{align*}
    \mu(Q) = \nu_Q(Q) = a_Q\mu(R_Q) + \delta\,\mu(Q \setminus R_Q) = a_Q \mu(R_Q) + \delta\, \mu(Q) - \delta\, \mu(R_Q).
  \end{align*}
  Hence $a_Q = \delta + (1 - \delta)\mu(Q)/\mu(R_Q)\leq 1 + C_1^{\,7}=: C_2$.
\end{proof}

To define the measure $\nu$, we iterate the construction of $f_Q\,d\mu_Q$ and pass to a limit. Formally, for each $k \geq 0$, we define $f_k = \sum_{Q \in D_k} f_Q$.  Using these weights, for each $n\geq 0$, we define a Borel measure $\nu_n$ by setting
\begin{align}\label{fn-weight}
  d\nu_n = \left( \prod_{k=0}^n f_k \right) d\mu.
\end{align}
See Figure \ref{fig:mu-to-nu-2}. Finally, we define the measure $\nu$ to be a weak-$*$ limit of $\nu_n$ (along some subsequence).

\begin{figure}
\begin{center}\includegraphics[width=.66\textwidth]{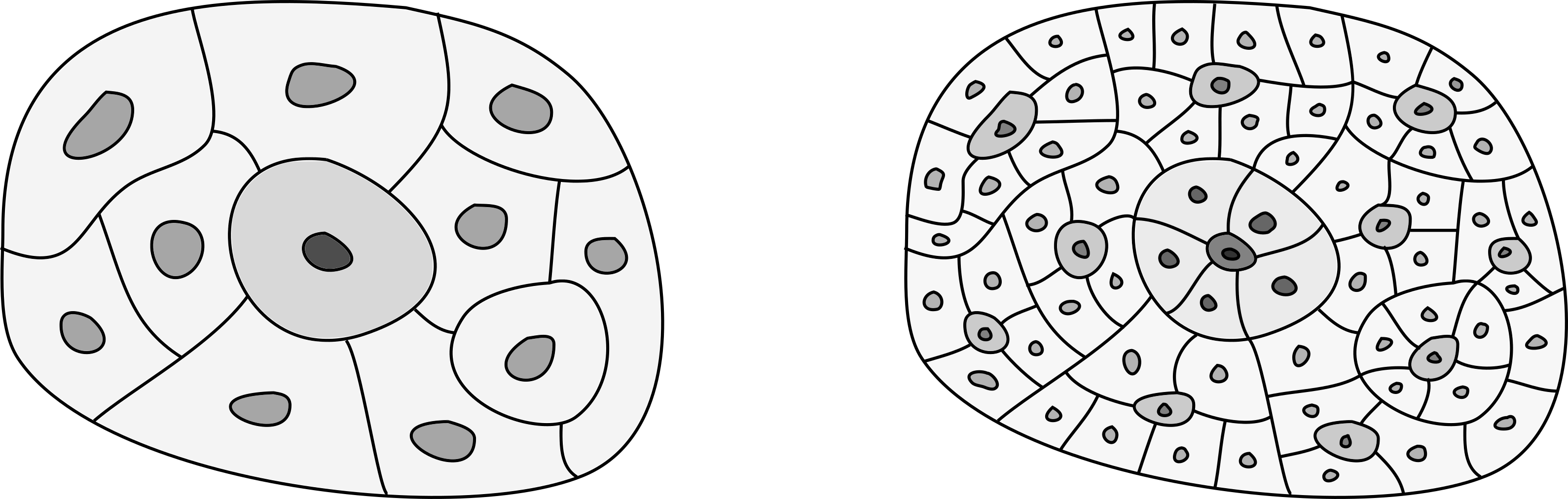}\end{center}
\caption{Possible densities $f_0f_1|_Q$ (left) and  $f_0f_1f_2|_Q$ (right).}\label{fig:mu-to-nu-2}\end{figure}

\begin{lemma} For all $n\geq 0$ and $Q\in D_n$, we have $\nu(\partial Q)=0$ and $\nu(Q)=\nu_{n-1}(Q)$. (When $n=0$, this should be read as $\nu(Q)=\mu(Q)$ for all $Q\in D_0$.)\end{lemma}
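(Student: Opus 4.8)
The plan is to prove the exact identity $\nu_m(Q)=\nu_{n-1}(Q)$ for all $m\ge n$ at the level of the approximating measures $\nu_m$, and then transfer it to the weak-$*$ limit $\nu$; the delicate point will be to show that $\nu$ assigns no mass to $\partial Q$.

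\textbf{Step 1 (self-similarity of the $\nu_m$).} First I would record two facts. Since $D_k$ partitions $X$ and each $f_P$ is supported on $P$, the restriction of $f_k$ to a cube $R\in D_k$ is exactly $f_R$; and by the choice of $a_R$ in \eqref{fQ-weight} one has $\int f_R\,d\mu=\nu_R(R)=\mu(R)$ and $\nu_R(R^c)=0$. An induction on $j\ge0$ then gives, for every $m\ge0$ and every $R\in D_m$,
\[
\int_R\Big(\textstyle\prod_{k=m}^{m+j}f_k\Big)\,d\mu=\mu(R),
\]
the inductive step being: split $R$ into its children $R'\in D_1(R)$, on each of which $f_m$ is the constant $a_R$ or $\delta$ (because $R'$ lies inside $R_R$ or inside $R\setminus R_R$), apply the hypothesis to $R'\in D_{m+1}$, and resum using $\sum_{R'}\int_{R'}f_R\,d\mu=\mu(R)$. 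The same observation shows $\prod_{k=0}^{n-1}f_k$ is a positive constant $c_Q$ on any $Q\in D_n$, with $\nu_{n-1}(Q)=c_Q\mu(Q)$; hence for $m\ge n$
\[
\nu_m(Q)=c_Q\int_Q\Big(\textstyle\prod_{k=n}^{m}f_k\Big)\,d\mu=c_Q\mu(Q)=\nu_{n-1}(Q),
\]
reading $\nu_{-1}:=\mu$ when $n=0$. The identical computation applies to any descendant $R'\in D_{n'}(Q)$: $\nu_m(R')=c_{R'}\mu(R')$ for $m\ge n'$, where $c_{R'}=\prod_{k=0}^{n'-1}f_k|_{R'}$ and the factor at generation $k\ge n$ equals $\delta$ whenever $R'$ avoids $R_{Q^{(k)}}$, with $Q^{(k)}$ the generation-$k$ ancestor of $R'$.

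\textbf{Step 2 (reduction to the boundary claim).} Since $X$ is complete and doubling, $\overline Q\subset B_Q$ is compact. Writing $\nu=\lim_j\nu_{n_j}$ along the defining subsequence, the portmanteau inequalities give $\nu(\mathrm{int}\,Q)\le\liminf_j\nu_{n_j}(Q)$ and $\nu(\overline Q)\ge\limsup_j\nu_{n_j}(Q)$; by Step 1 both sides equal $\nu_{n-1}(Q)$ once $n_j\ge n$, so $\nu(\mathrm{int}\,Q)\le\nu_{n-1}(Q)\le\nu(\overline Q)$. Therefore, as soon as $\nu(\partial Q)=0$ is known, the sandwich collapses to $\nu(\mathrm{int}\,Q)=\nu(Q)=\nu(\overline Q)=\nu_{n-1}(Q)$, and the lemma is proved. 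Everything thus reduces to the boundary claim, which is the hard part.

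\textbf{Step 3 (no mass on $\partial Q$).} The geometric engine is \eqref{RQ-def}: $R_{Q'}$ is buried deeper than $\tfrac18 2^{-7n'}$ inside each $Q'\in D_{n'}$, so any $x\in Q'$ lying within $\tfrac18 2^{-7(k+1)}$ of $(Q^{(k)})^c$ is forced out of $R_{Q^{(k)}}$, i.e.\ $f_{Q^{(k)}}(x)=\delta$. Fix $Q\in D_n$ and small $\rho>0$, and let $U_\rho=\{x:d(x,\partial Q)<\rho\}$, an open neighbourhood of $\partial Q$. Unwinding the definition of $\partial Q$, each point of $U_\rho$ lies either in $Q$ within $2\rho$ of $Q^c$, or in some $Q'\in D_n\setminus\{Q\}$ within $2\rho$ of $(Q')^c$; volume doubling bounds the number of such neighbours $Q'$, and for every relevant top cube $Q''\in\{Q\}\cup\{\text{neighbours}\}$ the near-boundary set $\{y\in Q'':d(y,(Q'')^c)<2\rho\}$ is covered by those $R'\in D_{n'}(Q'')$ it meets, with $n'$ chosen so that $2^{-7n'}\sim\rho$. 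For each such $R'$ all intermediate generations contribute $\delta$, so $c_{R'}\lesssim C_2^{\,n}\delta^{\,n'-n}$ by Step 1 and Lemma \ref{l:a-bound}; since Corollary \ref{l:growth} gives at most $M^{\,n'-n}$ such cubes inside each $Q''$ and $\sum_{R'}\mu(R')\le\mu(Q'')$, for $n_j\ge n'$ we obtain
\[
\nu_{n_j}(U_\rho)\ \lesssim\ C_2^{\,n}\,\delta^{\,n'-n}\!\!\sum_{Q''}\mu(Q'')\ \lesssim_{Q}\ \delta^{\,n'-n}.
\]
As $0<\delta<1$ (only $\delta<1$ is needed here; the stronger \eqref{e:delta-bound} is used elsewhere) and $n'-n\sim\tfrac17\log_2(1/\rho)\to\infty$, the open-set portmanteau inequality gives $\nu(\partial Q)\le\nu(U_\rho)\le\liminf_j\nu_{n_j}(U_\rho)\lesssim\delta^{\,n'-n}\to0$ as $\rho\to0$; hence $\nu(\partial Q)=0$. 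I expect this step to be the main obstacle: pinning the scale $n'\sim\tfrac17\log_2(1/\rho)$, checking that \emph{all} of a boundary-touching sub-cube $R'$ (not merely one point of it) remains in the $\delta$-region at every generation between $n$ and $n'-1$, and controlling the number of such $R'$ are precisely the places where \eqref{RQ-def}, the thinning $D_n=\Delta_{7n}$, and Corollary \ref{l:growth} must be combined carefully.
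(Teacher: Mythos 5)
Your proof is correct and follows essentially the same route as the paper's: the self-similarity identity $\nu_m(Q)=\nu_{n-1}(Q)$ for $m\ge n$, reduction via weak-$*$ portmanteau to showing $\nu(\partial Q)=0$, and exponential decay near $\partial Q$ coming from the factor $\delta$ accumulated at every generation (a consequence of the separation in \eqref{RQ-def} together with Lemma~\ref{l:center}). The paper packages the decay estimate through the sets $F_{A,k}=A\setminus\bigcup_{j<k}\bigcup_{S\in D_j(A)}R_S$ for adjacent cubes $A$, noting $\nu_{n+k-1}(F_{A,k})=\delta^k\nu_{n-1}(A)$, whereas you cover the open $\rho$-neighborhood $U_\rho$ of $\partial Q$ by generation-$n'$ cubes with $2^{-7n'}\sim\rho$---the same computation, spelled out a bit more explicitly.
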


\begin{proof} From \eqref{fQ-weight} and \eqref{fn-weight}, it is immediate that $\nu_p(Q)=\nu_{n-1}(Q)$ for all $p\geq n\geq 0$ and $Q\in D_n$. If we can show that $\nu(\partial Q)=0$ for all $Q\in\Delta$, then $\nu(Q)=\lim_{p\rightarrow \infty}\nu_p(Q)=\nu_{n-1}(Q)$ for all $Q\in D_n$ by weak convergence.

Fix $Q\in D_n$ for some $n\geq 0$. To prove that $\nu(\partial Q)=0$, we must find a good cover of the boundary. To that end, let $\mathcal{A}$ denote the family of all $A\in D_n$ such that $\overline{A}\cap \partial Q\neq\emptyset$. Each cube $A\in\mathcal{A}$ is \emph{adjacent} to $Q$. Because $\mu$ is doubling and the sets $\{U_A:A\in\mathcal{A}\}$ are pairwise disjoint and confined to a bounded region of $X$, the collection $\mathcal{A}$ is finite. We will cover $\partial Q$ with certain subsets of the adjacent cubes. Given $A\in\mathcal{A}$ and $k\geq 1$, let $F_{A,k}=A\setminus \bigcup_{j=0}^{k-1} \bigcup_{S\in D_j(A)} R_S$. That is, form $F_{A,k}$ from $A$ by removing any central descendants $R_S$ of $A$ through $k$ generations. By Lemma \ref{l:center}, $\partial Q$ is contained in some open subset $V_k$ of $\bigcup_{A\in\mathcal{A}} F_{A,k}$ for each $k\geq 1$. By weak convergence, monotonicity, and subadditivity of measures, and the fact that $F_{A,k}$ is a union of cubes in $D_{n+k}$, $$\nu(\partial\Omega)\leq \nu(V_k)\leq \liminf_{m\rightarrow\infty} \nu_m(V_k)\leq \liminf_{m\rightarrow\infty}\sum_{A\in\mathcal{A}}\nu_m(F_{A,k})=\sum_{A\in\mathcal{A}} \nu_{n+k-1}(F_{A,k})$$ for all $k\geq 1$. Because $F_{A,k}$ is formed by deleting $k$ generations of central descendants, $\nu_{n+k-1}(F_{A,k}) = \delta^k\nu_{n-1}(A)$ for all $k\geq 1$. Because $\sum_{A\in\mathcal{A}}\nu_{n-1}(A)<\infty$, we conclude that $\nu(\partial\Omega) \leq \lim_{k\rightarrow\infty} \delta^k\sum_{A\in\mathcal{A}}\nu_{n-1}(A)=0$.\end{proof}

\subsection{Doubling of $\nu$}

\begin{lemma} \label{l:adjacent} There is a constant $C_3\geq 1$ depending only on $C_\mu$ and $\delta$ so that if $S\in D_n$ for some $n\geq 0$ and $\mathcal{N}(S)=\{T\in D_n: d(S,T)\leq 2048 \cdot 2^{-7n}\}$, then
  \begin{align} \label{e:adjacent-bounds}
    C_3^{-1} \nu(S) \leq \nu(T) \leq C_3\, \nu(S)\quad\text{for all }T\in\mathcal{N}(S).
  \end{align}
\end{lemma}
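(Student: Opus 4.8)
The plan is to make the density of $\nu$ with respect to $\mu$ completely explicit on each cube of $D$, and then to compare $S$ and $T$ scale by scale.

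First I would fix $n\geq 0$ and $S\in D_n$, and for $0\leq k\leq n-1$ write $Q^{(k)}_S\in D_k$ for the unique ancestor of $S$. Since $R_{Q^{(k)}_S}\in D_{k+1}$ and $n\geq k+1$, the cube $S$ is nested inside exactly one cube of $D_{k+1}$, so it lies entirely inside $R_{Q^{(k)}_S}$ or entirely inside $Q^{(k)}_S\setminus R_{Q^{(k)}_S}$; in particular $f_k=\sum_{Q\in D_k}f_Q$ is constant on $S$, equal to $a_{Q^{(k)}_S}$ in the first case and to $\delta$ in the second. Hence $\prod_{k=0}^{n-1}f_k$ is constant on $S$, say equal to $g_S$, and since $\nu(S)=\nu_{n-1}(S)$ (from the construction in \S\ref{ss:nu}) we get $\nu(S)=g_S\,\mu(S)$. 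Using Lemma \ref{l:a-bound} together with $a_Q=\delta+(1-\delta)\mu(Q)/\mu(R_Q)\in[1,C_2]$, every factor of $g_S$ lies in $[\delta,C_2]$. Thus the lemma reduces to showing, for each $T\in\mathcal{N}(S)$, that $\mu(S)\sim_{C_\mu}\mu(T)$ and $g_S\sim_{C_\mu,\delta}g_T$.

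The comparison of $\mu(S)$ and $\mu(T)$ is just doubling of $\mu$: $S\cup T$ is contained in a ball of radius $\lesssim 2^{-7n}$ centered at a point of $T$, while $T$ contains $U_T=U(x_T,\tfrac16 2^{-7n})$, so finitely many applications of \eqref{doubling-mu} give $\mu(S)\lesssim_{C_\mu}\mu(T)$, and symmetrically. For the weights, assume $S\neq T$ and compare $f_k|_S$ with $f_k|_T$ for $0\leq k\leq n-1$. Let $m$ be the largest integer with $Q^{(m)}_S=Q^{(m)}_T$ if $S$ and $T$ share an ancestor in $D$, and set $m=-1$ otherwise. For $k\leq m-1$ the ancestors agree at generations $k$ and $k+1$, so $S\subset R_{Q^{(k)}_S}$ if and only if $T\subset R_{Q^{(k)}_T}$, whence $f_k|_S=f_k|_T$. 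For $m+1\leq k\leq n-2$ the ancestors $Q^{(k)}_S$ and $Q^{(k)}_T$ are distinct, hence $T\subset(Q^{(k)}_S)^c$, so
\[
d\bigl(S,(Q^{(k)}_S)^c\bigr)\;\leq\;d(S,T)\;\leq\;2048\cdot 2^{-7n}\;\leq\;\tfrac18\cdot 2^{-7k},
\]
using $k\leq n-2$ in the last step; comparing with \eqref{RQ-def}, which gives $d(R_{Q^{(k)}_S},(Q^{(k)}_S)^c)>\tfrac18\cdot 2^{-7k}$, forces $S\not\subset R_{Q^{(k)}_S}$, so $f_k|_S=\delta$, and symmetrically $f_k|_T=\delta$. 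Therefore the factors of $g_S$ and $g_T$ coincide except possibly at the two indices $k=m$ and $k=n-1$; since each factor lies in $[\delta,C_2]$, this gives $g_S/g_T\leq(C_2/\delta)^2$ and $g_T/g_S\leq(C_2/\delta)^2$. Combining the two comparisons yields \eqref{e:adjacent-bounds} with $C_3$ depending only on $C_\mu$ and $\delta$.

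The main obstacle I expect is the phenomenon that two cubes in $\mathcal{N}(S)$ may have a common ancestor many generations up — e.g.\ when they straddle the boundary of a far ancestor — so that a priori the product $\prod f_k$ could drift between $S$ and $T$ by an unbounded multiplicative amount. The resolution, encoded in the estimate above, is the quantitative interiority of the central descendants from \eqref{RQ-def}: at every scale $2^{-7k}$ with $k\leq n-2$, a cube lying within $2048\cdot 2^{-7n}$ of the complement of its scale-$k$ ancestor cannot sit in that ancestor's deeply interior central descendant, hence carries the weight $\delta$ there — the same for $S$ as for $T$. This pins the discrepancy to the single branch-point scale $k=m$ and the bottom scale $k=n-1$, i.e.\ to at most two factors. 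The two points to be careful about are the edge cases ($S,T$ in distinct cubes of $D_0$, where $m=-1$ and there is only one bad factor, and $m=n-1$, likewise one bad factor), and the arithmetic identity $2048\cdot 2^{-7n}=\tfrac18\cdot 2^{-7(n-2)}$, which is exactly what makes the radius $2048\cdot 2^{-7n}$ in the definition of $\mathcal{N}(S)$ the right size relative to the ``skip by $7$'' in the definition of $D_n$.
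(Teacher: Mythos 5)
Your proposal is correct and follows essentially the same route as the paper's proof: identify the first common-ancestor scale $m$ (setting $m=-1$ if there is none, where the paper splits this into a second case), use \eqref{RQ-def} and the arithmetic $2048\cdot 2^{-7n}=\tfrac18\cdot 2^{-7(n-2)}$ to force $f_k=\delta$ on both $S$ and $T$ for $m+1\leq k\leq n-2$, observe that $f_k|_S=f_k|_T$ for $k\leq m-1$ by shared ancestry, and conclude that at most the two factors at scales $m$ and $n-1$ can differ, each lying in $[\delta,C_2]$. Combined with $\mu(S)\sim_{C_\mu}\mu(T)$ by doubling, this gives the claimed comparability with a constant depending only on $C_\mu$ and $\delta$, just as in the paper.
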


\begin{proof} Let $S\in D_n$ and $\mathcal{N}(S)$ be fixed as in the statement.  To proceed, let $T\in\mathcal{N}(S)$. There are two cases.

\emph{Case 1.} Suppose that $S$ and $T$ have a common ancestor in $D$. Let $k \geq 0$ be the largest integer such that
  $S \subset Q_0$ and $T \subset Q_0$ for some $Q_0 \in D_k$. In other words, let $Q_0$ be the first common ancestor of $S$ and $T$. We claim that neither $S$ nor $T$ is contained in $R_Q$ for any $Q \in \bigcup_{j={k+1}}^{n-2} D_j$. Indeed, first suppose to get a contradiction that $S \subset R_Q$ for some $Q \in D_j$ with $k+1 \leq j \leq n-2$. Then $T \cap Q = \emptyset$, since $S\subset Q$ and $Q$ is not a common ancestor of $S$ and $T$. Hence
    $$
    d(R_Q,Q^c) \leq d(S,T) \leq 2048 \cdot 2^{-7n} \leq \frac{1}{8} \cdot 2^{-7j},
    $$
    where we used the fact that $j \leq n-2$.  This violates \eqref{RQ-def}.

  An identical argument
  implies $T \nsubseteq R_Q$
  for any $Q\in\bigcup_{j={k+1}}^{n-2} D_j$.
  The consequence of this is that $f_j(x) = \delta = f_j(y)$ for all $x \in S$ and $y \in T$ when $k+1\leq j\leq n-2$. Also $f_j(x) = f_j(y)$ for all $x \in S$ and $y \in T$ when $0\leq j \leq k-1$, since $Q_0 \in D_k$ is a common ancestor of $S$ and $T$. Hence only $f_k$ and $f_{n-1}$ may have different values for $x$ and $y$.  Thus, Lemma~\ref{l:a-bound} gives
  \begin{align*}
    \frac{\left( \prod_{j=0}^{n-1} f_j(x) \right)}{\left( \prod_{j=0}^{n-1} f_j(y) \right)} = \frac{f_k(x)f_{n-1}(x)}{f_k(y)f_{n-1}(y)} \in \left[\delta^2/C_2^2, C_2^2/\delta^2\right].
  \end{align*}

  \emph{Case 2.} Suppose that $S$ and $T$ do not have a common ancestor in $D$. Repeating the argument above informs us that neither $S$ nor $T$ is contained in $R_Q$ for any $Q \in \bigcup_{j=0}^{n-2} D_j$.  It follows that $f_j(x) = \delta = f_j(y)$ for all $x \in S$ and $y \in T$ when $0 \leq j \leq n-2$. Again, by Lemma~\ref{l:a-bound}, we have
\begin{align*}
  \frac{\left( \prod_{j=0}^{n-1} f_j(x) \right)}{\left( \prod_{j=0}^{n-1} f_j(y) \right)} = \frac{f_{n-1}(x)}{f_{n-1}(y)} \in [\delta/C_2, C_2/\delta].
\end{align*}

In each case, \begin{equation*}
(\delta/C_2)^2\mu(S)\leq \nu(S)\leq (C_2/\delta)^2\mu(S)\quad\text{and}\quad (\delta/C_2)^2\mu(T)\leq \nu(T)\leq (C_2/\delta)^2\mu(T).\end{equation*} The lemma follows, because $\mu$ is a doubling measure and $T \in \mathcal{N}(S)$; cf.~Remark \ref{r:halving}.
\end{proof}

\color{black}

\begin{lemma}
  There is a constant $C_4\geq 1$ depending only on $C_\mu$ and on $\delta$ so that
  \begin{align}
    C_4^{-1} \mu(Q) \leq \nu(Q) \leq C_4\, \mu(Q) \quad \text{for all }Q \in D_1. \label{e:D1-bound}
  \end{align}
\end{lemma}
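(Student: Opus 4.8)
The plan is to invoke the preceding lemma, which asserts that $\nu(Q)=\nu_{n-1}(Q)$ for every $Q\in D_n$; in the case $n=1$ this reads $\nu(Q)=\nu_0(Q)$ for all $Q\in D_1$, so the whole statement reduces to a one‑line computation with the single density $f_0$. First I would note that each $Q\in D_1$ lies in a unique $Q_0\in D_0$, and since the functions $\{f_{Q'}\}_{Q'\in D_0}$ have pairwise disjoint supports, on $Q$ we have $f_0=f_{Q_0}=a_{Q_0}\chi_{R_{Q_0}}+\delta\chi_{Q_0\setminus R_{Q_0}}$. Consequently
\[
\nu(Q)=\nu_0(Q)=\int_Q f_{Q_0}\,d\mu=
\begin{cases}
a_{Q_0}\,\mu(Q),& Q=R_{Q_0},\\[2pt]
\delta\,\mu(Q),& Q\neq R_{Q_0},
\end{cases}
\]
where in the second case one uses that $Q$ and $R_{Q_0}$ are then distinct children of $Q_0$ in $D$, hence disjoint, so $Q\subset Q_0\setminus R_{Q_0}$.

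Next I would bound the coefficient in each case. From the identity $a_{Q_0}=\delta+(1-\delta)\,\mu(Q_0)/\mu(R_{Q_0})$ established in the proof of Lemma~\ref{l:a-bound} together with the inclusion $R_{Q_0}\subset Q_0$ (so $\mu(Q_0)/\mu(R_{Q_0})\geq 1$), we get $1\leq a_{Q_0}\leq C_2$; in the other case $0<\delta<1$. Combining the two cases gives $\delta\,\mu(Q)\leq \nu(Q)\leq C_2\,\mu(Q)$ for every $Q\in D_1$, so taking $C_4:=\max\{\delta^{-1},C_2\}\geq 1$ — which depends only on $C_\mu$ and $\delta$, since $C_2$ does — yields \eqref{e:D1-bound}.

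There is essentially no obstacle here; the only points requiring care are applying the preceding lemma at $n=1$ (so that $\nu_{n-1}$ is $\nu_0$ and $f_0$ is the only relevant density), and observing that the binding inequality flips between the two cases — $a_{Q_0}\geq 1$ makes the upper estimate the nontrivial one on the central child $R_{Q_0}$, while $\delta<1$ makes the lower estimate the nontrivial one on the remaining children.
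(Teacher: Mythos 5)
Your proof is correct and takes essentially the same approach as the paper: both identify that for $Q\in D_1$ with parent $Q_0\in D_0$, the measure $\nu(Q)$ equals either $a_{Q_0}\mu(Q)$ (if $Q=R_{Q_0}$) or $\delta\mu(Q)$, then bound the coefficients via Lemma~\ref{l:a-bound} and take $C_4=\max\{\delta^{-1},C_2\}$. You spell out the reduction $\nu(Q)=\nu_0(Q)$ and the case split in more detail, but the substance is identical.
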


\begin{proof} If $Q\in D_1$, then either $\nu(Q)=\delta\,\mu(Q)$ or $\nu(Q)=a_P\,\mu(Q)$, where $P\in D_0$ is the parent of $Q$ in $D$. Hence $\delta\, \mu(Q)\leq \nu(Q)\leq C_2\,\mu(Q)$ for all $Q\in D_1$ by Lemma \ref{l:a-bound}. Therefore, we may take $C_4 = \max\{\delta^{-1},C_2\}$.
\end{proof}

\begin{proposition}\label{prop:doubling}
  The measure $\nu$ is doubling.
\end{proposition}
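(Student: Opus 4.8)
The strategy is to compare $\nu$-measures of balls with $\nu$-measures of dyadic cubes of comparable size and then invoke the two facts already in hand: the comparability of $\nu$ on same-generation neighbouring cubes (Lemma~\ref{l:adjacent}) and the behaviour of $\nu$ at coarse scales, where it literally equals $\mu$. A preliminary observation I would record first is that \emph{passing to a child of a cube loses only a bounded factor of $\nu$-mass}: for every $Q\in D_n$ and $R\in D_1(Q)$ one has $\nu(R)\geq\delta\,C_1^{7}\,\nu(Q)$. Indeed, $f_0,\dots,f_{n-1}$ are constant on $Q$ (each $f_k$ with $k<n$ is constant on every cube of $D_k$), so $\nu_{n-1}\res Q$ is a positive multiple of $\mu\res Q$; since $f_n\res Q=f_Q\geq\delta$ (note $a_Q\geq 1$ from the formula for $a_Q$ in the proof of Lemma~\ref{l:a-bound}) and $D_1(Q)=\Delta_{7}(Q)$, applying Lemma~\ref{l:cube-doubling} seven times gives $\nu(R)=\int_R f_Q\,d\nu_{n-1}\geq\delta\,\nu_{n-1}(R)\geq\delta\,C_1^{7}\,\nu_{n-1}(Q)=\delta\,C_1^{7}\,\nu(Q)$. (When $n=0$ read $\nu_{-1}$ as $\mu$.)

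For the main estimate, fix $x\in X$ and $r>0$, and split into two regimes. If $0<r<128$, choose $n\geq 0$ with $2^{-7n}\leq r<128\cdot 2^{-7n}$, let $S\in D_n$ be the cube containing $x$, and let $R\in D_1(S)$ be the cube containing $x$. Every $Q\in D_n$ meeting $B(x,2r)$ satisfies $d(Q,S)\leq 2r<2048\cdot 2^{-7n}$, hence $Q\in\mathcal{N}(S)$, and a volume count with the doubling measure $\mu$ applied to the pairwise disjoint inner balls $U_Q$ shows there are only boundedly many such $Q$. Therefore $\nu(B(x,2r))\leq\sum_Q\nu(Q)\lesssim_{C_\mu,\delta}\nu(S)$ by Lemma~\ref{l:adjacent}. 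On the other hand $R\subset B(x,\tfrac{16}{3}\side R)\subset B(x,r)$ because $\side R=2^{-7(n+1)}<r/128$, so the preliminary observation gives $\nu(B(x,r))\geq\nu(R)\geq\delta\,C_1^{7}\,\nu(S)$. Combining, $\nu(B(x,2r))\lesssim_{C_\mu,\delta}\nu(S)\lesssim_{C_\mu,\delta}\nu(B(x,r))$, and all three quantities are positive and finite since $\nu(S)$ is a positive finite multiple of $\mu(S)$.

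If instead $r\geq 128$, I would use coarse cubes. For any generation $m\leq 0$, a cube $Q\in\Delta_m$ is a countable disjoint union of cubes of $D_0=\Delta_0$, and $\nu$ agrees with $\mu$ on each of those, so $\nu(Q)=\mu(Q)$. Picking $m\leq 0$ with $r/2<2^{-m}\leq r$, the ball $B(x,2r)$ is contained in the union of the cubes of $\Delta_m$ it meets, itself contained in $B(x,\tfrac{22}{3}r)$, so $\nu(B(x,2r))\leq\mu(B(x,\tfrac{22}{3}r))\lesssim_{C_\mu}\mu(B(x,r))$ by doubling of $\mu$; and the cube $R\in\Delta_{m'}$ containing $x$, for $m'\leq 0$ chosen with $2^{-m'}\sim r$ small enough that $R\subset B(x,r)$, satisfies $\nu(B(x,r))\geq\nu(R)=\mu(R)\geq\mu(U_R)\gtrsim_{C_\mu}\mu(B(x,r))$ by doubling of $\mu$ and the roundness of cubes. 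Hence again $\nu(B(x,2r))\lesssim_{C_\mu}\mu(B(x,r))\lesssim_{C_\mu}\nu(B(x,r))<\infty$, with positivity as before.

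I expect the only delicate part to be the bookkeeping of scales in the first regime, namely ensuring simultaneously that $S$ is a neighbour of every cube of $D_n$ meeting $B(x,2r)$ and that the chosen child of $S$ fits inside $B(x,r)$; this is precisely what the constant $2048$ in the definition of $\mathcal{N}(S)$ and the seven-generation gap defining $D$ were engineered for. Everything else reduces to the doubling of $\mu$ and the quantitative roundness of the KRS cubes.
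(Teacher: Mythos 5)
Your proof is correct and follows the same two-case strategy as the paper: at small scales compare $\nu$-mass of $B(x,r)$ and $B(x,2r)$ to $\nu(S)$ for a cube $S$ of comparable size using the neighbour comparison of Lemma~\ref{l:adjacent}, and at large scales reduce to the doubling of $\mu$ via the agreement (or comparability) of $\nu$ and $\mu$ at coarse scales. There are two mild organizational differences. First, in the small-$r$ regime the paper chooses the generation $j$ so that $\tfrac{16}{3}\cdot 2^{-7j}\leq r$, which forces $\diam S\leq r$ and hence $S\subset B(x,r)$ directly; you choose the scale one step coarser and then descend to a child $R\in D_1(S)$, which requires your extra ``preliminary observation'' that $\nu(R)\geq\delta C_1^7\,\nu(S)$. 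That observation is correct (small caveat: you write that each $f_k$ with $k<n$ is constant on cubes of $D_k$, but it is constant on cubes of $D_{k+1}$; the conclusion that $f_0,\dots,f_{n-1}$ are constant on $Q\in D_n$ still holds), but it is an extra lemma the paper avoids by its scale choice. Second, in the large-$r$ regime the paper covers balls by cubes of $D_1$ and uses the two-sided bound $C_4^{-1}\mu\leq\nu\leq C_4\mu$ on $D_1$-cubes, whereas you pass to coarse cubes $\Delta_m$ with $m\leq 0$ on which $\nu$ and $\mu$ agree exactly; both reduce cleanly to the doubling of $\mu$, and your version is marginally more elementary since it uses equality rather than comparability. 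Either variant establishes the proposition.
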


\begin{proof}
  Let $B(x,r)$ be a ball in $X$.

\emph{Case 1.} Assume that $r \leq {\frac{16}{3}}$. Then there exists a unique integer $j\geq 0$ such that $$\tfrac{16}{3} \cdot 2^{-7j}\leq r<\tfrac{16}{3}\cdot 2^{-7(j-1)}.$$ Since $D_j$ partitions $X$, there exists a unique cube $S\in D_j$ such that $x\in S$. On the one hand, since $r\geq \diam S$, we have $B(x,r)\supset S$ and
  \begin{align}
    \nu(B(x,r)) \geq \nu(S). \label{e:lower-doubling}
  \end{align}

Let $\mathcal{T}$ denote all cubes of $D_j$ that intersect $B(x,2r)$.  Thus, $\nu(B(x,2r)) \leq \sum_{T \in \mathcal{T}} \nu(T)$.
If $T\in \mathcal{T}$, then
  \begin{align*}
    d(S,T) \leq d(x,T)\leq 2r < \tfrac{32}{3} \cdot 2^{-7(j-1)} < 1366 \cdot 2^{-7j}
  \end{align*}
  and so $T \in \mathcal{N}(S)$ as defined in Lemma \ref{l:adjacent}. This lemma implies that $\nu(T) \leq C_3 \nu(S)$ for all $T \in \mathcal{T}$. Hence
  \begin{align}
    \nu(B(x,2r)) \leq \sum_{T \in \mathcal{T}} C_3 \nu(S) = \# \mathcal{T} \cdot C_3 \nu(S). \label{e:upper-doubling}
  \end{align}
  The proposition will follow from \eqref{e:lower-doubling} and \eqref{e:upper-doubling} in this case once we have shown that $\# \mathcal{T}$ is uniformly bounded.
  Indeed, for all $T \in \mathcal{T}$, we have
  \begin{align*}
      d(x_S,T) + \diam T \leq \diam S + d(s,T) + 2 \diam T < 1382 \cdot 2^{-7j}.
  \end{align*}
    This implies that $T \subset B(x_S,1382 \cdot 2^{-7j})$ and we also know that $T\supset U_T=U(x_T,\tfrac{1}{6}\cdot 2^{-7j})$. Thus, because $\mu$ is doubling, $\#\mathcal{T}\lesssim_{C_\mu} 1$; cf.~the argument in Remark \ref{r:halving}.

\emph{Case 2.}  Now assume $r > \frac{16}{3}$.  Let
  \begin{align*}
    S_1 = \bigcup \{Q \in D_1 : Q \cap B(x,2r) \neq \emptyset\} \quad\text{and}\quad   S_2 = \bigcup \{Q \in D_1 : Q \cap B(x,r/2) \neq \emptyset\}.
  \end{align*}
  As elements of $D_1$ have diameters bounded by $16/3 \cdot 2^{-7} < r/2$, we get the containments
  \begin{align*}
    B(x,2r) \subset S_1 \subset B(x,4r) \quad \text{and}\quad
    B(x,r/2) \subset S_2 \subset B(x,r).
  \end{align*}
  We now can bound
  \begin{align*}
    \nu(B(x,2r)) \leq \nu(S_1) &\overset{\eqref{e:D1-bound}}{\leq} C_4\, \mu(S_1) \leq C_4\, \mu(B(x,4r)) \leq C_4\, C_\mu^3\, \mu(B(x,r/2)) \\
    &\leq C_4\, C_\mu^3\, \mu(S_2) \overset{\eqref{e:D1-bound}}{\leq} C_4^2\, C_\mu^3\, \nu(S_2) \leq C_4^2\, C_\mu^3\, \nu(B(x,r)). \qedhere
  \end{align*}
\end{proof}

\color{black}

\subsection{Cubes with high density}

For $0 \leq k \leq n$ and $Q \in D$, we define $\cK_Q(n,k)$ to be the collection of cubes $S \in D_n(Q)$
for which there exist at least $n-k$ distinct cubes $T \in \bigcup_{j=0}^{n-1} D_j(Q)$ such that $S \subset R_T$. We remark  that $$\cK_Q(n,k)\subset \cK_Q(n,l)\quad\text{when}\quad 0\leq k\leq l\leq n,$$ with $\#\cK_Q(n,0)=1$ and $\cK_Q(n,n)= D_n(Q)$. When $k\ll n$, the cubes $S\in\cK_Q(n,k)$ have relatively high density $\nu(S)/\mu(S)$ compared to $\nu(Q)/\mu(Q)$.

\begin{lemma} \label{l:mass-lower} If $k \geq \delta n$, then
  $\nu \left(\bigcup \cK_Q(n,k)\right)
  \geq
  \left(1- \exp \left[ - \frac{n}{8} \left( \frac{k}{n}-\delta\right)^2 \right] \right) \nu(Q)$.
\end{lemma}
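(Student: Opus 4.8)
The plan is to recognize $\nu\bigl(\bigcup\cK_Q(n,k)\bigr)$ as the probability that a certain random walk does not drift too far, and then apply a one-sided Chernoff/Hoeffding bound. First I would normalize: replace $\nu$ by $\nu/\nu(Q)$, which is a probability measure carried by $Q$ (using the fact that $\nu(Q) = \nu_{n-1}(Q) = \mu(Q) > 0$ and $\nu(\partial Q) = 0$ from the boundary lemma). Under this probability measure, selecting a point $x$ at random and tracking which descendant of $Q$ in $D_j(Q)$ contains $x$ for $j = 0, 1, \dots, n$ defines a sequence of nested cubes $Q = S_0 \supset S_1 \supset \cdots \supset S_n$. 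For each $j \in \{0,\dots,n-1\}$, let $\xi_j$ be the indicator of the event $\{S_{j+1} \subset R_{S_j}\}$, i.e. that the point fell into the ``central child''. The key structural point is that, conditioned on $S_j$, the $\nu$-probability of $\{S_{j+1} \subset R_{S_j}\}$ equals $\nu_{j}(R_{S_j})/\nu_{j}(S_j)$, and by Lemma~\ref{l:p-bound} this conditional probability is $\geq 1 - \delta$. Thus the $\xi_j$ stochastically dominate independent Bernoulli$(1-\delta)$ variables; equivalently, writing $N = \sum_{j=0}^{n-1}\xi_j$ for the number of central steps, $N$ stochastically dominates a Binomial$(n, 1-\delta)$ random variable.

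The second step is to translate the combinatorial condition defining $\cK_Q(n,k)$ into this language. By definition, $S \in \cK_Q(n,k)$ exactly when there are at least $n - k$ indices $j \in \{0,\dots,n-1\}$ with $S \subset R_{S_j}$ — that is, exactly when $N \geq n - k$ along the branch ending at $S$. Hence
\[
\nu\Bigl(\bigcup\cK_Q(n,k)\Bigr) = \nu(Q)\cdot \P\bigl(N \geq n-k\bigr),
\]
and it remains to lower bound $\P(N \geq n-k) = 1 - \P(N \leq n - k - 1) \geq 1 - \P(N < n-k)$. Writing $M = n - N$ for the number of non-central steps, $M$ is stochastically dominated by Binomial$(n,\delta)$, so $\P(N < n-k) = \P(M > k) \leq \P(\mathrm{Bin}(n,\delta) > k)$. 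Since $k \geq \delta n$, this is an upper-tail deviation, and Hoeffding's inequality (applied to the dominating independent Bernoulli$(\delta)$ sum, whose terms lie in $[0,1]$) gives
\[
\P(M > k) \leq \exp\!\left(-2n\Bigl(\tfrac{k}{n}-\delta\Bigr)^{2}\right) \leq \exp\!\left(-\tfrac{n}{8}\Bigl(\tfrac{k}{n}-\delta\Bigr)^{2}\right).
\]
(The constant $1/8$ in the statement is far from sharp, so even a cruder second-moment or Chernoff estimate suffices; I would use whichever gives the stated constant most transparently.) Combining, $\nu\bigl(\bigcup\cK_Q(n,k)\bigr) \geq \bigl(1 - \exp[-\tfrac{n}{8}(\tfrac{k}{n}-\delta)^2]\bigr)\nu(Q)$, as claimed.

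The main obstacle is making the ``conditional probability of a central step is $\geq 1-\delta$, independently at each scale'' statement rigorous, since the $\nu_j$ are not literally product measures and the density $\prod_{k} f_k$ couples the scales. The clean way around this is to argue directly with the measures rather than with an abstract probability space: for a fixed branch $Q = S_0 \supset \cdots \supset S_j$, one has $\nu(S_{j+1}) = \nu_j(S_{j+1})$ and $\nu(S_j) = \nu_j(S_j) = \nu_{j-1}(S_j)$, and $\nu_j\restriction S_j = f_{S_j}\,\nu_{j-1}\restriction S_j$ with $f_{S_j} = a_{S_j}\chi_{R_{S_j}} + \delta\chi_{S_j\setminus R_{S_j}}$; hence $\nu_j(R_{S_j}) = a_{S_j}\nu_{j-1}(R_{S_j})$ and, summing over children, $\sum_{S'\in D_1(S_j)}\nu(S') = \nu(S_j)$ with $\nu(R_{S_j})/\nu(S_j) = \nu_j(R_{S_j})/\nu_j(S_j) = \nu_{S_j}(R_{S_j})/\nu_{S_j}(S_j) \geq 1-\delta$ by Lemma~\ref{l:p-bound}. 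Iterating this down the tree and bounding the ``bad'' mass at each level by $\delta$ times the mass of the parent, one can directly estimate $\nu$ of the set of branches with more than $k$ non-central steps by the tail of a Binomial$(n,\delta)$ — this is exactly the standard ``fraction of a measure that deviates'' bookkeeping, and I would carry it out by induction on $n$, peeling off one generation at a time, rather than invoking an external probabilistic domination theorem.
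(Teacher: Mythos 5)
Your proof is correct and reaches the same conclusion as the paper, but by a genuinely different route. The paper normalizes $\nu\res Q$ to a probability measure, sets $Y_j=\sum_{S\in D_j'}\mathbf 1_S$ (your $\xi_{j-1}$), observes $\E[Y_j]\geq 1-\delta$, and then applies \emph{Azuma's inequality} to the centered partial sums $X_j=\sum_{i\leq j}(Y_i-\E[Y_i])$, treating them as a martingale with increments bounded by $2$, to get the $\exp[-t^2/(8n)]$ tail. You instead show that the conditional probability of a central step is $\geq 1-\delta$ (your computation with $\nu(R_{S_j})/\nu(S_j)=a_{S_j}\mu(R_{S_j})/\mu(S_j)\geq 1-\delta$ is exactly right), deduce that the number $M$ of non-central steps is stochastically dominated by $\mathrm{Bin}(n,\delta)$, and then apply Hoeffding's bound to the dominating Binomial. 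This gives the stronger constant $2$ in place of $1/8$, which of course still implies the stated bound.

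One point in favor of your route that is worth noting: as written, the paper's $X_j=\sum_{i\leq j}(Y_i-\E[Y_i])$ is not literally a martingale, because $\E[Y_j\mid\mathcal F_{j-1}]$ depends on the cube $S_{j-1}$ (through $\mu(R_{S_{j-1}})/\mu(S_{j-1})$) and is not equal to the unconditional $\E[Y_j]$. To invoke Azuma one should instead center by the \emph{conditional} expectations, $Z_j=\sum_{i\leq j}\bigl(Y_i-\E[Y_i\mid\mathcal F_{i-1}]\bigr)$, and use $\sum_i\E[Y_i\mid\mathcal F_{i-1}]\geq n(1-\delta)$ a.s.; the rest of the computation then goes through (and in fact with increment bound $1$ rather than $2$, giving constant $1/2$). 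Your stochastic-domination formulation uses only the conditional bound from Lemma~\ref{l:p-bound} directly and never needs the unconditional centering, so it sidesteps this subtlety entirely. The alternative ``peel off one generation by induction'' bookkeeping you sketch at the end is the measure-theoretic unrolling of the same coupling and would also work; the paper's martingale framing buys nothing extra here. Two small slips in your write-up that do not affect correctness: $\nu(Q)=\nu_{n-1}(Q)$ need not equal $\mu(Q)$ unless $Q\in D_0$ (you only need $\nu(Q)>0$); and in the last display $\exp(-2n(\cdot)^2)\leq\exp(-\tfrac n8(\cdot)^2)$ is the direction you want, since $2\geq 1/8$.
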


\begin{proof}
  Fix $Q\in D$. Without loss of generality, we may assume that $\nu(Q) = 1$.  This will allow us to adopt a probabilistic view. Let $\P$ denote the probability measure $\nu\res Q$ and let $\E$ denote the corresponding expectation.

For $j \geq 1$, define $D_j' := \{R_T : T \in D_{j-1}(Q)\}$ and the random variable $Y_j = \sum_{S \in D_j'} {\bf 1}_S$.  By Lemma \ref{l:p-bound}, we have $\E[Y_j] \geq 1 - \delta$. From the definition of $Y_j$ and the nested nature of the $D_k$'s, it is apparent that the random variables
  \begin{align*}
    X_0 = 0, \qquad X_j = \sum_{i=1}^j (Y_i - \E[Y_i]) \quad \text{for all }j \geq 1,
  \end{align*}
  form a martingale with respect to the filtration generated by $\{D_j:j\geq 1\}$. Furthermore, $|X_j - X_{j-1}| = \left|Y_j - \E[Y_j]\right| \leq 2$ for all $j$.  Thus, we may bound
  \begin{align*}
    \P\left[ \sum_{j=1}^n Y_j < n-k \right] = \P\left[X_n < n - k - \sum_{j=1}^n \E[Y_j] \right] &\leq \P[X_n - X_0 <  \delta n - k] \\
    &\leq \exp\left[ - \frac{( \delta n - k)^2}{8n} \right],
  \end{align*}
  where  the final estimate holds by Azuma's inequality (see e.g.~\cite[Theorem 7.2.1]{alon-spencer}) provided that $\delta n-k\leq 0$. The lemma follows, because $\bigcup \cK_Q(n,k) = \{ \sum_{j=1}^n Y_j \geq n-k \}$.
\end{proof}

\begin{lemma}
  There exists a constant $C_5 \geq 1$ depending only on $C_\mu$ so that
  \begin{align}
    \# \cK_Q(n,k) \leq \left(C_5\,\frac{n}{k}\right)^k \quad \text{for all }Q \in D. \label{e:num-bound}
  \end{align}
\end{lemma}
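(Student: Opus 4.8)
The plan is to reduce the count to a combinatorial ``few bad steps'' estimate along descending chains of cubes. Fix $Q\in D$. Every $S\in D_n(Q)$ determines a unique chain of $D$-ancestors $Q=T_0\supset T_1\supset\cdots\supset T_n=S$ with $T_j\in D_j(Q)$. The first thing I would record is that, since $R_{T_j}\in D_1(T_j)$ is one of the $D$-children of $T_j$ and these children partition $T_j$, the condition $S\subset R_{T_j}$ is equivalent to $T_{j+1}=R_{T_j}$; moreover any cube $T\in\bigcup_{j=0}^{n-1}D_j(Q)$ with $S\subset R_T$ satisfies $S\subset R_T\subset T$, hence is an ancestor of $S$ and so equals some $T_j$. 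Consequently, writing $\mathcal{F}(S):=\{j\in\{0,\dots,n-1\}:T_{j+1}\neq R_{T_j}\}$, we have $S\in\cK_Q(n,k)$ if and only if $\#\mathcal{F}(S)\leq k$.

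Next I would count the admissible chains. By Corollary \ref{l:growth} applied with $j=7$, the number of $D$-children of any cube is $\#\Delta_7(\cdot)\leq M^7=:M_0$, with $M$ depending only on $C_\mu$. Building a chain generation by generation, at a step $j\notin\mathcal{F}(S)$ the cube $T_{j+1}=R_{T_j}$ is forced, while at a step $j\in\mathcal{F}(S)$ there are at most $M_0$ possibilities; hence for each fixed $A\subseteq\{0,\dots,n-1\}$ with $\#A=k$ the number of $S\in D_n(Q)$ with $\mathcal{F}(S)\subseteq A$ is at most $M_0^k$. Since $k\leq n$, every $S$ with $\#\mathcal{F}(S)\leq k$ satisfies $\mathcal{F}(S)\subseteq A$ for some such $A$, so $\#\cK_Q(n,k)\leq\binom{n}{k}M_0^k\leq(en/k)^kM_0^k=(eM_0\,n/k)^k$, using the standard bound $\binom{n}{k}\leq n^k/k!\leq(en/k)^k$. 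Thus one may take $C_5=eM^7$, depending only on $C_\mu$. (The degenerate case $k=0$ is just the identity $\#\cK_Q(n,0)=1$ already noted.)

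I do not expect a real obstacle here: once the ancestor-chain description of $\cK_Q(n,k)$ is in place, the rest is bookkeeping. The only points needing care are translating ``$S\subset R_T$'' into ``$T$ is an ancestor and $T_{j+1}=R_{T_j}$'' correctly, and invoking the uniform bound $M_0$ on the number of $D$-children via Corollary \ref{l:growth}; the binomial estimate is elementary.
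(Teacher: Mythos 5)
Your proof is correct and follows essentially the same approach as the paper. The paper encodes cubes in $D_n(Q)$ as length-$n$ strings over the alphabet $\{1,\dots,M^7\}$ (with $1$ marking the central child $R_{T_j}$) and bounds the count of strings with at least $n-k$ ones by $\binom{n}{n-k}M^{7k}$; your chain formulation with the bad-step set $\mathcal{F}(S)$ is precisely the same encoding, and both arguments finish with $\binom{n}{k}\le (en/k)^k$ to get $C_5 = eM^7$.
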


\begin{proof}
  By Corollary \ref{l:growth}, we can index each child in $D_1(Q)$ of a cube $Q$ by a character in $\mathcal{A}=\{1,\dots,M^7\}$.
  We make the convention that $R_Q$ is indexed by 1.  We can then continue indexing all descendants via strings of characters in $\mathcal{A}$ in an obvious way so that cubes in $D_n(Q)$ are length $n$ strings.

By our indexing convention and the definition of $\cK_Q(n,k)$, we see that $\# \cK_Q(n,k)$ is no greater than the number of length $n$ strings of characters in $\mathcal{A}$ with at least $n-k$ of the characters equal to 1.  We can bound this quantity by $\binom{n}{n-k} M^{7k}$, since $\binom{n}{n-k}$ equals the number ways in which $n-k$ characters equal to 1 can be chosen
  and $M^{7k}$ bounds the number of all possible choices of characters in the other $k$ positions. Therefore,
  \begin{align*}
    \# \cK_Q(n,k)
    \leq
    \binom{n}{n-k} M^{7k}
    \leq \frac{n^k}{k!} M^{7k} \leq \left( M^7e\frac{n}{k} \right)^k,
  \end{align*}
  where we used the Taylor series of $e^x$ to write $k^k/k! < e^k$.\end{proof}

\subsection{Rectifiable curves with significant $\nu$ measure}

For this subsection, let $Q_1\in D$ be fixed. Our goal is to find a rectifiable curve $\Gamma=\Gamma(Q_1)$ such that $\nu(\Gamma\cap Q_1)>0$, quantitatively. Let $Q_0\in D_0$ denote the unique cube of side length 1 such that $Q_1\subset Q_0$. Since $X$ is locally quasiconvex, there exists a constant $C_{qc, Q_0}\geq 1$ such that any two points $x,y\in Q_0$ can be connected by a rectifiable curve $\Gamma_{x,y}$ in $X$ with $\Haus^1(\Gamma_{x,y})\lesssim_{C_{qc, Q_0}}d(x,y)$. (We do not claim (and it is not true in general) that $\Gamma_{x,y}$ is contained in $Q_0$.)

To proceed, given a cube $Q \in D(Q_0)$ and $0 \leq k \leq n$, we define an auxiliary curve $\Gamma_Q(n,k)$ as follows: for each $S \in \cK_Q(n,k)$, connect $x_S$ to $x_Q$ with a curve of length at most $C_{qc, Q_0} \diam Q$,
where $C_{qc, Q_0}$ is the local quasiconvexity constant of $X$, described in the previous paragraph.
The set $\Gamma_Q(n,k)$ is then defined to be the union of these curves.
For all $Q\in D_m(Q_0)$, we have the bound
\begin{align}
  \cH^1(\Gamma_Q(n,k)) \leq C_{qc, Q_0} \diam(Q) \cdot \# \cK_Q(n,k) \overset{\eqref{e:num-bound}}{\leq} \tfrac{16}{3} C_{qc, Q_0} \cdot 2^{-7m} \left(\frac{C_5n}{k} \right)^k. \label{e:piece-bound}
\end{align}
Recalling that $C_5$ does not depend on $\delta$,
we may finally fix $\delta > 0$ sufficiently small and $n_1 \in \N$ so that
\begin{align}
  \left( \frac{C_5}{2\delta} \right)^{2\delta} \leq 64 = 2^6 \label{e:delta-bound}
\end{align}
and such that $k_1 = 2\delta n_1$ is an integer.
We now construct a sequence $(n_i,k_i)_{i=1}^\infty$ by defining $n_j = j n_1$ and $k_j = j k_1$, and note that $n_j/k_j = (2\delta)^{-1}$, for all $j \in \mathbb{N}$.

Recall that $Q_1\in D$ is fixed and $Q_0\in D_0$ is the unique cube of side length 1 such that $Q_1\subset Q_0$. We now construct a curve $\Gamma=\Gamma(Q_1)$ that captures a significant portion of the mass of $\nu\res Q_1$. Define $\cK_0 := \{Q_1\}$ and $K_0:=\bigcup\cK_0=Q_1$.  Assuming $\cK_{j-1}$ is defined for some $j\geq 1$, we next define $\cK_{j} := \bigcup_{Q \in \cK_{j-1}} \cK_Q(n_{j},k_{j})$ and $K_j := \bigcup \cK_j$.  Note that $K_{j} \subset K_{j-1}$, and
\begin{align*}
  \# \cK_j \overset{\eqref{e:num-bound}}\leq \#\cK_{j-1} \left( \frac{C_5 n_j}{k_j} \right)^{k_j} = \# \cK_{j-1} \left( \frac{C_5}{2\delta} \right)^{k_j}.
\end{align*}
Iterating this estimate gives
\begin{align}
  \# \cK_j \leq \left( \frac{C_5}{2\delta} \right)^{k_1 + \cdots + k_j}. \label{e:cK-bound}
\end{align}
We now define
\begin{align*}
  \Gamma = \bigcup_{j=1}^\infty \bigcup_{Q \in \cK_{j-1}} \Gamma_Q(n_{j}, k_{j}) \cup \bigcap_{j=1}^\infty K_j.
\end{align*} Note that $\Gamma$ is closed. Furthermore, as $\Gamma_Q(n_{j},k_{j})$ connects $x_S$ to $x_Q$ for each cube $S \in \cK_Q(n_{j}, k_{j})$, the set $\Gamma$ is path-connected.

The proof of Theorem \ref{t-gks} is a short step from the next two lemmas.

\begin{lemma}\label{finite-length} $\Gamma=\Gamma(Q_1)$ is a rectifiable curve with $\Haus^1(\Gamma)\lesssim C_{qc, Q_0} \diam Q_1$.
\end{lemma}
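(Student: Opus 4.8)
The plan is to estimate $\Haus^1(\Gamma)$ stage by stage and then invoke Lemma~\ref{l-waz-2}: once we know that $\Gamma$ is closed and path-connected (both noted just above the lemma) and has finite $\Haus^1$ measure, the completeness of $X$ forces $\Gamma$ to be a rectifiable curve.

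The heart of the matter is the bookkeeping. Write $Q_1\in D_{m_1}$ and set $N_i:=n_1+\cdots+n_i$ with $N_0:=0$; the nested construction of the $\cK_j$ shows $\cK_{j-1}\subset D_{m_1+N_{j-1}}(Q_0)$ for every $j\ge 1$. Since $k_j=jk_1=2\delta n_j$ and $n_j/k_j=(2\delta)^{-1}$, the per-piece bound \eqref{e:piece-bound} together with the defining inequality \eqref{e:delta-bound} gives, for each $Q\in\cK_{j-1}$,
\[
 \Haus^1(\Gamma_Q(n_j,k_j))\le \tfrac{16}{3}C_{qc,Q_0}\,2^{-7(m_1+N_{j-1})}\Bigl(\tfrac{C_5}{2\delta}\Bigr)^{k_j}\le \tfrac{16}{3}C_{qc,Q_0}\,2^{-7(m_1+N_{j-1})}\,2^{6n_j},
\]
while \eqref{e:cK-bound} and \eqref{e:delta-bound} give $\#\cK_{j-1}\le\bigl(\tfrac{C_5}{2\delta}\bigr)^{2\delta N_{j-1}}\le 2^{6N_{j-1}}$. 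Multiplying and summing over $Q\in\cK_{j-1}$,
\[
 \sum_{Q\in\cK_{j-1}}\Haus^1(\Gamma_Q(n_j,k_j))\le \tfrac{16}{3}C_{qc,Q_0}\,2^{-7m_1}\,2^{-N_{j-1}+6n_j}.
\]
The crucial observation is that $6<7$, so the accumulated cube-scale decay $2^{-7N_{j-1}}$ dominates the population growth $2^{6N_{j-1}}$ by a geometric factor $2^{-N_{j-1}}$; since $N_{j-1}=n_1\tfrac{(j-1)j}{2}$ grows quadratically in $j$ while $6n_j=6jn_1$ grows only linearly, the exponent $-N_{j-1}+6n_j=n_1 j\tfrac{13-j}{2}$ is eventually very negative, so $\sum_{j\ge 1}2^{-N_{j-1}+6n_j}=:C_6<\infty$ is a finite constant depending only on $n_1$, hence only on $C_\mu$.

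It remains to see that the residual set $\bigcap_{j}K_j$ contributes nothing. For each $j$ it is covered by the $\#\cK_j\le 2^{6N_j}$ cubes forming $\cK_j$, each of diameter at most $\delta_j:=\tfrac{16}{3}2^{-7(m_1+N_j)}$, so $\Haus^1_{\delta_j}\bigl(\bigcap_m K_m\bigr)\le 2^{6N_j}\delta_j=\tfrac{16}{3}2^{-7m_1}2^{-N_j}\to 0$; since $\delta_j\downarrow 0$ as $j\to\infty$, this forces $\Haus^1\bigl(\bigcap_m K_m\bigr)=0$. Combining the estimates,
\[
 \Haus^1(\Gamma)\le \sum_{j=1}^\infty\sum_{Q\in\cK_{j-1}}\Haus^1(\Gamma_Q(n_j,k_j))+\Haus^1\Bigl(\bigcap_{j}K_j\Bigr)\le \tfrac{16}{3}C_6\,C_{qc,Q_0}\,2^{-7m_1}\lesssim C_{qc,Q_0}\diam Q_1,
\]
where the last step uses $\side Q_1=2^{-7m_1}\lesssim\diam Q_1$, valid since $X$ is connected (being locally quasiconvex), so $\diam Q_1\ge\diam U_{Q_1}\ge\tfrac16\side Q_1$. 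Finally, $\Gamma$ is a closed connected subset of the complete space $X$ with $\Haus^1(\Gamma)<\infty$, so Lemma~\ref{l-waz-2} identifies $\Gamma$ as a rectifiable curve.

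The main obstacle is the balancing in the second paragraph: one must track which generation of $D$ contains $\cK_{j-1}$ and then offset three competing quantities—the number $\#\cK_{j-1}\lesssim 2^{6N_{j-1}}$ of curves drawn at stage $j$, the cube side length $2^{-7(m_1+N_{j-1})}$, and the length blowup $2^{6n_j}$ of the innermost fan $\Gamma_Q(n_j,k_j)$—so that the calibrated choice \eqref{e:delta-bound} (which encodes precisely that $6<7$) produces a convergent series. Everything else is routine.
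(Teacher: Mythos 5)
Your proof is correct and follows essentially the same route as the paper: bound the lengths of the auxiliary fans $\Gamma_Q(n_j,k_j)$ stage by stage using \eqref{e:piece-bound}, \eqref{e:cK-bound}, and \eqref{e:delta-bound}; show $\Haus^1\bigl(\bigcap_j K_j\bigr)=0$ by covering with the cubes of $\cK_j$; and conclude via Lemma~\ref{l-waz-2}. Your exponent $-N_{j-1}+6n_j$ equals the paper's $-(n_1+\cdots+n_j)+7n_j$, so the two summations agree term by term; you are slightly more explicit about carrying along the generation $m_1$ of $Q_1$ and about why $\side Q_1\lesssim\diam Q_1$ (via connectedness of $X$), but these are only cosmetic refinements of the paper's argument.
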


\begin{proof} Fix $\ell \geq 1$ and $\eta = \tfrac{16}{3} \cdot 2^{-7(n_1 + \cdots + n_\ell)}$. For every $Q\in\mathcal{K}_\ell$, we have $\diam Q\leq \diam B_Q\leq \eta$. Hence
  \begin{align*}
    \cH^1_\eta\left( \bigcap_{j=1}^\infty K_j \right) \leq \cH^1_\eta(K_\ell)
    \overset{\eqref{e:cK-bound}}\leq \tfrac{16}{3} \cdot 2^{-7(n_1 + \cdots + n_\ell)} \left( \frac{C_5}{2\delta} \right)^{k_1 + \cdots + k_\ell}
    \overset{\eqref{e:delta-bound}}\leq \tfrac{16}{3} \cdot 2^{-(n_1 + \cdots + n_\ell)}.
  \end{align*}
  Since $n_1 + \cdots + n_\ell \to \infty$ and $\eta \to 0$ as $\ell \to \infty$, we get that $\cH^1\left( \bigcap_j K_j \right) = 0$.  Thus,
  \begin{align*}
    \cH^1(\Gamma) \leq \sum_{j=1}^\infty \sum_{Q \in \cK_{j-1}} \cH^1(\Gamma_Q(n_{j},k_{j})).
  \end{align*}
  As the only cube in $\cK_0$ is $Q_1$ and the cubes of $\cK_j$ are in $D_{n_1 + \cdots + n_j}(Q_1)$ whenever $j\geq 1$, we have (interpreting $n_1+\cdots+n_{j-1}\equiv 0$ and $k_1+\cdots+k_{j-1}\equiv 0$ when $j=1$)
  \begin{align*}
    \cH^1(\Gamma) &\overset{\eqref{e:piece-bound}}
    \leq
    \tfrac{16}{3}C_{qc, Q_0}\side Q_1 \sum_{j=1}^\infty \# \cK_{j-1}\cdot
    2^{-7(n_1 + \cdots + n_{j-1})} \left(\frac{C_5 n_{j}}{k_{j}} \right)^{k_{j}} \\ &\overset{\eqref{e:cK-bound}}{\leq}
    \tfrac{16}{3}C_{qc, Q_0}\side Q_1 \sum_{j=1}^\infty
    2^{-7(n_1 + \cdots + n_{j-1})} \left( \frac{C_5}{2\delta} \right)^{k_1 + \cdots + k_{j}}\\
    &= \tfrac{16}{3}C_{qc, Q_0}\side Q_1 \sum_{j=1}^\infty \left[2^{-7} \left( \frac{C_5}{2\delta} \right)^{2\delta} \right]^{n_1 + \cdots + n_{j}} 2^{7n_{j}} \\
    &\overset{\eqref{e:delta-bound}}{\leq}
    \tfrac{16}{3}C_{qc, Q_0}\side Q_1 \sum_{j=1}^\infty
    2^{-(n_1 + \cdots + n_{j})} 2^{7n_{j}}\lesssim {C_{qc, Q_0}}\side Q_1\lesssim{C_{qc, Q_0}}\diam Q_1.
  \end{align*}
  In the last line, we used $\sum_{j=1}^\infty 2^{-(n_1 + \cdots + n_{j})} 2^{7n_{j}}=\sum_{j=1}^\infty 2^{-\frac12 j(j+1)n_1+7jn_1}\lesssim 1$, since $n_j=jn_1$; indeed, the tail of the series is dominated by a convergent geometric series.
  Because $X$ is a complete metric space and $\Gamma\subset X$ is nonempty, closed, connected, and $\mathcal{H}^1(\Gamma) < \infty$, Lemma~\ref{l-waz-2} implies that $\Gamma$ is a rectifiable curve.
\end{proof}

\begin{lemma}\label{lots-of-measure}
  $
    \nu(\Gamma\cap Q_1) \geq \varepsilon \nu(Q_1)
  $ for some constant $\varepsilon\in(0,1)$ determined by $\delta$ and $n_1$. In particular, $\varepsilon$ is independent of $Q_1$.
\end{lemma}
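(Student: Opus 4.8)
The plan is to show that the intersection $\Gamma \cap Q_1$ captures a fixed proportion of $\nu(Q_1)$ by proving that $\bigcap_{j=1}^\infty K_j \subset \Gamma \cap Q_1$ and then bounding $\nu\left(\bigcap_j K_j\right)$ from below. The second containment issue is automatic: each $K_j \subset K_0 = Q_1$ by construction, and $\bigcap_j K_j$ is one of the two pieces in the definition of $\Gamma$, so $\bigcap_j K_j \subset \Gamma \cap Q_1$. Therefore it suffices to bound $\nu\left(\bigcap_j K_j\right)$ below by $\varepsilon\, \nu(Q_1)$ for a constant $\varepsilon$ depending only on $\delta$ and $n_1$.

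To estimate $\nu(K_\infty)$ where $K_\infty := \bigcap_j K_j$, I would peel off one generation at a time. For a fixed $Q \in \cK_{j-1}$, Lemma \ref{l:mass-lower} (applied with $n = n_j$ and $k = k_j$, noting $k_j = 2\delta n_j \geq \delta n_j$) gives
$$
\nu\left(\bigcup \cK_Q(n_j,k_j)\right) \geq \left(1 - \exp\left[-\frac{n_j}{8}\left(\frac{k_j}{n_j}-\delta\right)^2\right]\right)\nu(Q) = \left(1 - \exp\left[-\frac{\delta^2 n_j}{8}\right]\right)\nu(Q),
$$
since $k_j/n_j - \delta = 2\delta - \delta = \delta$. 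Summing over $Q \in \cK_{j-1}$ and using that the cubes in $\cK_{j-1}$ are pairwise disjoint with $K_{j-1} = \bigcup \cK_{j-1}$, and that $K_j = \bigcup_{Q \in \cK_{j-1}} \bigcup \cK_Q(n_j,k_j)$, we obtain the recursion
$$
\nu(K_j) \geq \left(1 - \exp\left[-\tfrac{\delta^2 n_j}{8}\right]\right)\nu(K_{j-1}).
$$
Iterating from $K_0 = Q_1$ gives $\nu(K_\infty) = \lim_{j\to\infty}\nu(K_j) \geq \nu(Q_1)\prod_{j=1}^\infty \left(1 - \exp\left[-\tfrac{\delta^2 n_j}{8}\right]\right)$, where the limit identity uses continuity of $\nu$ from above along the nested sequence $(K_j)$ (valid since $\nu(Q_1) < \infty$).

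It remains to check that the infinite product converges to a strictly positive number. Since $n_j = j n_1$, the $j$-th factor is $1 - e^{-\delta^2 j n_1/8}$, so the product converges precisely because $\sum_j e^{-\delta^2 j n_1/8} < \infty$ (a convergent geometric series). Hence $\varepsilon := \prod_{j=1}^\infty \left(1 - \exp\left[-\tfrac{\delta^2 j n_1}{8}\right]\right) \in (0,1)$ is a well-defined constant depending only on $\delta$ and $n_1$, and $\nu(\Gamma \cap Q_1) \geq \nu(K_\infty) \geq \varepsilon\, \nu(Q_1)$. The only mild subtlety — and the one place to be careful — is confirming that $\nu(K_j) = \sum_{Q \in \cK_{j-1}} \nu\left(\bigcup \cK_Q(n_j,k_j)\right)$, which follows because $K_j$ is a disjoint union over the finitely many pairwise-disjoint cubes $Q \in \cK_{j-1}$ of the sets $\bigcup \cK_Q(n_j,k_j) \subset Q$; there is no real obstacle here, just bookkeeping with the nested cube structure.
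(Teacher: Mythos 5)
Your proof is correct and follows essentially the same route as the paper: apply Lemma \ref{l:mass-lower} one generation at a time (with $k_j = 2\delta n_j$ so that $k_j/n_j - \delta = \delta$), sum over the pairwise-disjoint cubes of $\cK_{j-1}$ to get the recursion $\nu(K_j) \geq (1-e^{-\delta^2 n_j/8})\nu(K_{j-1})$, and observe that the resulting infinite product converges to a positive number because $n_j = jn_1$ makes $\sum_j e^{-\delta^2 n_j/8}$ a geometric series. The extra explicitness you add (checking $\bigcap_j K_j \subset \Gamma \cap Q_1$ and spelling out the disjoint-union step) is sound bookkeeping and matches what the paper leaves implicit.
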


\begin{proof}
  As $K_{j+1} \subset K_j$, we have by the dominated convergence theorem that
  \begin{align*}
    \nu(\Gamma\cap Q_0) \geq \nu\left( \bigcap_{j=1}^\infty K_j \right) = \lim_{j \to \infty} \nu(K_j).
  \end{align*}
  By the construction of $\cK_j$ and Lemma \ref{l:mass-lower}, we have
  \begin{align*}
    \nu(K_j) \geq \left( 1- e^{ - n_j \delta^2/8 }\right) \nu(K_{j-1}) \geq \nu(Q_1) \prod_{i=1}^j (1-e^{-n_i\delta^2/8}).
  \end{align*}
  This product converges to a nonzero number as $\sum_{i=1}^\infty e^{-n_i\delta^2/8}$ is a convergent geometric series (since $n_i = i n_1$).  This proves the lemma.
\end{proof}

\subsection{Proof of Theorem \ref{t-gks}} Let $\nu$ be the measure defined in \S\ref{ss:nu}. By Proposition \ref{prop:doubling}, $\nu$ is a doubling measure on $X$. As $\nu=\sum_{Q_0\in D_0} \nu\res Q_0$ and $D_0$ is countable, to prove that $\nu$ is 1-rectifiable, it will suffice to check that $\nu\res Q_0$ is 1-rectifiable for each $Q_0\in D_0$.

Fix $Q_0\in D_0$. By the above discussion (see Lemmas \ref{finite-length} and \ref{lots-of-measure}), there exists a rectifiable curve $\Gamma=\Gamma(Q_0)$ such that $\nu(Q_0\setminus \Gamma)\leq (1-\varepsilon)\nu(Q_0)$ for some constant $\varepsilon\in(0,1)$ independent of $Q_0$.

Suppose for induction that for some $k\geq 1$ we have found a finite family $\mathscr{C}_k$ of rectifiable curves such that $\nu(Q_0\setminus\bigcup\mathscr{C}_k)\leq (1-\frac12\varepsilon)^k\nu(Q_0)$.  Since the set $\bigcup\mathscr{C}_k$ is closed (being a finite union of closed sets), we may write $Q_0\setminus \bigcup{\mathscr{C}_k}$ as a countable union of pairwise disjoint cubes $Q_1,Q_2,\cdots\in D(Q_0)$. Once again, for each $i\geq 1$, we can find a rectifiable curve $\Gamma_i$ such that $\nu(Q_i\setminus \Gamma_i)\leq (1-\varepsilon)\nu(Q_i)$ by Lemmas \ref{finite-length} and \ref{lots-of-measure}. All together, $$\nu\left(Q_0\setminus \left( \bigcup\mathscr{C}_k\cup\bigcup_{i=1}^\infty\Gamma_i\right)\right) \leq (1-\varepsilon)\sum_{i=1}^\infty \nu(Q_i) = (1-\varepsilon)\nu\left(Q_0\setminus\bigcup\mathscr{C}_k\right).$$ Thus, by continuity from above and the induction hypothesis, we can find $j\geq 1$ sufficiently large such that $$\nu\left(Q_0\setminus \left( \bigcup\mathscr{C}_k\cup\bigcup_{i=1}^j\Gamma_i\right)\right)\leq (1-\tfrac12\varepsilon)\nu\left(Q_0\setminus\bigcup\mathscr{C}_k\right)\leq (1-\tfrac12\varepsilon)^{k+1}\nu(Q_0).$$ Hence $\mathscr{C}_{k+1}:=\mathscr{C}_k\cup\{\Gamma_1,\dots,\Gamma_j\}$ satisfies the next step of the induction.

Finally, $\mathscr{C}=\bigcup_{k=1}^\infty\mathscr{C}_k$ is a countable family of rectifiable curves and $$\nu\left(Q_0\setminus \bigcup\mathscr{C}\right) \leq \inf_{k\geq 1} \nu\left(Q_0\setminus\bigcup\mathscr{C}_k\right) \leq \inf_{k\geq 1} (1-\tfrac12\varepsilon)^k\nu(Q_0)=0.$$ Therefore, $\nu\res Q_0$ is 1-rectifiable. This completes the proof of Theorem \ref{t-gks}.

\bibliographystyle{plain}
\bibliography{biblio}

\end{document}